\documentclass[reqno,autoref,numberwithinsect]{amsart}

\usepackage{hyperref}
\usepackage{amsfonts,amsmath,amssymb,amsthm}
\usepackage{graphicx}
\usepackage{tikz}

\usetikzlibrary{math}
\usepackage{caption}
\usepackage[labelfont=rm]{subcaption}
\usepackage{xspace}
\usepackage{pgfplots}

\usepackage{bm}

\usepackage[normalem]{ulem}

\usepackage{interval}

\usepackage{empheq}

\usepackage[capitalize]{cleveref}

\crefformat{equation}{\textup{#2(#1)#3}}
\crefrangeformat{equation}{\textup{#3(#1)#4--#5(#2)#6}}
\crefmultiformat{equation}{\textup{#2(#1)#3}}{ and \textup{#2(#1)#3}}
{, \textup{#2(#1)#3}}{, and \textup{#2(#1)#3}}
\crefrangemultiformat{equation}{\textup{#3(#1)#4--#5(#2)#6}}%
{ and \textup{#3(#1)#4--#5(#2)#6}}{, \textup{#3(#1)#4--#5(#2)#6}}{, and \textup{#3(#1)#4--#5(#2)#6}}

\Crefformat{equation}{Equation~#2\textup{(#1)}#3}
\Crefrangeformat{equation}{Equations~\textup{#3(#1)#4--#5(#2)#6}}
\Crefmultiformat{equation}{Equations~\textup{#2(#1)#3}}{ and \textup{#2(#1)#3}}
{, \textup{#2(#1)#3}}{, and \textup{#2(#1)#3}}
\Crefrangemultiformat{equation}{Equations~\textup{#3(#1)#4--#5(#2)#6}}%
{ and \textup{#3(#1)#4--#5(#2)#6}}{, \textup{#3(#1)#4--#5(#2)#6}}{, and \textup{#3(#1)#4--#5(#2)#6}}

\usepackage{thmtools, thm-restate}
\usepackage{todonotes}

\usepackage{dsfont}

\newtheorem{theorem}{Theorem}[section]
\newtheorem{lemma}[theorem]{Lemma}
\newtheorem{proposition}[theorem]{Proposition}
\newtheorem{corollary}[theorem]{Corollary} 
\theoremstyle{definition}
\newtheorem{definition}[theorem]{Definition}

\theoremstyle{remark}
\newtheorem{remark}[theorem]{Remark}
\newtheorem{observation}[theorem]{Observation}
\newtheorem{question}[theorem]{Question}
\newtheorem{example}[theorem]{Example}
\newtheorem{conjecture}[theorem]{Conjecture}

\usepackage{algorithmicx}
\usepackage{algorithm}
\usepackage{algpseudocode}

\usepackage[inline,shortlabels]{enumitem}
\setlist[enumerate]{leftmargin=1.5pc}

\algdef{SE}[DOWHILE]{Do}{DoWhile}{\algorithmicdo}[1]{\algorithmicwhile\ #1}
\algrenewcommand{\algorithmiccomment}[1]{\hfill $\rhd$ \emph{#1}}
\algrenewcommand{\algorithmicrequire}{\textbf{Input:}}
\algrenewcommand{\algorithmicensure}{\textbf{Output:}}
\algnewcommand{\Or}{\textbf{or}}
\algnewcommand{\And}{\textbf{and}}
\algnewcommand{\Not}{\textbf{not}\,}

\newcommand\SetOf[2]{\left\{\left.#1\vphantom{#2}\ \right|\ #2\vphantom{#1}\right\}}

\DeclareMathOperator{\supp}{supp}
\DeclareMathOperator{\psupp}{supp^{\oplus}}
\DeclareMathOperator{\nsupp}{supp^{\ominus}}
\newcommand\KK{{\mathbb K}}
\newcommand\HH{{\mathbb H}}
\DeclareMathOperator\val{val}
\newcommand\TT{{\mathbb T}}
\newcommand\TTpm{\mathbb T_{\pm}}

\newcommand\RR{{\mathbb R}}

\newcommand\teq{\mathrel{\vDash}}

\newcommand{\TSS}{\mathbb{S}}
\newcommand{\SignH}{\mathbb{O}}

\DeclareMathOperator{\whull}{\conv_{TC}} 
\DeclareMathOperator{\shull}{\conv_{TO}} 
\DeclareMathOperator{\wcone}{\cone_{\mspace{1mu} {TC}}}

\DeclareMathOperator{\wspan}{span_{\mspace{1mu} {TC}}}
\DeclareMathOperator{\sval}{sval}
\DeclareMathOperator{\conv}{conv}
\DeclareMathOperator{\slog}{slog}

\DeclareMathOperator{\sgn}{sgn}
\DeclareMathOperator{\tsgn}{tsgn}
\DeclareMathOperator{\argmax}{argmax}

\newcommand\Tleq{\TT_{\leq \mathds{O}}}
\newcommand\Tgeq{\TT_{\geq \mathds{O}}}
\newcommand\Tlt{\TT_{< \mathds{O}}}
\newcommand\Tgt{\TT_{> \mathds{O}}}
\newcommand\Tzero{\TT_{\bullet}}
\newcommand\Zero{\mathds{O}}

\newcommand\Uncomp{\mathcal{U}}
\newcommand\HC{\mathcal{H}}
\newcommand\Hclosed{\overline{\mathcal{H}}}
\newcommand\Hsemi{\widetilde{\mathcal{H}}}

\newcommand\puiseux[2]{#1\{#2\}} 

\DeclareMathAlphabet\mathbfcal{OMS}{cmsy}{b}{n}

\newcommand\bHclosed{\overline{\mathbfcal{H}}}
\newcommand\subs{J}
\newcommand{\lif}{\mathsf{li}}
\newcommand{\clif}{\mathsf{cli}}

\DeclareMathOperator{\cone}{cone}

\newcommand{\lplus}{\triangleleft}
\newcommand{\biglplus}{\lhd}

\newcommand{\vertices}{\mathrm{Vert}}
\newcommand{\faces}{\mathrm{Faces}}
\newcommand{\Sym}{\mathrm{Sym}}

\newcommand{\diag}{\mathrm{diag}}

\newcommand{\cl}{\mathrm{cl}}
\newcommand{\inter}{\mathrm{int}}

\newcommand{\ort}{O}
\newcommand{\bort}{\bm O}
\newcommand{\dort}{Q}

\newcommand{\domin}{\mathrm{Argmax}}
\newcommand{\posdomin}{\domin^{+}}

\newcommand{\lc}{\mathrm{lc}}

\newcommand{\ortsig}{\nu}
\newcommand{\coG}{H}
\newcommand{\ccG}{\overline{H}}
\newcommand{\lclam}{\gamma}

\newcommand{\Xperp}{X'}
\newcommand{\Gper}{\bm{G}^{\perp}}
\newcommand{\Gali}{\bm{G}^{=}}

\newcommand{\cM}{\mathcal{M}}

\makeatletter
\def\wrt{w.r.t\@ifnextchar.{}{.\ }}
\makeatother

\makeatletter
\def\input@path{{./Images/}{./}}
\makeatother

\title{Signed tropical halfspaces and convexity}

\author{Georg Loho \and Mateusz Skomra}

\address[Georg Loho]{
  University of Twente,
  Department of Applied Mathematics \\
  The Netherlands \\
}
\email{g.loho@utwente.nl}

\address[Mateusz Skomra]{
  LAAS-CNRS\\
  Universit{\'e} de Toulouse\\
  CNRS\\
  Toulouse\\
  France\\
}
\email{mateusz.skomra@laas.fr}

\begin{document}

\maketitle

\begin{abstract}
  We extend the fundamentals for tropical convexity beyond the tropically positive orthant expanding the theory developed by Loho and V{\'e}gh (ITCS 2020). 
  We study two notions of convexity for signed tropical numbers called \emph{TO-convexity} (formerly `signed tropical convexity') and the novel notion \emph{TC-convexity}. 

  We derive several separation results for TO-convexity and TC-convexity.
  A key ingredient is a thorough understanding of TC-hemispaces -- those TC-convex sets whose complement is also TC-convex. 
  Furthermore, we use new insights in the interplay between convexity over Puiseux series and its signed valuation.

  Remarkably, TC-convexity can be seen as a natural convexity notion for representing oriented matroids as it arises from a generalization of the composition operation of vectors in an oriented matroid.
  We make this explicit by giving representations of linear spaces over the real tropical hyperfield in terms of TC-convexity. 
\end{abstract}


\section{Introduction}

Convexity is a powerful structure which is often behind the existence of efficient algorithms.
In this spirit, tropical convexity, arising from classical convexity by replacing addition with maximization and multiplication by addition, found several applications from optimization~\cite{AllamigeonBenchimolGaubertJoswig:2021}, phylogenetics~\cite{YoshidaZhangZhang:2019} and machine learning~\cite{MaragosCharisopoulosTheodosis:2021}, just to name a few recent references. 
While tropical convexity was mainly considered with a nonnegativity constraint for a long time, the recent paper \cite{LohoVegh:2020} introduced a notion of signed tropical convexity to overcome this restriction.
The main drawback of the convexity introduced in \cite{LohoVegh:2020} is that the closed tropical halfspaces are usually \emph{not} convex.
  This has two major disadvantages. 
  Firstly, there exist signed tropical polyhedra (intersections of finitely many closed tropical halfspaces) that are not convex.
  Secondly, the image by a signed valuation of a convex set defined over Puiseux series may not be convex. 
  In this paper, we extend the fundamentals of signed tropical convexity and thereby resolve limitations of the framework developed in \cite{LohoVegh:2020}.
  To do so, we introduce a second, extended notion of signed tropical convexity.

  The crucial building blocks of the two versions of signed tropical convexity are open and closed tropical halfspaces.
  We coin the name \emph{TO-convexity} (tropical open convexity) for the convexity in which the hull of finitely many points equals the intersection of the \emph{open} tropical halfspaces containing them.
  This is exactly the notion considered in~\cite{LohoVegh:2020}.
  Additionally, we define \emph{TC-convexity} (tropical closed convexity) in such a way that the hull of finitely many points equals the intersection of the \emph{closed} tropical halfspaces containing them.
  In this way, signed tropical polyhedra are TC-convex even though they may not be TO-convex and the same is true about the signed valuations of convex sets.

  Our key contribution are various separation theorems for these convexities:

  \begin{itemize}
  \item Two disjoint TO-convex sets can be separated by a closed tropical halfspace (\cref{th:weak_separation_TO}). 
  \item Each TC-convex set is an intersection of TC-hemispaces; here, a TC-hemispace is a well-structured set sandwiched between an open and a closed tropical halfspace (\cref{thm:separation_hemispaces} and \cref{th:hemispace}). 
  \item The image of a closed semi-algebraic set under the signed valuation can be separated from a point by a closed tropical halfspace (\cref{thm:tropical_polar}). 
  \end{itemize}

    The weak separation of two disjoint TO-convex sets by a closed tropical halfspace
  is an analogue of the hyperplane separation theorem of real convexity and extends the separation theorem known for two tropically convex sets in the nonnegative orthant \cite{GaubertSergeev:2008}.
  Our results imply 
  that a set is TC-convex if and only if it is closed by taking a hull of two points (\cref{thm:TC-hull-intersection-halfspaces}).
  For technical reasons, we actually introduce the TC-convexity via the hull of only two points (\cref{def:weak+hull+segment}) and then show that the hull of a bigger set is an intersection of closed halfspaces.
  Doing the proof in this order also resembles more closely the proof of the hyperplane separation theorem over the reals.
  We combine these insights with further tools to derive the following representations: 

  \begin{itemize}
  \item The closure of a TO-convex set equals the intersection of the closed tropical halfspaces that contain it (\cref{th:TO-closure_separation}).  
  \item The TC-convex hull of a finite set $X$ equals the intersection of the closed tropical halfspaces containing $X$ (\cref{th:conic_M-W} and \cref{thm:affine_M-W}).
  \end{itemize}

  The latter is a signed tropical analogue of the Minkowski--Weyl theorem over the reals, which extends the tropical Minkowski--Weyl theorem known for the nonnegative orthant~\cite{GaubertKatz:2011}.

  Finally, we demonstrate how TC-convexity can be seen as the fundamental notion of tropical convexity for signed tropical geometry.
  We highlight a geometric interpretation of structural results for oriented matroids and matroids over the signed tropical hyperfield.
  In unsigned tropical geometry, the Bergman fan of a matroid arises by considering it as a matroid over the tropical hyperfield and taking its set of vectors there.
  This object turns out to be the tropical convex hull of the circuits (interpreted as tropical vectors) as identified in \cite{MurotaTamura:2001}, see \cite{Joswig:2022} for more details. 
  Considering this over the signed tropical hyperfield leads to the `real Bergman fan' in the sense of~\cite{Celaya:2019}, extending the understanding of the Bergman fan in the positive orthant~\cite{ArdilaKlivansWilliams:2006}, and more generally to linear spaces of matroids over the signed tropical hyperfield.
  We show that such a linear space is actually TC-convex.
  Furthermore, it has a representation, similar to the unsigned ones:
  \begin{itemize}
  \item The linear space of a matroid over the signed tropical hyperfield is the TC-convex hull of the circuits and the intersection of tropical hyperplanes associated with the cocircuits (\cref{thm:representation-vectors-tropical-hyperfields}).
  \end{itemize}
  Since tropical linear spaces are the fundamental building blocks of tropical algebraic geometry, our result reveals that TC-convexity is the right notion of signed tropical convexity for signed tropical geometry.  
  
  \smallskip

To arrive at our main separation results, we need to gather extensive insights into the structure of TC-convex sets and their lifts to Puiseux series.
Notably, our analysis of the signed valuations of closed semialgebraic sets relies on the analogue of the hyperplane separation theorem over Puiseux series, which we recall in \cref{th:hyperplane_sep}.
We use this connection with Puiseux series to strengthen the separation results directly derived for TC-convex sets.
The main insight for these is the fundamental elimination property of TC-convexity given in \cref{prop:sand_glass}.
To combine the separation of TC-convex sets with separation of lifts, we investigate TC-hemispaces -- those TC-convex sets whose complement is also TC-convex.
Apart from showing that any nontrivial TC-hemispace is sandwiched between an open tropical halfspace and its closure, we provide several structural insights on the boundary of non-closed TC-hemispaces. 
These insights are summarized in \cref{le:bigger_pos_argmax}.
This provides a partial generalization of the full characterization of hemispaces in the nonnegative tropical orthant established in~\cite{BriecHorvath:2008,KatzNiticaSergeev:2014,EhrmannHigginsNitica:2016}.
We connect the study of TC-hemispaces with the study of lifts into Puiseux series by showing that every TC-hemispace arises as a signed valuation of some convex set (\cref{prop:hemispace_puiseux}).
Additionally, we derive Carath{\'e}odory-type results for TC-convexity (\cref{le:inter_caratheodory}).

\subsection*{Comparison of TO-convexity and TC-convexity}
From the viewpoint of abstract convexity~\cite{VanDeVel:1993}, TO-convexity is generated by open tropical halfspaces and TC-convexity is generated by closed tropical halfspaces.
This yields that TO-convex sets are also TC-convex.
While TO-convex sets still behave rather well with the algebraic operations, as TO-convex hull can be written using (tropical) convex combinations with hyperoperations, this structure is lost for TC-convexity.

To describe the latter, we use a non-commutative addition, which we call \emph{left sum} (\cref{def:left+sum}). 
It generalizes the composition of vectors in an oriented matroid.
In this way, this operation already appeared in work on Bergman fans for matroid over hyperfields generalizing composition of sign vectors of an oriented matroid \cite{Celaya:2019,Anderson:2019}.
We elaborate on this in \cref{sec:oriented-matroids}. 

Another remarkable difference is the behavior with respect to lifts to Puiseux series.
In \cite[Theorem~3.14]{LohoVegh:2020}, it was shown that the TO-convex hull of finitely many points arises as union of the signed valuations of the convex hulls ranging over all lifts. 
We show in \cref{cor:intersection-of-lifts}, that the TC-convex hull arises as the intersection of these lifts. 

\subsection*{Motivation}

Our work is motivated by the need for a better structural basis underlying (recent) applications of tropical convexity.
New insights on the complexity of classical linear programming based on tropical geometry have been a great success story in recent years~\cite{AllamigeonBenchimolGaubertJoswig:2014,AllamigeonBenchimolGaubertJoswig:2018,AllamigeonBenchimolGaubertJoswig:2021,AllamigeonGaubertVandame:2022}. 
The point of origin for these advances is the tropicalization of linear programs -- where one always had to impose additional nonnegativity constraints. 
A thorough foundation of signed tropical convexity will allow to study the signed tropicalization of general linear programs. 

Furthermore, tropical convexity has an intimate connection with mean payoff games as the feasibility problem for tropical inequality systems (restricted to the tropical nonnegative orthant) is equivalent to mean payoff games \cite{AkianGaubertGuterman:2012}.
The latter have an intriguing complexity status in the intersection of NP and co-NP while no polynomial-time algorithm is known~\cite{ZwickPaterson:1996}; see the recent paper~\cite{ColcombetFijalkowGawrychowskiOhlmann:2022} for a good overview including the flourishing advances on the subclass of parity games. 
We think that signed tropical convexity may enrich the insights in the structure of these games. 

Studying the polar of a tropical polyhedron naturally leads to tropically convex sets without a nonnegativity constraint.
To overcome this, the nonnegativity constraints were modeled by pairs of nonnegative numbers in former work~\cite{AllamigeonGaubertKatz:2011}.
A full characterization characterization of the polars of tropically convex cones in the nonnegative orthant was established in the recent paper~\cite{AkianAllamigeonGaubertSergeev:2023} using the notion of signed bend cones. 
In upcoming work, we study polars in more detail and give a unified view of signed bend cones beyond the nonnegative orthant based on TC-convexity. 

Yet another motivation comes from interest in signed tropicalization of semialgebraic sets \cite{JellScheidererYu:2018}.
The study of unsigned tropicalizations of semialgebraic sets already lead to a fruitful connection between stochastic games and (non-Archimedean) semidefinite programs \cite{AllamigeonGaubertSkomra:2018}.
In particular, the analysis of tropical cones arising in this work resulted in recent universal complexity bounds on value iteration for a large class of games in~\cite{AllamigeonGaubertKatzSkomra:2022}.
Our framework allows to capture also other classes of optimization problems where the nonnegativity condition is a priori not satisfied. 

Finally, already in~\cite{LohoVegh:2020}, it was demonstrated that signed tropical linear inequality systems with unit coefficients are the same as Boolean formulas.
In this sense, more general signed tropical linear inequality systems form a `quantitative' generalization of Boolean formulas.
Therefore, the feasibility problem for linear systems over signed tropical numbers is a natural generalization of SAT.
Note that the solution sets of these systems are actually not TO-convex.
However, they are exactly the finitely generated TC-convex sets. 

\subsection*{Related work}
We refer to the book~\cite{Joswig:2022} for a general overview on (unsigned) tropical convexity. 
Different versions of separation theorems in tropical convexity have been obtained by numerous authors~\cite{Zimmermann:1977,SamborskiiShpiz:1992,CohenGaubertQuadrat:2005,DevelinSturmfels:2004,BriecHorvath:2008,GaubertSergeev:2008}. Likewise, the Carath{\'e}odory theorem for tropical convexity has been discovered independently in different works~\cite{Helbig:1988,BriecHorvath:2004,DevelinSturmfels:2004}. The tropical analogue of the Minkowski--Weyl theorem was proven in \cite{GaubertKatz:2011}. In order to prove our main separation theorem, we rely on works on abstract convexity, which were already applied to the usual tropical convexity in \cite{Horvath:2017}. Furthermore, our proof requires to give a partial characterization of signed tropical hemispaces. A full characterization of tropical hemispaces in one orthant is given in \cite{BriecHorvath:2008,KatzNiticaSergeev:2014,EhrmannHigginsNitica:2016}.

The link between tropical convexity and convexity over Puiseux series was established in the work \cite{DevelinYu:2007} which studies the tropicalization of polyhedra. This was later generalized to spectrahedra in \cite{Yu:2015,AllamigeonGaubertSkomra:2020} and to convex semialgebraic sets in \cite{AllamigeonGaubertSkomra:2019}. The tropicalizations of general semialgebraic sets are studied in \cite{Alessandrini:2013} and this study is extended to signed tropicalizations in \cite{JellScheidererYu:2018} and \cite[Section~4]{AllamigeonGaubertSkomra:2020}.

Signed tropical numbers first appeared in the context of the symmetrized semiring~\cite{Plus:90}. 
The idea that tropical convexity could be extended to signed tropical numbers appeared already in \cite{BriecHorvath:2004,Briec:2015}, where two such extensions (different than the ones considered here) are introduced. Our work is most closely related to the paper \cite{LohoVegh:2020} which introduces and studies TO-convexity.

Inspired by our work, the structure of TC-convexity was identified in the study of curves \cite{LeTexier:2021}. 
Furthermore, there has been progress on extending signed tropical convexity to convexity over more general hyperfields \cite{MaxwellSmith:2023}.

Our representation of linear spaces associated with oriented (valuated) matroids in terms of TC-convexity extends the real Bergman fan introduced in \cite{Celaya:2019}. 
It draws heavily from recent developments for matroids over hyperfields \cite{Anderson:2019,BowlerPendavingh-arxiv:2023,BakerBowler:2019}. 

\section{Preliminaries on signed tropical numbers}

We give a brief overview of necessary notions related to signed numbers; for more see~\cite{AkianGaubertGuterman:2014,Plus:90,LohoVegh:2020}.   
The signed tropical numbers $\TTpm$ are obtained by glueing two copies of $\left(\RR \cup \{-\infty\}\right)$ at the tropical zero element $\Zero = -\infty$ giving rise to the \emph{nonnegative tropical numbers} $\Tgeq = \RR \cup \{\Zero\}$ and the \emph{non-positive tropical numbers} $\Tleq = \SetOf{\ominus x}{x \in \RR} \cup \{\Zero\}$.

The signed numbers can be extended to the \emph{symmetrized semiring} $\TSS$, which forms a semiring containing $\TTpm$, by attaching \emph{balanced numbers} $\Tzero = \SetOf{\bullet x}{x \in \RR} \cup \{\Zero\}$. 
We will often use the norm $|\,.\,|$ on $\TTpm$ which maps each element of $\Tgeq$ to itself, removes the $\ominus$ sign of an element in $\Tleq$ and the $\bullet$ from an element in $\Tzero$. 
This is complemented by the map $\tsgn$ from $\TSS$ to $\{\oplus,\ominus,\bullet,\Zero\}$ keeping only the sign information.

Throughout, we use the notation $[d] = \{1,2,\dots,d\}$ and $[d]_0 = [d] \cup \{0\}$. 
For a vector $z \in \TTpm^d$ we denote its support, positive support and negative support, respectively, by
\begin{equation} \label{eq:support-notation}
\begin{aligned}
 \supp(z) &= \SetOf{i \in [d]}{z_i \neq \Zero} \\
 \psupp(z) &= \SetOf{i \in [d]}{z_i \in \Tgt} \\
 \nsupp(z) &= \SetOf{i \in [d]}{z_i \neq \Tlt} \enspace .
\end{aligned}
\end{equation}

For $x,y \in \TSS$, the addition is defined by
\begin{equation} \label{eq:balanced+addition}
  x \oplus y = \begin{cases}
    \argmax_{x,y}(|x|,|y|) & \text{ if } |\chi|=1 \\
    \bullet \argmax_{x,y}(|x|,|y|) & \text{ else } \enspace .
  \end{cases}
\end{equation}
where $\chi = \SetOf{\tsgn(\xi)}{\xi \in (\argmax(|x|,|y|))}$.
The multiplication is given by
\begin{equation}
  x \odot y = \left(\tsgn(x) * \tsgn(y)\right) (|x| + |y|) \enspace ,
\end{equation}
where the $*$-multiplication table is the usual multiplication of $\{-1,1,0\}$ for $\{\ominus,\oplus,\bullet\}$ with the additional specialty that multiplication with $\Zero$ yields $\Zero$. 
The operation $\oplus$ extends to vectors and matrices componentwise, while $\odot$ extends in analogy to matrix multiplication.  

We use $\oplus$, $\ominus$ and $\bullet$ also as unary operations on $\TSS$ with $\ominus \ominus = \oplus$, $\bullet \ominus = \bullet \bullet = \ominus \bullet = \bullet$ and where $\oplus$ just acts as identity.
The fact that $\TSS$ with these operations forms a commutative semiring justifies the name introduced above.
We also point out that balanced numbers in the symmetrized semiring are equivalent to the use of a multivalued addition as in the theory of hyperfields~\cite{ConnesConsani:2011,BakerBowler:2019}. More precisely, the addition in the symmetrized semiring is equivalent to the multivalued addition in the \emph{real tropical hyperfield} discussed in~\cite{JellScheidererYu:2018,Viro:2010}.

\begin{example}
  One has $2 \odot \ominus 1 = \ominus 3$, $(0 \oplus \ominus 0) \odot \ominus -1 = \bullet 0 \odot \ominus -1 = \bullet -1$, $-1 \odot -1 = -2$, $\ominus 1 \odot \ominus 1 = 2$. 
\end{example}

\bigskip

We recall some relations which serve to order the symmetrized semiring $\TSS$. 
For $x,y \in \TSS$ we set 
\begin{equation} \label{eq:extended+partial+orders}
  \begin{aligned}
    x > y \quad &\Leftrightarrow \quad x \ominus y \in \Tgt  \\
    x \teq y \quad &\Leftrightarrow \quad x \ominus y \in \Tgt \cup \Tzero \\
    x \geq y \quad &\Leftrightarrow \quad x > y \vee x = y \\  
  \end{aligned}
\end{equation}

For $a \in \TSS$, we set 
\begin{equation} \label{eq:incomparable+elements}
\Uncomp(a) = \begin{cases}
\left[\ominus |a|, |a|\right] = \{ x \in \TTpm \mid \ominus |a| \leq x \leq |a| \} & \text{ for } a \in \Tzero \\
\{a\} & \text{ else }
\end{cases}
\enspace .
\end{equation}
 We extend this to vectors by setting $\Uncomp(v) = \prod_{i \in [d]} \Uncomp(v_i)$.

\begin{example}
 One has $2 > \ominus 3$
 but $\bullet 4$ and $\ominus 3$ are incomparable via `$>$' since $\bullet 4 \ominus \ominus 3 = \bullet 4 \oplus 3 = \bullet 4 = \ominus 4 \bullet 4$.
 Though it holds $\bullet 4 \teq \ominus 3$ and $\ominus 3 \teq \bullet 4$ and $3 \teq \bullet 4$ showing, e.g., that `$\teq$' is not anti-symmetric. 
\end{example}

\bigskip

The signed tropical numbers are equipped with the order topology induced by the strict order $<$.
With this, $\TTpm$ is homeomorphic to $\RR$ with the usual order topology via
\[
\slog \colon \RR \to \TTpm, \quad \slog(x) =
\begin{cases}
\log(x) & \text{ for } x > 0 \\
\ominus\log(|x|) & \text{ for } x < 0 \\
\Zero & \text{ for } x = 0 
\end{cases}
\enspace .
\]
This extends to $\TTpm^d$ via the product topology, in particular $\TTpm^d$ is homeomorphic to $\RR^d$. 
Hence, we can use all topological properties of $\RR^d$ also for vectors of signed tropical numbers. 

Recall that a partial order on some set $S$ is \emph{dense} if for every two elements $x,y \in S$ such that $x < y$ there exists $z \in S$ that satisfies $x < z < y$.
With this, the order $<$ on $\TTpm$ is dense.

\subsection{Halfspaces}

The different versions of halfspaces form the building blocks for our convexity notions. 

\begin{figure}[tbh]
\includegraphics{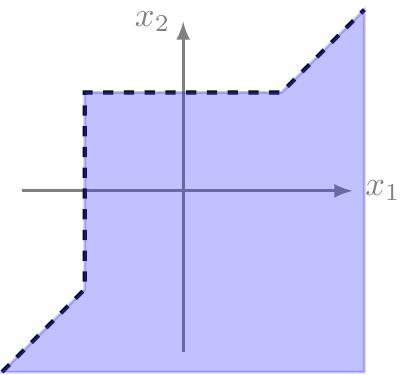} \qquad
\includegraphics{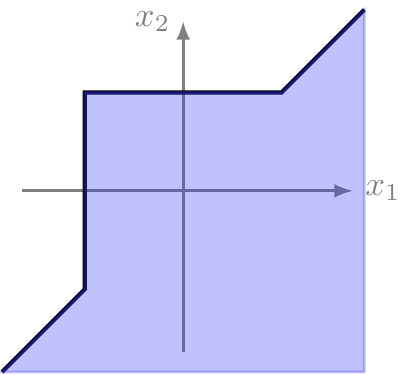}
\label{fig:halfspaces}
\caption{An open and a closed signed tropical halfspace. }
\end{figure}

\begin{definition}
  For a vector $(a_0,a_1,\dots,a_d) \in \TTpm^{d+1}$ such that $(a_1,\dots,a_d) \neq \Zero$ we define the \emph{signed (affine) tropical hyperplane} by
  \begin{equation}
  \HC(a) = \SetOf{x \in \TTpm^d}{a \odot \begin{pmatrix} 0 \\ x \end{pmatrix} \in \Tzero} \enspace ,
\end{equation}
  the \emph{open signed (affine) tropical halfspace} by
\begin{equation}
  \HC^+(a) = \SetOf{x \in \TTpm^d}{a \odot \begin{pmatrix} 0 \\ x \end{pmatrix} \in \Tgt} \enspace ,
\end{equation}
the \emph{closed signed (affine) tropical halfspace} by
\begin{equation}
  \Hclosed^+(a) = \SetOf{x \in \TTpm^d}{a \odot \begin{pmatrix} 0 \\ x \end{pmatrix} \in \Tgt \cup \Tzero} \enspace ,
\end{equation}
and the \emph{semi-closed signed (affine) tropical halfspace} by
\begin{equation}
  \Hsemi^+(a) = \SetOf{x \in \TTpm^d}{a \odot \begin{pmatrix} 0 \\ x \end{pmatrix} \in \Tgeq} \enspace .
\end{equation}
If $a_0 = \Zero$, we call a tropical halfspaces \emph{linear} instead of affine. 

We denote $\HC^-(a) = \HC^+(\ominus a)$ and $\Hclosed^-(a) = \Hclosed^+(\ominus a)$.
Furthermore, given $\subs \subseteq [d]$, we say that a tropical halfspace $\Hclosed^+(a)$ is \emph{of type $\subs$} if $\subs = \SetOf{i \ge 1}{a_{i} \in \Tgt}$.
\end{definition}

To express $\Hclosed^+(a)$ using a more classical notation, we recall that $\max \emptyset = -\infty$.
Then, we have $x \in \Hclosed^+(a)$ if and only if
\begin{equation}\label{eq:hspace}
\max\bigl( a_{0}^{+}, \max_{\tsgn(x_i) = \tsgn(a_i)}(|a_i| + |x_i|) \bigr) \ge \max\bigl( a_{0}^{-}, \max_{\tsgn(x_i) \neq \tsgn(a_i)}(|a_i| + |x_i|) \bigr) \, ,
\end{equation}
where $(a_{0}^{+}, a_{0}^{-}) = (a_0, -\infty)$ for $a_0 \in \Tgeq$ and $(a_{0}^{+}, a_{0}^{-}) = (-\infty, |a_0|)$ otherwise.
We have $x \in \HC^+(a)$ if and only if the inequality in \cref{eq:hspace} is strict.
The following lemma gathers the basic topological properties of tropical halfspaces.

\begin{lemma}\label{le:hspace_topo}
The set $\Hclosed^+(a)$ is closed, its interior is equal to $\HC^+(a)$, and the closure of $\HC^+(a)$ is equal to $\Hclosed^+(a)$.
\end{lemma}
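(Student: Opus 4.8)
The plan is to work with the concrete description of $\Hclosed^+(a)$ given in \cref{eq:hspace} and to use the homeomorphism $\slog\colon\RR^d\to\TTpm^d$ from the preliminaries, which lets me transport all topological questions to $\RR^d$. Write $f(x) = \max\bigl(a_0^+, \max_{\tsgn(x_i)=\tsgn(a_i)}(|a_i|+|x_i|)\bigr)$ and $g(x) = \max\bigl(a_0^-, \max_{\tsgn(x_i)\neq\tsgn(a_i)}(|a_i|+|x_i|)\bigr)$, so that $\Hclosed^+(a) = \{f \ge g\}$, $\HC^+(a) = \{f > g\}$, and $\HC(a) = \{f = g\}$. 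The first step is to observe that on each of the (finitely many) closed orthants of $\TTpm^d$ — determined by the choice of $\tsgn(x_i)\in\{\oplus,\ominus\}$ for $i\in\supp(a)$, plus the behaviour on coordinates $i\notin\supp(a)$, which do not affect $f$ or $g$ — both $f$ and $g$ are continuous (indeed, after applying $\slog$ they become maxima of finitely many continuous functions of the form $|a_i| + |x_i|$ or constants, hence continuous), and moreover on such an orthant the partition of $\supp(a)$ into $\{\tsgn(x_i)=\tsgn(a_i)\}$ and $\{\tsgn(x_i)\neq\tsgn(a_i)\}$ is fixed (up to coordinates where $x_i = \Zero$, which contribute $-\infty$ to both sides and can be absorbed). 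Since $\TTpm^d$ is covered by finitely many such closed orthants whose union is everything, $f$ and $g$ are continuous on all of $\TTpm^d\cong\RR^d$.

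Given continuity of $f$ and $g$, closedness of $\Hclosed^+(a) = \{f-g\ge 0\}$ and openness of $\HC^+(a)=\{f-g>0\}$ are immediate (preimages of a closed, resp.\ open, ray under a continuous function), so $\HC^+(a)\subseteq\inter\Hclosed^+(a)$ and $\cl\HC^+(a)\subseteq\Hclosed^+(a)$ follow formally. It remains to prove the two reverse inclusions, and this is where the real content lies. For $\inter\Hclosed^+(a)\subseteq\HC^+(a)$: take $x$ with $f(x)=g(x)$; I must produce points arbitrarily close to $x$ with $f<g$, i.e.\ $x\notin\inter\Hclosed^+(a)$. The idea is to perturb a coordinate $i$ achieving the max on the $g$-side — i.e.\ with $\tsgn(x_i)\neq\tsgn(a_i)$ and $|a_i|+|x_i| = g(x) \ge a_0^-$ — by increasing $|x_i|$ slightly (keeping the sign, staying in the same orthant), which strictly increases $g$ while leaving $f$ unchanged (coordinate $i$ does not appear in the $f$-side). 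One has to handle the degenerate subcase where the $g$-side max is attained only by the constant $a_0^-$ and no coordinate contributes: then perturb some coordinate $i$ with $\tsgn(x_i)\neq\tsgn(a_i)$ into having large enough $|x_i|$; if $x_i=\Zero$ one first moves it off $\Zero$ on the side opposite to $a_i$. For $\Hclosed^+(a)\subseteq\cl\HC^+(a)$: take $x$ with $f(x)\ge g(x)$; if $f(x)>g(x)$ there is nothing to do, so assume $f(x)=g(x)$, and now perform the mirror-image perturbation, increasing (the absolute value of) a coordinate on the $f$-side, or, if the $f$-side is realized only by $a_0^+$, moving a coordinate to agree in sign with $a_i$ and making it large; this yields nearby points with $f>g$.

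The main obstacle is the careful bookkeeping of the degenerate cases in these perturbation arguments: coordinates sitting at $x_i=\Zero$ (where a perturbation changes $\tsgn(x_i)$ and can move $i$ between the two sides of the inequality), coordinates not in $\supp(a)$, and the situation where one side of \cref{eq:hspace} is attained purely by the affine constant $a_0^\pm$ with the other side empty — recalling $\max\emptyset = -\infty$. In each such case one still has enough freedom, because the hypothesis $(a_1,\dots,a_d)\neq\Zero$ guarantees at least one coordinate $i\ge 1$ with $a_i\neq\Zero$, and by choosing the sign of $x_i$ one can place that coordinate on whichever side of the inequality one needs to perturb; making $|x_i|$ large then dominates the fixed constant on the other side. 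Organizing these cases cleanly — perhaps by first reducing to the full-support case via a preliminary perturbation off $\Zero$ — is the bulk of the work; everything else is the soft topological argument above.
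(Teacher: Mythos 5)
Your approach --- proving that the two sides $f,g$ of \cref{eq:hspace} are globally continuous, so that $\{f\ge g\}$ is closed and $\{f>g\}$ is open, and then establishing the remaining two inclusions by explicit small perturbations --- is genuinely different from the paper's, which simply cites \cite[Lemma~5.2]{LohoVegh:2020} for $\cl(\HC^+(a)) = \Hclosed^+(a)$ and then obtains the interior statement by a short argument with the complementary halfspace $\Hclosed^-(a)$. Your route is more self-contained, and the continuity argument via the finite closed cover by orthants is fine, but as written there is a gap precisely in the degenerate case you flag.

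To show a boundary point $x$ (with $f(x)=g(x)$) is not interior, you must produce points \emph{arbitrarily close} to $x$ with $f<g$. Your perturbation increases $|x_i|$ for a $g$-side maximizer, which works whenever such a maximizer exists. But in the degenerate case where $g(x)=a_0^->\Zero$ is attained only by the constant, you propose to make $|x_i|$ ``large enough to dominate $a_0^-$''. That is not a small perturbation, so it does not contradict $x\in\inter(\Hclosed^+(a))$; locally $g$ is pinned at the constant $a_0^-$ and cannot be raised by a small move. The repair is to \emph{decrease} $f$ instead: $a_0^->\Zero$ forces $a_0<\Zero$, hence $a_0^+=\Zero$, so the value $f(x)=a_0^->\Zero$ must be attained by some $f$-side coordinates $j$ (with $\tsgn(x_j)=\tsgn(a_j)$), and shrinking $|x_j|$ slightly for each such maximizer (finitely many, without changing sign) strictly lowers $f$ while leaving $g$ at $a_0^-$. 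The remaining sub-case $a_0^-=\Zero$ forces $f(x)=g(x)=\Zero$, i.e.\ $x_i=\Zero$ for every $i\in\supp(a)$ and $a_0=\Zero$; there, moving any one such $x_i$ slightly to the sign opposite $a_i$ already gives $g(x')>\Zero=f(x')$. The mirror-image degenerate case for $\Hclosed^+(a)\subseteq\cl(\HC^+(a))$ has the same flaw (``making it large'') and the same repair with the roles of $f$ and $g$ exchanged. Once patched, your argument is valid and, unlike the paper's, does not outsource the key step to~\cite{LohoVegh:2020}.
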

\begin{proof}
  \Cref{eq:hspace} shows that the restriction of $\Hclosed^+(a)$ to any closed orthant of $\TTpm^d$ is closed.
  Therefore, $\Hclosed^+(a)$ is closed and $\HC^+(a) = \TTpm^d \setminus \Hclosed^-(a)$ is open.
  Also, \cite[Lemma~5.2]{LohoVegh:2020} shows that $\Hclosed^+(a)$ is the closure of $\HC^+(a)$.
  In particular, $\HC^+(a)$ is included in the interior of $\Hclosed^+(a)$.
  To finish, we note that the interior of $\Hclosed^+(a)$ cannot contain any point that belongs to $\HC(a) \subseteq \Hclosed^-(a)$ because $\Hclosed^-(a)$ is the closure of $\HC^-(a)$ and $\Hclosed^+(a) \cap \HC^-(a) = \emptyset$.
\end{proof}

\subsection{Basic calculations in $\TSS$}\label{sec:basic_calculation}

We collect and extend basic properties from \cite{LohoVegh:2020}. 

\begin{lemma}[{\cite[Lemma~2.6]{LohoVegh:2020}}] \label{lem:basic+properties+teq}
  Let $a,b,c,d \in \TSS$.
  \begin{enumerate}[(a)]
  \item \label{item:teq+other+side} $a \oplus c \teq b \Leftrightarrow a \teq b \ominus c$
  \item \label{item:teq+summing} $a \teq b \quad \wedge \quad c \teq d \quad \Rightarrow \quad a \oplus c \teq b \oplus d$.
  \item \label{item:restricted+signed+transitivity} If $c \in \TTpm$, then $b \teq c$ and $c \teq a$ imply $b \teq a$.
  \item \label{item:teq+affine+monotony} $a \teq b$ implies $c \odot a \oplus d \teq c \odot b \oplus d$ for $c \in \Tgeq$ and $c \odot b \oplus d \teq c \odot a \oplus d$ for $c \in \Tleq$.
  \end{enumerate}
\end{lemma}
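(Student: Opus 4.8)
The plan is to isolate a single elementary fact about signs and then deduce all four items from it together with the semiring axioms of $\TSS$. Set $P := \Tgt \cup \Tzero = \{\, x \in \TSS : \tsgn(x) \neq \ominus \,\}$, so that by definition $x \teq y$ is exactly the assertion $x \ominus y \in P$; note also $\ominus P = \{\, x : \tsgn(x) \neq \oplus \,\}$, and since $\ominus y \ominus \ominus x = x \ominus y$ we have $x \teq y \Leftrightarrow \ominus y \teq \ominus x$. The fact I would record first, directly from \cref{eq:balanced+addition} and the multiplication table, is: the set $P$ is closed under $\oplus$, and $c \odot P \subseteq P$ for $c \in \Tgeq$ while $c \odot P \subseteq \ominus P$ for $c \in \Tleq$. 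Each of these is a one-line check, using that $\tsgn(x \oplus y)$ is always one of $\tsgn(x)$, $\tsgn(y)$, $\bullet$ — hence never $\ominus$ when neither summand has sign $\ominus$ — and that $\tsgn(c \odot x) = \tsgn(c) * \tsgn(x)$.

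Items \eqref{item:teq+other+side}, \eqref{item:teq+summing} and \eqref{item:teq+affine+monotony} are then formal consequences. For \eqref{item:teq+other+side}, commutativity and associativity of $\oplus$ together with distributivity of the unary $\ominus$ over $\oplus$ give $(a \oplus c) \ominus b = a \ominus (b \ominus c)$, so the two sides express the same membership in $P$. For \eqref{item:teq+summing}, from $a \ominus b \in P$ and $c \ominus d \in P$ and closure of $P$ under $\oplus$ we get $(a \ominus b) \oplus (c \ominus d) \in P$, and this element equals $(a \oplus c) \ominus (b \oplus d)$. For \eqref{item:teq+affine+monotony} with $c \in \Tgeq$: since $c \odot a \ominus c \odot b = c \odot (a \ominus b) \in P$ we get $c \odot a \teq c \odot b$, and then \eqref{item:teq+summing} (applied with the trivially valid $d \teq d$) adds $d$ to both sides; for $c \in \Tleq$ one uses $c \odot P \subseteq \ominus P$ instead, which gives $c \odot b \teq c \odot a$ and hence, after adding $d$, the stated reversed inequality.

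The only item that genuinely needs the hypothesis $c \in \TTpm$ — and the main obstacle — is \eqref{item:restricted+signed+transitivity}, since $\teq$ is not transitive in general (for instance $0 \teq \bullet 1$ and $\bullet 1 \teq 1$ but $0 \not\teq 1$), so a purely formal argument is impossible and the balanced value of $c$ really must be excluded. Using $x \teq y \Leftrightarrow \ominus y \teq \ominus x$ I would reduce to $c \in \Tgeq$. If $c = \Zero$, then $b \teq \Zero$ and $\Zero \teq a$ give $b = b \oplus \Zero \teq \Zero \oplus a = a$ by \eqref{item:teq+summing}. If $c \in \Tgt$, I would read off from \cref{eq:balanced+addition}, by comparing magnitudes in the sums defining $\ominus$, the necessary conditions $b \in P$ and $|b| \ge |c|$ (from $b \teq c$), and $a \notin \Tgt$ or $|a| \le |c|$ (from $c \teq a$). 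Then I analyse $b \ominus a = b \oplus \ominus a$ in two cases. If $\tsgn(a) \neq \oplus$, then $\ominus a \in P$ and $b \in P$, so $b \oplus \ominus a \in P$ by closure of $P$ under $\oplus$. If $\tsgn(a) = \oplus$, then $a \in \Tgt$, so $|a| \le |c| \le |b|$; hence $b$ dominates or ties with $\ominus a$ in the sum, and since $\tsgn(b) \neq \ominus$ the result again lies in $P$. In all cases $b \ominus a \in P$, i.e., $b \teq a$. This case distinction on signs and magnitudes is the only place where some care is required; everything else reduces to the one sign lemma and the semiring identities.
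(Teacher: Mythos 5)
This lemma is imported by citation from~\cite[Lemma~2.6]{LohoVegh:2020}, so the paper at hand provides no proof to compare against; I can only assess your argument on its own terms, and it is correct. Your organizing device --- setting $P=\Tgt\cup\Tzero=\{x:\tsgn(x)\neq\ominus\}$, so that $x\teq y\Leftrightarrow x\ominus y\in P$, and then recording once that $P$ is closed under $\oplus$, that $c\odot P\subseteq P$ for $c\in\Tgeq$, and that $c\odot P\subseteq\ominus P$ for $c\in\Tleq$ --- is exactly the right abstraction: parts (a), (b), and (d) then fall out of the semiring identities and the equality $\ominus y\ominus\ominus x=x\ominus y$, with no case analysis at all. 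For (c) your reduction to $c\in\Tgeq$ via the antitone symmetry is legitimate, the extraction from $b\teq c$ of ``$b\in P$ and $|b|\ge|c|$'' and from $c\teq a$ of ``$a\notin\Tgt$ or $|a|\le|c|$'' is accurate, and the two-case finish (either $\ominus a\in P$ and closure applies, or $a\in\Tgt$ forces $|b|\ge|a|$ so that $b$ dominates or balances $\ominus a$ in $b\ominus a$) correctly uses the fact that when $|b|=|a|$ the summands have differing signs and hence produce a balanced element of $P$. The counterexample $0\teq\bullet 1\teq 1$ but $0\not\teq 1$ is also right and usefully pins down where the hypothesis $c\in\TTpm$ bites. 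No gaps.
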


\begin{lemma} \label{lem:stronger+cancellation+teq}
  Let $a,b,c \in \TSS$. Then,
  \begin{equation*}
    a \ominus c \teq \Zero \wedge b \oplus c \teq \Zero \Rightarrow a \oplus b \oplus c \teq \Zero \wedge a \oplus b \ominus c \teq \Zero .
  \end{equation*}
\end{lemma}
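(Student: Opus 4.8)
The plan is to prove the first assertion $a\oplus b\oplus c\teq\Zero$ and then obtain the second one for free by symmetry. Indeed, specializing the first assertion to the triple $(b,a,\ominus c)$ in place of $(a,b,c)$ and using $\ominus\ominus c=c$ together with the commutativity of $\oplus$, its hypotheses become $b\oplus c\teq\Zero$ and $a\ominus c\teq\Zero$ (exactly our two hypotheses), and its conclusion becomes $a\oplus b\ominus c\teq\Zero$. So it is enough to show that $a\ominus c\teq\Zero\wedge b\oplus c\teq\Zero$ implies $a\oplus b\oplus c\teq\Zero$.

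For that I would argue by contradiction, using only the elementary description of $\oplus$ coming from \cref{eq:balanced+addition}: for $x,y\in\TSS$ one has $|x\oplus y|=\max(|x|,|y|)$, so no cancellation of magnitude ever occurs, and more generally $|x_1\oplus\dots\oplus x_n|=\max_i|x_i|$ with the value lying in $\Tlt$ only if every summand of maximal norm lies in $\Tlt$ (the sign of an iterated sum is governed solely by its norm-maximal terms). Also recall that $z\teq\Zero$ holds iff $z\notin\Tlt$: this is \cref{eq:extended+partial+orders} with $y=\Zero$, since $z\ominus\Zero=z$ and $\TSS=\Tgt\sqcup\Tlt\sqcup\Tzero$. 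Assume now, for contradiction, that $a\oplus b\oplus c\in\Tlt$, and set $m:=\max(|a|,|b|,|c|)=|a\oplus b\oplus c|\in\RR$; by the facts above, every element of $\{a,b,c\}$ of norm $m$ then belongs to $\Tlt$. The only delicate point of the whole argument is precisely to have this magnitude-and-sign bookkeeping for $\oplus$ cleanly in hand; once it is set up, each of the cases below is a one-line computation.

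It remains to split on whether $c$ attains the maximal norm. If $|c|=m$, then $c\in\Tlt$ and $|b|\le|c|$: if $|b|<|c|$ then $b\oplus c=c\in\Tlt$, while if $|b|=|c|$ then $b$ is norm-maximal, hence $b\in\Tlt$ as well, and so $b\oplus c=\ominus m\in\Tlt$; either way $b\oplus c\not\teq\Zero$, contradicting the hypothesis. If $|c|<m$, then $\max(|a|,|b|)=m$: if $|b|=m$ then $b\in\Tlt$ and, since $|b|>|c|$, we get $b\oplus c=b\in\Tlt$, again contradicting $b\oplus c\teq\Zero$; and if $|b|<m$ then necessarily $|a|=m$, so $a\in\Tlt$ and, since $|a|>|c|$, we get $a\ominus c=a\oplus(\ominus c)=a\in\Tlt$, contradicting $a\ominus c\teq\Zero$. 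All cases being impossible, $a\oplus b\oplus c\teq\Zero$, and the second assertion follows by the symmetry noted above. (Alternatively, when $c\in\Tzero\cup\{\Zero\}$ one has $\ominus c=c$, and then both conclusions follow at once from item~(b) of \cref{lem:basic+properties+teq}; but the contradiction argument handles all signs of $c$ uniformly.)
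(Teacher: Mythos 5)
Your proof is correct but takes a genuinely different route from the paper's. The paper proves the lemma directly by splitting on whether $c$ is balanced: for $c \in \Tzero$ it adds the two hypotheses side-wise via \cref{lem:basic+properties+teq}(\ref{item:teq+summing}) and uses $c = \ominus c = \bullet c$; for $c \in \TTpm$ it normalizes to $c \geq \Zero$, derives $|a \oplus b| \geq |c|$, and then checks that $a \oplus b \teq c$, from which both conclusions follow. You instead (i) observe the symmetry $(a,b,c) \mapsto (b,a,\ominus c)$, which swaps the two hypotheses and swaps the two conclusions, reducing the problem to proving just $a \oplus b \oplus c \teq \Zero$; and (ii) prove that single claim by contradiction, using only the magnitude-and-sign description of $\oplus$ (no cancellation of norm, and an iterated sum lands in $\Tlt$ only if every norm-maximal summand does) together with $z \teq \Zero \Leftrightarrow z \notin \Tlt$. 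Your approach has the advantage of being more self-contained and uniform: it bypasses the auxiliary calculus of \cref{lem:basic+properties+teq} and handles the balanced and unbalanced cases for $c$ in one sweep. The symmetry reduction is also a genuine simplification that the paper does not make explicit. The paper's approach, on the other hand, exercises the $\teq$-calculus that is reused elsewhere in the article and arrives at a slightly stronger intermediate fact ($a \oplus b \teq c$) in the unbalanced case. Both proofs are valid; yours is the more elementary of the two.
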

\begin{proof}
  We distinguish two cases.

  \textbf{Case 1} ($c \in \Tzero$) Adding the equivalent relations $a \teq c$ and $b \teq \ominus c$ side-wise yields $a \oplus b \teq \bullet c$.
  This is equivalent to $a \oplus b \bullet c \teq \Zero$ and shows the required as in this case $\bullet c = \ominus c = c$.

  \textbf{Case 2} ($c \in \TTpm$) Without loss of generality, we can assume that $c \geq \Zero$. With $a \teq c$ this implies $|a| \geq |c|$.
  Combining $a \teq c$ and $c \teq \ominus b$ yields $a \oplus b \teq \Zero$ in this case.
  With
  \begin{equation*}
    |a \oplus b| = |a| \oplus |b| \geq |a| \geq |c| 
  \end{equation*}
  we get $a \oplus b \teq c \geq \ominus c$ by checking the two possibilities $a \oplus b \in \TTpm$ or $a \oplus b \in \Tzero$. 
\end{proof}

\begin{lemma} \label{lem:iterated-sums-uncomp}
  Let $u,v,w \in \TTpm$, $q \in \Uncomp(v \oplus w)$, $p \in \Uncomp(u \oplus v)$. 
  \begin{enumerate}[(a)]
  \item $\Uncomp(u \oplus q) \subseteq \Uncomp(u \oplus v \oplus w)$ and $\Uncomp(p \oplus w) \subseteq \Uncomp(u \oplus v \oplus w)$,
  \item $\Uncomp(u \oplus q) \cap \Uncomp(p \oplus w) \neq \emptyset$. 
  \end{enumerate}
\end{lemma}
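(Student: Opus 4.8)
The plan is to reduce everything to explicit computations with the norm and sign, exploiting the fact that $\Uncomp(a)$ is a single point $\{a\}$ when $a\in\TTpm$ and a full symmetric interval $[\ominus|a|,|a|]$ when $a\in\Tzero$. The key structural observation I would isolate first is this: for $x,y\in\TTpm$, the sum $x\oplus y$ lies in $\Tzero$ precisely when $|x|=|y|$ and $\tsgn(x)=\ominus\tsgn(y)$ (both nonzero), i.e.\ $x=\ominus y\neq\Zero$; otherwise $x\oplus y\in\TTpm$ with $|x\oplus y|=\max(|x|,|y|)$ and $\tsgn(x\oplus y)$ equal to the sign of whichever has strictly larger norm (or their common sign if equal). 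I would also record the description of $\Uncomp(u\oplus v\oplus w)$ for $u,v,w\in\TTpm$: writing $M=\max(|u|,|v|,|w|)$, the triple sum is balanced iff the maximum $M$ is attained with both signs among $\{u,v,w\}$ (equivalently, there exist $i,j$ with $|u_i|=|u_j|=M$ and opposite signs), in which case $\Uncomp=[\ominus M,M]$; otherwise it equals the single point with norm $M$ and the sign of the unique (up to sign-agreement) maximizer. So $\Uncomp(u\oplus v\oplus w)$ is either a point or the interval $[\ominus M,M]$.

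For part (a), consider $q\in\Uncomp(v\oplus w)$; I want $\Uncomp(u\oplus q)\subseteq\Uncomp(u\oplus v\oplus w)$. Split on whether $v\oplus w\in\TTpm$ or $v\oplus w\in\Tzero$. If $v\oplus w\in\TTpm$ then $q=v\oplus w$ and $u\oplus q=u\oplus v\oplus w$ exactly, so the inclusion is an equality — nothing to prove beyond associativity of $\oplus$ on $\TTpm$ (which follows from the semiring structure of $\TSS$, already asserted in the text). If $v\oplus w\in\Tzero$, then $|v|=|w|=:m$ with opposite signs, so $M=\max(|u|,m)$ and $u\oplus v\oplus w$ is balanced with $|u\oplus v\oplus w|=M$, hence $\Uncomp(u\oplus v\oplus w)=[\ominus M,M]$. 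On the other side, $q\in[\ominus m,m]$, so $|q|\le m\le M$, hence $|u\oplus q|=\max(|u|,|q|)\le\max(|u|,m)=M$, and $\Uncomp(u\oplus q)\subseteq[\ominus|u\oplus q|,|u\oplus q|]\subseteq[\ominus M,M]$. (When $u\oplus q\in\TTpm$, $\Uncomp(u\oplus q)$ is the single point $u\oplus q$, whose norm is still $\le M$; when it is balanced the interval bound applies directly.) The statement for $\Uncomp(p\oplus w)$ is symmetric, exchanging the roles $u\leftrightarrow w$ and $v\oplus w\leftrightarrow u\oplus v$.

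For part (b), I need a common element of $\Uncomp(u\oplus q)$ and $\Uncomp(p\oplus w)$. The clean approach: show that $\Zero$ lies in at least one of them, or more robustly, exhibit an explicit witness. I would split into the three cases according to which of $u\oplus v$, $v\oplus w$ are balanced. If neither is balanced, then $q=v\oplus w$ and $p=u\oplus v$, and both $\Uncomp(u\oplus q)$ and $\Uncomp(p\oplus w)$ contain the (possibly balanced, possibly signed) element $u\oplus v\oplus w$ — if $u\oplus v\oplus w\in\TTpm$ it is literally the unique element of both, by associativity; if it is balanced, both sets equal $[\ominus M,M]$ by the part-(a) reasoning, so they coincide. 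If, say, $v\oplus w$ is balanced (so $|v|=|w|=m$, opposite signs), then as computed in (a) we have $\Uncomp(u\oplus v\oplus w)=[\ominus M,M]$ with $M=\max(|u|,m)$, and part (a) gives $\Uncomp(u\oplus q)\subseteq[\ominus M,M]$ and $\Uncomp(p\oplus w)\subseteq[\ominus M,M]$. To force a nonempty intersection I would show each contains $\Zero$: since $|q|\le m$, we get $|u\oplus q|=\max(|u|,|q|)$; if $u\oplus q$ is balanced then $\Zero\in\Uncomp(u\oplus q)$ and we are done, and $u\oplus q$ is balanced precisely when $|u|=|q|$ with opposite signs — but I can always \emph{choose} such a $q$ only if $|u|\le m$. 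So instead I argue directly: if $|u|<m$ then $u\oplus q$ for $q$ ranging over $[\ominus m,m]$ covers a balanced value (take $q$ with $|q|\ge|u|$), hence $\Uncomp(u\oplus q)$ for a \emph{generic} choice... — the cleaner route is: both supersets are the fixed interval $[\ominus M,M]$, but the actual sets depend on $q,p$; the honest claim is that $u\oplus q$ and $p\oplus w$ always share a value, which I verify by noting $p\in\Uncomp(u\oplus v)$ and the symmetric reductions let me take $p$ and $q$ as the \emph{balanced} representatives whenever available, making $\Zero$ a common member. The main obstacle — and where I expect to spend real care — is exactly this bookkeeping in part (b): making sure that whichever of $u\oplus v$, $v\oplus w$ is balanced, the chosen representatives $p$, $q$ can be taken so that $\Zero$ (or a common signed value of norm $\le M$) sits in both $\Uncomp(u\oplus q)$ and $\Uncomp(p\oplus w)$; the cases where $|u|$ or $|w|$ dominates $m$ force the common element to be the signed value $u\oplus v\oplus w$ rather than $\Zero$, so the witness is not uniform and the case analysis cannot be collapsed. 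Everything else is routine manipulation of $\max$ and signs.
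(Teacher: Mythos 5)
Part (a) of your plan has a real error: you claim that when $v\oplus w\in\Tzero$ with $|v|=|w|=m$, the triple sum $u\oplus v\oplus w$ is balanced with norm $M=\max(|u|,m)$, and hence $\Uncomp(u\oplus v\oplus w)=[\ominus M,M]$. This is false when $|u|>m$: there the triple sum equals $u$ (signed), so $\Uncomp(u\oplus v\oplus w)=\{u\}$, a singleton. Your norm bound $|u\oplus q|\le M$ then shows only that $\Uncomp(u\oplus q)$ sits inside a box $[\ominus M,M]$, not that it sits inside $\{u\}$. To close the gap you must observe that when $|u|>m$, automatically $|q|\le m<|u|$, so $u\oplus q=u$ and both sides are the singleton $\{u\}$. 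This is exactly why the paper's proof explicitly distinguishes $|u|>|v\oplus w|$ from $|u|\le|v\oplus w|$ rather than treating the triple sum as uniformly balanced.

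Part (b) is not finished, and the sketch you have would not converge if pushed further. The recurring confusion is that you try to \emph{choose} $p$ and $q$ (``take $p$ and $q$ as the balanced representatives whenever available''), but they are given arbitrary elements of $\Uncomp(u\oplus v)$ and $\Uncomp(v\oplus w)$; you have no freedom. The witness you are looking for is also not always $\Zero$, which you half-notice at the end but never resolve. The clean argument is the one you do not use: if at least one of $v\oplus w$, $u\oplus v$ is unbalanced — say $v\oplus w\in\TTpm$ — then $q$ is forced to equal $v\oplus w$, so $\Uncomp(u\oplus q)=\Uncomp(u\oplus v\oplus w)\supseteq\Uncomp(p\oplus w)$ by part (a), and the intersection equals $\Uncomp(p\oplus w)\neq\emptyset$. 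No explicit witness is needed. That disposes of two of your three cases, including the case that stalls you. The only remaining case is both sums balanced, which forces $u=\ominus v=w$; then $p\oplus w$ is either $w$ or $\bullet|w|$, and $u\oplus q$ is either $u$ or $\bullet|u|$, and since $u=w$ all four combinations have nonempty intersection (in fact each contains $u=w$). Your proposal never isolates this last case as the only genuinely new one, and the ``show $\Zero$ is in both'' strategy cannot handle the combinations where one side is a singleton $\{u\}$ with $u\neq\Zero$.
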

\begin{proof}
  For (a): 
  By commutativity, it suffices to prove the first inclusion.
  For $v \oplus w \in \TTpm$, we have $q = v \oplus w$ and therefore $\Uncomp(u \oplus q) = \Uncomp(u \oplus v \oplus w)$.
  Otherwise, the claim follows from the definition of $\Uncomp(.)$ using
  \begin{equation*}
    |u \oplus q| = |u| \oplus |q| \leq |u| \oplus |u \oplus w| = |u \oplus v \oplus w| 
  \end{equation*}
  by distinguising $|u| > |v \oplus w|$ and $|u| \leq |v \oplus w|$. 
  
  \smallskip

  For (b):
  
  \textbf{Case 1} ($\Uncomp(v \oplus w)$ or $\Uncomp(u \oplus v)$ is singleton.)
  Assume without loss of generality that $\Uncomp(v \oplus w)$ is a singleton.
  This implies $\Uncomp(u \oplus q) = \Uncomp(u \oplus v \oplus w) \supseteq \Uncomp(p \oplus w)$, where the inclusion follows from (a).
  
  \textbf{Case 2} ($v \oplus w$ and $u \oplus v$ are balanced.)
  Here, we have $u = \ominus v = w$.
  This means we have either $p = \ominus w$ and, hence, $\Uncomp(p \oplus w) = \Uncomp(u \oplus v \oplus w)$, or $p \oplus w = w = u$.
  The same applies to $q$ and $u$.
  Therefore, in each of the four combinations of the two possibilities, one has a nonempty intersection. 
\end{proof}

As $\Uncomp(.)$ is defined componentwise, we obtain a higher-dimensional extension of~\Cref{lem:iterated-sums-uncomp}(b). 

\begin{corollary} \label{cor:iterated-sums-uncomp-dim}
    For $u,v,w \in \TTpm^d$ and $q \in \Uncomp(v \oplus w)$, $p \in \Uncomp(u \oplus v)$, one gets $\Uncomp(u \oplus q) \cap \Uncomp(p \oplus w) \neq \emptyset$. 
\end{corollary}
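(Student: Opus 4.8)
The plan is to deduce \Cref{cor:iterated-sums-uncomp-dim} directly from the one-dimensional statement \Cref{lem:iterated-sums-uncomp}(b) by arguing coordinatewise. Recall that $\Uncomp(\,\cdot\,)$ on vectors is defined as the Cartesian product of the one-dimensional sets: $\Uncomp(z) = \prod_{i\in[d]}\Uncomp(z_i)$, and that $\oplus$ acts on vectors componentwise. So for each coordinate $i\in[d]$ we have $q_i \in \Uncomp(v_i\oplus w_i)$ and $p_i \in \Uncomp(u_i\oplus v_i)$, which are exactly the hypotheses of \Cref{lem:iterated-sums-uncomp}(b) applied to the scalars $u_i,v_i,w_i\in\TTpm$. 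Hence for each $i$ the set $\Uncomp(u_i\oplus q_i)\cap\Uncomp(p_i\oplus w_i)$ is nonempty; pick an element $r_i$ in it.

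First I would check that the scalar hypotheses really do transfer: this is immediate since membership $q\in\Uncomp(v\oplus w)$ for a vector is by definition the conjunction over coordinates of $q_i\in\Uncomp(v_i\oplus w_i)$, and similarly for $p$. Next I would form the vector $r = (r_1,\dots,r_d)\in\TTpm^d$ from the coordinatewise choices. Then I would verify $r \in \Uncomp(u\oplus q)$: since $(u\oplus q)_i = u_i\oplus q_i$ by componentwise addition, and $r_i\in\Uncomp(u_i\oplus q_i)$ for every $i$, the product description of $\Uncomp$ gives $r\in\prod_i\Uncomp((u\oplus q)_i) = \Uncomp(u\oplus q)$. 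The same argument with $p\oplus w$ in place of $u\oplus q$ shows $r\in\Uncomp(p\oplus w)$. Therefore $r$ witnesses $\Uncomp(u\oplus q)\cap\Uncomp(p\oplus w)\neq\emptyset$, as desired.

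There is essentially no obstacle here: the only thing to be careful about is that the set operations (intersection, product) interact correctly with the coordinatewise definitions, i.e. that $\Uncomp$ of a vector genuinely decomposes as a product and that $\oplus$ genuinely acts entrywise — both of which are stated explicitly in the preliminaries. The mild subtlety worth a sentence in the write-up is that the choices $r_i$ are made independently per coordinate and then reassembled, which is legitimate precisely because the constraints defining $\Uncomp(u\oplus q)$ and $\Uncomp(p\oplus w)$ are products of per-coordinate constraints with no coupling across coordinates. This is exactly the sense in which the corollary is, as the text says, just a "higher-dimensional extension" obtained "as $\Uncomp(.)$ is defined componentwise."
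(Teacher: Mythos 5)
Your proof is correct and takes exactly the route the paper intends: the paper's "proof" is the one-line remark that the corollary follows componentwise from \cref{lem:iterated-sums-uncomp}(b) because $\Uncomp(\cdot)$ on vectors is a product of per-coordinate sets, and you simply spell out those details. Nothing to add.
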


\begin{lemma}[{\cite[Lemma~3.5(b)]{LohoVegh:2020}}]
  If $a \in \Uncomp(x)$, $b\in \Uncomp(y)$, and $c\in \TTpm$, then
$\Uncomp(c\odot a\oplus b)\subseteq \Uncomp(c\odot x\oplus y)$.
\end{lemma}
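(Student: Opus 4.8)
The plan is to first reduce to the case $c = \oplus 0$, and then argue by how the sign-types of $x$ and $y$ interact under $\oplus$.

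\emph{Reduction.} First I would note that multiplication by a fixed $c \in \TTpm$ is compatible with $\Uncomp(\,\cdot\,)$, i.e. $a \in \Uncomp(x)$ implies $c \odot a \in \Uncomp(c \odot x)$: if $c = \Zero$ both sides equal $\{\Zero\}$; if $x$ is non-balanced then $a = x$; and if $c \neq \Zero$ and $x \in \Tzero$ then $c \odot x \in \Tzero$ with $|c \odot x| = |c| + |x| \geq |c| + |a| = |c \odot a|$, so $c \odot a$ lies in the interval $\Uncomp(c \odot x)$. Applying this to the pairs $(c \odot a, c \odot x)$ and $(b, y)$, it suffices to treat $c = \oplus 0$: if $a \in \Uncomp(x)$ and $b \in \Uncomp(y)$ then $\Uncomp(a \oplus b) \subseteq \Uncomp(x \oplus y)$. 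Recall $a, b \in \TTpm$ here, since $\Uncomp(\,\cdot\,) \subseteq \TTpm$.

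\emph{The unit case.} I would use two elementary facts about $\oplus$ on $\TSS$: the norm satisfies $|u \oplus v| = |u| \oplus |v| = \max(|u|,|v|)$; and if $u \oplus v$ is non-balanced then it equals one of $u, v$ (namely the one of strictly larger norm, if any), because in the defining formula for $\oplus$ the non-balanced outcome is precisely $\argmax_{u,v}(|u|,|v|)$. Now split on $x \oplus y$. If $x \oplus y \in \Tzero$: from $|a| \leq |x|$ and $|b| \leq |y|$ (both follow from $a \in \Uncomp(x)$, $b \in \Uncomp(y)$) we get $|a \oplus b| = \max(|a|,|b|) \leq \max(|x|,|y|) = |x \oplus y|$, so every element of $\Uncomp(a \oplus b)$ has norm at most $|x \oplus y|$ and therefore $\Uncomp(a \oplus b) \subseteq [\ominus |x \oplus y|, |x \oplus y|] = \Uncomp(x \oplus y)$. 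If $x \oplus y \in \TTpm$: then $x \oplus y$ equals one of $x, y$, so that one is non-balanced; by symmetry assume $x \oplus y = x$, whence $x$ is non-balanced, $a = x$, and $|x| \geq |y| \geq |b|$. If $|x| > |y|$ then $a \oplus b = a = x$; if $|x| = |y|$, the non-balancedness of $x \oplus y$ forces $\tsgn y = \tsgn x$, hence $y = x$, hence $b = x = a$ and $a \oplus b = x$. Either way $a \oplus b = x = x \oplus y$ and $\Uncomp(a \oplus b) = \Uncomp(x \oplus y)$.

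\emph{Main obstacle.} The delicate point is the bookkeeping in the last case, specifically ruling out that $a \oplus b$ becomes balanced while $x \oplus y$ is not. Since $a, b \in \TTpm$, a balanced $a \oplus b$ would force $a = \ominus b \neq \Zero$ and $|a| = |b|$; combined with $a = x$ (as $x$ is non-balanced) and $|b| \leq |y| \leq |x|$ this gives $|y| = |x| = |b|$ and $b = \ominus x$, so either $y = \ominus x$ or $y \in \Tzero$, and in both subcases $x \oplus y$ is balanced --- a contradiction. Edge cases involving $\Zero$, where $\Uncomp$ collapses to a singleton, are absorbed into the same norm inequalities.
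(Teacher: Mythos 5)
Your proof is correct. Note that this paper does not prove the statement itself: it is imported verbatim from \cite[Lemma~3.5(b)]{LohoVegh:2020} and stated without proof, so there is no in-paper argument to compare against. That said, your argument is sound and self-contained: the reduction to $c = 0$ via the compatibility of $\odot$ with $\Uncomp(\cdot)$ is exactly the content of the Observation immediately following the lemma, and your case split on whether $x \oplus y$ is balanced is the natural one. The one genuinely delicate point, which you correctly identified, is ruling out that $a \oplus b$ becomes balanced while $x \oplus y = x$ is not; your direct contradiction ($a = x$, $|b| = |y| = |x|$, $b = \ominus x$ forcing $y = \ominus x$ or $y \in \Tzero$, either way making $x \oplus y$ balanced) closes this cleanly, though it is in fact already subsumed by your observation in subcase B2 that $|x| = |y|$ forces $y = x$ and hence $a \oplus b = x \oplus x = x$.
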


\begin{observation} 
  Let $a,b,\mu\in \TTpm$ and $s,t \in \TSS$ with $a \in \Uncomp(s)$ and $b \in \Uncomp(t)$. 
  Then $\mu \odot a \in \Uncomp(\mu \odot s)$ and $a \oplus b \in \Uncomp(s \oplus t)$. 
\end{observation}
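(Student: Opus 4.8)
The plan is to deduce both memberships from the lemma just quoted (\cite[Lemma~3.5(b)]{LohoVegh:2020}), which gives $\Uncomp(c\odot a'\oplus b')\subseteq\Uncomp(c\odot x\oplus y)$ whenever $a'\in\Uncomp(x)$, $b'\in\Uncomp(y)$ and $c\in\TTpm$, after a suitable choice of the parameters. The only other ingredient is the elementary fact that $z\in\Uncomp(z)$ for every $z\in\TTpm$: if $z\notin\Tzero$ then $\Uncomp(z)=\{z\}$, and otherwise $z\in\TTpm\cap\Tzero=\{\Zero\}$ and $\Uncomp(\Zero)=\{\Zero\}$.

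For $\mu\odot a\in\Uncomp(\mu\odot s)$ I would apply the quoted inclusion with $c:=\mu$, $x:=s$, $b':=\Zero$ and $y:=\Zero$ (admissible because $\Zero\in\Uncomp(\Zero)$); since $\mu\odot a\oplus\Zero=\mu\odot a$ and $\mu\odot s\oplus\Zero=\mu\odot s$, this reads $\Uncomp(\mu\odot a)\subseteq\Uncomp(\mu\odot s)$. As $\mu,a\in\TTpm$, their product $\mu\odot a$ is again non-balanced (or equal to $\Zero$), hence $\mu\odot a\in\Uncomp(\mu\odot a)$, and the first claim follows. For $a\oplus b\in\Uncomp(s\oplus t)$ the trick is to use the tropical unit $0\in\RR\subseteq\TTpm$ as the scalar, since $0\odot z=z$ for all $z\in\TSS$; the quoted inclusion with $c:=0$, $x:=s$, $y:=t$ then gives $\Uncomp(a\oplus b)=\Uncomp(0\odot a\oplus b)\subseteq\Uncomp(0\odot s\oplus t)=\Uncomp(s\oplus t)$, and one concludes via $a\oplus b\in\Uncomp(a\oplus b)$.

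I do not expect a genuine obstacle here: all the content sits in the quoted lemma, and the rest is choosing the right specialisations ($b'=y=\Zero$ in one case, $c=0$ in the other) together with the trivial membership $z\in\Uncomp(z)$. The one point that warrants a line is that this last membership needs the element in question to lie in $\TTpm$: for $\mu\odot a$ this is automatic, while for $a\oplus b$ it holds outside the degenerate configuration $\lvert a\rvert=\lvert b\rvert$ with $\tsgn(a)\neq\tsgn(b)$, which is the setting in which the observation is applied. If one would rather avoid the quoted lemma altogether, an equally short alternative is a direct case split according to whether $s$ and $t$ are balanced, combined with the bounds $\lvert\mu\odot a\rvert\le\lvert\mu\odot s\rvert$ and $\lvert a\oplus b\rvert\le\lvert s\oplus t\rvert$ (immediate from $\lvert a\rvert\le\lvert s\rvert$ and $\lvert b\rvert\le\lvert t\rvert$) and the description of $\Uncomp$ on $\Tzero$ as $\{x\in\TTpm:\lvert x\rvert\le\lvert\cdot\rvert\}$.
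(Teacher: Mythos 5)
The statement is an \emph{Observation} that the paper records without proof, so there is no argument of the paper to compare against; the assessment has to rest on the merits of your proposal alone. Your derivation from the preceding Lemma~3.5(b) is correct and is presumably the intended one: taking $b' = y = \Zero$ and $c = \mu$ gives $\Uncomp(\mu \odot a) \subseteq \Uncomp(\mu \odot s)$, and since $\mu \odot a \in \TTpm$ one has $\Uncomp(\mu \odot a) = \{\mu \odot a\}$; taking $c = 0$ gives $\Uncomp(a \oplus b) \subseteq \Uncomp(s \oplus t)$. The one genuine subtlety, which you flag, deserves to be stated bluntly: if $|a| = |b|$ and $\tsgn(a) \neq \tsgn(b)$, then $a \oplus b$ is a nonzero balanced element, while $\Uncomp(s \oplus t)$ is by definition a subset of $\TTpm$, so the membership $a \oplus b \in \Uncomp(s \oplus t)$ cannot hold --- e.g.\ $s = t = \bullet 0$, $a = 0$, $b = \ominus 0$ gives $a \oplus b = \bullet 0 \notin [\ominus 0, 0] = \Uncomp(s \oplus t)$. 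This is an imprecision in the Observation as written, not a defect of your argument; the second assertion should be read under the additional hypothesis $a \oplus b \in \TTpm$, or restated as the inclusion $\Uncomp(a \oplus b) \subseteq \Uncomp(s \oplus t)$, which is exactly the $c = 0$ instance of Lemma~3.5(b) and holds without restriction. You were right to record this.
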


\subsection{Left sum}

The following notion of one-sided addition of tropical signed numbers is crucial for our study of convexity over signed tropical numbers.

\begin{definition} \label{def:left+sum}
Given two tropical numbers $x,y \in \TSS$ we define their \emph{left sum} as
\[
x \lplus y = \begin{cases}
x &\text{if $|x| = |y|$,} \\
  x \oplus y &\text{otherwise.} 
\end{cases}
\]
We extend this definition to vectors $x, y \in \TSS^d$ by putting $(x \lplus y)_k = x_k \lplus y_k$ for every $k \in [d]$. 
\end{definition}

\begin{remark} \label{rem:relation-left-sum-OM}
 For the special case of vectors with entries in $\{\Zero,0,\ominus 0\}$, the left sum operation is just the \emph{composition} of sign vectors in an oriented matroid \cite{BLSWZ:1993}.
 The generalization to signed vectors with real entries already appears for the description of \emph{real Bergman fans} \cite{Celaya:2019} and even more generally in the context of matroids over tracts \cite[Section 6.2]{Anderson:2019}.
 From this point of view, later on in \cref{prop:local-generation-TC-hull} we will see an extension of the elimination property of oriented matroids.
 We will discuss this connection further in Section~\ref{sec:oriented-matroids}. 
\end{remark}

The left sum operation is not commutative as one can see on the example $0 \oplus (\ominus 0) = 0 \neq \ominus 0 = (\ominus 0) \oplus 0$. 
Nevertheless, it is associative and compatible with the order on $\TTpm$.
We collect some useful properties in the next technical statements. 

\begin{observation} \label{obs:basic-properties-lsum}
  For every $x,y,z \in \TTpm^d$ we have
  \begin{enumerate}[(a)]
  \item $(x \lplus y) \lplus z = x \lplus (y \lplus z)$
  \item $|x \lplus y| = |x| \oplus |y|$.
  \end{enumerate}
\end{observation}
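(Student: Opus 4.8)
The plan is to reduce both identities to the scalar case and then settle them by a short case distinction on magnitudes. Since $\lplus$ and $\oplus$ both act componentwise, it suffices to prove (a) and (b) for $x,y,z\in\TTpm$. The key preliminary observation is that on $\TTpm$ the left sum has the closed form
\[
x\lplus y=\begin{cases}x&\text{if }|x|\ge |y|,\\ y&\text{if }|x|<|y|,\end{cases}
\]
which follows by unwinding \cref{def:left+sum} together with the definition of $\oplus$ in \cref{eq:balanced+addition}: when $|x|\neq|y|$ the argmax in \cref{eq:balanced+addition} is a single element, so $x\oplus y$ is whichever of $x,y$ has the larger magnitude, while when $|x|=|y|$ the left sum equals $x$ by definition. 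Part (b) is then immediate: $x\lplus y$ is one of $x,y$ and its magnitude is $\max(|x|,|y|)$, which equals $|x|\oplus|y|$ since both operands lie in $\Tgeq$.

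For part (a) I would distinguish three cases according to which of $|x|,|y|,|z|$ realizes $M:=|x|\oplus|y|\oplus|z|$, breaking ties toward the leftmost position. If $|x|=M$, then $x\lplus y=x$ (as $|x|\ge|y|$) and hence $(x\lplus y)\lplus z=x\lplus z=x$ (as $|x|\ge|z|$); on the other side $|y\lplus z|=\max(|y|,|z|)\le|x|$ by (b), so $x\lplus(y\lplus z)=x$. If $|x|<M$ but $|y|=M$, the same computation gives both sides equal to $y$, using $|x|<|y|$ and $|y|\ge|z|$. If $|x|<M$ and $|y|<M$, then necessarily $|z|=M$ with $|z|>|x|$ and $|z|>|y|$; here $|x\lplus y|=\max(|x|,|y|)<|z|$ gives $(x\lplus y)\lplus z=z$, and $y\lplus z=z$ followed by $x\lplus z=z$ gives $x\lplus(y\lplus z)=z$. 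These three cases are exhaustive and mutually exclusive, which proves associativity.

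I do not expect a genuine obstacle here; the only point requiring care is to keep the tie-breaking convention of the closed form for $\lplus$ consistent throughout — in particular that $|x|=|y|$ always resolves to the left argument — since this is exactly what lets the three cases above close without any further subdivision.
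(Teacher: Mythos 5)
Your proof is correct. The paper states this as an observation without proof, so there is nothing to compare against; your argument supplies the missing details cleanly. The closed form $x\lplus y = x$ when $|x|\ge|y|$ and $x\lplus y = y$ when $|x|<|y|$ follows exactly as you say from \cref{def:left+sum} and \cref{eq:balanced+addition}, and once one has it, (b) is immediate and (a) reduces to the three-case magnitude analysis you give. The one delicate point — that ties $|x|=|y|$ resolve to the left argument, so that "leftmost maximizer" governs all three cases without further subdivision — is exactly the right thing to watch, and you handle it correctly.
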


\begin{lemma}\label{le:lsum_order}
  Let $a_1,b_1,a_2,b_2 \in \TTpm$.
  \begin{enumerate}[(a)]
  \item If $a_1 \le b_1$ and $a_2 \le b_2$, then $a_1 \lplus a_2 \le b_1 \lplus b_2$.
  \item if $a_1 < b_1$ and $a_2 < b_2$, then $a_1 \lplus a_2 < b_1 \lplus b_2$.
  \item if $a_1 \ge b_1$ and $a_2 \ge b_2$, then $a_1 \lplus a_2 \ge b_1 \lplus b_2$
  \item if  $a_1 > b_1$ and $a_2 > b_2$, then $a_1 \lplus a_2 > b_1 \lplus b_2$.
  \end{enumerate}
\end{lemma}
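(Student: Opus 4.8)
The plan is to reduce everything to a case analysis on whether the norms $|a_i|$ and $|b_i|$ coincide, since the definition of $\lplus$ branches exactly there, and then to appeal to the order-compatibility of the ordinary addition $\oplus$ together with the fact (\cref{obs:basic-properties-lsum}(b)) that $|x \lplus y| = |x| \oplus |y|$. Since the order $<$ on $\TTpm$ is the one coming from the homeomorphism $\slog$ with $\RR$ (equivalently, from $\ominus|x| \le \Zero \le |x|$ on each orthant together with monotonicity of $|\,.\,|$ on $\Tgeq$), I would keep the guiding picture that for $a, b \in \TTpm$ one has $a \le b$ iff either both lie in $\Tgeq$ with $|a| \le |b|$, or both lie in $\Tleq$ with $|a| \ge |b|$, or $a \in \Tleq$ and $b \in \Tgeq$. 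It suffices to prove (a) and (b): parts (c) and (d) follow from (a) and (b) by an order-reversing argument — but one must be careful, as $\lplus$ is not commutative, so instead I would observe directly that negating does not interact cleanly, and rather just redo (c) and (d) symmetrically, or note that $a \ge b \Leftrightarrow \ominus a \le \ominus b$ and $\ominus(x \lplus y)$ relates to $(\ominus x) \lplus (\ominus y)$ because the branch condition $|x| = |y|$ is invariant under $\ominus$ and $\ominus$ distributes over $\oplus$; that last observation gives $\ominus(x \lplus y) = (\ominus x) \lplus (\ominus y)$, so (c) is literally (a) applied to $\ominus a_i, \ominus b_i$, and (d) is (b) likewise.

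For part (a), I would split into four cases according to whether $|a_1| = |b_1|$ and whether $|a_2| = |b_2|$. If $|a_1| = |b_1|$, then from $a_1 \le b_1$ and equal norms we must have $a_1 = b_1$ (two elements of $\TTpm$ with the same norm are equal unless they have opposite nonzero signs, in which case they are incomparable), so $a_1 \lplus a_2 = a_1$ and $b_1 \lplus b_2 = b_1 = a_1$, giving equality and hence $\le$; symmetrically if $|a_2| = |b_2|$. In the remaining case $|a_1| \ne |b_1|$ and $|a_2| \ne |b_2|$, we have $a_1 \lplus a_2 = a_1 \oplus a_2$ and $b_1 \lplus b_2 = b_1 \oplus b_2$, and I would like to conclude from $a_i \le b_i$ that $a_1 \oplus a_2 \le b_1 \oplus b_2$; this is the monotonicity of $\oplus$ in $\TTpm$, which I would either cite from \cite{LohoVegh:2020} or prove in a line by the same orthant-based description of $\le$, noting that since all four relevant norm-equalities fail, no balanced element can arise and $a_1 \oplus a_2, b_1 \oplus b_2$ genuinely lie in $\TTpm$. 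Part (b) is the strict version: now $a_i < b_i$ with $|a_1| \ne |b_1|$ forced (strict inequality rules out equal norms), so again $a_i \lplus a_i = a_i \oplus a_i$, wait — rather $a_1 \lplus a_2 = a_1 \oplus a_2$ and $b_1 \lplus b_2 = b_1 \oplus b_2$, and strict monotonicity of $\oplus$ (when the arguments have distinct norms) gives the strict conclusion.

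The main obstacle I anticipate is not conceptual but bookkeeping: correctly handling the boundary phenomena where an $\oplus$ of two $\TTpm$-elements could a priori be balanced, and confirming that the case hypotheses (distinct norms on both coordinates) exclude this, so that all quantities stay in $\TTpm$ where $<$ and $\le$ are actually defined as stated in \cref{eq:extended+partial+orders}. A secondary subtlety is the non-commutativity of $\lplus$: I must make sure that in the equal-norm branch I read off the \emph{first} argument, and that the claimed identity $\ominus(x \lplus y) = (\ominus x) \lplus (\ominus y)$ — which I use to derive (c),(d) from (a),(b) — is justified by: $|{\ominus x}| = |x|$ so the branch condition is preserved, and on the nontrivial branch $\ominus(x \oplus y) = (\ominus x) \oplus (\ominus y)$ holds in $\TSS$, while on the equal-norm branch $\ominus(x \lplus y) = \ominus x = (\ominus x) \lplus (\ominus y)$ directly. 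With these points pinned down, the proof is a short finite case check.
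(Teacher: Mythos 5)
Your reduction of (c) and (d) to (a) and (b) via the identity $\ominus(x \lplus y) = (\ominus x) \lplus (\ominus y)$ is sound and matches the paper's one-line reduction. The core case analysis for (a) and (b), however, contains two genuine errors.

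First, you split on whether $|a_1| = |b_1|$ and whether $|a_2| = |b_2|$, asserting that the definition of $\lplus$ ``branches exactly there.'' It does not: by \cref{def:left+sum}, the branch condition for $a_1 \lplus a_2$ is $|a_1| = |a_2|$, and for $b_1 \lplus b_2$ it is $|b_1| = |b_2|$. Your split compares an $a$ with the corresponding $b$, which says nothing about which branch of $\lplus$ fires. In particular, even if $a_1 = b_1$ it does not follow that $a_1 \lplus a_2 = a_1$; if $|a_2| > |a_1|$ then $a_1 \lplus a_2 = a_2$. Likewise, in your final case the conditions $|a_1| \neq |b_1|$, $|a_2| \neq |b_2|$ do not rule out $|a_1| = |a_2|$ or $|b_1| = |b_2|$, so you cannot conclude that the left sums reduce to $\oplus$ nor that no balanced value arises.

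Second, the parenthetical claim that two elements of $\TTpm$ with equal norm and opposite nonzero signs are ``incomparable'' is wrong for the total order $<$: one has $\ominus r < r$ for every $r \in \Tgt$ (e.g.\ $\ominus 0 < 0$), as one checks directly from \cref{eq:extended+partial+orders}. So $a_1 \le b_1$ together with $|a_1| = |b_1|$ does not force $a_1 = b_1$.

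The paper instead splits on which argument each left sum actually returns, i.e.\ on the pair $(a_1 \lplus a_2, \, b_1 \lplus b_2) \in \{a_1,a_2\} \times \{b_1,b_2\}$. The diagonal cases $(a_1,b_1)$ and $(a_2,b_2)$ are immediate from the hypotheses. The mixed cases, say $(a_1,b_2)$, carry the real content: there one has $|a_1| \ge |a_2|$ and $|b_2| \ge |b_1|$, and a short sign-and-norm chase closes the gap. Your case split never isolates these mixed situations, which is where the lemma is nontrivial. Realigning your split with the actual branch conditions (on $|a_1|$ vs.\ $|a_2|$ and $|b_1|$ vs.\ $|b_2|$) and then treating the mixed outcomes would bring you essentially to the paper's argument.
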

\begin{proof}
  It is enough to prove (a) and (b) as (c),(d) follow by replacing $(a_1,a_2,b_1,b_2)$ with $(\ominus a_1, \ominus a_2, \ominus b_1, \ominus b_2)$.
  Hence, we suppose that $a_1 \le b_1$ and $a_2 \le b_2$ (or $a_1 < b_1$ and $a_2 < b_2$). 
  Let $x = a_1 \lplus a_2$ and $y = b_1 \lplus b_2$.

  \smallskip
  
  If $(x,y) \in \{(a_1,b_1), (a_2, b_2)\}$, then the claim is trivial.
  
  If $(x,y) = (a_1, b_2)$, then we have $|b_2| \ge |b_1|$ and $|a_1| \ge |a_2|$.
  If $b_2 \ge \Zero$, then $b_2 \ge b_1 \ge a_2$ (or $b_2 > a_1$ if $a_1 < b_1$).
  If $b_2 < \Zero$, then $a_2 \le b_2 < \Zero$ implies that $|a_1| \ge |a_2| \ge |b_2| \ge |b_1|$.
  Therefore, $a_1 \le b_1$ implies that $a_1 \le \Zero$.
  Since $|a_1| \ge |a_2|$, we get $a_1 \le a_2 \le b_2$ (or $a_1 < b_2$ if $a_2 < b_2$).
  
  If $(x,y) = (a_2, b_1)$, then we have $|a_2| \ge |a_1|$, $|b_1| \ge |b_2|$, and an analogous proof as above applies.
\end{proof}

\begin{lemma} \label{lem:cancel-inequalities-left-sum}
  Fix $a,b \in \TTpm$ with $\tsgn(a) = \tsgn(b)$ and let $x_1^{(1)},\dots,x_n^{(1)} \in \TTpm$, $x_1^{(2)},\dots,x_n^{(2)} \in \TTpm$ be such that
  \[
  a \leq x_1^{(1)} \lplus \dots \lplus x_n^{(1)} \quad \text{ and } \quad b \leq x_1^{(2)} \lplus \dots \lplus x_n^{(2)} \;.
  \]
  Then
  \[
  a \oplus b \leq w_1 \lplus \dots \lplus w_n
  \]
  for all $w_1 \in \Uncomp(x_1^{(1)} \oplus x_1^{(2)}), \dots, w_n \in \Uncomp(x_n^{(1)} \oplus x_n^{(2)})$.

  By multiplying with $\ominus 0$, the implication also holds if one replaces `$\leq$' by `$\geq$'.  
\end{lemma}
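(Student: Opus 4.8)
The plan is to reduce the $n$-fold statement to the binary cancellation lemma already proved in the excerpt, namely \cref{lem:stronger+cancellation+teq}, combined with the order-compatibility of the left sum (\cref{le:lsum_order}) and the behaviour of $\Uncomp(.)$ under iterated sums (\cref{cor:iterated-sums-uncomp-dim}). First I would rewrite the hypotheses using the order relations: since $\tsgn(a) = \tsgn(b)$ and both are in $\TTpm$, the inequalities $a \le x_1^{(1)} \lplus \dots \lplus x_n^{(1)}$ and $b \le x_1^{(2)} \lplus \dots \lplus x_n^{(2)}$ can be transported to statements about $\teq$ and $\oplus$ via \cref{eq:extended+partial+orders}; in particular the conclusion $a \oplus b \le w_1 \lplus \dots \lplus w_n$ is what we want, and it suffices to work with a single representative tuple $(w_k)$ since any two such tuples differ only in balanced coordinates, which do not affect $\le$ comparisons with the fixed element $a \oplus b \in \TTpm$ (one checks that if $a \oplus b \le s$ for one choice, it holds for all, using that $\Uncomp(x_k^{(1)} \oplus x_k^{(2)})$ is either a singleton or an interval symmetric around a balanced value).

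The core argument is an induction on $n$. For $n = 1$: $a \le x_1^{(1)}$, $b \le x_1^{(2)}$, so $a \ominus x_1^{(1)} \teq \Zero$ is not quite the right form — rather $x_1^{(1)} \ominus a \teq \Zero$ and $x_1^{(2)} \ominus b \teq \Zero$, add them with \cref{lem:basic+properties+teq}\ref{item:teq+summing} to get $(x_1^{(1)} \oplus x_1^{(2)}) \ominus (a \oplus b) \teq \Zero$, i.e. $a \oplus b \teq x_1^{(1)} \oplus x_1^{(2)}$, and then pass to $w_1 \in \Uncomp(x_1^{(1)} \oplus x_1^{(2)})$ noting $a \oplus b \in \TTpm$ so that $a \oplus b \le w_1$ for every such $w_1$ (here the balanced case forces $w_1$ to range over an interval containing $a\oplus b$ or lying above it). For the inductive step, split each left-sum as $x_1^{(j)} \lplus (x_2^{(j)} \lplus \dots \lplus x_n^{(j)})$; set $y^{(j)} = x_2^{(j)} \lplus \dots \lplus x_n^{(j)}$. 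The subtlety is that $\lplus$ is non-commutative and non-cancellative, so I cannot simply "subtract off" the first coordinate. Instead I would case-split on whether $|x_1^{(1)}| = |y^{(1)}|$ (and likewise for the second family): when the norms are unequal the left sum equals the ordinary sum $\oplus$ and the binary lemma applies to peel off coordinate $1$; when they are equal, $x_1^{(j)} \lplus y^{(j)} = x_1^{(j)}$, so the hypothesis degenerates and the bound on $a \oplus b$ already comes from coordinate $1$ alone, reducing to the $n=1$ case.

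The main obstacle I anticipate is the bookkeeping when peeling off coordinate $1$: after applying the binary cancellation to $x_1^{(1)}, x_1^{(2)}$ I need the residual quantities to again be of the form "$\le$ a left sum of the remaining $x_k$'s" so that the induction hypothesis applies, and this requires \cref{le:lsum_order} to relate the left sum of the tails to the left sum of the $\Uncomp(x_k^{(1)} \oplus x_k^{(2)})$ representatives, together with \cref{cor:iterated-sums-uncomp-dim} to guarantee that the chosen intermediate representative $w_1$ is compatible with a choice of $q \in \Uncomp(y^{(1)} \oplus y^{(2)})$ making both $\Uncomp(x_1 \oplus q)$-type membership relations hold simultaneously. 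Concretely, I expect to invoke associativity of $\lplus$ (\cref{obs:basic-properties-lsum}(a)) to regroup, then \cref{cor:iterated-sums-uncomp-dim} to find a common refinement of the $\Uncomp$ choices across the split, and finally \cref{le:lsum_order}(a) to assemble $w_1 \lplus q \le w_1 \lplus w_2 \lplus \dots \lplus w_n$. Once the $\le$ version is established, the $\ge$ version follows by multiplying through by $\ominus 0$, using that $\Uncomp(\ominus x) = \ominus \Uncomp(x)$ and that left sum and $\oplus$ commute with the unary $\ominus$, exactly as the statement indicates.
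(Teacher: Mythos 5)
There is a genuine gap in the base case, and the inductive step is only sketched at precisely the point where it is hard. For $n=1$, you derive $x_1^{(1)} \oplus x_1^{(2)} \teq a \oplus b$ and claim that this, together with $a \oplus b \in \TTpm$, gives $a \oplus b \le w_1$ for every $w_1 \in \Uncomp(x_1^{(1)} \oplus x_1^{(2)})$. But when $x_1^{(1)} \oplus x_1^{(2)} = \bullet c$ is balanced, $\bullet c \teq a \oplus b$ is \emph{equivalent} to $a \oplus b \le c$ (unwind the definition), which only bounds $a \oplus b$ by the upper endpoint of $\Uncomp(\bullet c) = [\ominus c, c]$, not by the lower endpoint $\ominus c$ that you need. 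Passing to the $\teq$-relation has already thrown away the data that one of $x_1^{(1)}, x_1^{(2)}$ equals $\ominus c$, so that the corresponding one of $a, b$ is $\le \ominus c$, and that $\tsgn(a) = \tsgn(b)$ then forces $a \oplus b$ to be negative with $|a \oplus b| \ge c$. Your parenthetical remark that the balanced case ``forces $w_1$ to range over an interval containing $a \oplus b$ or lying above it'' is exactly the missing content; it is true, but requires a separate argument that the $\teq$-calculus does not supply.

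The inductive step then inherits this problem and adds another. Splitting on whether $|x_1^{(j)}| = |y^{(j)}|$ only handles the aligned cases; in the mixed case (say $|x_1^{(1)}| > |y^{(1)}|$ but $|x_1^{(2)}| < |y^{(2)}|$), the term $w_1 \in \Uncomp(x_1^{(1)} \oplus x_1^{(2)})$ already couples the two families, and $w_1 \lplus \dots \lplus w_n$ does not split into a head piece for a binary lemma and a tail piece for the induction hypothesis. Neither \cref{lem:stronger+cancellation+teq} (a $\teq$-statement with the same balanced-case blind spot) nor \cref{cor:iterated-sums-uncomp-dim} (a nonemptiness statement, not an inequality) closes this. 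The paper avoids induction: it identifies the first dominating indices $p,q$ of the two left sums, uses $\tsgn(a) = \tsgn(b)$ to write $a \oplus b = a \lplus b = b \lplus a$ so that \cref{le:lsum_order} gives $a \oplus b \le \min\bigl(x_p^{(1)} \lplus x_q^{(2)}, x_q^{(2)} \lplus x_p^{(1)}\bigr)$ directly, then observes that each $\Uncomp(x_i^{(1)} \oplus x_i^{(2)})$ has endpoints $x_i^{(1)} \lplus x_i^{(2)}$ and $x_i^{(2)} \lplus x_i^{(1)}$, and applies \cref{le:lsum_order} once more to bound $w_1 \lplus \dots \lplus w_n$ from below by a $2n$-fold left sum whose value is tracked to one of those two minima. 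The identity $a \lplus b = a \oplus b$ under $\tsgn(a) = \tsgn(b)$, which your sketch never uses, is what keeps the whole argument inside the order $\le$ and avoids $\teq$ altogether.
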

\begin{proof}
  Let $p,q \in [n]$ be the smallest indices such that $x_1^{(1)} \lplus \dots \lplus x_n^{(1)} = x_p^{(1)}$ and $x_1^{(2)} \lplus \dots \lplus x_n^{(2)} = x_q^{(2)}$.
  Then \cref{le:lsum_order} implies that $a \lplus b \leq x_p^{(1)} \lplus x_q^{(2)}$ and $b \lplus a \leq x_q^{(2)} \lplus x_p^{(1)}$.
  Due to $\tsgn(a) = \tsgn(b)$, this yields $a \oplus b \leq \min\left(x_p^{(1)} \lplus x_q^{(2)}, x_q^{(2)} \lplus x_p^{(1)}\right)$.
  
  By definition, for any $s,t \in \TTpm$, the interval $\Uncomp(s \oplus t)$ has the boundary points $s \lplus t$ and $t \lplus s$.
  There is a choice of orders $\tau_1,\dots,\tau_n \in \Sym(2)$ which minimizes the expression $\biglplus \left(x_i^{(\tau_i(1))} \lplus x_i^{(\tau_i(2))}\right)$.
  By the former observation and \cref{le:lsum_order} it is smaller than $w_1 \lplus \dots \lplus w_n$ for all $w_1 \in \Uncomp(x_1^{(1)} \oplus x_1^{(2)}), \dots, w_n \in \Uncomp(x_n^{(1)} \oplus x_n^{(2)})$. 
  Let $x_j^{(\ell)}$ for $j \in [n]$ and $\ell \in \{1,2\}$ be the entry which defines the value of this expression, which means the first summand with maximal absolute value.
  If $\ell = 1$ then $j = p$, otherwise $j = q$.
  In either case, we get $\biglplus \left(x_i^{(\tau(1))} \lplus x_i^{(\tau(2))}\right) = x_p^{(1)} \lplus x_q^{(2)}$ or $\biglplus \left(x_i^{(\tau(1))} \lplus x_i^{(\tau(2))}\right) = x_q^{(2)} \lplus x_p^{(1)}$.
  In particular, we obtain
  \[
  w_1 \lplus \dots \lplus w_n \geq \biglplus \left(x_i^{(\tau(1))} \lplus x_i^{(\tau(2))}\right) \geq \min\left(x_p^{(1)} \lplus x_q^{(2)}, x_q^{(2)} \lplus x_p^{(1)}\right) \geq a \oplus b \;. \qedhere
  \]
\end{proof}

\begin{example}
Let $a = 0$, $b = \ominus 0$, $x^{(1)}_1 = \Zero$, $x^{(1)}_2 = 0$, $x^{(2)}_1 = \ominus 0$, $x^{(2)}_2 = \Zero$, $w_1 = \ominus 0$, $w_2 = 0$. Then $a \le x^{(1)}_1 \lplus x^{(1)}_2$, $b \le x^{(2)}_1 \lplus x^{(2)}_2$, $w_1 = x^{(1)}_1 \oplus x^{(2)}_1$, $w_2 = x^{(1)}_2 \oplus x^{(2)}_2$, but $0 = a \lplus b > w_1 \lplus w_2 = \ominus 0$. This example shows that the assumption $\tsgn(a) = \tsgn(b)$ cannot be omitted in the previous lemma. 
\end{example}

\section{Flavors of signed convexity}

In~\cite{LohoVegh:2020}, a notion of convexity for signed tropical numbers was introduced. 
We revisit this notion under the name \emph{TO-convexity} and give several new insights. 
Furthermore, we establish a second notion of convexity for signed tropical numbers in an equally natural way.

Recall that a \emph{cone} is a set $X \subseteq \TTpm^d$ such that $\lambda \odot X \subseteq X$ for all $\lambda \in \Tgt$.
Based on the two convexity notions we will also consider cones over the respective convex sets. 

We will often identify a finite set with the columns of a matrix and vice versa. 

\subsection{TO-convexity}

We collect several results from \cite{LohoVegh:2020}. 
Note that TO-convexity appears under the name `signed tropical convexity' there. 

The \emph{TO-convex hull} of a matrix $A \in \TTpm^{d \times n}$ is 
\begin{equation}\label{eq:hull+U}
   \shull(A) =\bigcup \SetOf{\Uncomp(A\odot x)}{ x\in \Tgeq^n, \bigoplus_{j \in [n]} x_j = 0 } \subseteq \TTpm^d\enspace .
\end{equation}
A set $M \subseteq \TTpm^d$ is \emph{TO-convex} if $\shull(T) \subseteq M$ for all finite $T \subseteq M$. 
The hull operator extends by setting
\begin{equation*}
  \begin{aligned}
    \shull(M) &= \bigcap_{M \subseteq S, S \text{ TO-convex}} S  \\
    &= \bigcup_{T \subseteq M, T \text{ finite}} \shull(T) \enspace . 
  \end{aligned}
\end{equation*}

For a vector $s \in \TSS^d$ with possibly balanced entries, the set $\Uncomp(s) \subset \TTpm^d$ can be seen as an axis-parallel hypercube.
If $s$ has $k$ entries in $\Tzero \setminus \{\Zero\}$, we say that the hypercube $\Uncomp(s)$ has \emph{dimension} $k$.
Furthermore, we say that $\Uncomp(s')$ is a \emph{face} of $\Uncomp(s)$ if $s'_k = s_k$ for every $k$ such that $s_k \in \TTpm$ and $|s'_k| = |s_k|$ for every other $k$.
A face of dimension zero is called a \emph{vertex} of $\Uncomp(s)$.
In particular, a hypercube of dimension $k$ has $2^k$ vertices.

We recall a crucial property of TO-convexity. 
\begin{lemma}[{\cite[Proposition~3.6]{LohoVegh:2020}}] \label{le:TO-hull-intervals}
  A subset $U \subseteq \TTpm^d$ is TO-convex if and only if $\shull(\{p,q\})$ is contained in $U$ for all $p,q \in U$. 
\end{lemma}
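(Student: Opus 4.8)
The plan is to prove the equivalence by showing that the one-dimensional (segment) closure condition on $U$ already forces closure under $\shull(T)$ for arbitrary finite $T$, since the forward implication is immediate from the definition of TO-convexity applied to two-element sets $T=\{p,q\}$. So the content is the converse: assume $\shull(\{p,q\})\subseteq U$ for all $p,q\in U$, and deduce $\shull(\{v^{(1)},\dots,v^{(n)}\})\subseteq U$ for every finite family in $U$. I would proceed by induction on $n$, the base case $n\le 2$ being the hypothesis.

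For the inductive step, take $n\ge 3$ points $v^{(1)},\dots,v^{(n)}\in U$ and a coefficient vector $x\in\Tgeq^n$ with $\bigoplus_j x_j=0$, and pick an arbitrary $z\in\Uncomp(A\odot x)$ where $A$ has columns $v^{(j)}$. The key algebraic step is to split the tropical affine combination $A\odot x = \bigoplus_{j\in[n]} x_j\odot v^{(j)}$ by grouping the first $n-1$ terms together: write $y = \bigoplus_{j=1}^{n-1} x_j \odot v^{(j)}$ and $w = x_n\odot v^{(n)}$, so that $A\odot x = y\oplus w$. After rescaling the first $n-1$ coefficients so that they tropically sum to $0$ (possible unless they are all $\Zero$, a degenerate case handled separately), the vector $y$ lies in $|y|$-scaled form inside a TO-convex combination of $v^{(1)},\dots,v^{(n-1)}$; more precisely each $u\in\Uncomp(y)$ (after the appropriate scalar) is, by the induction hypothesis, an element of $\Uncomp(A'\odot x')$ for the submatrix $A'$ and hence in $U$. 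I would invoke the observation (and \Cref{lem:iterated-sums-uncomp}-style reasoning, extended to vectors via \Cref{cor:iterated-sums-uncomp-dim}) that $\Uncomp(u' \oplus w') \subseteq \Uncomp(y\oplus w)$ for $u'\in\Uncomp(\lambda^{-1}\odot y)$, $w'\in\Uncomp(w)$ with the right scalars, so that $z$ can be realized inside $\Uncomp$ of a \emph{two-point} combination of a point of $U$ (coming from the first $n-1$) and $v^{(n)}$.

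Concretely, the heart of the argument is: given $z\in\Uncomp(A\odot x)$, find $\mu\in\Tgeq$ with $\mu\odot(\text{something in }\shull(\{v^{(1)},\dots,v^{(n-1)}\}))$ and $v^{(n)}$ whose TO-combination with the complementary weight has $z$ in its $\Uncomp$-box; then since the point from the first $n-1$ lies in $U$ by induction, the segment hypothesis $\shull(\{p,v^{(n)}\})\subseteq U$ finishes it. The one subtlety is matching up the scalars: the weight on the grouped block is $\bigoplus_{j<n} x_j$, which may be $\Zero$ (all small coefficients vanish) — then $A\odot x = w = x_n\odot v^{(n)}$ up to balance and $z\in\Uncomp(v^{(n)})=\{v^{(n)}\}\subseteq U$ directly (as $x_n = 0$ necessarily) — or it may be $0$, in which case the rescaling is trivial. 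Otherwise one renormalizes.

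The main obstacle I anticipate is the bookkeeping in the $\Uncomp$-boxes: the operations $\oplus$ and $\odot$ do not interact with $\Uncomp(\cdot)$ as cleanly as one would like when balanced entries appear, and one must carefully track which coordinates of the intermediate vector $y$ are balanced versus signed to guarantee that $z\in\Uncomp(y\oplus w)$ is actually hit by some genuine TO-combination of a point of $U$ with $v^{(n)}$ (rather than only being contained in a larger box). The cited results \Cref{lem:iterated-sums-uncomp}, \Cref{cor:iterated-sums-uncomp-dim}, and the $\Uncomp$-monotonicity lemma (\cite[Lemma~3.5(b)]{LohoVegh:2020}) together with the final Observation of \Cref{sec:basic_calculation} are precisely the tools designed to push this through, so the proof should reduce to assembling these coordinatewise statements with the induction hypothesis; I would present it at that level of detail and relegate the coordinatewise verifications to those lemmas.
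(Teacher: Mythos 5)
The paper does not reprove this lemma---it quotes it from \cite[Proposition~3.6]{LohoVegh:2020}---so there is no in-paper argument to compare against; the only question is whether your sketch is correct. Your overall plan is the right one: the forward direction is immediate, and for the converse you induct on the number of generators, split $A\odot x = y\oplus w$ with $y=\bigoplus_{j<n} x_j\odot v^{(j)}$ and $w = x_n\odot v^{(n)}$, renormalize $y = \mu\odot y''$ so that $y''=A'\odot x'$ is a proper TO-combination of $v^{(1)},\dots,v^{(n-1)}$, and feed $\Uncomp(y'')\subseteq U$ (by induction) back into the two-point hypothesis $\shull(u,v^{(n)})\subseteq U$. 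The degenerate case $\mu=\Zero$ is handled correctly.

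The problem is the claim that \cref{lem:iterated-sums-uncomp}, \cref{cor:iterated-sums-uncomp-dim}, the monotonicity lemma \cite[Lemma~3.5(b)]{LohoVegh:2020} and the Observation closing \cref{sec:basic_calculation} are ``precisely the tools'' for the step you yourself flag as the crux. What the induction actually needs is the \emph{covering} inclusion
\[
\Uncomp(\mu\odot y'' \oplus w) \;\subseteq\; \bigcup_{u\in\Uncomp(y'')} \Uncomp\bigl(\mu\odot u \oplus w\bigr)\,,
\]
so that every $z\in\Uncomp(A\odot x)$ is \emph{hit} by a two-point TO-segment with one endpoint in $\Uncomp(y'')\subseteq U$, not merely contained in a larger box. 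None of the cited results gives this: they all go the easy way (each segment lies inside the big box, or two such segments have nonempty intersection). So as written the proposal has a genuine gap at exactly the point you identified. The gap is fillable, and you should fill it rather than delegate it: the covering inclusion holds and has a short coordinatewise proof. Fix $k$ and $z_k\in\Uncomp(\mu\odot y''_k\oplus w_k)$. If $y''_k$ is signed there is nothing to choose. If $y''_k$ is balanced and $|w_k|>|\mu\odot y''_k|$, then $\Uncomp(\mu\odot y''_k\oplus w_k)=\{w_k\}$ and any $u_k\in\Uncomp(y''_k)$ works. If $y''_k$ is balanced and $|w_k|\le|\mu\odot y''_k|$, then for $|z_k|\ge|w_k|$ take $u_k$ with $\mu\odot u_k=z_k$, and for $|z_k|<|w_k|$ take $u_k$ with $\mu\odot u_k=\ominus w_k$, so that $\mu\odot u_k\oplus w_k$ is balanced with modulus $|w_k|>|z_k|$ and hence its $\Uncomp$-box contains $z_k$; in both sub-cases $|u_k|\le|y''_k|$, so $u_k\in\Uncomp(y''_k)$. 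Since $\Uncomp(\,.\,)$ is a box, the coordinate choices glue into a single $u$, and the induction closes.
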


The TO-convex hull of a finite set can also be given as intersection of open tropical halfspaces.
This is the origin of the name `TO-convex' derived from `tropical open'.  
Furthermore, it motivates the definition of the TC-convex hull (`tropical closed') in \cref{sec:TC-convexity}. 

\begin{theorem}[{\cite[Theorem~5.1]{LohoVegh:2020}}]
  The TO-convex hull of a finite subset $M \subseteq \TTpm^d$ equals the intersection of all open halfspaces containing $M$, i.e.,
  \begin{equation*}
    \shull(M) = \bigcap_{M \subset \HC^+(a)} \HC^+(a)
  \end{equation*}
\end{theorem}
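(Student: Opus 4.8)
The plan is to prove the two inclusions separately. The inclusion $\shull(M) \subseteq \bigcap_{M \subseteq \HC^+(a)} \HC^+(a)$ amounts to showing that open tropical halfspaces are TO-convex: once we know this, since each open halfspace $\HC^+(a)$ containing $M$ is TO-convex, it must contain $\shull(M)$, hence so does the intersection. To check that $\HC^+(a)$ is TO-convex, by \cref{le:TO-hull-intervals} it suffices to verify that $\shull(\{p,q\}) \subseteq \HC^+(a)$ for any $p,q \in \HC^+(a)$. Unravelling \cref{eq:hull+U}, a point of $\shull(\{p,q\})$ lies in $\Uncomp(\lambda \odot p \oplus \mu \odot q)$ for nonnegative tropical scalars with $\lambda \oplus \mu = 0$. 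Using the characterization of $\HC^+(a)$ via strict inequality in \cref{eq:hspace} together with the monotonicity properties of $\teq$ and the behaviour of $\Uncomp(\cdot)$ under $\odot$ and $\oplus$ (\Cref{lem:basic+properties+teq}\eqref{item:teq+affine+monotony} and the observation on $\Uncomp$ near the end of \cref{sec:basic_calculation}), one checks that $a \odot \binom{0}{p} \in \Tgt$ and $a \odot \binom{0}{q} \in \Tgt$ force $a \odot \binom{0}{z} \in \Tgt$ for every $z$ in the relevant hypercube. This is the algebraic heart of this direction and is essentially \cite[Lemma~5.2]{LohoVegh:2020} or its proof, which I would cite.

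For the reverse inclusion $\bigcap_{M \subseteq \HC^+(a)} \HC^+(a) \subseteq \shull(M)$, I would argue contrapositively: given a point $x \notin \shull(M)$, I must produce an open tropical halfspace $\HC^+(a)$ with $M \subseteq \HC^+(a)$ but $x \notin \HC^+(a)$. Here $M$ is finite, so $\shull(M)$ is a concretely described set (a finite union of hypercubes $\Uncomp(A \odot x)$), and I would invoke the separation machinery already available in \cite{LohoVegh:2020}. Concretely, \cite[Theorem~5.1]{LohoVegh:2020} is exactly the statement being recalled, so strictly speaking the theorem as stated in the excerpt is quoted verbatim from that source; the "proof" here is a restatement/recollection, and I would present it as such — either citing the result directly or reproducing the separation argument: find a halfspace through the scaling/sign data that puts all of $M$ strictly on one side while $x$ fails the strict inequality, using that $x \notin \shull(M)$ translates into incompatibility of a certain system of tropical inequalities.

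The main obstacle, if one wants a self-contained proof rather than a citation, is the separation direction: constructing the separating open halfspace $\HC^+(a)$ from the hypothesis $x \notin \shull(M)$. This requires extracting the vector $a$ from the combinatorial structure of the (finite) union of hypercubes comprising $\shull(M)$ and verifying the strict inequality \cref{eq:hspace} holds on all generators of $M$ but not at $x$; the sign bookkeeping across the orthants of $\TTpm^d$ is the delicate part, since a tropical halfspace restricted to different orthants behaves like different classical halfspaces. Since the statement is labelled as \cite[Theorem~5.1]{LohoVegh:2020}, the cleanest approach is simply to refer the reader to that proof, noting only that the easy inclusion follows from \cref{le:TO-hull-intervals} and the convexity of open halfspaces (\cref{le:hspace_topo} and \cite[Lemma~5.2]{LohoVegh:2020}).
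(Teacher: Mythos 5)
The paper does not prove this theorem; it is stated as a recalled result with the citation \cite[Theorem~5.1]{LohoVegh:2020}, so there is no proof to compare against. Your proposal correctly identifies the two-directional structure (the easy inclusion via TO-convexity of open halfspaces through \cref{le:TO-hull-intervals}, and the harder separation direction), accurately locates the delicate part in the sign/orthant bookkeeping for constructing the separating halfspace, and sensibly concludes that since the statement is cited, the appropriate presentation is to refer to the original source — which is exactly what the paper does.
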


\subsection{TC-convexity}
\label{sec:TC-convexity}

While the structure of TO-convexity is heavily linked to open tropical halfspaces, we introduce TC-convexity based on closed tropical halfspaces.
For technical reasons, we first only define the TC-convex hull of two points, and then extend it to arbitrary sets using these TC-convex line segments.
Later it turns out that, for an arbitrary finite set, this is actually the same as just taking the intersection  of all closed tropical halfspaces containing it.

For basics on general convexity, we refer to \cite{VanDeVel:1993}. 
We will mainly rely on the notion of a \emph{convexity structure} \cite[\S 1.1]{VanDeVel:1993}.
In particular, we are interested in convexity structures induced by an \emph{interval operator} \cite[\S 4.1]{VanDeVel:1993}. 

\begin{definition} \label{def:weak+hull+segment}
  We define the \emph{TC-convex hull} of two points $x,y \in \TTpm^d$ as
    \begin{equation}
    \whull(x,y) = \bigcap_{x,y \in \Hclosed^+(a)} \Hclosed^+(a) \enspace .
  \end{equation}
\end{definition}

Sets of this form are \emph{TC-convex intervals}. 

\begin{definition} \label{def:TC-convex-whull}
  We say that a set $X \subset \TTpm^d$ is \emph{TC-convex} if $\whull(x,y) \subseteq X$ for all $x,y \in X$.
\end{definition}

We have the following desirable properties. 

\begin{corollary} \label{cor:basic-properties-TC-convex}
  The TC-convex sets form a convexity structure.
  In particular, TC-convex sets are closed by intersection and nested union. 
\end{corollary}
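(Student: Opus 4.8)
The statement to prove is \cref{cor:basic-properties-TC-convex}: that the TC-convex sets form a convexity structure, which by \cite[\S 1.1]{VanDeVel:1993} means verifying three axioms --- that $\emptyset$ and the whole space $\TTpm^d$ are TC-convex, that an arbitrary intersection of TC-convex sets is TC-convex, and that a nested (directed) union of TC-convex sets is TC-convex. The plan is to check each of these directly from \cref{def:TC-convex-whull}, which characterizes TC-convexity purely through the binary hull operator $\whull(\cdot,\cdot)$ of \cref{def:weak+hull+segment}. This is the standard recipe for showing that an interval operator induces a convexity structure \cite[\S 4.1]{VanDeVel:1993}, so the corollary should follow formally once one unwinds the definitions.

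First I would dispose of the trivial cases: $\emptyset$ is TC-convex vacuously (there are no pairs $x,y$), and $\TTpm^d$ is TC-convex because $\whull(x,y) \subseteq \TTpm^d$ always holds by construction --- indeed $\whull(x,y)$ is an intersection of subsets of $\TTpm^d$, hence a subset of $\TTpm^d$ (and in the degenerate case where no closed halfspace contains both $x$ and $y$, the empty intersection is $\TTpm^d$ itself by convention). Next, for closure under intersection: let $\{X_i\}_{i \in I}$ be a family of TC-convex sets and $x,y \in \bigcap_i X_i$. Then $x,y \in X_i$ for every $i$, so $\whull(x,y) \subseteq X_i$ for every $i$ by TC-convexity of each $X_i$, whence $\whull(x,y) \subseteq \bigcap_i X_i$. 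Finally, for a nested union $\bigcup_i X_i$ of TC-convex sets $X_i$ directed by inclusion: if $x,y \in \bigcup_i X_i$, pick indices with $x \in X_i$ and $y \in X_j$; by directedness there is $k$ with $X_i \cup X_j \subseteq X_k$, so $x,y \in X_k$, giving $\whull(x,y) \subseteq X_k \subseteq \bigcup_i X_i$.

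There is no real obstacle here; the only point requiring a moment's care is the convention for the empty intersection defining $\whull(x,y)$ when $x$ and $y$ lie in no common closed tropical halfspace, so that $\whull(x,y) = \TTpm^d$ in that case --- this makes the ``$\TTpm^d$ is TC-convex'' axiom hold on the nose and is consistent with the monotonicity $\whull(x,y) \supseteq \whull(x',y')$ when the set of containing halfspaces shrinks. One should also note that $\{x,y\} \subseteq \whull(x,y)$ always (each point lies in every halfspace used in the intersection), which is what makes $\whull$ a genuine interval operator in the sense of \cite[\S 4.1]{VanDeVel:1993} and justifies the phrase ``convexity structure induced by an interval operator'' in the surrounding text. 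The ``in particular'' sentence of the corollary is then just a restatement of the intersection and nested-union axioms just verified.
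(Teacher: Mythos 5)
Your proof is correct and is exactly the argument the paper leaves implicit: the corollary is stated without proof immediately after \cref{def:weak+hull+segment,def:TC-convex-whull}, with the justification deferred to the references \cite[\S 1.1, \S 4.1]{VanDeVel:1993} on interval-induced convexity structures. Your verification of the three axioms (trivial sets, arbitrary intersection, directed union) directly from the binary hull operator is the standard unwinding of those definitions. One minor note: the degenerate case you flag (no closed halfspace containing both $x$ and $y$) never actually occurs, since for any pair $x,y$ one can take $a=(M,0,\Zero,\dots,\Zero)$ with $M > \max(|x_1|,|y_1|)$ to get a closed halfspace containing both; so the empty-intersection convention is not needed, though noting it does no harm.
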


The definition of TC-convexity directly gives two important classes of TC-convex sets. 

\begin{corollary} \label{cor:hspace_weakly_convex}
  A closed signed affine tropical halfspace and a signed affine tropical hyperplane is TC-convex.
  TC-convex intervals, i.e. sets of the form $\whull(x,y)$, for $x,y \in \TTpm^d$, are TC-convex. 
\end{corollary}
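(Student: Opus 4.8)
The plan is to prove each of the three assertions in \cref{cor:hspace_weakly_convex} separately, working from the definitions and from the fact (established in \cref{def:weak+hull+segment}) that $\whull(x,y)$ is by construction the \emph{smallest} intersection of closed tropical halfspaces containing $x$ and $y$.

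First I would handle closed halfspaces. Let $H = \Hclosed^+(a)$ and pick any $x,y \in H$. Since $H$ is itself a closed tropical halfspace containing both $x$ and $y$, it is one of the sets over which the intersection in \cref{def:weak+hull+segment} is taken; hence $\whull(x,y) \subseteq H$. This is exactly the condition in \cref{def:TC-convex-whull}, so $H$ is TC-convex. For a hyperplane $\HC(a)$, the trick is to write it as an intersection of two closed halfspaces: one checks directly from the definitions that $\HC(a) = \SetOf{x}{a \odot \pvec{0}{x} \in \Tzero} = \Hclosed^+(a) \cap \Hclosed^-(a) = \Hclosed^+(a) \cap \Hclosed^+(\ominus a)$, because $a \odot \pvec{0}{x} \in \Tzero$ is equivalent to the conjunction of $a \odot \pvec{0}{x} \in \Tgt \cup \Tzero$ and $\ominus\bigl(a \odot \pvec{0}{x}\bigr) \in \Tgt \cup \Tzero$. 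Then TC-convexity of $\HC(a)$ follows from TC-convexity of $\Hclosed^+(a)$ and $\Hclosed^+(\ominus a)$ together with closure under intersection from \cref{cor:basic-properties-TC-convex}.

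The third assertion — that a set of the form $\whull(x,y)$ is itself TC-convex — is the one requiring a genuine argument, since it amounts to an idempotency-type statement: taking TC-convex hulls of pairs of points inside $\whull(x,y)$ must not escape $\whull(x,y)$. The clean way is to invoke the abstract-convexity machinery already referenced: in \cref{cor:basic-properties-TC-convex} it is asserted that the TC-convex sets form a convexity structure generated by the closed tropical halfspaces, and $\whull(x,y)$ is by definition the intersection of all members of the generating family that contain $\{x,y\}$. Every closed halfspace is TC-convex (just proved), and an arbitrary intersection of TC-convex sets is TC-convex (again \cref{cor:basic-properties-TC-convex}); therefore $\whull(x,y)$ is an intersection of TC-convex sets and hence TC-convex. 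Concretely: if $p,q \in \whull(x,y)$, then every closed halfspace $\Hclosed^+(a)$ containing $\{x,y\}$ also contains $\{p,q\}$, so $\whull(p,q) \subseteq \Hclosed^+(a)$ for each such $a$, and intersecting over all of them gives $\whull(p,q) \subseteq \whull(x,y)$.

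The main obstacle here is essentially bookkeeping rather than mathematics: one must be careful that the interval operator $\whull(\cdot,\cdot)$ induces the same convexity structure as the generating family of closed halfspaces, i.e. that \cref{cor:basic-properties-TC-convex} is being applied to the correct structure. The only nontrivial verification is the identity $\HC(a) = \Hclosed^+(a) \cap \Hclosed^+(\ominus a)$, which reduces to the elementary observation that for $t \in \TSS$ one has $t \in \Tzero$ if and only if $t \in \Tgt \cup \Tzero$ and $\ominus t \in \Tgt \cup \Tzero$; this in turn follows from the sign conventions on $\TSS$ and the definition of the orders in \cref{eq:extended+partial+orders}. Everything else is a direct unwinding of \cref{def:weak+hull+segment} and \cref{def:TC-convex-whull}.
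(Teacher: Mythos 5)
Your proof is correct and takes essentially the same route as the paper: closed halfspaces are TC-convex directly from \cref{def:weak+hull+segment} and \cref{def:TC-convex-whull}, while the hyperplane and the TC-interval are handled as intersections of closed halfspaces via the closure-under-intersection property from \cref{cor:basic-properties-TC-convex}. The paper's proof is just a terser version of the same argument, leaving the decomposition $\HC(a) = \Hclosed^+(a) \cap \Hclosed^-(a)$ and the unwinding of $\whull(p,q) \subseteq \whull(x,y)$ implicit.
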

\begin{proof}
 Right from the definition we get that closed signed affine tropical halfspaces are TC-convex.
 The TC-convexity of the other sets follows from the intersection property. 
\end{proof}

\Cref{cor:basic-properties-TC-convex} allows us to extend the hull operator routinely to arbitrary sets $M \subseteq \TTpm^d$ by setting

\begin{equation*}
  \whull(M) = \bigcap_{M \subseteq S, S \text{ TC-convex}} S \enspace .
\end{equation*}

In particular, this convexity structure is `domain finite' by \cite[Theorem 1.3]{VanDeVel:1993} which means the following. 
\begin{corollary} \label{cor:convexity-finitary}
  For an arbitrary set $M \subseteq \TTpm^d$ we have
  \begin{equation*}
  \begin{aligned}
  \whull(M) = \bigcup_{T \subseteq M, T \textup{ finite}} \whull(T) \enspace .
  \end{aligned}
\end{equation*}
\end{corollary}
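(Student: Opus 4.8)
The plan is to invoke the general fact from abstract convexity theory that a domain-finite (equivalently, finitary) convexity structure satisfies exactly this identity, and to check that our structure meets the hypothesis. Concretely, \Cref{cor:basic-properties-TC-convex} already records that TC-convex sets form a convexity structure closed under intersection and under nested (directed) unions. The closure under nested unions is precisely the domain-finiteness condition in \cite[Theorem~1.3]{VanDeVel:1993}, so that theorem applies verbatim and yields the displayed equation. Thus the proof is essentially a one-line appeal to the cited theorem once the nested-union property is in hand.

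For completeness I would also sketch the elementary direct argument, since it clarifies why nested unions matter. The inclusion $\bigcup_{T \subseteq M \text{ finite}} \whull(T) \subseteq \whull(M)$ is immediate: each finite $T \subseteq M$ satisfies $T \subseteq M \subseteq \whull(M)$, and $\whull(M)$ is TC-convex, hence $\whull(T) \subseteq \whull(M)$ by monotonicity of the hull operator (which follows from \Cref{def:TC-convex-whull} together with the intersection definition of $\whull(M)$). For the reverse inclusion, set $N = \bigcup_{T \subseteq M \text{ finite}} \whull(T)$; it suffices to show $N$ is TC-convex and contains $M$, since then $\whull(M) \subseteq N$. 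That $M \subseteq N$ is clear (take $T$ to be singletons, noting $\whull(\{x\}) \ni x$). To see $N$ is TC-convex, take $x, y \in N$: then $x \in \whull(T_1)$ and $y \in \whull(T_2)$ for finite $T_1, T_2 \subseteq M$, so both lie in $\whull(T_1 \cup T_2)$, whence $\whull(x,y) \subseteq \whull(T_1 \cup T_2) \subseteq N$ because $T_1 \cup T_2$ is again a finite subset of $M$. Hence $N$ is TC-convex, completing the argument.

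The one genuinely substantive point — and the only place the nested-union closure from \Cref{cor:basic-properties-TC-convex} is really used — is the tacit claim that $\whull(T_1 \cup T_2) \subseteq N$: this is the assertion that the family $\{\whull(T) : T \subseteq M \text{ finite}\}$, directed under inclusion since $\whull(T_1), \whull(T_2) \subseteq \whull(T_1 \cup T_2)$, has union that is still TC-convex. Equivalently, one must know that $\whull(T_1 \cup T_2)$ is itself of the form appearing in the union (which it is, $T_1 \cup T_2$ being finite), so there is in fact nothing to prove beyond bookkeeping. I expect no real obstacle here; the content has been front-loaded into \Cref{cor:basic-properties-TC-convex} and the invocation of \cite[Theorem~1.3]{VanDeVel:1993}, and the rest is a routine directedness argument. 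I would therefore present the proof simply as: TC-convex sets form a domain-finite convexity structure by \Cref{cor:basic-properties-TC-convex}, so \cite[Theorem~1.3]{VanDeVel:1993} gives the claim.
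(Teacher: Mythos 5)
Your proposal matches the paper's argument: the paper derives this corollary precisely by noting that closure under nested unions (established in \cref{cor:basic-properties-TC-convex}) makes TC-convexity a domain-finite convexity structure in the sense of \cite[Theorem~1.3]{VanDeVel:1993}, which immediately yields the displayed identity. Your additional direct argument is also correct; it relies on the 2-ary characterization of TC-convexity in \cref{def:TC-convex-whull} (to show $N$ is TC-convex it suffices to check TC-intervals between pairs of points), and is a useful unpacking of why the cited theorem applies, though the paper itself leaves this implicit.
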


\begin{corollary} \label{cor:TO-contain-TC}
  For an arbitrary set $M \subseteq \TTpm^d$ we have $\whull(M) \subseteq \shull(M)$. 

Furthermore, each TO-convex set is also TC-convex. 
\end{corollary}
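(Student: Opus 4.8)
The plan is to prove the second assertion — that every TO-convex set is TC-convex — and to deduce the first from it: $\shull(M)$ is TO-convex (being an intersection of TO-convex sets), hence by the second assertion TC-convex; since it contains $M$, the definition of $\whull(M)$ as the intersection of all TC-convex supersets of $M$ gives $\whull(M)\subseteq\shull(M)$. For the second assertion, let $X$ be TO-convex and fix $x,y\in X$. By \cref{le:TO-hull-intervals} we have $\shull(\{x,y\})\subseteq X$, so in view of \cref{def:TC-convex-whull} it suffices to prove $\whull(x,y)\subseteq\shull(\{x,y\})$ for all $x,y\in\TTpm^d$.

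I would prove this inclusion by separating points. Fix $z\notin\shull(\{x,y\})$; by the description of the TO-convex hull of a finite set as an intersection of open halfspaces \cite[Theorem~5.1]{LohoVegh:2020}, there is $a\in\TTpm^{d+1}$ with $(a_1,\dots,a_d)\neq\Zero$, $x,y\in\HC^+(a)$ and $z\notin\HC^+(a)$. The crux is to turn this open separating halfspace into a \emph{closed} one, i.e.\ to produce $b\in\TTpm^{d+1}$ with $(b_1,\dots,b_d)\neq\Zero$, $x,y\in\Hclosed^+(b)$ and $z\notin\Hclosed^+(b)$. Granting this, $\Hclosed^+(b)$ is one of the halfspaces intersected in \cref{def:weak+hull+segment}, so $z\notin\whull(x,y)$; as $z$ was arbitrary outside $\shull(\{x,y\})$, the inclusion follows, and with it the whole statement.

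To build $b$ from $a$, write $\xi=a\odot\pvec{0}{x}$, $\eta=a\odot\pvec{0}{y}$ and $\zeta=a\odot\pvec{0}{z}$, so that $\xi,\eta\in\Tgt$ while $\zeta\in\Tlt\cup\Tzero$. If $\zeta\in\Tlt$, take $b=a$. If $\zeta\in\Tzero\setminus\{\Zero\}$, then the maximal magnitude among the summands $a_0,a_1\odot z_1,\dots,a_d\odot z_d$ of $\zeta$ is realized by at least one summand of sign $\ominus$, say the one with index $j\in[d]_0$ (using the convention $z_0=0$); I would let $b$ agree with $a$ except that $b_j:=a_j\odot\varepsilon$ for a small $\varepsilon\in\Tgt$, which makes this summand strictly dominate when evaluated at $z$ and hence forces $b\odot\pvec{0}{z}\in\Tlt$. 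If $\zeta=\Zero$, then necessarily $a_0=\Zero$ and $z_i=\Zero$ for every $i$ with $a_i\neq\Zero$; here I would take $b=(\ominus\varepsilon,a_1,\dots,a_d)$ for a small $\varepsilon\in\Tgt$, so that $b\odot\pvec{0}{z}=\ominus\varepsilon\in\Tlt$. In the latter two cases one checks, by tracking which summand realizes the maximal magnitude, that for $|\varepsilon|$ small enough $b\odot\pvec{0}{x}$ and $b\odot\pvec{0}{y}$ still lie in $\Tgt$ — the only change is a slight increase of one $\ominus$-contribution, which remains dominated by the unchanged $\oplus$-maximum of $\xi$, respectively $\eta$ — so $x,y\in\HC^+(b)\subseteq\Hclosed^+(b)$.

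The main obstacle is precisely this last perturbation step. One cannot in general find a closed halfspace \emph{contained in} the open halfspace $\HC^+(a)$ (an open tropical halfspace may contain no closed one), so the argument genuinely has to push $z$ strictly across the bounding hyperplane of a slightly tilted closed halfspace while keeping $x$ and $y$ strictly inside it. The delicate point is the boundary situation $\zeta\in\Tzero$, where $z$ lies on the hyperplane $\HC(a)$, and the split according to whether $\zeta$ is strictly negative, balanced, or equal to $\Zero$ appears unavoidable, since these three regimes correspond to structurally different ways of perturbing $a$.
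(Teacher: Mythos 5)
Your proof is correct, but it takes a genuinely different route from the paper's. The paper invokes \cite[Theorem~5.4]{LohoVegh:2020}, which says that $\shull(x,y)$ is an intersection of \emph{finitely many closed} halfspaces; each such halfspace contains $\{x,y\}$, so it is one of the sets intersected in the definition of $\whull(x,y)$, giving $\whull(x,y)\subseteq\shull(x,y)$ in one line. You instead start from \cite[Theorem~5.1]{LohoVegh:2020}, which represents $\shull(x,y)$ via \emph{open} halfspaces, and then build the needed closed separating halfspace by a perturbation argument with three cases according to whether $a\odot\pvec{0}{z}$ is strictly negative, balanced and nonzero, or equal to $\Zero$. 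Your case analysis and the choice of the perturbation parameter $\varepsilon$ check out: in the balanced case, bumping up the magnitude of one $\ominus$-dominant coefficient $a_j$ by a small enough $\varepsilon$ makes $z$ strictly violate the inequality while $x,y$ remain strictly inside because their $\oplus$-maxima are strictly larger than any $\ominus$-contribution; in the $\Zero$ case, inserting $\ominus\varepsilon$ as the affine term works for $\varepsilon<\min(|\xi|,|\eta|)$. The reduction to arbitrary $M$ — that $\shull(M)$ is TO-convex and hence TC-convex once you know the pairwise inclusion — is also fine and matches the paper's final step. What the paper's citation buys is brevity and the extra information that finitely many closed halfspaces suffice (which matters elsewhere in the paper, e.g.\ for Minkowski--Weyl-type statements); what your argument buys is that it is elementary and does not require the harder representation result from \cite{LohoVegh:2020}, in effect re-deriving a weaker form of Theorem~5.4 (closed-halfspace representation of $\shull(x,y)$, without finiteness) from Theorem~5.1.
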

\begin{proof}
 For two point $x,y \in \TTpm^d$, \cite[Theorem~5.4]{LohoVegh:2020} implies that $\shull(x,y)$ is an intersection of finitely many closed halfspaces.
 Now, combining with the \cref{def:weak+hull+segment} yields $\whull(x,y) \subseteq \shull(x,y)$.
 Therefore, by definition of a TC-convex set via TC-convex intervals (\cref{def:TC-convex-whull}), each TO-convex set is also TC-convex.
 Hence, using the extension of the hull operators for TO-convexity and TC-convexity to not necessarily finite sets gives the first claim.  
\end{proof}

\begin{figure}[tbh]
  \begin{subfigure}[t]{0.49\textwidth}
    \centering
\includegraphics{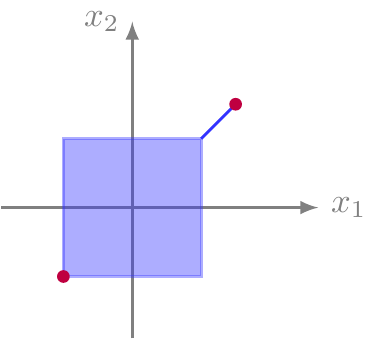}
    \caption{$\shull((0,0),(\ominus -2, \ominus -2))$}
    \label{subfig:tropical+exploded+line+segment}
  \end{subfigure} \hfill
  \begin{subfigure}[t]{0.49\textwidth}
    \centering
\includegraphics{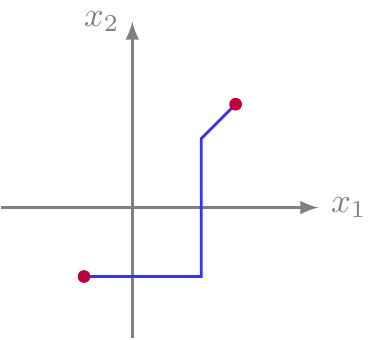}
    \caption{$\shull((0,0),(\ominus -3, \ominus -2))$}
  \end{subfigure}
  \begin{subfigure}[t]{0.49\textwidth}
    \centering
\includegraphics{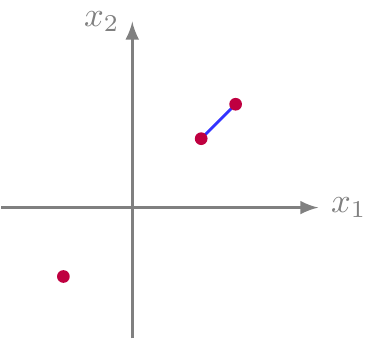}
        \caption{$\whull((0,0),(\ominus -2, \ominus -2))$}
  \end{subfigure} \hfill
  \begin{subfigure}[t]{0.49\textwidth}
    \centering
\includegraphics{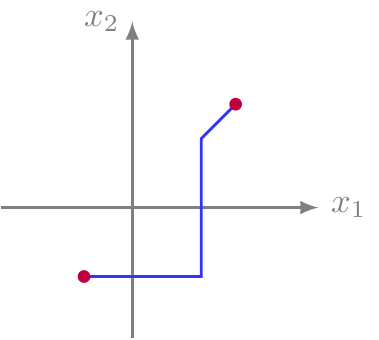}
    \caption{$\whull((0,0),(\ominus -3, \ominus -2))$}
  \end{subfigure}
  \caption{TO-convex intervals and TC-convex intervals in the plane (see \cref{ex:TO+TC-intervals}) }
  \label{fig:line+segments}
\end{figure}

To describe the TC-convex hull explicitly, we need significantly more tools than for the TO-convex hull. 

\begin{definition} \label{def:vert+faces}
Given a set of points $X = \left\{x_1, \dots, x_n\right\} \in \TTpm^d$, we define 
\[
\vertices(X) = \vertices(x_1, \dots, x_n) \coloneqq \SetOf{x_{\sigma(1)} \lplus x_{\sigma(2)} \lplus \dots \lplus x_{\sigma(n)}}{\sigma \in \Sym(n)}  \subset  \TTpm^d \, ,
\]
where $\Sym(n)$ denotes the group of permutations of $[n]$. 
Furthermore, we denote by
\[
\faces(X) \coloneqq \faces(x_1, \dots, x_n)
\]
the union of all faces of $\Uncomp(x_1 \oplus \dots \oplus x_n)$ (considering it as a hypercube) whose vertices belong to $\vertices(x_1, \dots, x_n)$.
\end{definition}

While the convex structure of cancellation for TO-convexity only depends on the balanced outcome, the non-associative structure of TC-convexity is far more subtle.
Therefore, it is not enough to apply a univariate operator like $\Uncomp( . )$ but we have to use the multivariate operator $\faces(.)$.
Though, equipped with this tool, we will be able to describe the TC-hull also in terms of an analog of convex combinations. 

Directly from the definition and \cref{obs:basic-properties-lsum} we get the next. 
\begin{corollary}\label{cor:permutation_vertices}
The set $\vertices(x_1, \dots, x_n)$ is a subset of vertices of the hypercube $\Uncomp(x_1 \oplus \dots \oplus x_n)$. In particular, it contains at most $2^d$ points.
\end{corollary}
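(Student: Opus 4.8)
The plan is to establish the inclusion directly, with the count then following for free: a hypercube in $\TTpm^d$ has exactly $2^k$ vertices, where $k\le d$ is its dimension, so any subset of its vertex set has at most $2^d$ elements. Write $s = x_1 \oplus \dots \oplus x_n \in \TSS^d$, fix $\sigma\in\Sym(n)$, and set $w_\sigma = x_{\sigma(1)} \lplus \dots \lplus x_{\sigma(n)}$. I would begin by recording the elementary fact that a point $v$ is a vertex of $\Uncomp(s)$ precisely when $v\in\Uncomp(s)$ and $|v| = |s|$: on a coordinate $i$ with $s_i\in\TTpm$ one has $\Uncomp(s_i) = \{s_i\}$, so membership already forces $v_i = s_i$, while on a coordinate with $s_i\in\Tzero\setminus\{\Zero\}$ one has $\Uncomp(s_i) = [\ominus|s_i|,|s_i|]$, and the extra requirement $|v_i| = |s_i|$ is exactly what pins $v_i$ to one of the two endpoints. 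Thus it suffices to verify the two conditions $w_\sigma\in\Uncomp(s)$ and $|w_\sigma| = |s|$ for every $\sigma$.

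The equality $|w_\sigma| = |s|$ is immediate from \cref{obs:basic-properties-lsum}(b): iterating $|x\lplus y| = |x|\oplus|y|$ gives $|w_\sigma| = |x_{\sigma(1)}|\oplus\dots\oplus|x_{\sigma(n)}|$, which equals $|x_1|\oplus\dots\oplus|x_n| = |s|$ because $\oplus$ restricted to $\Tgeq$ is just coordinatewise maximum (here using \cref{eq:balanced+addition}). For the membership $w_\sigma\in\Uncomp(s)$ I would argue by induction on $n$ after recording one more elementary fact: $u\lplus v\in\Uncomp(u\oplus v)$ for all $u,v\in\TTpm^d$. Coordinatewise, this is clear when $|u_i|\neq|v_i|$ since then $u_i\lplus v_i = u_i\oplus v_i$; and when $|u_i| = |v_i|$ one has $u_i\lplus v_i = u_i$, which lies in $\Uncomp(u_i\oplus v_i)$ because that set is either $\{u_i\}$ (same sign, or both $\Zero$) or the full interval $[\ominus|u_i|,|u_i|]$ (opposite signs). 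For the inductive step, put $a = x_{\sigma(1)}\lplus\dots\lplus x_{\sigma(n-1)}$ and $t = x_{\sigma(1)}\oplus\dots\oplus x_{\sigma(n-1)}$; the inductive hypothesis gives $a\in\Uncomp(t)$, and then
\[
w_\sigma = a\lplus x_{\sigma(n)} \in \Uncomp(a\oplus x_{\sigma(n)}) \subseteq \Uncomp(t\oplus x_{\sigma(n)}) = \Uncomp(s) ,
\]
where the first membership is the fact just recorded (note $a\in\TTpm^d$), the inclusion is \cite[Lemma~3.5(b)]{LohoVegh:2020} applied coordinatewise with $c = 0$, and the final equality holds because $\oplus$ is commutative and associative in the semiring $\TSS$, so the order $\sigma$ is irrelevant for the ordinary sum.

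Combining the two verified conditions shows that $w_\sigma$ is a vertex of $\Uncomp(x_1\oplus\dots\oplus x_n)$ for every $\sigma\in\Sym(n)$; this is the asserted inclusion, and the bound of $2^d$ points follows immediately. I do not anticipate a genuine obstacle: the argument is bookkeeping around \cref{obs:basic-properties-lsum} and the stability of $\Uncomp(\cdot)$ under $\oplus$. The only spots that need a moment's care are the vertex description of $\Uncomp(s)$ and, inside the first elementary fact, the case $|u_i| = |v_i|$ with $u_i$ and $v_i$ of opposite signs, where the left sum and the ordinary sum genuinely differ but both remain inside the one-dimensional interval $\Uncomp(u_i\oplus v_i)$.
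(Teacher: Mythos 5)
Your argument is correct and is exactly the fleshing-out the paper intends: it gives no separate proof for this corollary, instead stating that it follows directly from \cref{def:vert+faces} and \cref{obs:basic-properties-lsum}, which is precisely what you verify (vertex characterization via $|v|=|s|$ together with $v\in\Uncomp(s)$, the identity $|w_\sigma|=|s|$ from iterating $|x\lplus y|=|x|\oplus|y|$, and the inductive membership $w_\sigma\in\Uncomp(s)$ using $u\lplus v\in\Uncomp(u\oplus v)$ and the monotonicity of $\Uncomp$ under $\oplus$). No gaps; same route, just spelled out.
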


\begin{example}
  If we only consider two points, there are essentially three cases.
  We illustrate them on small examples.
  In the first case, the sum does not have a balanced entry. 
  \begin{equation*}
    \vertices\left(\begin{pmatrix} 0 \\ \ominus 0 \end{pmatrix},\begin{pmatrix} 1 \\  -1 \end{pmatrix}\right) =
    \left\{\begin{pmatrix} 1 \\ \ominus 0 \end{pmatrix}\right\} =
    \faces\left(\begin{pmatrix} 0 \\ \ominus 0 \end{pmatrix},\begin{pmatrix} 1 \\  -1 \end{pmatrix}\right) \enspace .
  \end{equation*}

  In the second case, we have 
  
  \begin{equation*}
    \vertices\left(\begin{pmatrix} 0 \\ \ominus 0 \end{pmatrix},\begin{pmatrix} 1 \\  0 \end{pmatrix}\right) =
    \left\{\begin{pmatrix} 1 \\ \ominus 0 \end{pmatrix},\begin{pmatrix} 1 \\  0 \end{pmatrix}\right\}
  \end{equation*}
  which yields 
  \begin{equation*}
    \faces\left(\begin{pmatrix} 0 \\ \ominus 0 \end{pmatrix},\begin{pmatrix} 1 \\  0 \end{pmatrix}\right) =
    \left\{\begin{pmatrix} 1 \\ s \end{pmatrix} \colon s \in [\ominus 0, 0]\right\} \enspace .
  \end{equation*}

  In the last case, there is more than one balanced entry
  \begin{align*}
    \vertices\left(\begin{pmatrix} 0 \\ \ominus 0 \end{pmatrix},\begin{pmatrix} \ominus 0 \\  0 \end{pmatrix}\right) =
    \left\{\begin{pmatrix} 0 \\ \ominus 0 \end{pmatrix},\begin{pmatrix} \ominus 0 \\  0 \end{pmatrix}\right\} =
    \faces\left(\begin{pmatrix} 0 \\ \ominus 0 \end{pmatrix},\begin{pmatrix} \ominus 0 \\  0 \end{pmatrix}\right) \enspace .
  \end{align*}

\end{example}

With this terminology, we can give a representation of the TC-convex hull of two points analogous to the representation by convex combinations. 

\begin{proposition}\label{le:weak_inter}
  We have
  \begin{equation}
  \whull(x,y) = \bigcup\SetOf{\faces(\lambda \odot x, \mu \odot y)}{\lambda, \mu \in \Tgeq, \lambda \oplus \mu = 0} \enspace .
  \end{equation}
\end{proposition}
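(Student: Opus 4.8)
The plan is to prove the displayed identity $\whull(x,y)=R$, where I write $R:=\bigcup\SetOf{\faces(\lambda\odot x,\mu\odot y)}{\lambda,\mu\in\Tgeq,\ \lambda\oplus\mu=0}$ for the set on the right, by establishing the two inclusions separately.

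\textbf{The inclusion $R\subseteq\whull(x,y)$.} By \cref{def:weak+hull+segment} it suffices to check that $R$ is contained in every closed tropical halfspace $\Hclosed^+(a)$ with $x,y\in\Hclosed^+(a)$, and it is enough to treat one term $\faces(\lambda\odot x,\mu\odot y)$ at a time. Since $\faces(\,\cdot\,,\,\cdot\,)$ is symmetric in its two arguments and $\lambda\oplus\mu=0$ forces $\lambda=0$ or $\mu=0$, I may assume $\lambda=0$, so $\lambda\odot x=x$ and $\mu\in\Tgeq$ with $\mu\le 0$ (the subcase $\mu=\Zero$ being trivial because then $\faces(x,\Zero)=\{x\}$). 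The first step is to show that the two vertices $v_1=x\lplus(\mu\odot y)$ and $v_2=(\mu\odot y)\lplus x$ lie in $\Hclosed^+(a)$. I would do this by rewriting $x\in\Hclosed^+(a)$ and $y\in\Hclosed^+(a)$ via the $\max$-inequality \cref{eq:hspace}, rephrasing each as a bound of the form ``a nonnegative tropical number $\le$ an iterated left sum of the coordinatewise terms $a_k\odot x_k$ (resp.\ $a_k\odot y_k$)'', and then feeding these two bounds into \cref{lem:cancel-inequalities-left-sum}; here I use that $a_k\odot\bigl(x_k\lplus(\mu\odot y_k)\bigr)=(a_k\odot x_k)\lplus(\mu\odot a_k\odot y_k)$ and that $a_k\odot(v_1)_k$ and $a_k\odot(v_2)_k$ are exactly the two endpoints of $\Uncomp(a_k\odot x_k\oplus\mu\odot a_k\odot y_k)$. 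The second step passes from $\{v_1,v_2\}$ to all of $\faces(x,\mu\odot y)$: if $s:=x\oplus(\mu\odot y)$ has no balanced coordinate then $\faces(x,\mu\odot y)=\{s\}$; if it has at least two then $\faces(x,\mu\odot y)=\{v_1,v_2\}$, because $v_1,v_2$ differ in every balanced coordinate and hence cannot span an edge of the hypercube $\Uncomp(s)$; and in the remaining case $\faces(x,\mu\odot y)$ is the single edge of $\Uncomp(s)$ joining $v_1$ and $v_2$, whose membership in $\Hclosed^+(a)$ I would deduce from \cref{eq:hspace} orthant by orthant, using that both endpoints of that edge lie in $\Hclosed^+(a)$ and that the edge varies only along the one balanced coordinate.

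\textbf{The inclusion $\whull(x,y)\subseteq R$.} Let $z\in\whull(x,y)$. By \cref{cor:TO-contain-TC} we have $z\in\shull(x,y)$, so \cref{eq:hull+U} yields $\lambda,\mu\in\Tgeq$ with $\lambda\oplus\mu=0$ and $z\in\Uncomp(s)$, where $s:=(\lambda\odot x)\oplus(\mu\odot y)$. Assume toward a contradiction that $z\notin R$; then in particular $z\notin\faces(\lambda\odot x,\mu\odot y)$, and by the description of $\faces$ above this forces $s$ to have at least two balanced coordinates and $z$ to sit at a ``mixed corner'' of $\Uncomp(s)$: either $z$ is strictly interior along some balanced coordinate (so $|z_k|<|s_k|$ for some $k$), or along two distinct balanced coordinates $k,l$ the value of $z$ agrees with $(\lambda\odot x)_k$ and with $(\mu\odot y)_l$ respectively. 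I would then construct a vector $a$ with $(a_1,\dots,a_d)\neq\Zero$ such that $x,y\in\Hclosed^+(a)$ but $z\notin\Hclosed^+(a)$: one builds $a$ supported on the constant coordinate $0$ together with the offending coordinates $k$ (resp.\ $k,l$), choosing the signs and magnitudes of the relevant $a_i$ so that both $x$ and $y$ land on the non-strict side of \cref{eq:hspace} — which is possible precisely because a balanced coordinate of $s$ records the exact equality $|\lambda\odot x_i|=|\mu\odot y_i|$ — while $z$ strictly violates \cref{eq:hspace}. This contradicts $z\in\whull(x,y)\subseteq\Hclosed^+(a)$, hence $z\in R$.

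\textbf{Main obstacle.} The genuine difficulty is the construction of the separating closed halfspace in the second inclusion. One must produce coefficients $a$ that simultaneously retain both $x$ and $y$ — which, because of the two equal magnitudes recorded by a balanced coordinate of $s$, rules out a pure coordinate halfspace and forces a ``balanced'' choice involving the affine term and one or two further coordinates — while still cutting off the mixed corner $z$. By contrast, the vertex membership via \cref{lem:cancel-inequalities-left-sum} and the propagation along a single balanced coordinate in the first inclusion are essentially bookkeeping once set up, though they require care with the $\max$-form \cref{eq:hspace} of halfspace membership and with the behaviour of $\lplus$ on sums with balanced outcome.
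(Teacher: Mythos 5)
Your overall strategy mirrors the paper's: use $\whull(x,y)\subseteq\shull(x,y)$ to reduce to a box-by-box analysis of $\Uncomp(\lambda\odot x\oplus\mu\odot y)$, show the vertices $v_1,v_2$ lie in every containing closed halfspace, and construct separating halfspaces to exclude everything else. However, both halves of your proposal have real gaps.

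For the inclusion $R\subseteq\whull(x,y)$, the intended application of \cref{lem:cancel-inequalities-left-sum} cannot go through. That lemma takes a single fixed ordering of the indices and requires both left sums (for $x$ and for $y$) to exceed their bounds under that \emph{same} ordering. The halfspace condition $x\in\Hclosed^+(a)$ only says that \emph{some} ordering $\sigma$ makes $a_{\sigma(0)}\odot x_{\sigma(0)}\lplus\cdots\geq\Zero$, and the good orderings for $x$ and $y$ need not agree. Concretely, take $d=2$, $a=(\Zero,0,0)$, $x=(0,\ominus 0)$, $y=(\ominus 0,0)$, $\lambda=\mu=0$: then $x,y\in\Hclosed^+(a)$, yet of the two orderings of coordinates $\{1,2\}$ (with $0$ first), one gives a negative left sum for $x$ and the other gives a negative left sum for $y$, so no common ordering satisfies the hypothesis of \cref{lem:cancel-inequalities-left-sum}. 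The paper avoids this entirely by moving the balanced block to the right of a $\teq$ relation and invoking \cref{lem:stronger+cancellation+teq}, which is a cancellation statement for $\teq$ rather than for left sums and does not depend on a choice of ordering.

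For the inclusion $\whull(x,y)\subseteq R$, your ``strictly interior along a single balanced coordinate $k$'' case cannot be handled by a halfspace supported on $\{0,k\}$ alone. On a balanced coordinate one has $\tsgn(x_k)\neq\tsgn(y_k)$ with $x_k,y_k\neq\Zero$, so for any $a_k\neq\Zero$ exactly one of $a_k\odot x_k,\,a_k\odot y_k$ is negative. For both of $x,y$ to be retained, the affine term $a_0$ must dominate that negative contribution in absolute value; but then $a_0$ also dominates $a_k\odot z_k$ (which has strictly smaller modulus), so $z$ stays inside the halfspace. The paper's construction in \cref{eq:separating+halfs+box} instead supports the halfspace on the constant coordinate \emph{and all} balanced coordinates, with a sign vector $\epsilon$ chosen per $z$; the cancellation across the full balanced block is what makes $x,y$ land on the boundary while $z$ falls strictly outside. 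Your ``two offending coordinates'' case is essentially the paper's Case 3b and is salvageable, but the single-coordinate case needs the full balanced support (or at least one additional balanced coordinate chosen as in the paper's Case 3a).
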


\begin{proof}
 By \cref{cor:TO-contain-TC}, we know that $\whull(x,y) \subseteq \shull(x,y)$.
  With~\cref{eq:hull+U}, this implies that it suffices to consider which part of $\Uncomp(\lambda \odot x \oplus \mu \odot y)$ is contained in $\whull(x,y)$ for each pair $\lambda, \mu \in \Tgeq, \lambda \oplus \mu = 0$.
  Note that, for different such pairs $\lambda_1,\mu_1$ and $\lambda_2,\mu_2$, the sets $\Uncomp(\lambda_1 \odot x \oplus \mu_1 \odot y)$ and $\Uncomp(\lambda_2 \odot x \oplus \mu_2 \odot y)$ are either disjoint or identical.
  Therefore, it is enough to consider the sets $\Uncomp(\lambda \odot x \oplus \mu \odot y)$ for a fixed pair $\lambda, \mu$.  
  Hence, fixing such a pair, we distinguish three cases.

  For $a = (a_0,\bar{a}) \in \TTpm^{d+1}$ let $\Hclosed^+(a)$ be an affine halfspace containing $x$ and $y$.
  This means that
  \begin{equation} \label{eq:two+points+halfspace}
    \lambda \odot a_0 \oplus \bar{a} \odot \lambda \odot x \teq \Zero \text{ and } \mu \odot a_0 \oplus \bar{a} \odot \mu \odot y \teq \Zero \enspace .
  \end{equation}
  Recall that at least one of $\lambda, \mu$ is $0$. 
  
  \textbf{Case 1} ($\lambda \odot x \oplus \mu \odot y$ has no balanced entry. )
  Adding the relations in~\cref{eq:two+points+halfspace} yields 
  \begin{equation*}
    a_0 \oplus \bar{a} \odot (\lambda \odot x \oplus \mu \odot y) \teq \Zero \enspace .
  \end{equation*}
  That already concludes this case, where indeed $\lambda \odot x \oplus \mu \odot y = \lambda \odot x \lplus \mu \odot y =  \mu \odot y \lplus \lambda \odot x$. 
  
  \textbf{Case 2} ($\lambda \odot x \oplus \mu \odot y$ has exactly one balanced entry. )
  Without loss of generality, we assume that the $d$-th entry is balanced.
  Reformulating~\cref{eq:two+points+halfspace} implies
  \begin{equation*}
    \begin{aligned}
      \lambda \odot a_0 \oplus \bigoplus_{\ell=1}^{d-1} a_{\ell} \odot \lambda \odot x_{\ell} &\teq \ominus \lambda \odot a_d \odot x_d \\
      \mu \odot a_0 \oplus \bigoplus_{\ell=1}^{d-1} a_{\ell} \odot \mu \odot y_{\ell} &\teq \ominus \mu \odot a_d \odot y_d \enspace .
    \end{aligned}
  \end{equation*}
  Using $b := \lambda \odot x_d =  \ominus \mu \odot y_d$, the side-wise addition of the relations yields
    \begin{equation*}
    \begin{aligned}
      a_0 \oplus \bigoplus_{\ell=1}^{d-1} a_{\ell} \odot (\lambda \odot x_{\ell} \oplus \mu \odot y_{\ell}) &\teq a_d \oplus \bullet b \enspace .
    \end{aligned}
    \end{equation*}
    Therefore, each point in
    \begin{equation*}
      \left\{(\lambda \odot x_{1} \oplus \mu \odot y_{1}, \dots, \lambda \odot x_{d-1} \oplus \mu \odot y_{d-1},s) \colon s \in [\ominus |b|, |b|]\right\}
    \end{equation*}
    is contained as claimed.
    Furthermore, the latter set is indeed the line between $\lambda \odot x \lplus \mu \odot y$ and $\mu \odot y \lplus \lambda \odot x$. 
    
  \textbf{Case 3} ($\lambda \odot x \oplus \mu \odot y$ has more than one balanced entry. )
  Without loss of generality, exactly the coordinates $k+1$ to $d$ of $\lambda \odot x \oplus \mu \odot y$ are balanced.
  Then~\cref{eq:two+points+halfspace} amounts to
  \begin{equation*}
    \begin{aligned}
      \left(\lambda \odot a_0 \oplus \bigoplus_{\ell=1}^{k} a_{\ell} \odot \lambda \odot x_{\ell}\right) \oplus \left(\bigoplus_{\ell=k+1}^{d} a_{\ell} \odot \lambda \odot x_{\ell}\right) &\teq \Zero\\
      \left(\mu \odot a_0 \oplus \bigoplus_{\ell=1}^{k} a_{\ell} \odot \mu \odot y_{\ell}\right) \oplus \left(\bigoplus_{\ell=k+1}^{d} a_{\ell} \odot \mu \odot y_{\ell} \right) &\teq \Zero \enspace .
    \end{aligned}
  \end{equation*}
  Using \cref{lem:stronger+cancellation+teq} in the same way as we did for Case 2, we obtain that $\lambda \odot x \lplus \mu \odot y$ and $\mu \odot y \lplus \lambda \odot x$ are also contained in $\Hclosed^+(a)$.

  Up until now, we have proven the first inclusion. Now, we set $b_{\ell} = \lambda \odot x_{\ell} = \ominus \mu \odot y_{\ell}$ for $\ell \in [d] \setminus [k]$. 
  We look at the halfspaces
  \begin{equation} \label{eq:separating+halfs+box}
    \Hclosed^+(\ominus 0,\Zero,\dots,\Zero,\epsilon_1 \odot b_{k+1}^{\odot -1},\dots,\epsilon_{d-k} \odot b_d^{\odot -1}) 
  \end{equation}
  for each $\epsilon \in \{\ominus,\oplus\}^{d-k} \setminus \{(\ominus,\dots,\ominus),(\oplus,\dots,\oplus)\}$.
  By putting $x$ and $y$ in the corresponding relation, we see that they are both contained.    

  Now let $w_1 =  \lambda \odot x \lplus \mu \odot y$ and $w_2 = \mu \odot y \lplus \lambda \odot x$.
  Note that $w_1$ and $x$ have the same sign pattern on the coordinates $k+1$ up to $d$, and the same for $w_2$ and $y$, respectively. 
  We pick any point 
  \begin{equation*}
    z \in \Uncomp(\lambda \odot x_1 \oplus \mu \odot y_1,\dots, \lambda \odot x_k \oplus \mu \odot y_k,\bullet b_{k+1},\dots, \bullet b_d) \setminus \{w_1,w_2\}
  \end{equation*}
  
  \textbf{Case 3a} ($z$ has the same sign pattern as $w_1$ or $w_2$. )
  Without loss of generality, we assume that $z$ has the same sign pattern as $w_1$. 
  Then there is a coordinate of $z$, say the $(k+1)$st, such that $|z_{k+1}| < |\lambda \odot x_{k+1}| = |\mu \odot y_{k+1}|$.
  We let
  \begin{equation*}
    (\epsilon_1,\dots, \epsilon_{d-k}) = (\tsgn(z_{k+1}),\ominus \tsgn(z_{k+2}),\dots,\ominus \tsgn(z_{d-k}))
  \end{equation*}
  As $|z_{k+1}\odot |b_{k+1}|^{\odot -1}| < 0$, we get 
  \begin{equation*}
    \ominus 0 \oplus \epsilon_1 \odot |b_{k+1}|^{\odot -1} \odot z_{k+1} \oplus \dots \oplus \epsilon_{d-k} \odot |b_d|^{\odot -1} \odot z_{d-k} < \Zero  .
  \end{equation*}
  Hence, there is a halfspace among those in~\cref{eq:separating+halfs+box} not containing $z$. 
  
  \textbf{Case 3b} ($z$ has a different sign pattern from $w_1$ and $w_2$. ) 
  We let
  \begin{equation*}
    (\epsilon_1,\dots, \epsilon_{d-k}) = (\ominus \tsgn(z_{k+1}),\dots,\ominus \tsgn(z_{d-k}))
  \end{equation*}
  be the negative of the sequence of signs. Because of the relation
  \begin{equation*}
    \ominus 0 \oplus \epsilon_1 \odot |b_{k+1}|^{\odot -1} \odot z_{k+1} \oplus \dots \oplus \epsilon_{d-k} \odot |b_d|^{\odot -1} \odot z_{d-k} < \Zero 
  \end{equation*}
  there is a halfspace among those in~\cref{eq:separating+halfs+box} not containing $z$, as these include all sign patterns except for those of $x$ and $y$. 
\end{proof}

\begin{example} \label{ex:TO+TC-intervals}
\Cref{fig:line+segments} compares TC-intervals with TO-intervals in a plane.
If we take $x = (0,0)$ and $y = (\ominus-3, \ominus-2)$, then for every $\lambda, \mu \in \Tgeq$ such that $\lambda \oplus \mu = 0$ we have $\faces(\lambda \odot x, \mu \odot y) = \Uncomp(\lambda \odot x \oplus \mu \odot y)$.
In particular, in this case the intervals coincide, $\shull(x,y) = \whull(x,y)$.
If we take $y = (\ominus -2, \ominus -2)$ instead, then the equality no longer holds.
Indeed, for $\lambda = -2$, $\mu = 0$ we have $\Uncomp(\lambda \odot x \oplus \mu \odot y) = \Uncomp(\bullet 2, \bullet 2) = [\ominus -2, -2] \times [\ominus -2, -2]$ and $\faces(\lambda \odot x, \mu \odot y) = \{(\ominus -2, \ominus -2), (-2, -2)\}$.
\end{example}

We provide a tool for a local check of convexity based on left sum and cancellation in only one coordinate.

\begin{proposition} \label{prop:local-generation-TC-hull}
  A set $M \subseteq \TTpm^{d}$ is TC-convex if and only if it is closed under the following two operations:
  \begin{enumerate}[(i)]
  \item for $x,y \in M$ and $\lambda, \mu \in \Tgeq$ with $\lambda \oplus \mu = 0$, we have
    \begin{equation*}
      \lambda \odot x \lplus \mu \odot y \in M; \tag{weighted left sum}
    \end{equation*}
  \item if $(u,v,w), (u,\ominus v, w) \in \TTpm^k \times \TTpm \times \TTpm^{d-k-1}$ for $k \in [d]_0$ are contained in $M$ then 
    \begin{equation*}
      (u,\Zero,w) \in M . \tag{local elimination}
    \end{equation*}
  \end{enumerate}
  Furthermore, we can equivalently replace $(ii)$ by a stronger property
    \begin{enumerate}
  \item[(ii')] if $(u,v,w), (u,\ominus v, w) \in \TTpm^k \times \TTpm \times \TTpm^{d-k-1}$ for $k \in [d]_0$ are contained in $M$ then 
    \begin{equation*}
      \{u\} \times [\ominus |v|, |v|] \times \{w\} \subseteq M .
    \end{equation*}
  \end{enumerate}
\end{proposition}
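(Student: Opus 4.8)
The plan is to prove the two directions of the equivalence separately, handling the two variants (ii) and (ii$'$) together since (ii$'$) trivially implies (ii), so the main work is showing that (i)$+$(ii) already forces the apparently stronger (ii$'$), and that (i)$+$(ii$'$) (equivalently (i)$+$(ii)) characterizes TC-convexity.

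\textbf{Necessity.} First I would show that a TC-convex set $M$ is closed under (i) and (ii$'$). For (i), the point $\lambda \odot x \lplus \mu \odot y$ lies in $\whull(x,y)$: this is exactly what \cref{le:weak_inter} gives, since $\lambda \odot x \lplus \mu \odot y$ is one of the distinguished vertices of $\faces(\lambda \odot x, \mu \odot y)$ (it appears as $x_{\sigma(1)} \lplus x_{\sigma(2)}$ for the identity permutation on the two-element set), and $\whull(x,y) \subseteq M$ by TC-convexity. For (ii$'$), I would apply \cref{le:weak_inter} with $x' = (u,v,w)$, $y' = (u,\ominus v, w)$, and $\lambda = \mu = 0$: then $x' \oplus y' = (u, \bullet v, w)$ is balanced exactly in the middle coordinate (where $u,w$ contribute unbalanced entries), so this is Case 2 of the proof of \cref{le:weak_inter}, and $\faces(x',y') = \{u\} \times [\ominus|v|,|v|] \times \{w\}$. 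Again $\whull(x',y') \subseteq M$.

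\textbf{Sufficiency.} For the converse, suppose $M$ is closed under (i) and (ii). The goal is $\whull(x,y) \subseteq M$ for all $x,y \in M$; by \cref{le:weak_inter} it suffices to show $\faces(\lambda \odot x, \mu \odot y) \subseteq M$ for each admissible $\lambda,\mu$. Fix such a pair and write $p = \lambda \odot x$, $q = \mu \odot y$, and $s = p \oplus q$. Using (i) twice I obtain both $p \lplus q \in M$ and $q \lplus p \in M$; these are the two ``distinguished'' vertices of $\Uncomp(s)$. By \cref{obs:basic-properties-lsum}(b), $p \lplus q$ and $q \lplus p$ agree on all coordinates where $s$ is unbalanced, and in each balanced coordinate $k+1,\dots,d$ (say $s$ has $m = d-k$ balanced entries) they take the opposite extreme values $\pm|s_\ell|$. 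I then want to fill in the entire sub-box: I claim that starting from these two opposite corners and iterating the local elimination (ii) one coordinate at a time produces all of $\{u\} \times \prod_{\ell>k}[\ominus|s_\ell|,|s_\ell|] \times \{w\}$ — but here is the subtlety, and this is the step I expect to be the main obstacle: local elimination only produces the single midpoint $\Zero$ in the eliminated coordinate, not the whole interval, so a naive induction on two fixed corners stalls. The fix is to first upgrade (ii) to (ii$'$) using only (i) and (ii): given $(u,v,w),(u,\ominus v,w) \in M$, apply (ii) to get $(u,\Zero,w) \in M$; now for any target $t$ with $|t| \le |v|$ and, say, $\tsgn(t) = \tsgn(v)$, use the weighted left sum (i) on the pair $(u,v,w)$ and $(u,\Zero,w)$ — since in the middle coordinate $v \lplus \Zero = v$, a direct left sum does not move it, so instead one scales: pick $\lambda$ with $\lambda \odot v$ having absolute value $|t|$ when $|t|>-\infty$, apply (i) to $(u',v',w')$ built from scalings, then re-eliminate the auxiliary coordinates; more cleanly, combine $(u,v,w)$ and $(u,\ominus v,w)$ under a weighted left sum with well-chosen $\lambda,\mu$ after translating to a homogeneous setting, or observe that $\whull((u,v,w),(u,\ominus v,w))$ only uses scalings of these two points and Cases 1--2 of \cref{le:weak_inter}, which are reproduced by finitely many applications of (i) and (ii). Once (ii$'$) is available, I induct on the number $m$ of balanced coordinates of $s$: for $m=0$ the point $p\lplus q = p \oplus q \in M$ is the whole of $\faces$; for the inductive step, for each of the $2^m$ sign patterns $\epsilon$ on the balanced block I locate two points of $\vertices(p,q)$ (images of suitable permutations, cf.\ the proof of \cref{le:weak_inter}) differing only in one balanced coordinate, apply (ii$'$) there to recover a full interval, and then recurse on the remaining $m-1$ balanced coordinates. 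Collecting over all corners and all $\lambda,\mu$ gives $\whull(x,y) \subseteq M$, completing the proof.

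Let me reconsider and streamline: the cleanest route for sufficiency is to prove directly that (i) and (ii) imply (ii$'$), then prove that closure under (i) and (ii$'$) implies $\whull(x,y)\subseteq M$ by matching against the explicit description in \cref{le:weak_inter} case by case — Case 1 is pure (i), Case 2 is (i) to get the two endpoints followed by one application of (ii$'$), and Case 3 is (i) to get the $2^{m}$ corners in $\vertices$ followed by an $m$-fold iteration of (ii$'$) peeling off one balanced coordinate at a time. The genuinely delicate point remains the upgrade (ii)$\Rightarrow$(ii$'$) modulo (i): one must produce every interior point of $[\ominus|v|,|v|]$ in the eliminated coordinate, and the mechanism is to first get the midpoint $\Zero$ via (ii), then for a target value $t\ne\Zero$ of sign $\sigma$ and $0<|t|<|v|$, scale $(u,v,w)$ by $t\odot v^{\odot-1}$ — wait, that scales $u,w$ too — so instead embed in one higher dimension or argue via the two-point hull: the set $\whull((u,v,w),(u,\Zero,w))$, by \cref{le:weak_inter}, contains $\{u\}\times[\Zero,v]\times\{w\}$ when the relevant sum $\lambda\odot(u,v,w)\oplus\mu\odot(u,\Zero,w)$ is balanced in that coordinate, and this hull is reachable from the generators by (i) and (ii) since it only invokes Cases 1--2; combining the two half-intervals $[\Zero,v]$ and $[\Zero,\ominus v]$ (the latter from $(u,\ominus v,w)$) yields all of $[\ominus|v|,|v|]$. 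I would present this upgrade as a standalone claim at the start of the sufficiency argument.
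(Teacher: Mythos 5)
Your necessity argument is fine and matches the paper's, but your sufficiency argument rests on a misreading of $\faces$ that makes the proposed induction both unnecessary and unworkable. For two points $p = \lambda\odot x$ and $q = \mu\odot y$, the set $\vertices(p,q)$ has at most the \emph{two} elements $p\lplus q$ and $q\lplus p$. If $p\oplus q$ has $m$ balanced coordinates, these two vertices are antipodal corners of an $m$-dimensional hypercube. For $m\ge 2$, no positive-dimensional face of that cube has both of its vertices (let alone all $2^j$ of them) inside $\vertices(p,q)$, so by definition $\faces(p,q)=\vertices(p,q)$ is just those two isolated points, not a filled box. Your plan to ``locate two points of $\vertices(p,q)$ differing only in one balanced coordinate'' therefore fails at the first step when $m\ge 2$: no such pair exists, because the only two available points differ in all $m$ balanced coordinates. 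Moreover, if the induction \emph{did} go through, it would show that $\whull(x,y)$ contains the entire box $\Uncomp(p\oplus q)$, which \cref{le:weak_inter} explicitly rules out (see its Case~3 and \cref{ex:caratheodory-number}: for $p=(0,\ominus 0)$, $q=(\ominus 0,0)$ the TC-hull is just the two points, not $[\ominus 0,0]^2$). So the argument would prove a false statement.

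The fix is not to iterate at all: the only nontrivial case is $m=1$. For $m\ne 1$ you already have $\faces(p,q)=\vertices(p,q)\subseteq M$ directly from property (i). For $m=1$, the two vertices $p\lplus q$ and $q\lplus p$ have the form $(u,v,w)$ and $(u,\ominus v,w)$, so (ii) gives the midpoint $(u,\Zero,w)\in M$, and then weighted left sums $\rho\odot(u,v,w)\lplus (u,\Zero,w)$ and $\rho\odot(u,\ominus v,w)\lplus(u,\Zero,w)$ for $\rho\in[\Zero,0]$, which are instances of (i), sweep out the whole segment $\{u\}\times[\ominus|v|,|v|]\times\{w\}$. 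That single-coordinate case is the complete content of the sufficiency direction, and it simultaneously shows that (i)$+$(ii) implies (ii$'$). The paper's argument is exactly this: a one-case analysis, no peeling of coordinates. Incidentally, in your side-remark about when ``the relevant sum $\lambda\odot(u,v,w)\oplus\mu\odot(u,\Zero,w)$ is balanced'': it never is in the middle coordinate, since $\lambda\odot v\oplus\Zero=\lambda\odot v$ is unbalanced; the interval is obtained purely from the scaling in (i), not from balancing.
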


\begin{proof}
  By \cref{le:weak_inter}, a TC-convex set fulfills $(i)$ and $(ii')$. Moreover, $(ii')$ is stronger than $(ii)$.

  It therefore suffices to prove that the TC-convex hull of two points can be generated by $(i)$ and $(ii)$.
  So we fix two points $x,y \in M \subseteq \TTpm^{d}$.
  Let $\lambda, \mu \in \Tgeq$ with $\lambda \oplus \mu = 0$.
  By the first property, $\vertices(\lambda \odot x, \mu \odot y)$ is contained in $M$.
  If $\vertices(\lambda \odot x, \mu \odot y)$ do not form the vertices of a face of the respective hypercube, it equals $\faces(\lambda \odot x, \mu \odot y)$. 
  Otherwise, we have $\lambda \odot x = (u,v,w)$ and $\mu \odot y = (u, \ominus v, w)$ for some $u,v,w$ as in $(ii)$, so that $(u,\Zero,w) \in M$.   Taking the combinations $\rho \odot (u,v,w) \lplus (u,\Zero,w)$ and $\rho \odot (u,\ominus v,w) \lplus (u,\Zero,w)$ for $\rho \leq 0$ via the weighted left sum property yields the whole interval $\faces(\lambda \odot x, \mu \odot y)$. 
  Now, \cref{le:weak_inter} concludes the proof. 
\end{proof}

\begin{example} \label{ex:vertical-lines}
  For $(p,r),(q,r) \in \TTpm \times \TTpm^{d-1}$ with $p < q$, one gets
  \begin{equation}
    \whull(\{(p,r),(q,r)\}) = [p,q] \times \{r\} . 
  \end{equation}
  To see this, we first assume that $p < q$ have the same sign, w.l.o.g. both are positive.
  Then $[p,q] \times \{r\} = \SetOf{(p,r) \lplus \lambda \odot (q,r)}{\Zero \leq \lambda \leq 0}$. 
  Otherwise, assume that $p < \Zero$ and $|p| < q$.
  Then $(\ominus p,r) = (q - |p|) \odot (q,r) \lplus (p,r)$ and the claim follows with the local elimination property. 
  The other cases follow by suitably adapting the signs and scalars.
\end{example}

In the study of the structure of $\faces( . )$, the boxes arising from $\Uncomp( . )$ play a crucial role.
For $p, q \in \TTpm^d$ with $p \leq q$ (defined component-wise), we denote
\begin{align*}
  [p,q] = \SetOf{x \in \TTpm^d}{p_i \leq x_i \leq q_i\; \forall i \in [d]}, \\
  (p,q) = \SetOf{x \in \TTpm^d}{p_i < x_i < q_i\; \forall i \in [d]}.
\end{align*}

\begin{example} \label{ex:vertical-boxes}
  For $p,q \in \TTpm^d$ with $p \leq q$, one gets
  \begin{equation}
    \whull(\{p_1,q_1\} \times \dots \times \{p_d,q_d\}) = [p,q] .
  \end{equation}
  To see this, we can fix all but one coordinate and then iteratively use \cref{ex:vertical-lines}. 

  \smallskip

  Note that it also holds that
  \begin{equation}
    \shull(\{p_1,q_1\} \times \dots \times \{p_d,q_d\}) = [p,q] .
  \end{equation}
\end{example}

Let $C$ be a subcomplex of the faces of $[-1,1]^d$ with the property: \emph{if all vertices of a face are contained in $C$ then so is the face.}
These are the kind of subcomplexes arising from the operator $\faces( . )$ as we will conclude in \cref{cor:faces_from_ineqs}. 

For ${\sim} \in \{{\le}, {\ge}\}^d$, we define $x \sim y\; \Leftrightarrow\; \left(x_k \sim_k y_k \text{ for all }k \in [d]\right)$.

\begin{lemma} \label{lem:containment-special-subcomplex-hypercube}
  A point $z \in [-1,1]^d$ is contained in $C$ if and only if for every ${\sim} \in \{{\le}, {\ge}\}^d$ there exists a vertex $w$ of $C$ such that $z \sim w$. 
\end{lemma}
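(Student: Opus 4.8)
The statement to prove is \Cref{lem:containment-special-subcomplex-hypercube}: a point $z \in [-1,1]^d$ lies in the subcomplex $C$ if and only if for every sign pattern ${\sim} \in \{{\le},{\ge}\}^d$ there is a vertex $w$ of $C$ with $z \sim w$. I would organize the proof around the closure property of $C$ (a face is in $C$ as soon as all its vertices are), since everything hinges on it.

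The plan is to prove the two directions separately. For the forward direction, suppose $z \in C$, and fix ${\sim} \in \{{\le},{\ge}\}^d$. Then $z$ lies in some face $F$ of $[-1,1]^d$ that is contained in $C$, namely the smallest face containing $z$: this is the face whose coordinates are frozen to $\pm 1$ exactly on the set $S$ of indices $k$ with $z_k \in \{-1,1\}$, and free (ranging over $[-1,1]$) elsewhere. I would then exhibit the specific vertex $w$ of $F$ defined by: for $k \notin S$, set $w_k = 1$ if ${\sim_k} = {\le}$ and $w_k = -1$ if ${\sim_k} = {\ge}$; for $k \in S$, set $w_k = z_k$ (the frozen value). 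Since $z_k \in (-1,1)$ for $k \notin S$, we get $z_k \le 1 = w_k$ resp. $z_k \ge -1 = w_k$, and $z_k = w_k$ for $k \in S$; hence $z \sim w$. As $w$ is a vertex of $F \subseteq C$, it is a vertex of $C$. This direction is essentially bookkeeping.

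The reverse direction is the substantive one. Assume that for every ${\sim}$ there is a vertex $w^{\sim}$ of $C$ with $z \sim w^{\sim}$; I want to conclude $z \in C$. Let $S = \{k : z_k \in \{-1,1\}\}$ and let $F$ be the smallest face containing $z$ (frozen on $S$, free elsewhere), so it suffices to show every vertex of $F$ lies in $C$, and then invoke the closure property. A vertex $v$ of $F$ is obtained by choosing, for each $k \notin S$, a value $v_k \in \{-1,1\}$, while $v_k = z_k$ for $k \in S$. I would produce the sign pattern ${\sim}$ that "points toward $v$": set ${\sim_k} = {\le}$ if $v_k = 1$ and ${\sim_k} = {\ge}$ if $v_k = -1$, for $k \notin S$; for $k \in S$ the choice is forced and irrelevant since $z_k = \pm 1$ is an extreme value (if $z_k = 1$ take ${\sim_k}={\le}$, if $z_k=-1$ take ${\sim_k}={\ge}$, so that the only vertex value compatible is $z_k$ itself). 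Now the hypothesis gives a vertex $w$ of $C$ with $z \sim w$. I claim $w = v$: indeed, for $k \notin S$ with $v_k = 1$ we have $z_k \le w_k$ and $w_k \in \{-1,1\}$ with $z_k > -1$, forcing $w_k = 1 = v_k$; symmetrically for $v_k = -1$; and for $k \in S$, say $z_k = 1$, we have $z_k \le w_k \le 1$ so $w_k = 1 = z_k = v_k$ (and likewise if $z_k = -1$). Hence $v = w \in C$. Since this holds for every vertex $v$ of $F$, the closure property yields $F \subseteq C$, so $z \in C$.

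The main obstacle — really the only delicate point — is handling the coordinates in $S$ where $z_k$ is already at an extreme value $\pm 1$: one must be careful that the chosen sign pattern forces the matching vertex to agree with $z$ (and hence with $v$) on those coordinates, rather than allowing a "wrong" vertex; the argument above resolves this by noting that $z_k \sim_k w_k$ together with $w_k \in \{-1,1\}$ and $z_k = \pm 1$ pins down $w_k = z_k$. The rest is a direct application of the defining closure property of the subcomplex $C$.
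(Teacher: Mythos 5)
Your proof is correct and follows essentially the same route as the paper: both arguments identify the smallest face $F$ of the cube containing $z$, and both use the correspondence between sign patterns $\sim$ and vertices of $F$ together with the closure property of $C$. The only cosmetic difference is that the paper proves the "condition implies $z\in C$" direction via its contrapositive (starting from $z \notin C$ and exhibiting a missing vertex of $F$), whereas you prove it directly by showing each vertex of $F$ equals the $w$ supplied by the hypothesis for the appropriate $\sim$; the paper also silently works with $w \sim z$ rather than $z \sim w$, which amounts only to reversing the sign pattern, and your version is arguably slightly cleaner for matching the lemma statement as written.
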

\begin{proof}
  With a point $z \in [-1,1]^d$, we associate a partition $I \cup J \cup K$ of $[d]$ indexing the coordinates of $z$ which are $+1$, $-1$ or in the open interval $(-1,1)$.
  Then the set $I \cup J$ uniquely defines the face $F$ of the cube of smallest dimension containing $z$.

  \smallskip
  
  Assume that $z$ is not contained in $C$.
  By the crucial property of $C$, there is a vertex $v$ of $F$ which is not contained in $C$.
  With $v$ we associate a vector ${\sim}^{(v)} \in \{{\le}, {\ge}\}^d$ by converting $-1$ to $\leq$ and $+1$ to $\geq$.
  Then $v$ is the unique vertex of $[-1,1]^d$ with $v \sim^{(v)} z$ because there is a unique choice of $\tau \in \{-1,1\}$ with $\tau \sim^{(v)}_k z_k$ for $k \in K$. 
  As $v$ is not a vertex of $C$ this concludes the first direction. 

  \smallskip
  
  On the other hand, if a point $z$ is contained in $C$ then also the face $F$ is contained in $C$.
  Let ${\sim} \in \{{\le}, {\ge}\}^d$ be arbitrary.
  We define a vertex $v$ of $[-1,1]^d$ by $v_{\ell} = z_{\ell}$ for $\ell \in I \cup J$ and
  \[
  v_k = \begin{cases}
    1 & \text{ if  ${\sim_k}$ equals ${\geq}$}, \\
    -1 & \text{ if ${\sim_k}$ equals ${\leq}$} 
  \end{cases}
  \]
  for $k \in K$.
  By construction, $v$ is a vertex of $F$ and, hence, of $C$.
  Furthermore, we get $v \sim z$ proving the claim. 
\end{proof}

\begin{corollary}\label{cor:faces_from_ineqs}
We have $y \in \faces(x_1,\dots,x_n)$ if and only if for every ${\sim} \in \{{\le}, {\ge}\}^d$ there exists $w \in \vertices(x_1, \dots, x_n)$ such that $y_k \sim_k w_k$ for all $k \in [d]$.
\end{corollary}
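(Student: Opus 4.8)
The plan is to reduce the statement directly to \cref{lem:containment-special-subcomplex-hypercube} by exhibiting $\faces(x_1,\dots,x_n)$ as (an affine copy of) a subcomplex $C$ of a cube and $\vertices(x_1,\dots,x_n)$ as the vertex set of that subcomplex. First I would set $s = x_1 \oplus \dots \oplus x_n$ and split the coordinates $[d]$ into the balanced ones $B = \SetOf{k \in [d]}{s_k \in \Tzero\setminus\{\Zero\}}$ and the rest. On a coordinate $k \notin B$ every element of $\faces(x_1,\dots,x_n) \subseteq \Uncomp(s)$ and every element of $\vertices(x_1,\dots,x_n)$ (a vertex of $\Uncomp(s)$ by \cref{cor:permutation_vertices}) has the fixed value $s_k$; moreover the condition ``$y_k \sim_k w_k$ for all $k$ and all ${\sim}$'' is, on such a coordinate, equivalent to $y_k = s_k$. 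Hence both sides of the desired equivalence depend only on the restrictions to $B$, and we may assume $B = [d]$, so that $\Uncomp(s)$ is a genuine full-dimensional box.

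Next I would transport everything to $[-1,1]^d$ using the coordinatewise order-preserving affine isomorphism $t \mapsto t \odot |s_k|^{\odot-1}$ (realising the cube in $\TTpm^d$ as $[\ominus 0, 0]^d$), and let $C$ be the image of $\faces(x_1,\dots,x_n)$. The key checks are: (i) $C$ is a subcomplex of $[-1,1]^d$, which is immediate since by \cref{def:vert+faces} it is a union of faces, hence closed under passing to subfaces; (ii) $C$ has the distinguished property required by \cref{lem:containment-special-subcomplex-hypercube}, namely that a face all of whose vertices lie in $C$ already lies in $C$; and (iii) the vertex set of $C$ is exactly the image of $\vertices(x_1,\dots,x_n)$. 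Both (ii) and (iii) rest on the same observation: a vertex $v$ of the ambient cube lies in $\faces(x_1,\dots,x_n)$ if and only if $v$ is a vertex of some face all of whose vertices lie in $\vertices(x_1,\dots,x_n)$, and since $v$ is then itself one of those vertices, this holds if and only if $v \in \vertices(x_1,\dots,x_n)$. Combined with the definition of $\faces$ as the union of all faces with vertices in $\vertices(x_1,\dots,x_n)$, this yields (ii) and (iii) simultaneously.

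Finally I would apply \cref{lem:containment-special-subcomplex-hypercube} to $C$ and translate the conclusion back through the identification, using that the coordinatewise rescalings preserve each $\le_k$ and $\ge_k$, to obtain the stated criterion. I expect the only genuinely delicate point to be the bookkeeping in steps (ii)/(iii) -- making precise that a vertex of the ambient cube lying in a union of faces must be a vertex of one of those faces, so that membership in $\faces(x_1,\dots,x_n)$ reduces cleanly to membership in $\vertices(x_1,\dots,x_n)$ -- together with the routine but necessary verification that discarding the non-balanced coordinates changes neither side of the equivalence.
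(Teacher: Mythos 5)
Your proposal is correct and takes essentially the same approach as the paper: both reduce the claim to \cref{lem:containment-special-subcomplex-hypercube} by identifying $\faces(x_1,\dots,x_n)$ with a subcomplex of a cube satisfying the lemma's closure property and $\vertices(x_1,\dots,x_n)$ with its vertex set. The paper's proof is terser, while your explicit reduction to the balanced coordinates and the bookkeeping in steps (ii)/(iii) correctly supply details the paper leaves implicit (in particular the dimension drop when $\Uncomp(x_1 \oplus \dots \oplus x_n)$ is lower-dimensional).
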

\begin{proof}
 By \cref{def:vert+faces}, the set $\faces(x_1,\dots,x_n)$ forms a subcomplex of the faces of the (stretched) hypercube $\Uncomp(x_1 \oplus \dots \oplus x_n)$ exactly with the property required of $C$ in \cref{lem:containment-special-subcomplex-hypercube}.
 As $\vertices(x_1, \dots, x_n)$ select exactly the vertices of this subcomplex, \cref{lem:containment-special-subcomplex-hypercube} gives the required equivalence. 
\end{proof}

\subsection{Representation of TC-convex sets by combinations}

We extend the representation as an analog of a convex combination from TC-convex hulls of two points to arbitrary finite sets in \cref{le:inthull_from_faces}.
Then, we refine this to a Carath{\'e}odory-type result.  
We start with two technical lemmas.
While the first relates the combination of elements with the hull arising from $\faces( . )$, the second shows how $\faces( . )$ arises from $\vertices( . )$. 
Fix finite sets $X,Y \subseteq \TTpm^{d}$. 

\begin{lemma} \label{le:adding_boxes}
  Let $s,t \in \TSS$ with $|s| = |\bigoplus X|$ and $|t| = |\bigoplus Y|$ such that $\Uncomp(s) \subseteq \faces(X)$ and $\Uncomp(t) \subseteq \faces(Y)$.
  Then $\Uncomp(s \lplus t) \subseteq \faces(X \cup Y)$.

\end{lemma}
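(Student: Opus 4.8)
The goal is to show that if $\Uncomp(s)$ and $\Uncomp(t)$ are faces-complexes coming from $X$ and $Y$ respectively, then $\Uncomp(s\lplus t)$ sits inside $\faces(X\cup Y)$. My plan is to verify the characterization of $\faces(X\cup Y)$ provided by \cref{cor:faces_from_ineqs}: I must show that for an arbitrary point $z\in\Uncomp(s\lplus t)$ and an arbitrary sign pattern ${\sim}\in\{{\le},{\ge}\}^d$, there is a vertex $w\in\vertices(X\cup Y)$ with $z\sim w$. First I would observe that a permutation $\sigma$ of the index set of $X\cup Y$ which first runs through all of $X$ (in some order $\sigma_X$) and then through all of $Y$ (in some order $\sigma_Y$) produces a left-sum vertex equal to $\vertices$-vertex of $X$ left-summed with a $\vertices$-vertex of $Y$; more precisely, by associativity of $\lplus$ (\cref{obs:basic-properties-lsum}(a)), such a vertex is $v_X\lplus v_Y$ with $v_X\in\vertices(X)$, $v_Y\in\vertices(Y)$, and since $|v_X|=|\bigoplus X|=|s|$ and similarly for $v_Y$, these are vertices of the cubes $\Uncomp(s)$ and $\Uncomp(t)$.

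The key step is then: fix $z\in\Uncomp(s\lplus t)$ and ${\sim}$. I want to find $v_X\in\vertices(X)$ that is a vertex of $\Uncomp(s)$ and $v_Y\in\vertices(Y)$ that is a vertex of $\Uncomp(t)$, chosen coordinatewise so that $z_k\sim_k (v_X\lplus v_Y)_k$ for all $k$, while ensuring $v_X\lplus v_Y\in\vertices(X\cup Y)$. The natural move is to apply \cref{lem:containment-special-subcomplex-hypercube} (equivalently \cref{cor:faces_from_ineqs}) separately to $\Uncomp(s)\subseteq\faces(X)$ and $\Uncomp(t)\subseteq\faces(Y)$ after translating the ambient sign pattern ${\sim}$ down to each factor. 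Concretely, working coordinate by coordinate $k\in[d]$: we have $(s\lplus t)_k = s_k\lplus t_k$, and the three cases from \cref{def:left+sum} govern which of $s_k$, $t_k$ dominates. If $|s_k|>|t_k|$ then $(s\lplus t)_k=s_k$, so $z_k\in\Uncomp(s_k)$, and one must pick the $k$-th coordinate of $v_Y$ so that it does not spoil the inequality — but since $|v_Y|_k=|t_k|<|s_k|$, for either choice of sign of $(v_Y)_k$, the value $s_k\lplus (v_Y)_k = s_k$ when $s_k\in\TTpm$, or lies inside $\Uncomp(s_k)$ when $s_k$ is balanced; in all cases the dominant term is controlled by $s_k$. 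So the requirement $z_k\sim_k (v_X\lplus v_Y)_k$ reduces to a requirement only on $(v_X)_k$, namely that $z_k\sim_k (v_X)_k$ when $s_k$ is balanced (and it is automatic when $s_k\in\TTpm$). A symmetric reduction handles $|t_k|>|s_k|$, and when $|s_k|=|t_k|$ one has $(s\lplus t)_k=s_k$ again and the same analysis of the $s_k$-balanced subcase applies, with the $v_Y$-coordinate irrelevant. Collecting the coordinates where $s_k$ is balanced into a restricted sign pattern ${\sim}^{(s)}$ and invoking \cref{lem:containment-special-subcomplex-hypercube} for $\Uncomp(s)\subseteq\faces(X)$ yields a vertex $v_X\in\vertices(X)$ with the needed inequalities; likewise ${\sim}^{(t)}$ yields $v_Y\in\vertices(Y)$. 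Then $w:=v_X\lplus v_Y\in\vertices(X\cup Y)$ and $z\sim w$, as required; \cref{cor:faces_from_ineqs} applied to $X\cup Y$ then gives $z\in\faces(X\cup Y)$.

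One subtlety to nail down is that $v_X\lplus v_Y$ really does lie in $\vertices(X\cup Y)$: this needs that concatenating a witnessing permutation of $[|X|]$ (producing $v_X$) with a witnessing permutation of the index set of $Y$ (producing $v_Y$) is a legitimate permutation of the index set of $X\cup Y$ whose left-sum output is $v_X\lplus v_Y$ — this is exactly associativity of $\lplus$, \cref{obs:basic-properties-lsum}(a), together with the hypothesis $|s|=|\bigoplus X|$ so that $\vertices(X)$-elements are genuine vertices of $\Uncomp(s)$ rather than points in the interior. The main obstacle I anticipate is the bookkeeping in the coordinatewise case split above: making sure that, when one factor dominates in coordinate $k$, the "free" coordinate of the subdominant vertex cannot break the inequality $z_k\sim_k(v_X\lplus v_Y)_k$, and conversely that in the tie case $|s_k|=|t_k|$ the left sum still behaves as if $s_k$ dominates. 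Once that is checked, applying \cref{lem:containment-special-subcomplex-hypercube} twice and concatenating permutations finishes the argument cleanly.
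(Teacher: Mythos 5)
Your proposal is correct in substance, but it takes a more roundabout route than the paper's own proof. Both arguments rest on the same two observations: that $(v_X \lplus v_Y)_k$ is governed by whichever of $s_k$, $t_k$ has the larger absolute value, and that concatenating a witnessing permutation for $v_X \in \vertices(X)$ with one for $v_Y \in \vertices(Y)$ produces (by associativity of $\lplus$) a witnessing permutation showing $v_X \lplus v_Y \in \vertices(X \cup Y)$. The paper, however, exploits the fact that $\Uncomp(s \lplus t)$ is a single face of $\Uncomp(\bigoplus (X \cup Y))$: it takes an arbitrary vertex $u$ of $\Uncomp(s \lplus t)$, splits $[d]$ into $I = \{i : |u_i| = |s_i|\}$ and its complement, writes $u = v \lplus w$ with $v$ a vertex of $\Uncomp(s)$ agreeing with $u$ on $I$ and $w$ a vertex of $\Uncomp(t)$ agreeing with $u$ off $I$, and concludes $u \in \vertices(X\cup Y)$, whence $\Uncomp(s \lplus t) \subseteq \faces(X\cup Y)$ follows immediately from the definition of $\faces$. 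Your version instead checks the inequality characterization of \cref{cor:faces_from_ineqs} pointwise: for each $z$ and each $\sim$ you pad $z$ to points $z^X \in \Uncomp(s)$ and $z^Y \in \Uncomp(t)$, invoke \cref{cor:faces_from_ineqs} for $\faces(X)$ and $\faces(Y)$ to produce $v_X$, $v_Y$, and then invoke \cref{cor:faces_from_ineqs} once more for $\faces(X\cup Y)$. This works, but it replaces a two-line direct argument with three applications of the corollary plus coordinatewise bookkeeping. Two small imprecisions worth fixing if you write it out: (a) not every element of $\vertices(X)$ is a vertex of $\Uncomp(s)$ (only those lying in the face $\Uncomp(s)$ are), so the claim that concatenation gives ``vertices of the cubes $\Uncomp(s)$ and $\Uncomp(t)$'' should be dropped — you only need $v_X \in \vertices(X)$ and $v_Y \in \vertices(Y)$; (b) the coordinates you need to constrain are those with $|s_k| \geq |t_k|$ (for $v_X$) and $|s_k| < |t_k|$ (for $v_Y$), not ``those where $s_k$ is balanced'' — for unbalanced constrained coordinates the constraint is indeed easy to satisfy, but the split should be phrased by dominance, exactly as in the paper's index set $I$.
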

\begin{proof}
  Note that, by definition of $s$, we have the inclusion $\Uncomp(\bullet s) \subseteq \Uncomp(\bigoplus X)$ and further, using $\Uncomp(s) \subseteq \faces(X)$, that every vertex of $\Uncomp(s)$ is a vertex of $\Uncomp(\bigoplus X)$.
  Analogously, every vertex of $\Uncomp(t)$ is a vertex of $\Uncomp(\bigoplus Y)$.
  
  Therefore as part of $\faces(\bigoplus X)$, all vertices of $\Uncomp(s)$ are of the form $\lplus X^{\sigma}$ for some ordering $\sigma$ of $X$, and analogously, all vertices of $\Uncomp(t)$ are of the form $\lplus Y^{\tau}$ for some ordering $\tau$ of $Y$.
  Now, we consider a vertex $u$ of $\Uncomp(s \lplus t)$. 
  We partition $[d]$ in two sets, $I := \{i \colon |u_i| = |s_i|\}$ and its complement.

  By definition of the left sum, $u$ is given by (signed versions of) the entries of $s$ on $I$ and (signed versions of) the entries of $t$ on $[d] \setminus I$. 
  Let $v$ be a vertex of $\Uncomp(s)$ which agrees with $u$ on $I$ and let $w$ be a vertex of $\Uncomp(t)$ which agrees with $u$ on $[d] \setminus I$.
  This just means that $u = v \lplus w$.
  Furthermore, by construction, the latter left sum lies in $\vertices(X \cup Y)$. 
  As $u$ belongs to $\vertices(X \cup Y)$, we get the inclusion $\Uncomp(s \lplus t) \subseteq \faces(X \cup Y)$ by definition of $\faces(.)$.
\end{proof}

\begin{lemma}\label{le:faces_convex}
  The set $\faces(X)$ is TC-convex.

  In particular, we have $\faces(X) = \whull(\vertices(X))$
\end{lemma}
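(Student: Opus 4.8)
\textbf{Proof plan for \Cref{le:faces_convex}.}

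The plan is to establish the two assertions in tandem, exploiting the characterization of membership in $\faces(\cdot)$ provided by \Cref{cor:faces_from_ineqs} together with the local generation criterion of \Cref{prop:local-generation-TC-hull}. First I would record the easy inclusion $\whull(\vertices(X)) \subseteq \faces(X)$: once $\faces(X)$ is shown to be TC-convex, this follows immediately since $\vertices(X) \subseteq \faces(X)$ by definition and the TC-convex hull is the smallest TC-convex set containing its argument. Conversely, the inclusion $\faces(X) \subseteq \whull(\vertices(X))$ needs a small argument: a face $F$ of the hypercube $\Uncomp(\bigoplus X)$ whose vertices all lie in $\vertices(X)$ is exactly a product of singletons and intervals $[\ominus|s_k|,|s_k|]$, and \Cref{ex:vertical-boxes} shows precisely that the $\whull$ of the vertex set of such a box is the whole box. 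Taking the union over all faces of the subcomplex gives $\faces(X) \subseteq \whull(\vertices(X))$.

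The core of the proof is therefore showing that $\faces(X)$ is TC-convex, and for this I would verify conditions $(i)$ and $(ii)$ of \Cref{prop:local-generation-TC-hull}. For $(i)$, take $p,q \in \faces(X)$ and scalars $\lambda,\mu \in \Tgeq$ with $\lambda \oplus \mu = 0$; since $\faces(X)$ lies in the fixed hypercube and $|p|,|q| \le |\bigoplus X|$, we have $\lambda = \mu = 0$ (scaling by a tropical scalar strictly larger than $0$ would leave the hypercube), so the weighted left sum reduces to $p \lplus q$. Then I would apply \Cref{le:adding_boxes} with $s$ the smallest-dimensional $\Uncomp$-box inside $\faces(X)$ containing $p$ and $t$ the analogous box containing $q$ — more precisely, with $X=Y$ here, or by first noting $\faces(X)=\faces(X\cup X)$ — to conclude $p \lplus q \in \Uncomp(s\lplus t) \subseteq \faces(X)$. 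For $(ii)$, suppose $(u,v,w)$ and $(u,\ominus v,w)$ both lie in $\faces(X)$; these two points are opposite vertices across the $k$-th coordinate of some face, and by the defining subcomplex property (the ``if all vertices of a face are contained then so is the face'' property, which \Cref{cor:faces_from_ineqs} and \Cref{lem:containment-special-subcomplex-hypercube} encode), the intermediate point $(u,\Zero,w)$ — lying on the edge between them — is also in $\faces(X)$. This last step is cleanest via \Cref{cor:faces_from_ineqs}: for any ${\sim} \in \{{\le},{\ge}\}^d$ one picks the witness vertex $w$ from $\vertices(X)$ for whichever of $(u,v,w)$, $(u,\ominus v,w)$ has its $k$-th coordinate on the correct side of $\Zero$, and checks the inequalities in all coordinates still hold.

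The step I expect to be the main obstacle is the careful bookkeeping in applying \Cref{le:adding_boxes} for property $(i)$: one must correctly identify the boxes $\Uncomp(s), \Uncomp(t)$ sitting inside $\faces(X)$ (as faces of the subcomplex through $p$ and $q$ respectively), verify the hypotheses $|s| = |\bigoplus X|$ etc. — which may require taking $s$ to be a full-dimensional-in-its-support representative rather than a minimal one — and then match the conclusion $\Uncomp(s \lplus t) \subseteq \faces(X \cup X) = \faces(X)$ with the fact that $p \lplus q$ actually lands in $\Uncomp(s \lplus t)$. The remaining verifications are routine once the framework from \Cref{cor:faces_from_ineqs} and \Cref{le:adding_boxes} is in place, and the two displayed conclusions then follow by combining the TC-convexity of $\faces(X)$ with \Cref{ex:vertical-boxes} as above.
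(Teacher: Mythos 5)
Your proposed route — verifying conditions $(i)$ and $(ii)$ of \Cref{prop:local-generation-TC-hull} together with \Cref{cor:faces_from_ineqs} — is genuinely different from what the paper does. The paper instead exhibits $\faces(X)$ \emph{explicitly} as an intersection of closed tropical halfspaces (one halfspace $H_\tau^{+}$ per orientation $\tau$ of the hypercube carving out $\Uncomp(\bullet|\bigoplus X|)$, then the opposite halfspaces $H_\tau^{-}$ for $\tau\notin\Sigma$ trimming away the forbidden faces), so TC-convexity is immediate from closure under intersection. Your verification of $(ii)$ via \Cref{cor:faces_from_ineqs} — picking the witness vertex from whichever of $(u,v,w),(u,\ominus v,w)$ lies on the correct side of $\Zero$ in coordinate $k+1$ — is correct, and the derivation of the identity $\faces(X)=\whull(\vertices(X))$ from \Cref{ex:vertical-boxes} once TC-convexity is in hand is fine.

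However, the verification of $(i)$ has a genuine gap. You assert that for $p,q\in\faces(X)$ and $\lambda,\mu\in\Tgeq$ with $\lambda\oplus\mu=0$, one must have $\lambda=\mu=0$. This is false: $\lambda\oplus\mu=0$ only says $\max(\lambda,\mu)=0$, so one of the two scalars can be strictly negative, say $\lambda=0$, $\mu<0$. Staying in the ambient hypercube rules out scalars $>0$, but that was already forced; it puts no lower bound on the smaller scalar. And the case $\mu<0$ is not trivial, because $\faces(X)$ is \emph{not} closed under scaling by scalars $<0$: for $X=\{(0,0),(\ominus 0,0),(0,\ominus 0)\}\subset\TTpm^2$, the set $\faces(X)$ is the ``L-shape'' $\bigl(\{0\}\times[\ominus 0,0]\bigr)\cup\bigl([\ominus 0,0]\times\{0\}\bigr)$, and $(-1)\odot(0,0)=(-1,-1)\notin\faces(X)$, so you cannot reduce to the $\lambda=\mu=0$ case. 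The claim $\lambda\odot p\lplus\mu\odot q\in\faces(X)$ is nonetheless true for general $\lambda,\mu$, but it requires a separate argument: for instance, if $\lambda=0$ one can check via \Cref{cor:faces_from_ineqs} that for every $\sim\in\{\le,\ge\}^d$ the witness vertex $w^{(p)}\in\vertices(X)$ dominating $p$ also dominates $r=p\lplus\mu\odot q$, using that in any coordinate where $r_i\ne p_i$ one has $|p_i|<|r_i|\le|w^{(p)}_i|$, which together with $p_i\sim_i w^{(p)}_i$ forces $r_i\sim_i w^{(p)}_i$. This step is what your write-up is missing. (A further minor point: the application of \Cref{le:adding_boxes} with $X=Y$ gives $s\lplus t=s$, so that lemma only returns the tautology $\Uncomp(s)\subseteq\faces(X)$; the useful content $p\lplus q\in\Uncomp(s)$ is a small direct calculation rather than an application of the lemma.)
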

\begin{proof}
  Let $x = |\bigoplus X|$ and let $x^{\sigma}$ for $\sigma \in \{\ominus,\oplus\}^d$ denote a signed version of $x$.
  Recall that the points $\vertices(X)$ are of the form $x^{\sigma}$ where $\sigma$ ranges over a subset $\Sigma$ of $\{\ominus, \oplus\}^d$.
  Let $H_{\tau}^+ = \Hclosed^+\left(0,(x^{\tau}_1)^{\odot-1},\dots,(x^{\tau}_d)^{\odot-1}\right)$ be the halfspace with its `apex' at the point $x^{\tau}$ for $\tau \in \{\ominus,\oplus\}^d$ containing $\mathbf{\Zero}$ and $H_{\tau}^{-}$ the opposite closed one. 
  Then the intersection $\bigcap_{\tau \in \{\ominus,\oplus\}^d} H_{\tau}^+$ exactly yields the hypercube $\Uncomp(\bullet x)$. 
  If we further intersect this with all halfspaces $H_{\tau}^{-}$ for $\tau \in \{\ominus,\oplus\}^d \setminus \Sigma$ we get $\faces(X)$ as $H_{\tau}^{-}$ exactly cuts off all faces of the cube containing $x^{\tau}$ with $\tau$ not in $\Sigma$. 
  Hence, it is TC-convex as an intersection of TC-convex sets.
  
  \smallskip

  Now, we look at a face $F$ of $\Uncomp(\bullet x)$ for which all vertices $V$ are contained in $\vertices(X)$. 
  Applying~\cref{le:weak_inter} iteratively on pairs of points, which only differ in the sign of one component, we get $\faces(V) = F \subseteq \whull(V)$. 
  Ranging over all faces in $\faces(X)$ yields $\faces(X) \subseteq \whull(X)$. 
  But as $\faces(X)$ is TC-convex, we get an equality. 
\end{proof}

\begin{proposition}\label{le:inthull_from_faces}
Let $X \subseteq \TTpm^{d}$ be an $n$-element set interpreted as a matrix. Then
\begin{equation}\label{eq:int_hull}
\whull(X) = \bigcup\SetOf{\faces(X \odot \diag(\lambda))}{\lambda \in \Tgeq^n, \bigoplus_i \lambda_i = 0} \, .
\end{equation}
\end{proposition}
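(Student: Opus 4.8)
The plan is to prove the two inclusions separately, using \cref{cor:convexity-finitary} to reduce to finite configurations and \cref{le:weak_inter} together with \cref{le:faces_convex} as the base case $n=2$. For the inclusion "$\supseteq$", fix $\lambda \in \Tgeq^n$ with $\bigoplus_i \lambda_i = 0$ and write $X \odot \diag(\lambda) = (\lambda_1 \odot x_1, \dots, \lambda_n \odot x_n)$. By \cref{le:faces_convex}, $\faces(X \odot \diag(\lambda)) = \whull(\vertices(\lambda_1 \odot x_1, \dots, \lambda_n \odot x_n))$, so it suffices to show that every vertex $\lambda_{\sigma(1)} \odot x_{\sigma(1)} \lplus \dots \lplus \lambda_{\sigma(n)} \odot x_{\sigma(n)}$ lies in $\whull(X)$. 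This I would do by induction on $n$: the two-point case is \cref{le:weak_inter}; for the inductive step, I would peel off the last term, noting that $\lambda_{\sigma(1)} \odot x_{\sigma(1)} \lplus \dots \lplus \lambda_{\sigma(n-1)} \odot x_{\sigma(n-1)}$ is (after rescaling so the weights sum to $0$) a weighted left sum of $x_{\sigma(1)}, \dots, x_{\sigma(n-1)}$, hence in $\whull(\{x_{\sigma(1)},\dots,x_{\sigma(n-1)}\})$ by induction, and then forming one more weighted left sum with $x_{\sigma(n)}$ using \cref{prop:local-generation-TC-hull}(i) — here one must be careful that weighted left sums, which only allow weights summing to $0$, suffice to build an arbitrary-weight left sum out of the normalized one, which follows from $\obs{obs:basic-properties-lsum}$ and the scaling behavior of $\lplus$.

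For the reverse inclusion "$\subseteq$", by \cref{cor:convexity-finitary} it is enough to show that the right-hand side of \cref{eq:int_hull}, call it $N$, is TC-convex and contains $X$: containment of $X$ is clear by taking $\lambda = e_i \odot 0$, and then $\whull(X) \subseteq N$. To prove $N$ is TC-convex I would verify the two closure properties of \cref{prop:local-generation-TC-hull}. For the weighted left sum property, take two points $z^{(1)} \in \faces(X \odot \diag(\lambda))$ and $z^{(2)} \in \faces(X \odot \diag(\mu))$ with $\lambda, \mu \in \Tgeq^n$ each summing to $0$, and scalars $\alpha, \beta \in \Tgeq$ with $\alpha \oplus \beta = 0$; I want $\alpha \odot z^{(1)} \lplus \beta \odot z^{(2)} \in N$. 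The natural witness is the weight vector $\nu = (\alpha \odot \lambda) \oplus (\beta \odot \mu) \in \Tgeq^n$, whose coordinate sum is $\bigoplus_i (\alpha \odot \lambda_i \oplus \beta \odot \mu_i) = \alpha \oplus \beta = 0$; the task is to show $\alpha \odot z^{(1)} \lplus \beta \odot z^{(2)} \in \faces(X \odot \diag(\nu))$. This is where \cref{le:adding_boxes} enters: with $s, t$ chosen so that $\Uncomp(s)$ is the smallest box of $\faces(X \odot \diag(\lambda))$ containing $\alpha \odot z^{(1)}$ and similarly for $t$, one gets $\Uncomp(s \lplus t) \subseteq \faces(X \odot \diag(\nu))$ provided $|s| = |\bigoplus \alpha \odot \lambda \odot X|$ and $|t| = |\bigoplus \beta \odot \mu \odot X|$, and $\alpha \odot z^{(1)} \lplus \beta \odot z^{(2)} \in \Uncomp(s \lplus t)$ since each coordinate lies in the corresponding interval. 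For the local elimination property (ii), if $(u,v,w)$ and $(u,\ominus v, w)$ both lie in $\faces(X \odot \diag(\lambda))$ and $\faces(X\odot\diag(\mu))$ respectively — or even in the same $\faces(X\odot\diag(\lambda))$ — then $(u,\Zero,w)$ lies in the same $\faces$ set because $\faces(\cdot)$ is a full subcomplex of a box (its defining property in \cref{lem:containment-special-subcomplex-hypercube}): the two opposite vertices being present forces the connecting edge, hence its midpoint-type face, to be present; I would phrase this via \cref{cor:faces_from_ineqs}, checking the sign condition for each ${\sim} \in \{\le,\ge\}^d$ directly.

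The main obstacle I anticipate is the bookkeeping in the weighted-left-sum closure step: matching the box $\Uncomp(s \lplus t)$ produced by \cref{le:adding_boxes} with the combinatorial structure of $\faces(X \odot \diag(\nu))$ requires knowing that the vertex set $\vertices(X \odot \diag(\nu))$ is compatible with left sums of vertices of $\vertices(X\odot\diag(\lambda))$ and $\vertices(X\odot\diag(\mu))$, i.e. that an interleaving of an ordering achieving a vertex for $\lambda$-weights and one for $\mu$-weights produces (via $\alpha\odot(\cdot)\lplus\beta\odot(\cdot)$ absorbed into $\nu$) a genuine vertex for the $\nu$-weights. This is essentially a statement that left sum "commutes" with the $\Uncomp$/$\faces$ construction at the level of vertices, and while \cref{le:adding_boxes} packages most of it, one still needs to check that the normalization $\alpha \oplus \beta = 0$ does not break the weight-sum condition and that no spurious faces appear — which is exactly the full-subcomplex property used in the other direction. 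Once these compatibilities are in hand, the rest is a routine assembly of the cited lemmas.
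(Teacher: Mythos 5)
Your proposal follows the paper's strategy closely. For the inclusion $\supseteq$, the induction on $n$ peeling off the last summand is equivalent to the paper's version (which peels off the first), and both rest on \cref{le:weak_inter} together with $\faces(\cdot) = \whull(\vertices(\cdot))$ from \cref{le:faces_convex}. For $\subseteq$, the paper shows the right-hand side $N$ is TC-convex by proving, for $a\in\faces(X\odot\diag(\lambda))$, $b\in\faces(X\odot\diag(\mu))$, $\alpha\oplus\beta=0$, that $\faces(\alpha\odot a,\beta\odot b)\subseteq\faces(X\odot\diag(\nu))$ with $\nu=\alpha\odot\lambda\oplus\beta\odot\mu$, and then uses \cref{le:weak_inter} to conclude $\whull(a,b)\subseteq N$. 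You instead verify the two closure conditions of \cref{prop:local-generation-TC-hull}. Your condition (i) argument via \cref{le:adding_boxes} amounts to the same calculation as the paper's, up to the identity $\faces(\alpha\odot X\odot\diag(\lambda)\cup\beta\odot X\odot\diag(\mu))=\faces(X\odot\diag(\nu))$, which you rightly flag as the technical crux and which the paper also asserts without spelling out.

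The gap is in your verification of local elimination (ii). The domination argument you sketch via \cref{cor:faces_from_ineqs} — for $\sim_{k+1}={\le}$ use the vertex dominating $(u,v,w)$, for $\sim_{k+1}={\ge}$ use the vertex dominating $(u,\ominus v,w)$ — only runs inside a single $\faces(X\odot\diag(\lambda))$. When $(u,v,w)\in\faces(X\odot\diag(\lambda))$ and $(u,\ominus v,w)\in\faces(X\odot\diag(\mu))$ with $\lambda\neq\mu$, the two families of witnessing vertices live in different complexes (indeed in hypercubes of different sizes) and do not certify membership of $(u,\Zero,w)$ in any one $\faces$ set; the phrase ``lies in the same $\faces$ set'' is undefined there. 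Also, the heuristic ``two opposite vertices force the connecting edge'' is misleading: the points need not be vertices of either hypercube, and you do need the full-subcomplex criterion of \cref{lem:containment-special-subcomplex-hypercube}/\cref{cor:faces_from_ineqs}, not just edge-forcing. The gap is fixable: apply your (i) with $\alpha=\beta=0$ to deduce $(u,v,w)=(u,v,w)\lplus(u,\ominus v,w)$ and $(u,\ominus v,w)=(u,\ominus v,w)\lplus(u,v,w)$ both lie in $\faces(X\odot\diag(\lambda\oplus\mu))$ (whose weight vector still has coordinate sum $0$), and then run your single-complex domination argument there. Alternatively, follow the paper and prove $\whull(a,b)\subseteq N$ directly via \cref{le:weak_inter}, which handles (i) and (ii) in one pass and avoids the case split entirely.
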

\begin{proof}
  Let $x_1, \dots, x_n \in X$ and $\lambda_1, \dots, \lambda_n \in \Tgeq$ be such that $\bigoplus_i \lambda_i = 0$.
  We will start by showing that $y = \lambda_1 \odot x_1 \lplus \dots \lplus \lambda_n \odot x_n$ belongs to $\whull(X)$.
  The proof goes by induction over $n$.

  The claim is trivial for $n  = 1$.
  For higher $n$, let $\mu = \bigoplus_{i > 1} \lambda_i$.
  The claim is trivial if $\mu = \Zero$.
  Otherwise, let $\mu_i = \lambda_i - \mu$ for every $i > 1$.
  Then, we have $\lambda_1 \odot x_1 \lplus \dots \lplus \lambda_n \odot x_n = \lambda_1 \odot x_1 \lplus \mu \odot (\mu_2 \odot x_2 \lplus \dots \lplus \mu_n \odot x_n)$.
  By the induction hypothesis, the point $z = \mu_2 \odot x_2 \lplus \dots \lplus \mu_n \odot x_n$ belongs to $\whull(X)$.
  Therefore, the point $y = \lambda_1 \odot x_1 \lplus \mu \odot z$ belongs to $\whull(X)$ by \cref{le:weak_inter}.
  Thus, we have proven that every point of the form $\lambda_1 \odot x_1 \lplus \dots \lplus \lambda_n \odot x_n$ belongs to $\whull(X)$.
  In particular, the set $\vertices(X \odot \diag(\lambda))$ belongs to $\whull(X)$.
  Hence,~\cref{le:faces_convex} implies that $\faces(X \odot \diag(\lambda)) = \whull\left(\vertices(X \odot \diag(\lambda))\right)$ is contained in $\whull(X)$. 

  \smallskip
  
  To finish the proof, we will show that the set on the right-hand side of \cref{eq:int_hull} is TC-convex. 
  To do so, suppose that $a \in \faces(X \odot \diag(\lambda))$ and $b \in \faces(X \odot \diag(\mu))$ for some $\lambda_i, \mu_i \in \Tgeq$ and $\bigoplus_i \lambda_i = \bigoplus_i \mu_i = 0$.
  Further, let $\alpha, \beta \in \Tgeq$ be such that $\alpha \oplus \beta = 0$.
  By~\cref{le:adding_boxes}, the set $\vertices(\alpha \odot a, \beta \odot b)$ is included in
  $\faces(\alpha \odot X \odot \diag(\lambda) \cup \beta \odot X \odot \diag(\mu))$. 
  Hence, $\faces(\alpha \odot a, \beta \odot b)$ is also included in this set by \cref{le:faces_convex}. 
  Thus, by \cref{le:weak_inter}, $\whull(a,b)$ is included in the set defined on the right-hand side of \cref{eq:int_hull} since $\faces(\alpha \odot X \odot \diag(\lambda) \cup \beta \odot X \odot \diag(\mu)) = \faces(X \odot \diag(\nu))$ with $\nu = \alpha \odot \diag(\lambda) \oplus \beta \odot \diag(\mu)$, which fulfills $\nu \in \Tgeq^n$ and $\bigoplus_i \nu_i = 0$.
\end{proof}

Using the representation of the convex hull as union of finite convex hulls stated in \cref{cor:convexity-finitary} we get the following.

\begin{corollary} \label{cor:whull-arbitrary-faces}
If $X \subseteq \TTpm^d$, then
\begin{align*}
\whull(X) &= \bigcup\SetOf{\faces(X \odot \diag(\lambda))}{\lambda \in \Tgeq^X, \bigoplus_i \lambda_i = 0, |\supp(\lambda)| < +\infty} \, .
\end{align*}
\end{corollary}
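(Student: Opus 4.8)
The plan is to bootstrap from the finite case already established in \cref{le:inthull_from_faces}, using the domain finiteness of the TC-convexity structure recorded in \cref{cor:convexity-finitary}. The one point that needs care is the interaction between the finitely many columns that actually carry nonzero weight and the formally infinite index set $X$: one must check that padding a generating family with copies of the tropical zero vector $\Zero$ affects neither $\vertices(\cdot)$ nor $\faces(\cdot)$, so that $\faces(X \odot \diag(\lambda))$ for a finitely supported $\lambda$ is unambiguously $\faces$ of the finite family $\supp(\lambda) \odot \diag(\lambda|_{\supp(\lambda)})$.

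First I would record the padding observation. Since $\Zero$ is a two-sided identity for $\lplus$ on $\TTpm$ (immediate from \cref{def:left+sum}) and $\lplus$ is associative by \cref{obs:basic-properties-lsum}, a $\Zero$ entry may be inserted into or deleted from a left-sum expression at any position without changing its value; moreover $x_1 \oplus \dots \oplus x_n \oplus \Zero = x_1 \oplus \dots \oplus x_n$, so the ambient hypercube $\Uncomp(\bigoplus)$ is unchanged. Hence $\vertices(x_1, \dots, x_n, \Zero) = \vertices(x_1, \dots, x_n)$, and therefore $\faces(x_1, \dots, x_n, \Zero) = \faces(x_1, \dots, x_n)$ by \cref{def:vert+faces}. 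Iterating gives $\faces(X \odot \diag(\lambda)) = \faces(\supp(\lambda) \odot \diag(\lambda|_{\supp(\lambda)}))$ for every finitely supported $\lambda \in \Tgeq^X$, which is how the right-hand side of the claimed identity is to be read.

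For the inclusion $\supseteq$, I take a finitely supported $\lambda \in \Tgeq^X$ with $\bigoplus_i \lambda_i = 0$; then $T \coloneqq \supp(\lambda)$ is a nonempty finite subset of $X$ and $\lambda|_T \in \Tgeq^T$ with $\bigoplus_{i \in T}(\lambda|_T)_i = 0$. By the padding observation and \cref{le:inthull_from_faces} applied to $T$, one gets $\faces(X \odot \diag(\lambda)) = \faces(T \odot \diag(\lambda|_T)) \subseteq \whull(T) \subseteq \whull(X)$, the last step by monotonicity of the hull operator. For $\subseteq$, I would invoke \cref{cor:convexity-finitary} to write $\whull(X) = \bigcup_{T \subseteq X \text{ finite}} \whull(T)$, apply \cref{le:inthull_from_faces} to each finite $T$ to get $\whull(T) = \bigcup \SetOf{\faces(T \odot \diag(\mu))}{\mu \in \Tgeq^T, \bigoplus_i \mu_i = 0}$, and extend each such $\mu$ by zeros to a finitely supported $\lambda \in \Tgeq^X$ with $\faces(T \odot \diag(\mu)) = \faces(X \odot \diag(\lambda))$, which lies in the right-hand side. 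The two inclusions together give the statement.

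The only genuine obstacle is the bookkeeping of the padding observation — confirming that inserting and deleting $\Zero$ columns is harmless for both $\vertices(\cdot)$ and $\faces(\cdot)$; once that is in hand, the corollary is a formal consequence of \cref{le:inthull_from_faces} and \cref{cor:convexity-finitary}.
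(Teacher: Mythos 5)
Your argument is correct and follows the same route the paper intends: the corollary is recorded as an immediate consequence of \cref{cor:convexity-finitary} together with \cref{le:inthull_from_faces}, and your two inclusions spell that out. The padding observation — that $\Zero$ is a two-sided $\lplus$-identity, so $\Zero$ columns are inert for both $\vertices(\cdot)$ and $\faces(\cdot)$ — is indeed the piece of bookkeeping the paper leaves implicit, and you have verified it correctly.
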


We now give estimates on the Carath{\'e}odory-number of TC-convexity.
The core case for this is the representation of vertices from a small set of generators. 

\begin{lemma}\label{le:vertex_caratheodory}
  Let $X = \{x_1, \dots, x_n\} \subseteq \TTpm^d$.
  Then there is a subset $Y \subseteq X$ with $|Y| \leq d2^d$ and $\vertices(X) \subseteq \vertices(Y)$, hence, $\vertices(X) = \vertices(Y)$. 
\end{lemma}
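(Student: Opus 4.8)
The plan is to recall that, by \cref{cor:permutation_vertices}, every element of $\vertices(X)$ is a vertex of the single hypercube $\Uncomp(x_1 \oplus \dots \oplus x_n)$, so $\vertices(X)$ consists of at most $2^d$ points, each obtained as a left sum $x_{\sigma(1)} \lplus \dots \lplus x_{\sigma(n)}$ for some permutation $\sigma$. Fix one such vertex $v = x_{\sigma(1)} \lplus \dots \lplus x_{\sigma(n)}$. The key observation is that $v$ is actually determined by very few of the $x_i$: reading the left sum coordinatewise, for each coordinate $k \in [d]$ the value $v_k$ equals $x_{i}^{}{}_k$ where $i$ is the first index in the order $\sigma$ for which $|x_i{}_k|$ attains the maximum $|(\bigoplus_j x_j)_k| = \max_j |x_j{}_k|$. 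Hence $v$ is witnessed by a set $Y_v \subseteq X$ of at most $d$ columns — one "responsible" column per coordinate — and moreover, by \cref{le:lsum_order} together with \cref{obs:basic-properties-lsum}, running the same left-sum order $\sigma$ restricted to $Y_v$ still produces $v$, because the omitted columns have absolute value $\le |(\bigoplus_j x_j)_k|$ in each coordinate $k$ and therefore never change the running left sum in the place where the responsible column already sits. (More carefully: if a later column has strictly smaller absolute value in coordinate $k$ it is absorbed; if it has equal absolute value it cannot overtake because the responsible column already appeared earlier in $\sigma$; this is exactly the mechanism behind \cref{def:left+sum}.) Thus $v \in \vertices(Y_v)$ with $|Y_v| \le d$.

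Next I would take $Y = \bigcup_{v \in \vertices(X)} Y_v$. Since $|\vertices(X)| \le 2^d$ and each $|Y_v| \le d$, we get $|Y| \le d \, 2^d$. To finish, I must check the two claimed inclusions. For $\vertices(X) \subseteq \vertices(Y)$: each $v \in \vertices(X)$ lies in $\vertices(Y_v) \subseteq \vertices(Y)$, where the last inclusion holds because $Y_v \subseteq Y$ and one may extend the left-sum order realizing $v$ on $Y_v$ to an order on all of $Y$ by appending the remaining columns of $Y$ at the end — these appended columns all have absolute value $\le |(\bigoplus_{x \in X} x)_k|$ in each coordinate, and since $Y \subseteq X$ we have $|(\bigoplus_{x \in Y} x)_k| \le |(\bigoplus_{x \in X} x)_k|$, so by the same absorption argument they do not alter the result. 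The reverse inclusion $\vertices(Y) \subseteq \vertices(X)$ is immediate by monotonicity of the construction: any left sum over an ordering of $Y$ is dominated coordinatewise by the corresponding behaviour over $X$, and more directly, \cref{obs:basic-properties-lsum} gives $|\bigoplus Y| \le |\bigoplus X|$ so every vertex produced from $Y$ is again a vertex of $\Uncomp(\bigoplus X)$; one then pads the $Y$-order with the remaining columns of $X$ at the end to realize the same point as an element of $\vertices(X)$. Combining the two inclusions yields $\vertices(X) = \vertices(Y)$.

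The main obstacle I anticipate is making the "append the remaining columns at the end does not change the left sum" argument fully rigorous in all coordinates simultaneously, including the delicate equal-absolute-value case where a column with $|x_i{}_k| = |(\bigoplus x)_k|$ but opposite sign to the current running value could in principle produce a balanced entry. The point is that such a column never sits at the end: the responsible column $Y_v$ chose per coordinate the \emph{first} maximizer in $\sigma$, so any equal-magnitude column in coordinate $k$ comes later in $\sigma$ and the left sum has already "frozen" that coordinate to the sign of the earlier column (this is precisely the $|x| = |y|$ branch of \cref{def:left+sum}, which keeps $x$). One must also ensure the padding order at the end is applied consistently across all coordinates, but since the left sum is associative (\cref{obs:basic-properties-lsum}(a)) and, after the responsible columns, every coordinate is frozen, appending the rest in \emph{any} fixed order works simultaneously. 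A clean way to organize this is: first prove the auxiliary claim that for any $Z$ with $Y_v \subseteq Z \subseteq X$ there is an ordering of $Z$ producing $v$, by placing the columns of $Y_v$ first in the order induced by $\sigma$ and the rest afterwards arbitrarily; then apply it with $Z = Y$ and $Z = X$.
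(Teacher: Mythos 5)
Your proof is correct and takes essentially the same route as the paper's: both select, for each vertex $v$ and each coordinate $k$, the first column in the $\sigma$-order achieving the coordinate's maximal absolute value (the paper's $J_y$, your $Y_v$), take the union over all vertices, and verify both inclusions by observing that once every coordinate of the running left sum is frozen at maximal absolute value, appending further columns of smaller or equal absolute value cannot change the result. The paper packages the extension step as exactly the auxiliary claim you suggest at the end: for any $J$ with $J_y \subseteq J \subseteq [n]$, ordering $J$ with the responsible columns first (in their $\sigma$-induced order) and the rest afterwards reproduces $y$.
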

\begin{proof}
  Let $y \in \vertices(X)$ and let $\sigma \in \Sym(n)$ such that $y = x_{\sigma(1)} \lplus \dots \lplus x_{\sigma(n)}$. 
  For every coordinate $k \in [d]$ let $j_k \in [n]$ be the smallest number such that $y_k = (x_{\sigma(j_k)})_k$ and let $J_{y} = \SetOf{\sigma(j_k)}{k \in [d]} \subseteq [n]$.
  Note that $|J_{y}| \le d$.
  Now, let $I_{y} = (i_1,\dots,i_{|J_y|})$ be the ascending sequence of the elements in $J_{{y}}$. 
  Observe that for any set $J$ with $J_{y} \subseteq J \subseteq [n]$,
  it holds $y = x_{i_1} \lplus \dots \lplus x_{|J_y|} \lplus \biglplus_{i \in J \setminus J_{y}} x_i $ for any order of the summands indexed by $J \setminus J_{y}$.

  Finally, we define $J$ to be the union $\bigcup_{y \in \vertices(X)} J_y$.
  By the above reasoning, we have $|J| \leq d2^d$ and $\vertices(X) \subseteq \vertices(Y)$.
  As each left-sum of the elements in $Y$ already defines a point for which the absolute values of the components equal the components of $\bullet \bigoplus X$, we also get the reverse inclusion $\vertices(Y) \subseteq \vertices(X)$.
  
\end{proof}

\begin{proposition}\label{le:inter_caratheodory}
If $X \subseteq \TTpm^d$, then
\begin{equation*}
\whull(X) = \bigcup\SetOf{\faces(X \odot \diag(\lambda))}{\lambda \in \Tgeq^X, \bigoplus_i \lambda_i = 0, |\supp(\lambda)| \leq c_d} \, ,
\end{equation*}
where $c_d = d2^d + 1$.
\end{proposition}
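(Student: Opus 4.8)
The plan is to prune the number of active generators in the finite‑support representation \cref{cor:whull-arbitrary-faces}. The inclusion ``$\supseteq$'' is immediate, since every $\faces(X\odot\diag(\lambda))$ appearing on the right‑hand side has $\lambda\in\Tgeq^X$ with $\bigoplus_i\lambda_i=0$ and $|\supp(\lambda)|\le c_d<\infty$, hence is among the sets whose union is $\whull(X)$ by \cref{cor:whull-arbitrary-faces}. For ``$\subseteq$'', fix $y\in\whull(X)$ and, by \cref{cor:whull-arbitrary-faces}, choose $\lambda\in\Tgeq^X$ with $\bigoplus_i\lambda_i=0$, $|\supp(\lambda)|<\infty$, and $y\in\faces(X\odot\diag(\lambda))$. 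Since $\Zero$ is a two‑sided neutral element for both $\lplus$ and $\oplus$ (immediate from \cref{def:left+sum} and \cref{eq:balanced+addition}), the $\Zero$‑columns of $X\odot\diag(\lambda)$ affect neither $\vertices(\cdot)$ nor $\bigoplus(\cdot)$ nor $\faces(\cdot)$; writing $Z$ for the finite point set $\SetOf{\lambda_x\odot x}{x\in\supp(\lambda)}$ we therefore have $y\in\faces(Z)$.

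Next I would apply \cref{le:vertex_caratheodory} to $Z$ to obtain $W\subseteq Z$ with $|W|\le d2^d$ and $\vertices(W)=\vertices(Z)$, so that $\faces(W)=\whull(\vertices(W))=\whull(\vertices(Z))=\faces(Z)\ni y$ by \cref{le:faces_convex}. Pick $S\subseteq\supp(\lambda)$ with $\SetOf{\lambda_x\odot x}{x\in S}=W$, so $|S|\le d2^d$. Because $\bigoplus_i\lambda_i=0$ and all $\lambda_i\in\Tgeq$, some index $j^{*}$ has $\lambda_{j^{*}}=0$, and in particular $j^{*}\in\supp(\lambda)$. If $j^{*}\in S$ then already $\bigoplus_{x\in S}\lambda_x=0$; otherwise set $S'=S\cup\{j^{*}\}$, which has $|S'|\le d2^d+1=c_d$ and $\bigoplus_{x\in S'}\lambda_x=\lambda_{j^{*}}\oplus\bigoplus_{x\in S}\lambda_x=0$. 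Define $\mu\in\Tgeq^X$ by $\mu_x=\lambda_x$ for $x$ indexed by $S'$ and $\mu_x=\Zero$ otherwise; then $|\supp(\mu)|\le c_d$, $\bigoplus_i\mu_i=0$, and, discarding $\Zero$‑columns as before, $\faces(X\odot\diag(\mu))=\faces\bigl(\SetOf{\lambda_x\odot x}{x\in S'}\bigr)$.

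It remains to prove $y\in\faces\bigl(\SetOf{\lambda_x\odot x}{x\in S'}\bigr)$, which follows from a monotonicity property of $\faces$ along subsets of a fixed finite point set: if $W\subseteq W'\subseteq Z$ in $\TTpm^d$ and $\vertices(W)=\vertices(Z)$, then $\vertices(W')=\vertices(Z)$, hence $\faces(W')=\faces(Z)$ by \cref{le:faces_convex}. To prove this I would use \cref{obs:basic-properties-lsum}(b): the absolute value of any iterated left sum of the elements of a set $V$ equals $|\bigoplus V|$ componentwise; picking $v\in\vertices(W)=\vertices(Z)$ gives $|\bigoplus W|=|v|=|\bigoplus Z|$, and together with $|\bigoplus W|\le|\bigoplus W'|\le|\bigoplus Z|$ this forces $|\bigoplus W|=|\bigoplus W'|=|\bigoplus Z|$ componentwise. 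Then any left‑sum ordering of $W'$ extends to a left‑sum ordering of $Z$ (list $W'$ first, then $Z\setminus W'$) reaching the same value, because once a coordinate of an iterated left sum attains its maximal absolute value, every further term—whose absolute value in that coordinate is no larger—is absorbed; the same argument, with $W$ and $W'$, yields the reverse inclusion $\vertices(Z)\subseteq\vertices(W')$. Applying this with $W'=\SetOf{\lambda_x\odot x}{x\in S'}$ (which satisfies $W\subseteq W'\subseteq Z$) gives $y\in\faces(W')=\faces(Z)$, hence $y\in\faces(X\odot\diag(\mu))$ with $\mu$ as required.

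I expect the work to be bookkeeping rather than deep. The one ingredient not directly available from the cited results is the monotonicity of $\faces(\cdot)$ along nested subsets of a fixed generating point set, but it is a short consequence of \cref{obs:basic-properties-lsum}(b) and \cref{le:faces_convex}. The only genuine subtlety is that one cannot restore the normalization $\bigoplus_i\mu_i=0$ by rescaling the weights, since multiplying $y$ by a nontrivial positive scalar would move it off the face; this is precisely why a single extra generator of weight $0$ is adjoined, which accounts for the ``$+1$'' in $c_d=d2^d+1$.
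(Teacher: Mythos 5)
Your proof is correct and follows essentially the same approach as the paper's: apply \cref{cor:whull-arbitrary-faces} to get a finitely supported $\lambda$, invoke \cref{le:vertex_caratheodory} to prune to at most $d2^d$ active generators preserving $\vertices(\cdot)$, then adjoin one index of weight $0$ to restore the normalization $\bigoplus_i\mu_i=0$, giving the $+1$ in $c_d$. The one place you go beyond the paper's text is your explicit verification that enlarging the pruned set $W$ by the extra generator still satisfies $\vertices(W')=\vertices(Z)$; the paper handles this implicitly, but the needed fact is already embedded in the proof of \cref{le:vertex_caratheodory} (the observation that for any $J$ with $J_y\subseteq J\subseteq[n]$ the left sum over $J$ still realizes $y$), and your absorption argument via \cref{obs:basic-properties-lsum}(b) is a valid way to rederive it. No gaps.
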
 
\begin{proof}
  Let $y \in \whull(X)$.
  By \cref{cor:whull-arbitrary-faces} 
  there is some $\lambda \in \Tgeq^X$ with $\bigoplus_i \lambda_i = 0$ such that $y \in \faces(X \odot \diag(\lambda))$. 

  Let $j_0$ be an index with $\lambda_{j_0} = 0$.
  We derive a new coefficient vector $\mu$ from $\lambda$ with $\mu_{j_0} = 0$, $\vertices(X \odot \diag(\lambda)) = \vertices(X \odot \diag(\mu))$ and $|\supp(\mu)| \leq c_d = d2^d+1$.
  \Cref{le:vertex_caratheodory} implies that we can achieve this by setting all but $c_d$ entries of $\lambda$ to $\Zero$. 

  With~\cref{le:faces_convex}, we obtain 
  \begin{align*}
    y \in \faces(X \odot \diag(\lambda)) &= \whull(\vertices(X \odot \diag(\lambda))) \\
    &\subseteq \whull(\vertices(X \odot \diag(\mu)) \\
    &= \faces(X \odot \diag(\mu)) \, . 
  \end{align*}
  Taking the subset $Y$ of $X$ given by the support of $\mu$, one sees $y \in \whull(Y)$. 
\end{proof}

While TC-convexity extends the `usual' tropical convexity whose Carath{\'e}odory number is $d+1$ as discussed in~\cite{GaubertMeunier:2010}, we already get an exponential lower bound from a simple example. 

\begin{example} \label{ex:caratheodory-number}
Let $X = \{\ominus 0, 0\}^d$.
Then, \cref{le:faces_convex} shows that $\whull(X) = [\ominus 0, 0]^d$.
In particular, we have $\Zero \in \whull(X)$.
Moreover, if $Y \subsetneq X$ is any strict subset of $X$, then \cref{le:faces_convex} shows that $\whull(Y)$ is the union of faces of $[\ominus 0, 0]^d$ whose vertices belong to $Y$.
In particular, $\Zero \notin \whull(Y)$.
This example shows the lower bound $c_d \ge 2^d$ for the Carath{\'e}odory number of TC-convexity.
We do not know what is the optimal value of $c_d$.
\end{example}

\subsection{TC-convex cones}

In usual convexity, the study of convex sets and convex cones is intimately connected. 
We define the TC-conic hull of a set and infer a representation by combinations of points for TC-convex cones from the representation of TC-convex sets given in \cref{le:inthull_from_faces}.
We also introduce TC-span based on the TC-conic hull and show a basic property for later use. 
We finish with a technical lemma for later use which represents the slice of a TC-convex cone at `height' $0$ as a TC-convex hull of finitely many points and `rays'.

\begin{definition}\label{def:TC_cone}
Given an arbitrary set $X \subseteq \TTpm^d$, we denote
\[
 \wcone(X) = \SetOf{\lambda \odot x}{\lambda \in \Tgeq, x \in \whull(X)} \, ,
 \]
the \emph{TC-conic hull} of $X$. 
\end{definition}

\begin{lemma}\label{le:TC_cone_from_faces}
If $X \subseteq \TTpm^d$, then
\begin{equation*}
\wcone(X) = \bigcup\SetOf{\faces(X \odot \diag(\lambda))}{\lambda \in \Tgeq^X, |\supp(\lambda)| < +\infty} \, .
\end{equation*}
\end{lemma}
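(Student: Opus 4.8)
The plan is to deduce this from the affine version \cref{le:inthull_from_faces} (more precisely its extension to arbitrary sets, \cref{cor:whull-arbitrary-faces}) by the standard homogenization trick, adapted to the signed tropical setting. The key observation is that scaling by $\lambda \in \Tgeq$ commutes with the operator $\faces(\cdot)$ in the sense that $\lambda \odot \faces(X \odot \diag(\mu)) = \faces(X \odot \diag(\lambda \odot \mu))$, which follows from the observation after \cref{le:faces_convex} that $\faces(\cdot)$ is built out of closed halfspaces together with the fact that scaling by a positive tropical unit is an order-preserving automorphism of $\TTpm^d$ (equivalently, one can see this directly: $\vertices$ commutes with positive scaling by \cref{obs:basic-properties-lsum} and the definition of left sum, and $\faces$ is determined by $\vertices$ via \cref{cor:faces_from_ineqs}).

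First I would prove the inclusion ``$\subseteq$''. Take $z \in \wcone(X)$, so $z = \lambda \odot x$ for some $\lambda \in \Tgt$ and $x \in \whull(X)$ (the case $\lambda = \Zero$ gives $z = \Zero$, which lies in every $\faces(X \odot \diag(\mu))$ with $\supp(\mu) \ne \emptyset$, e.g. taking a single coordinate $0$). By \cref{cor:whull-arbitrary-faces} there is $\mu \in \Tgeq^X$ with $\bigoplus_i \mu_i = 0$, $|\supp(\mu)| < \infty$, and $x \in \faces(X \odot \diag(\mu))$. Then $z = \lambda \odot x \in \lambda \odot \faces(X \odot \diag(\mu)) = \faces(X \odot \diag(\lambda \odot \mu))$, and $\nu := \lambda \odot \mu$ lies in $\Tgeq^X$ with finite support, so $z$ is in the right-hand side.

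For ``$\supseteq$'', take $\nu \in \Tgeq^X$ with finite nonempty support and $z \in \faces(X \odot \diag(\nu))$; if $\supp(\nu) = \emptyset$ then $\faces = \{\Zero\} \subseteq \wcone(X)$ trivially. Let $\lambda = \bigoplus_i \nu_i \in \Tgt$ and set $\mu_i = \nu_i - \lambda$ for each $i$, so $\mu \in \Tgeq^X$, $\bigoplus_i \mu_i = 0$, $|\supp(\mu)| = |\supp(\nu)| < \infty$, and $\nu = \lambda \odot \mu$. Then $z \in \faces(X \odot \diag(\lambda \odot \mu)) = \lambda \odot \faces(X \odot \diag(\mu))$, so $z = \lambda \odot x$ with $x \in \faces(X \odot \diag(\mu)) \subseteq \whull(X)$ by \cref{cor:whull-arbitrary-faces}; hence $z \in \wcone(X)$ by \cref{def:TC_cone}.

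The main obstacle is the commutation identity $\lambda \odot \faces(X \odot \diag(\mu)) = \faces(X \odot \diag(\lambda \odot \mu))$ for $\lambda \in \Tgt$; everything else is bookkeeping. I would establish it by noting that multiplication by $\lambda$ is a bijection $\TTpm^d \to \TTpm^d$ that sends left sums to left sums — since $|\lambda \odot a| = |\lambda \odot b| \iff |a| = |b|$ and $\odot$ distributes over $\oplus$ on signed (non-balanced) vectors — and therefore sends $\vertices(X \odot \diag(\mu))$ bijectively onto $\vertices(X \odot \diag(\lambda \odot \mu))$; since it is moreover order-preserving coordinatewise, \cref{cor:faces_from_ineqs} then transports the characterization of $\faces$, giving the desired equality of sets. (One small point to handle carefully: $\Uncomp(x \oplus \dots)$ rescales to $\Uncomp(\lambda \odot x \oplus \dots)$ because $\lambda \odot \bullet b = \bullet(\lambda \odot b)$ and $\lambda \odot [\ominus|b|, |b|] = [\ominus|\lambda \odot b|, |\lambda \odot b|]$, so the ambient hypercube is rescaled compatibly.)
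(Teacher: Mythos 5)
Your proof is correct and follows essentially the same route as the paper: both rely on \cref{le:inthull_from_faces}/\cref{cor:whull-arbitrary-faces} together with the commutation $\lambda \odot \faces(X \odot \diag(\mu)) = \faces(X \odot \diag(\lambda \odot \mu))$ and then reparametrize the scalars; the paper phrases it as a chain of equalities while you separate the two inclusions, and you give a more explicit justification of the commutation identity, which the paper treats as obvious. One small slip: in the case $\lambda = \Zero$ you claim $\Zero$ lies in every $\faces(X \odot \diag(\mu))$ with $\supp(\mu) \neq \emptyset$, which is false (take $X = \{(0)\}$ and $\mu = (0)$); what you actually want is to take $\mu = \Zero$, whose support is empty (and finite), giving $\faces(X \odot \diag(\Zero)) = \{\Zero\}$.
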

\begin{proof}
By \cref{le:inthull_from_faces} we have the equality
\begin{align*}
&\wcone(X) \\
&= \bigcup\SetOf{\mu \odot \faces(X \odot \diag(\lambda))}{\mu \in \Tgeq, \lambda \in \Tgeq^X, \bigoplus_i \lambda_i = 0,  |\supp(\lambda)| < +\infty} \\
&= \bigcup\SetOf{\faces(X \odot \diag(\mu \odot \lambda))}{\mu \in \Tgeq, \lambda \in \Tgeq^X, \bigoplus_i \lambda_i = 0,  |\supp(\lambda)| < +\infty}
\end{align*}
Furthermore, if $\xi \in \Tgeq^X$ $\xi \neq \Zero$ has finite support, then we can write it as $\xi = \mu \odot \lambda$, where $\mu = \xi_1 \oplus \dots \oplus \xi_d \in \Tgeq$ and $\lambda \in \Tgeq^X$, $\bigoplus_i \lambda_i = 0$ is defined as $\lambda_i = \xi_i \odot \mu^{\odot -1}$ for all $i$. Therefore, we get
\begin{align*}
&\wcone(X) \\
&= \bigcup\SetOf{\faces(X \odot \diag(\mu \odot \lambda))}{\mu \in \Tgeq, \lambda \in \Tgeq^X, \bigoplus_i \lambda_i = 0, |\supp(\lambda)| < +\infty} \\
&= \bigcup\SetOf{\faces(X \odot \diag(\lambda))}{\lambda \in \Tgeq^X, |\supp(\lambda)| < +\infty} \, . \qedhere
\end{align*}
\end{proof}

Recall that a \emph{cone} is a set $X \subseteq \TTpm^d$ such that $\lambda \odot X \subseteq X$ for all $\lambda \in \Tgt$.

\begin{corollary} \label{le:convexity_TC_cone}
 The set $\wcone(X)$ is the smallest TC-convex cone that contains $X$ and $\Zero$. 
\end{corollary}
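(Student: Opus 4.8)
The plan is to establish the two inclusions implicit in ``smallest'': that $\wcone(X)$ is itself a TC-convex cone containing $X$ and $\Zero$, and that it is contained in every TC-convex cone with this property. I would treat the easy half first. From \cref{def:TC_cone} it is immediate that $\wcone(X)$ is a cone (for $t \in \Tgt$ and $z = \lambda \odot x$ with $\lambda \in \Tgeq$, $x \in \whull(X)$ we have $t \odot z = (t \odot \lambda) \odot x$ and $t \odot \lambda \in \Tgeq$), that it contains $X$ (take $\lambda = 0$, using $x \in \whull(X)$ for $x \in X$), and that it contains $\Zero$ (take $\lambda = \Zero$; here and below I assume $X \neq \emptyset$, the empty case being degenerate). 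For minimality, let $C$ be any TC-convex cone with $X \cup \{\Zero\} \subseteq C$: then $\whull(X) \subseteq C$ since $\whull(X)$ is the smallest TC-convex set containing $X$ and $C$ is TC-convex; moreover $t \odot C = C$ for every $t \in \Tgt$, because $t \odot C \subseteq C$ and $t^{\odot -1} \odot C \subseteq C$ force equality, so $t \odot \whull(X) \subseteq C$; combining this with $\Zero \in C$ and the identity $\wcone(X) = \{\Zero\} \cup \bigcup_{t \in \Tgt} t \odot \whull(X)$ read off from \cref{def:TC_cone}, one gets $\wcone(X) \subseteq C$.

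The substantive part is to show $\wcone(X)$ is TC-convex, and for this I would reuse almost verbatim the bookkeeping in the last paragraph of the proof of \cref{le:inthull_from_faces}, with \cref{le:TC_cone_from_faces} playing the role of \cref{le:inthull_from_faces}. Fix $a, b \in \wcone(X)$; by \cref{le:TC_cone_from_faces} there are $\lambda, \mu \in \Tgeq^X$ with finite support such that $a \in \faces(X \odot \diag(\lambda))$ and $b \in \faces(X \odot \diag(\mu))$. Let $\alpha, \beta \in \Tgeq$ with $\alpha \oplus \beta = 0$; we may assume $\alpha, \beta \in \Tgt$, since $\alpha = \Zero$ forces $\beta = 0$ and then $\faces(\Zero, b) = \{b\} \subseteq \wcone(X)$. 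As multiplication by a positive tropical scalar commutes with $\oplus$, $\lplus$ and $\Uncomp$, hence with $\vertices(\cdot)$ and $\faces(\cdot)$, we get $\alpha \odot a \in \faces(X \odot \diag(\alpha \odot \lambda))$ and $\beta \odot b \in \faces(X \odot \diag(\beta \odot \mu))$. Then \cref{le:adding_boxes} yields $\vertices(\alpha \odot a, \beta \odot b) \subseteq \faces(\alpha \odot X \odot \diag(\lambda) \cup \beta \odot X \odot \diag(\mu))$, and \cref{le:faces_convex} upgrades this to $\faces(\alpha \odot a, \beta \odot b) \subseteq \faces(\alpha \odot X \odot \diag(\lambda) \cup \beta \odot X \odot \diag(\mu))$. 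Exactly as in the proof of \cref{le:inthull_from_faces}, the latter set equals $\faces(X \odot \diag(\nu))$ with $\nu = \alpha \odot \lambda \oplus \beta \odot \mu \in \Tgeq^X$ of finite support, so by \cref{le:TC_cone_from_faces} it is contained in $\wcone(X)$. Finally \cref{le:weak_inter} gives $\whull(a, b) = \bigcup \{\faces(\alpha \odot a, \beta \odot b) : \alpha, \beta \in \Tgeq,\ \alpha \oplus \beta = 0\}$, whence $\whull(a, b) \subseteq \wcone(X)$ and $\wcone(X)$ is TC-convex.

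The hard part is this TC-convexity argument: it carries all the real content and reuses the delicate face-merging step (\cref{le:adding_boxes} together with the identity $\faces(\alpha \odot X \odot \diag(\lambda) \cup \beta \odot X \odot \diag(\mu)) = \faces(X \odot \diag(\nu))$) from the bounded case. The one simplification relative to \cref{le:inthull_from_faces} is that the normalization $\bigoplus_i \lambda_i = 0$ vanishes, because \cref{le:TC_cone_from_faces} already parametrizes $\wcone(X)$ by all finitely supported $\lambda \in \Tgeq^X$; accordingly $\nu$ need only be nonnegative with finite support. Everything else -- the cone property, the two containments, the minimality direction, and the invariance of $\vertices(\cdot)$ and $\faces(\cdot)$ under positive scaling -- is routine unwinding of the definitions and of facts already used earlier in the paper (in particular the scaling identity $\mu \odot \faces(X \odot \diag(\lambda)) = \faces(X \odot \diag(\mu \odot \lambda))$ appearing in the proof of \cref{le:TC_cone_from_faces}).
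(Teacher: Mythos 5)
Your proof is correct but proceeds differently from the paper's. For TC-convexity, the paper does not redo the face-merging argument: using \cref{le:TC_cone_from_faces} and \cref{le:inthull_from_faces}, it rewrites $\wcone(X)$ as the nested union $\bigcup_{\mu \in \Tgeq} \whull(\mu \odot \tilde{X})$ with $\tilde{X} = X \cup \{\Zero\}$, and then invokes \cref{cor:basic-properties-TC-convex} (TC-convexity is closed under nested unions). Minimality then falls out of the same identity, since any TC-convex cone $Z \supseteq \tilde{X}$ contains every $\whull(\mu \odot \tilde{X})$. You instead verify the two-point hull condition directly, re-running the bookkeeping from the last paragraph of \cref{le:inthull_from_faces} (via \cref{le:adding_boxes}, \cref{le:faces_convex}, and the merge identity $\faces(\alpha \odot X \odot \diag(\lambda) \cup \beta \odot X \odot \diag(\mu)) = \faces(X \odot \diag(\nu))$), correctly noting that the normalization $\bigoplus_i \lambda_i = 0$ may be dropped because \cref{le:TC_cone_from_faces} already parametrizes $\wcone(X)$ by arbitrary finitely supported $\lambda \in \Tgeq^X$. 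Both proofs are sound; yours is more explicit and does not need the nested-union observation, at the price of duplicating the technical core of \cref{le:inthull_from_faces} rather than treating it as a black box. Your cone-property and minimality arguments match the paper's in spirit, and the $X = \emptyset$ caveat you flagged is also quietly set aside in the paper (which works with $\tilde{X}$, always nonempty).
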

\begin{proof}
  We set $\tilde{X} = X \cup \Zero$ and use the representation from \cref{le:TC_cone_from_faces} to get
  \begin{align*}
    &\wcone(X) \\
    &= \bigcup_{\mu \in \Tgeq}\bigcup\SetOf{\faces(X \odot \diag(\lambda))}{\lambda \in \Tgeq^X, \bigoplus_i \lambda_i \leq \mu,|\supp(\lambda)| < +\infty} \\
    &= \bigcup_{\mu \in \Tgeq}\bigcup\SetOf{\faces(\tilde{X} \odot \diag(\lambda))}{\lambda \in \Tgeq^{\tilde{X}}, \bigoplus_i \lambda_i = \mu,|\supp(\lambda)| < +\infty} \\
    &= \bigcup_{\mu \in \Tgeq} \whull\left(\mu \odot \tilde{X}\right) \enspace .
  \end{align*}
  As this is a nested union for increasing $\mu$, it is a TC-convex set by \cref{cor:basic-properties-TC-convex}.

  Furthermore, let $Z$ be the smallest TC-convex cone containing $\tilde{X}$.
  By definition of a cone, one gets $\mu \odot \tilde{X} \subseteq Z$ for all $\mu > \Zero$ and, by TC-convexity, also  $\whull\left(\mu \odot \tilde{X}\right) \subseteq Z$ for all $\mu > \Zero$. 
  Since $\Zero$ belongs to $Z$, the equality above shows the minimality of $\wcone(X)$. 
  \end{proof}
  
\begin{corollary}\label{cor:hull_of_cone}
If $X \subseteq \TTpm^d$ is a cone, then $\wcone(X) = \whull(X) \cup \{\Zero\}$.
\end{corollary}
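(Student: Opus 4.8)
The plan is to read this off the combinatorial description of the TC-conic hull in \cref{le:TC_cone_from_faces} together with the hull formula in \cref{le:inthull_from_faces}; the role of the cone hypothesis is precisely that it lets us absorb strictly positive scalars back into $X$. The inclusion $\whull(X)\cup\{\Zero\}\subseteq\wcone(X)$ is immediate: taking $\lambda=0$ in \cref{def:TC_cone} gives $\whull(X)\subseteq\wcone(X)$, and $\Zero\in\wcone(X)$ by \cref{le:convexity_TC_cone}.

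For the reverse inclusion I would take $z\in\wcone(X)$ and, using \cref{le:TC_cone_from_faces}, fix a finitely supported $\lambda\in\Tgeq^{X}$ with $z\in\faces(X\odot\diag(\lambda))$. If $\lambda=\Zero$, then every column of $X\odot\diag(\lambda)$ is $\Zero$, so $z=\Zero$ and we are done. Otherwise, I would first observe that the columns $\lambda_x\odot x$ with $x\notin\supp(\lambda)$ equal $\Zero$ and are inert for the $\faces$ operator: since $y\lplus\Zero=\Zero\lplus y=y$ for every $y\neq\Zero$ and $\Zero$ is absorbed by $\bigoplus$, both the set $\vertices(\cdot)$ and the ambient hypercube $\Uncomp(\cdot)$ of the tuple are unchanged upon deleting the $\Zero$-columns. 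Hence $\faces(X\odot\diag(\lambda))=\faces(Y)$, where $Y=\SetOf{\lambda_x\odot x}{x\in\supp(\lambda)}$ is a finite set. For $x\in\supp(\lambda)$ we have $\lambda_x\in\Tgt$, so $\lambda_x\odot x\in X$ because $X$ is a cone; thus $Y\subseteq X$. Finally, applying \cref{le:inthull_from_faces} to $Y$ with all scalars equal to the tropical unit $0$ yields $\faces(Y)\subseteq\whull(Y)\subseteq\whull(X)$ by monotonicity of the hull, so $z\in\whull(X)$.

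The one step that needs a little care is the claim that $\Zero$-columns are inert for $\faces(\cdot)$; everything else is bookkeeping, and there is no serious obstacle. An alternative that sidesteps the $\Zero$-column discussion is to factor $\lambda=\mu\odot\lambda'$ with $\mu=\bigoplus_{x}\lambda_x\in\Tgt$ and $\bigoplus_{x}\lambda'_x=0$ (exactly as in the proof of \cref{le:TC_cone_from_faces}), rewrite $X\odot\diag(\lambda)=(\mu\odot X)\odot\diag(\lambda')$, and then use $\mu\odot X\subseteq X$ to land inside the formula of \cref{cor:whull-arbitrary-faces}; I would keep whichever of the two versions turns out to be shorter.
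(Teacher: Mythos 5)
Your argument is correct and matches the paper's proof in spirit: both absorb the strictly positive scalars $\lambda_x$ into $X$ via the cone property and then feed the resulting configuration into the $\faces$-description of $\whull$. The paper does this by keeping the indexing by $X$ and defining a new weighting $\mu\in\Tgeq^X$ with $\mu_z=0$ exactly when $z=\lambda_x\odot x$ for some $x\in\supp(\lambda)$ (so $\bigoplus_z\mu_z=0$) and then invoking \cref{cor:whull-arbitrary-faces}, implicitly relying on the $\Zero$-column inertness of $\faces(\cdot)$ that you spell out explicitly before passing to the finite set $Y\subseteq X$ and applying \cref{le:inthull_from_faces} with the all-$0$ weight.
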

\begin{proof}
The inclusion $\whull(X) \cup \{\Zero\} \subseteq \wcone(X)$ follows from \cref{le:convexity_TC_cone}. Conversely, if $y \in \wcone(X)$, then \cref{le:TC_cone_from_faces} shows that $y \in \faces(X \odot \diag(\lambda))$ for some $\lambda \in \Tgeq^X, |\supp(\lambda)| < +\infty$. If $\lambda = \Zero$, then $y = \Zero$. Otherwise, we define $\mu \in \Tgeq^X$ as 
\[
\mu_z = \begin{cases}
0 &\text{if $z = \lambda_x \odot x$ for some $x \in \supp(\lambda)$},\\
\Zero &\text{otherwise}.
\end{cases}
\]
Then, $\bigoplus_{z} \mu_z = 0$ and $X \odot \diag(\lambda) = X \odot \diag(\mu)$ because $X$ is a cone. Hence $y \in \faces(X \odot \diag(\mu))$ and $y \in \whull(X)$ by \cref{cor:whull-arbitrary-faces}.
\end{proof}

We define the TC-span in analogy to the classical fact that a linear space can be written as a cone with antipodal generators: $\wspan(X) = \wcone(X,\ominus X)$. 
Observe that this is a monotonous operator, that is $\wspan(X) \subseteq \wspan(Y)$ for $X \subseteq Y \subseteq \TTpm^{k}$. 

We relate the TC-convex hull and TC-span by intersecting with a suitably scaled box. 

\begin{observation} \label{obs:intersection+cuboids}
  Let $Q \subseteq \RR^k$ be a (potentially low-dimensional) axis-parallel hypercube with nonempty intersection with $[-1,1]^k$. 
  Then the vertices of $Q \cap [-1,1]^k$ are obtained from the vertices of $Q$ by replacing all entries of absolute value bigger than $1$ with an entry of absolute value $1$ but with the same sign. 
\end{observation}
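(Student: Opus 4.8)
The plan is to reduce everything to a coordinate-by-coordinate computation, exploiting that both $Q$ and the cube $[-1,1]^k$ are products of intervals. First I would write $Q = \prod_{i=1}^{k} I_i$, where each $I_i = [a_i,b_i]$ is a closed interval of $\RR$ (degenerate intervals $a_i = b_i$ being allowed, which accounts for $Q$ being possibly low-dimensional). The vertices of $Q$ are then exactly the points $v = (v_1,\dots,v_k)$ with $v_i \in \{a_i,b_i\}$ for every $i$; similarly $Q \cap [-1,1]^k = \prod_{i=1}^k (I_i \cap [-1,1])$, and the hypothesis that this intersection is nonempty is equivalent to $I_i \cap [-1,1] \ne \emptyset$ for every $i$, i.e.\ $a_i \le 1$ and $b_i \ge -1$.

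Next I would introduce the clamping map $\kappa\colon \RR \to [-1,1]$, $\kappa(t) = \max(-1,\min(1,t))$, and observe that $\kappa(t) = t$ when $|t| \le 1$ while $\kappa(t) = \sgn(t)$ when $|t| > 1$; thus applying $\kappa$ coordinatewise is precisely the replacement operation in the statement. The key computation is then $I_i \cap [-1,1] = [\max(a_i,-1),\min(b_i,1)] = [\kappa(a_i),\kappa(b_i)]$, where the last equality uses $a_i \le 1$ (so $\min(1,a_i) = a_i$, hence $\kappa(a_i) = \max(-1,a_i)$) and $b_i \ge -1$ (so $\kappa(b_i) = \min(1,b_i)$). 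Consequently the vertices of $Q \cap [-1,1]^k$ are exactly the points $w$ with $w_i \in \{\kappa(a_i),\kappa(b_i)\}$ for all $i$.

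Finally I would combine the two descriptions. Given a vertex $w$ of $Q \cap [-1,1]^k$, for each $i$ pick a representative $v_i \in \{a_i,b_i\}$ with $\kappa(v_i) = w_i$ (either choice works if $\kappa(a_i) = \kappa(b_i)$); then $v$ is a vertex of $Q$ and $w$ is obtained from $v$ by the prescribed replacement of entries of absolute value $>1$. Conversely, applying $\kappa$ coordinatewise to any vertex $v$ of $Q$ produces a point whose $i$-th coordinate lies in $\{\kappa(a_i),\kappa(b_i)\}$, i.e.\ a vertex of $Q \cap [-1,1]^k$, which closes the argument.

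There is essentially no hard step here; the only thing to be careful about is the degenerate cases, namely a coordinate with $a_i = b_i$ or a coordinate where $\kappa(a_i) = \kappa(b_i)$ because both endpoints lie strictly on the same side of $[-1,1]$. These are handled uniformly by the interval identity $I_i \cap [-1,1] = [\kappa(a_i),\kappa(b_i)]$ above, since the argument never assumes the $I_i$ are nondegenerate — so I expect the write-up to be short.
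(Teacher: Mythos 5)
The paper states this as an \emph{observation} and supplies no proof of its own, so there is no paper argument to compare against. Your coordinate-wise reduction is correct and is essentially the canonical way to verify the claim: writing $Q = \prod_i [a_i,b_i]$, noting that the nonemptiness hypothesis gives $a_i \le 1$ and $b_i \ge -1$ for each $i$, and checking that $[a_i,b_i] \cap [-1,1] = [\kappa(a_i),\kappa(b_i)]$ with $\kappa$ the clamp to $[-1,1]$. The identification of $\kappa$ with the sign-preserving replacement operation, and the observation that the endpoint sets $\{\kappa(a_i),\kappa(b_i)\}$ describe the vertices of the intersection even when they collapse, cleanly handles the low-dimensional and degenerate cases that the statement explicitly allows. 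Nothing is missing.
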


\begin{lemma} \label{lem:special-hull-unit-cube}
Suppose that $X \subseteq \{\ominus 0, \Zero, 0\}^k$ satisfies $X = \ominus X$ and $\Zero \in X$. Then $\whull(X) = \wspan(X) \cap [\ominus 0, 0]^k$.
\end{lemma}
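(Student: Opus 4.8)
The statement claims $\whull(X) = \wspan(X) \cap [\ominus 0, 0]^k$ for $X \subseteq \{\ominus 0, \Zero, 0\}^k$ with $X = \ominus X$ and $\Zero \in X$. The plan is to prove the two inclusions separately, using the combinatorial descriptions of $\whull$ and $\wspan$ in terms of the $\faces(\cdot)$ operator from \cref{le:inthull_from_faces} and \cref{le:TC_cone_from_faces}. For the inclusion $\whull(X) \subseteq \wspan(X) \cap [\ominus 0, 0]^k$: the containment in $\wspan(X) = \wcone(X \cup \ominus X) = \wcone(X)$ (using $X = \ominus X$) is immediate from \cref{le:convexity_TC_cone}, and the containment in the box $[\ominus 0, 0]^k$ follows because every generator has entries in $\{\ominus 0, \Zero, 0\}$, so any weighted left sum $\lambda \odot x$ with $\lambda \in \Tgeq$, $\bigoplus \lambda_i = 0$ stays within absolute value $0$ in each coordinate; hence so does every point of $\faces(X \odot \diag(\lambda))$, since $\faces$ of such points is a subcomplex of $\Uncomp(\bullet(\text{something with entries of absolute value} \le 0))$.

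**The harder inclusion.** The reverse inclusion $\wspan(X) \cap [\ominus 0, 0]^k \subseteq \whull(X)$ is where the real work lies. Take $y \in \wspan(X) \cap [\ominus 0, 0]^k$. By \cref{le:TC_cone_from_faces} (applied to $X \cup \ominus X = X$), there is $\lambda \in \Tgeq^X$ with finite support such that $y \in \faces(X \odot \diag(\lambda))$. The coefficients $\lambda$ may be large, pushing the vertices $\vertices(X \odot \diag(\lambda))$ outside the box; but $y$ itself lies in $[\ominus 0, 0]^k$. The idea is to \emph{truncate}: I would replace each $\lambda_x$ by $\min(\lambda_x, 0)$ (equivalently, cap the scaling at $0$), obtaining $\mu \in \Tgeq^X$ with $\bigoplus_i \mu_i = 0$ (here I use $\Zero \in X$ only if all $\mu_x = \Zero$, but since $y \in \faces(X\odot\diag(\lambda))$ and $y$ is not forced to be $\Zero$, we can always pad with the generator $\Zero$ scaled by $0$ to ensure $\bigoplus \mu_i = 0$). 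Then I would show $y \in \faces(X \odot \diag(\mu))$ by invoking \cref{cor:faces_from_ineqs}: for each sign pattern ${\sim} \in \{\le,\ge\}^k$ there is a vertex $w \in \vertices(X \odot \diag(\lambda))$ with $y \sim w$; I want to produce a vertex $w' \in \vertices(X \odot \diag(\mu))$ with $y \sim w'$. The natural candidate is to take the same permutation defining $w$ but with capped scalars; the point is that capping at absolute value $0$ does not change the \emph{sign} of any coordinate of a vertex — since entries of $X$ are in $\{\ominus 0, \Zero, 0\}$, scaling by $\lambda_x$ versus $\min(\lambda_x, 0)$ affects only whether the absolute value is $\lambda_x$ or $0$ — and since $|y_k| \le 0$ for all $k$, the inequality $y_k \sim_k w_k$ is preserved when $w_k$ is pulled back to absolute value $0$ with the same sign (this is where $y \in [\ominus 0, 0]^k$ is essential). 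Having established $y \in \faces(X \odot \diag(\mu))$ with $\bigoplus_i \mu_i = 0$, \cref{cor:whull-arbitrary-faces} gives $y \in \whull(X)$.

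**Main obstacle.** I expect the main obstacle to be the sign-preservation argument in the truncation step: one must be careful that capping the scalars $\lambda_x$ and then forming a left sum (which is order-sensitive) still yields a vertex in $\vertices(X \odot \diag(\mu))$ realizing the same inequalities against $y$. The subtlety is that $\vertices(\cdot)$ ranges over \emph{all} orderings, so it suffices to exhibit \emph{one} good ordering; using the same ordering $\sigma$ that produced $w$ from $\lambda$ should work, because the left sum $\biglplus_j \mu_{\sigma(j)} \odot x_{\sigma(j)}$ agrees with $\biglplus_j \lambda_{\sigma(j)} \odot x_{\sigma(j)}$ in every coordinate's \emph{sign} (the first nonzero contribution in the left sum has the same sign in both cases), only possibly differing in absolute value where $w$ had absolute value exceeding $0$. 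Then $y \sim w$ together with $|y_k| \le 0$ forces $y \sim w'$. A clean way to handle this uniformly is to observe that capping is exactly intersection with the box $[\ominus 0, 0]^k$ at the level of the supporting-halfspace description from the proof of \cref{le:faces_convex}, combined with \cref{obs:intersection+cuboids} which describes exactly how vertices of a hypercube behave under intersection with $[-1,1]^k$ (rescaled to $[\ominus 0, 0]^k$). Formalizing via \cref{obs:intersection+cuboids} may in fact be the shortest route and avoids fiddling with individual left sums.
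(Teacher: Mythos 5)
Your overall strategy matches the paper's proof exactly: truncate the scalars to $\mu_i = \min(0, \lambda_i)$ (using $\Zero \in X$ to ensure $\bigoplus_i \mu_i = 0$), then show $y \in \faces(X \odot \diag(\mu))$. For the easy inclusion you argue via absolute values of left sums; the paper more simply notes that $[\ominus 0, 0]^k$ is TC-convex and contains $X$.

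There is, however, a genuine gap in your sign-preservation step. You claim that taking the \emph{same} ordering $\sigma$ that produced the vertex $w = \biglplus_j \lambda_{\sigma(j)} \odot x_{\sigma(j)}$ yields a vertex $w' = \biglplus_j \mu_{\sigma(j)} \odot x_{\sigma(j)}$ of $\vertices(X \odot \diag(\mu))$ with the same sign pattern, because ``the first nonzero contribution in the left sum has the same sign in both cases.'' This is false: the coordinate value of a left sum is determined by the first index achieving the \emph{maximum absolute value}, not the first nonzero index, and truncation can change which index wins. Concretely, for $k=1$ take $X = \{\Zero, 0, \ominus 0\}$, $\lambda = (0,3,5)$, $\sigma = \mathrm{id}$: then $w = \Zero \lplus 3 \lplus \ominus 5 = \ominus 5$ has sign $\ominus$, whereas after truncation $w' = \Zero \lplus 0 \lplus \ominus 0 = 0$ has sign $\oplus$. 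The paper repairs this by re-ordering: it replaces $\sigma$ by a permutation $\tau$ with $\lambda_{\tau(1)} \ge \dots \ge \lambda_{\tau(n)}$, checks that the $\tau$-left-sum still equals $w$ (the winning index at each coordinate becomes the first nonzero one, which agrees under $\sigma$ and under $\tau$), and then uses that $\min(\cdot,0)$ is monotone so the $\mu$-scalars are still nonincreasing along $\tau$ — hence the winner at each coordinate, and with it the sign, is unchanged after truncation. Your proposed escape via \cref{obs:intersection+cuboids} and the halfspace description from the proof of \cref{le:faces_convex} does not avoid this: \cref{obs:intersection+cuboids} tells you the vertices of $F \cap [\ominus 0,0]^k$ are sign-preserving truncations of the vertices of $F$, but you still must show those truncated points lie in $\vertices(X \odot \diag(\mu))$, and that is exactly where the re-sorting argument is needed. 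The paper in fact uses both the re-sorting permutation and \cref{obs:intersection+cuboids} in tandem.
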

\begin{proof}
  Denote $X = \{x_1, x_2, \dots, x_n\}$ where $x_1 = \Zero$.
  The inclusion $\whull(X) \subseteq \wspan(X) \cap [\ominus 0, 0]^k$ is trivial because the set $ [\ominus 0, 0]^k$ is TC-convex.

  To prove the opposite inclusion, let $y \in \wspan(X) \cap [\ominus 0, 0]^k$.
  Since $X = \ominus X$, we have $\wspan(X) = \wcone(X)$.
  Hence, by \cref{le:TC_cone_from_faces} we have $y \in \faces(X \odot \diag(\lambda))$ for some $\lambda \in \Tgeq^n$.
  If $\lambda = \Zero$, then $y = \Zero$ and so $y \in \whull(X)$. Otherwise, let $\mu \in \Tgeq^n$ be defined as $\mu_1 = 0$ and $\mu_i = \min\{0,\lambda_i\}$ for every $i \ge 2$.
  We next show that $y \in \faces(X \odot \diag(\mu))$ concluding the proof by  \cref{le:TC_cone_from_faces} . 

  To do so, recall \cref{def:vert+faces} and let $F$ be a face of $\faces(X \odot \diag(\lambda))$ that contains $y$. Any vertex of this face is of the form $\lambda_{\sigma(1)} \odot x_{\sigma(1)} \lplus \dots \lplus \lambda_{\sigma(n)} \odot x_{\sigma(n)}$ for some permutation $\sigma$ of $[n]$.
  Let $\tau$ be the permutation of $[n]$ such that $\lambda_{\tau(1)} \geq \dots \geq \lambda_{\tau(n)}$ and that $\tau(i) < \tau(j)$ for $i<j$ with $\lambda_{\sigma(i)} = \lambda_{\sigma(j)}$.
  Then, we have $\lambda_{\sigma(1)} \odot x_{\sigma(1)} \lplus \dots \lplus \lambda_{\sigma(n)} \odot x_{\sigma(n)} = \lambda_{\tau(1)} \odot x_{\tau(1)} \lplus \dots \lplus \lambda_{\tau(n)} \odot x_{\tau(n)}$.
  Indeed, consider this left sum componentwise.
  The value of a component is given by the first non-$\Zero$ entry in the sum with maximal absolute value.
  This agrees in both expressions.
  Moreover, since $\lambda_{\tau(1)} \geq \dots \geq \lambda_{\tau(n)}$, the point $\mu_{\tau(1)} \odot x_{\tau(1)} \lplus \dots \lplus \mu_{\tau(n)} \odot x_{\tau(n)}$ is a vertex of $\faces(X \odot \diag(\mu))$ with the same sign pattern.
  Therefore, applying \cref{obs:intersection+cuboids} to $F$ shows that $y \in F \cap [\ominus 0, 0]^k \subseteq \faces(X \odot \diag(\mu))$.
\end{proof}

We finish this section with the relation between TC-convex hull and TC-conic hull via homogenization.

\begin{lemma}\label{le:homog_halflines}
Suppose that $V,W \subset \TTpm^d$ are two nonempty finite sets. Let $\hat{V} = \SetOf{(v,0)}{v \in V}$, $\hat{W} = \SetOf{(w,\Zero)}{w \in \hat{W}}$, and 
\[
X = \whull\bigl(\SetOf{v \lplus \lambda \odot w}{v \in V, w \in W, \lambda \in \Tgeq}\bigr) \, . 
\]
Then, we have the equality
\[
\SetOf{(x, 0)}{x \in X} = \wcone(\hat{V} \cup \hat{W}) \cap \{x_{d+1} = 0\} \, .
\]
\end{lemma}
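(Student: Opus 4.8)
Write $\iota\colon\TTpm^{d}\to\TTpm^{d+1}$, $\iota(x)=(x,0)$, and let $\pi\colon\TTpm^{d+1}\to\TTpm^{d}$ be the projection forgetting the last coordinate; put $G=\SetOf{v\lplus\lambda\odot w}{v\in V,\ w\in W,\ \lambda\in\Tgeq}$, so $X=\whull(G)$, and since $W\neq\emptyset$ we have $V\subseteq G$ (take $\lambda=\Zero$). The plan is to prove the two inclusions separately; the key preparatory step is to understand how $\iota$ interacts with TC-segments. Using \cref{le:weak_inter} and the identity $\alpha\odot\iota(p)=(\alpha\odot p,\alpha)$ one checks that, whenever $\alpha,\beta\in\Tgeq$ with $\alpha\oplus\beta=0$, every point of $\faces\bigl((\alpha\odot p,\alpha),(\beta\odot q,\beta)\bigr)$ has last coordinate $\alpha\oplus\beta=0$ and projects under $\pi$ into $\faces(\alpha\odot p,\beta\odot q)$; hence $\whull(\iota(p),\iota(q))=\iota(\whull(p,q))$ for all $p,q\in\TTpm^d$. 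Together with \cref{cor:convexity-finitary} this gives $\iota(\whull(G))=\whull(\iota(G))$ and shows that $\iota(X)$ is TC-convex. Finally $\{x_{d+1}=0\}$ is TC-convex, being the intersection of the two closed tropical halfspaces $\{x_{d+1}\ge 0\}$ and $\{x_{d+1}\le 0\}$ (cf.\ \cref{cor:hspace_weakly_convex}), so the right-hand side of the claimed equality, as the intersection of the TC-convex cone $\wcone(\hat V\cup\hat W)$ (\cref{le:convexity_TC_cone}) with $\{x_{d+1}=0\}$, is TC-convex.

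For ``$\subseteq$'': a point $\iota(v\lplus\lambda\odot w)$ equals $(v,0)\lplus\lambda\odot(w,\Zero)$, which is a vertex of $(\hat V\cup\hat W)\odot\diag(\nu)$ for the coefficient vector $\nu$ with $\nu_{(v,0)}=0$, $\nu_{(w,\Zero)}=\lambda$, and $\nu=\Zero$ elsewhere; by \cref{le:TC_cone_from_faces} it lies in $\wcone(\hat V\cup\hat W)$, obviously at height $0$. Thus $\iota(G)$ is contained in the (TC-convex) right-hand side, and therefore so is $\iota(X)=\whull(\iota(G))$.

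For ``$\supseteq$'': let $z\in\wcone(\hat V\cup\hat W)$ with $z_{d+1}=0$. By \cref{le:TC_cone_from_faces}, $z\in\faces\bigl((\hat V\cup\hat W)\odot\diag(\lambda)\bigr)$ for some finitely supported $\lambda\in\Tgeq^{\hat V\cup\hat W}$. The columns coming from $\hat W$ carry $\Zero$ in the last coordinate, so $\bigoplus\bigl((\hat V\cup\hat W)\odot\diag(\lambda)\bigr)$ is non-balanced in the last coordinate with value $\bigoplus_{v\in V}\lambda_{(v,0)}$; hence every point of this $\faces$-set has last coordinate $\bigoplus_{v\in V}\lambda_{(v,0)}$, and comparison with $z_{d+1}=0$ forces $\bigoplus_{v\in V}\lambda_{(v,0)}=0$. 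In particular some $v_0\in V$ carries the coefficient $\lambda_{(v_0,0)}=0$ (an \emph{anchor}). By \cref{le:faces_convex} the $\faces$-set equals $\whull$ of its vertex set, and since $\iota(X)$ is TC-convex it suffices to show that each vertex $y$ of $(\hat V\cup\hat W)\odot\diag(\lambda)$ lies in $\iota(X)$. Such a $y$ has $y_{d+1}=0$, so $y=\iota(\pi(y))$, and by associativity of $\lplus$ we may write $\pi(y)=c_1\lplus\dots\lplus c_m$ where each $c_j$ is $\lambda\odot v$ with $v\in V$ or $\lambda\odot w$ with $w\in W$ (and $\lambda\in\Tgeq$), the scalars on the $v$-summands all having norm $\le 0$ and at least one equal to $0$. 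It remains to prove $\pi(y)\in\whull(G)$, which we do by induction on $m$, peeling a summand off the end and re-inserting it through the weighted-left-sum closure of $\whull(G)$ from \cref{prop:local-generation-TC-hull}(i): a $v$-summand $\lambda\odot v$ is absorbed as $0\odot q\lplus\lambda\odot v$ with $q\in\whull(G)$ (legal since $|\lambda|\le 0$ and $v\in G$); if every summand is an anchor then $\pi(y)$ is a left sum of elements of $V$, hence lies in $\whull(V)\subseteq\whull(G)$ by \cref{le:inthull_from_faces}; and a $w$-summand $\lambda\odot w$ is absorbed, once $q\in\whull(G)$ still has $v_0$ available in the appropriate place, via the identity $q\lplus\lambda\odot w=0\odot q\lplus 0\odot(v_0\lplus\lambda\odot w)$ — which holds because the reinserted copy of $v_0$ is idempotent ($v_0\lplus v_0=v_0$) — together with $v_0\lplus\lambda\odot w\in G$.

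The main obstacle is exactly this last step of ``$\supseteq$'': the combinatorial bookkeeping showing that an arbitrary vertex of the homogenised cone at height $0$ is $\iota$ of a point of $\whull(G)$. One must keep an anchor available throughout the peeling so that the induction hypothesis (which requires the $v$-scalars to $\oplus$-sum to $0$) stays applicable — in particular one peels from the end in such a way as not to destroy the last available anchor — and one must verify that a ``$w$-summand'' with an a priori unbounded scalar can be re-absorbed into the \emph{hull} (not merely into the cone), which is precisely where the anchor $v_0$ and the membership $v_0\lplus\lambda\odot w\in G$ are used. The remaining ingredients — that $\iota$ transports TC-segments to TC-segments and that $\{x_{d+1}=0\}$ is TC-convex — are routine consequences of \cref{le:weak_inter} and \cref{cor:hspace_weakly_convex}.
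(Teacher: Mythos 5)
Your overall decomposition (embed via $\iota$, use that $\iota$ transports TC-segments to TC-segments, show $\iota(G)\subseteq$ RHS and invoke TC-convexity for $\subseteq$, then re-pack a height-$0$ vertex of the homogenized cone for $\supseteq$) is sound, and the $\subseteq$ direction is a legitimate, slightly cleaner reorganization of what the paper does. However, the ``peeling'' argument for $\supseteq$ has a genuine gap exactly where you flag it, and it is not merely bookkeeping. When the anchor $v_0$ is the last summand in the left-sum order that produces the vertex $y$ (and no other $v$-summand carries scalar $0$), the prefix $q=c_1\lplus\dots\lplus c_{m-1}$ has no anchor, so the induction hypothesis does not apply to $q$; your plan to ``peel from the end in such a way as not to destroy the last available anchor'' is unavailable, because the left-sum order for a given vertex is dictated by the permutation, not chosen by you. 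Concretely, with $V=\{(0,\ominus 0)\}$, $W=\{(0,0)\}$, the vertex $(0,0,0)\in\wcone(\hat V\cup\hat W)\cap\{x_3=0\}$ is produced only by $\hat w_0\lplus\hat v_0$; the prefix $\hat w_0$ contains no anchor, and establishing $\hat w_0=\iota((0,0))\in\iota(\whull(G))$ is precisely the thing you are trying to prove, so the step is circular. Your idempotency identity $q\lplus\lambda\odot w=q\lplus v_0\lplus\lambda\odot w$ also only holds when $v_0$ is among the summands of $q$, so it cannot rescue this case.

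The paper avoids the problem with a more global move: working in $\vertices\bigl((\hat V\cup\hat W)\odot\diag(\lambda)\bigr)$, it replaces each $\xi_i\odot\hat w_i$ by $(-1)\odot\hat v_1\lplus\xi_i\odot\hat w_i=(-1)\odot\hat z_i$ with $\hat z_i=\hat v_1\lplus(\xi_i+1)\odot\hat w_i$. Because $(-1)\odot\hat v_1$ has strictly smaller modulus than $\mu_1\odot\hat v_1=\hat v_1$ in every coordinate where $\hat v_1\neq\Zero$ (and is $\Zero$ elsewhere), this replacement never changes which summand wins in any coordinate and any order, so $\vertices$ is unchanged. After the replacement every column is a scaling of an element of $\hat V\cup\iota(G)$ with scalars in $[\Zero,0]$ summing to $0$, so \cref{le:inter_caratheodory} gives $\pi(y)\in\whull(G)$ directly — no peeling, hence no anchor placement issue. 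To salvage your argument you would need to perform essentially this replacement as a preprocessing step (insert a $(-1)$-scaled anchor into each $w$-summand and verify, as the paper does, that this leaves $\vertices$ invariant), rather than trying to insert $v_0$ at scale $0$ on the fly.
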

\begin{proof}
Let $x \in X$. By \cref{le:inter_caratheodory}, we have
\[
x \in \faces\bigl(\mu_1 \odot (v_1 \lplus \lambda_1 \odot w_1), \dots, \mu_{\ell} \odot (v_\ell \lplus \lambda_\ell \odot w_{\ell}) \bigr)
\]
for some $\mu_{i} \in \Tgeq$, $\bigoplus_{i} \mu_i = 0$, $\lambda_i \in \Tgeq$, $v_i \in V$, $w_i \in W$. Denote $\xi_i = \mu_i \odot \lambda_i$ for all $i$ and observe that
\begin{align*}
\vertices&\bigl(\mu_1 \odot (v_1 \lplus \lambda_1 \odot w_1), \dots, \mu_{\ell} \odot (v_\ell \lplus \lambda_\ell \odot w_{\ell}) \bigr) \\
&= \vertices(\mu_1 \odot v_1 \lplus \xi_1 \odot w_1, \dots, \mu_{\ell} \odot v_\ell \lplus \xi_\ell \odot w_{\ell}) \\
&\subseteq \vertices(\mu_1 \odot v_1, \dots, \mu_\ell \odot v_\ell, \xi_i \odot w_1, \dots, \xi_\ell \odot w_\ell) \, .
\end{align*}
By \cref{le:faces_convex} we get $z \in \faces(\mu_1 \odot v_1, \dots, \mu_\ell \odot v_\ell, \xi_i \odot w_1, \dots, \xi_\ell \odot w_\ell)$. Let $\hat{v}_i = (v_i,0) \in \hat{V}$ and $\hat{w}_i = (w_i, \Zero) \in \hat{W}$ for all $i$. Since $\bigoplus_{i} \mu_i = 0$ we get
\[
(x,0) \in \faces(\mu_1 \odot \hat{v}_1, \dots, \mu_\ell \odot \hat{v}_\ell, \xi_i \odot \hat{w}_1, \dots, \xi_\ell \odot \hat{w}_\ell) \, .
\]
In particular, \cref{le:TC_cone_from_faces} implies that $(x,0) \in \wcone(\hat{V} \cup \hat{W})$. Conversely, suppose that $(x,0) \in \wcone(\hat{V} \cup \hat{W})$. Then, \cref{le:TC_cone_from_faces} shows that there exist $\mu \in \Tgeq^{\ell}, \xi \in \Tgeq^{\ell}$, $\hat{v}_i \in \hat{V}$, $\hat{w}_i \in \hat{W}$ such that $(x,0) \in \faces(\mu_1 \odot \hat{v}_1, \dots, \mu_\ell \odot \hat{v}_\ell, \xi_i \odot \hat{w}_1, \dots, \xi_\ell \odot \hat{w}_\ell)$. Since the last coordinate of $(x,0)$ is $0$, we get $\bigoplus_{i} \mu_i = 0$. Without loss of generality, we can suppose that $\mu_1 = 0$. Observe that
\begin{align*}
\vertices&(\mu_1 \odot \hat{v}_1, \dots, \mu_\ell \odot \hat{v}_\ell, \xi_i \odot \hat{w}_1, \dots, \xi_\ell \odot \hat{w}_\ell) \\
&= \vertices(\mu_1 \odot \hat{v}_1, \dots, \mu_\ell \odot \hat{v}_\ell, (-1) \odot \hat{v}_1 \lplus \xi_i \odot \hat{w}_1, \dots, (-1) \odot \hat{v}_1 \lplus \xi_\ell \odot \hat{w}_\ell) \\
&= \vertices(\mu_1 \odot \hat{v}_1, \dots, \mu_\ell \odot \hat{v}_\ell, (-1) \odot \hat{z}_1, \dots, (-1) \odot \hat{z}_\ell) \, ,
\end{align*}
where $\hat{z} = \hat{v}_1 \lplus (\xi_i + 1) \odot \hat{w}_i$. Hence, by \cref{le:faces_convex} we get 
\[
(x,0) \in \faces(\mu_1 \odot \hat{v}_1, \dots, \mu_\ell \odot \hat{v}_\ell, (-1) \odot \hat{z}_1, \dots, (-1) \odot \hat{z}_\ell) \, .
\]
For every $i$, let  $v_i,z_i \in \TTpm^d$ be the projection of $\hat{v}_i,\hat{z}_i$ obtained by deleting the last coordinate. Then, we have 
\[
\{v_1, \dots, v_\ell, z_1, \dots, z_\ell\} \subseteq \SetOf{v \lplus \lambda \odot w}{v \in V, w \in W, \lambda \in \Tgeq} \, .
\]
Furthermore, since the last coordinate of every point $\hat{v}_i, \hat{z}_i$ is equal to $0$ and $\bigoplus_{i} \mu_i = 0$, we get $x \in \faces(\mu_1 \odot v_1, \dots, \mu_\ell \odot v_\ell, (-1) \odot z_1, \dots, (-1) \odot z_\ell)$. Therefore, by \cref{le:inter_caratheodory} we have $x \in X$.
\end{proof}

\section{Separation by Hemispaces}
\label{sec:separation+hemispaces}

A \emph{hemispace} is a convex set whose complement is also convex.
Separation by hemispaces has been studied extensively in the context of abstract convexity structures, see, e.g.,~\cite{VanDeVel:1993}. 
In this section, we prove theorems about separation by hemispaces in the context of TO- and TC-convexity.
These theorems will serve as a building blocks towards proving the separation by halfspaces discussed in \cref{sec:TC-hull+as+intersection}.

We say that a TO-convex set $X \subseteq \TTpm^d$ is a \emph{TO-hemispace} if $\TTpm^d \setminus X$ is also TO-convex. Likewise, we say that a TC-convex set $X \subseteq \TTpm^d$ is a \emph{TC-hemispace} if $\TTpm^d \setminus X$ is also TC-convex. By \cref{cor:TO-contain-TC}, a TO-hemispace is also a TC-hemispace. The first theorem of this section (proven in \cref{sec:separation_hemispace_TO}) shows that TO-convexity has very well-behaved separation properties: it fulfills the Pasch property and the Kakutani property, see \cref{fig:pasch,fig:to_pasch} for an illustration. A similar statement was proven for the unsigned case in~\cite{Horvath:2017}.

\begin{theorem}\label{thm:kakutani}
TO-convexity has the Pasch property and the Kakutani property. In other words,
\begin{enumerate}[(i)]
\item (Pasch property) if $(a,b_1,b_2,c_1,c_2) \in \TTpm^d$ are such that $b_1 \in \shull(a,c_1)$, $b_2 \in \shull(a,c_2)$, then $\shull(c_1,b_2) \cap \shull(c_2,b_1) \neq \emptyset$;
\item (Kakutani property) if $A, B \subseteq \TTpm^d$ are two disjoint TO-convex sets, then there exists a TO-hemispace $X \subseteq \TTpm^d$ such that $A \subseteq X$ and $B \subseteq (\TTpm^d \setminus X)$.
\end{enumerate}
In particular, any TO-convex set is an intersection of TO-hemispaces.
\end{theorem}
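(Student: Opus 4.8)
The plan is to establish the Pasch property by a direct hands-on computation with TO-convex hulls of two points, then derive the Kakutani property by invoking the abstract-convexity machinery of \cite{VanDeVel:1993}, and finally note that the Kakutani property together with the fact that singletons are TO-convex gives the closing sentence. For (i), I would use the explicit description of $\shull(\{p,q\})$ via \cref{le:TO-hull-intervals} and \cref{eq:hull+U}: a point of $\shull(a,c_1)$ is of the form $w \in \Uncomp(\alpha \odot a \oplus \beta \odot c_1)$ with $\alpha, \beta \in \Tgeq$, $\alpha \oplus \beta = 0$, and similarly for $b_2$. Writing $b_1 \in \Uncomp(\alpha_1 \odot a \oplus \beta_1 \odot c_1)$ and $b_2 \in \Uncomp(\alpha_2 \odot a \oplus \beta_2 \odot c_2)$, I would substitute these representations into the hulls $\shull(c_1, b_2)$ and $\shull(c_2, b_1)$ and look for a common point. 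Concretely, I expect the point realizing the intersection to be built from a common ``barycentric'' combination of $a, c_1, c_2$, so I would write out a candidate scalar vector $(\gamma_0, \gamma_1, \gamma_2) \in \Tgeq^3$ with $\bigoplus \gamma_i = 0$ and check, using \cref{cor:iterated-sums-uncomp-dim} (the iterated-sums property of $\Uncomp(\cdot)$, which says $\Uncomp(u \oplus q) \cap \Uncomp(p \oplus w) \neq \emptyset$ when $q \in \Uncomp(v \oplus w)$, $p \in \Uncomp(u \oplus v)$), that $\Uncomp(c_1\text{-combination involving } b_2)$ and $\Uncomp(c_2\text{-combination involving } b_1)$ meet.

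For (ii), I would appeal to the general theory of convexity structures. The Kakutani separation property is equivalent, in any convexity structure with a suitable finiteness/domain-finiteness condition, to the join-hull commutativity / Pasch-type conditions; more precisely, by results in \cite[Ch.\ I, \S\S1--4]{VanDeVel:1993}, a convexity structure that is domain finite (which TO-convexity is, since $\shull(M) = \bigcup_{T \subseteq M \text{ finite}} \shull(T)$), in which every singleton is convex, and which satisfies the Pasch property, is a \emph{Kakutani} convexity structure, i.e.\ any two disjoint convex sets extend to a complementary pair of half-spaces (hemispaces). So once (i) is in place, (ii) follows by citing the relevant theorem. The final sentence, that every TO-convex set is an intersection of TO-hemispaces, is then immediate: given a TO-convex $A$ and a point $z \notin A$, apply (ii) to the disjoint TO-convex sets $A$ and $\{z\}$ to obtain a TO-hemispace containing $A$ and excluding $z$; intersecting over all such $z$ recovers $A$.

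The main obstacle I anticipate is entirely in part (i): verifying the Pasch property requires careful bookkeeping of the weighted-left-sum / $\Uncomp(\cdot)$ combinations across three points with possibly balanced entries, and the non-trivial content is choosing the right common combination and correctly invoking \cref{cor:iterated-sums-uncomp-dim} coordinatewise. The balanced entries are the delicate case — when $\alpha \odot a \oplus \beta \odot c_1$ has a balanced coordinate, the set $\Uncomp$ is a genuine interval rather than a point, and one must make sure the candidate intersection point lies simultaneously in both intervals; this is exactly what the higher-dimensional iterated-sums corollary is designed to handle, so the proof should reduce to setting up the bookkeeping so that its hypotheses apply. Part (ii) and the final remark are, by contrast, soft consequences of the abstract framework and should be short.
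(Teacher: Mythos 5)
Your plan follows essentially the same route as the paper: for part (i) you mimic the classical proof of the Pasch property over ordered fields (building barycentric coefficients and then invoking \cref{cor:iterated-sums-uncomp-dim} to handle the balanced entries), for part (ii) you derive Kakutani from Pasch via abstract convexity, and the closing claim follows by applying (ii) to a convex set and a singleton. Two remarks on where you diverge from or fall short of the actual argument.

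First, for part (ii) you cite a theorem of the form ``domain finite + singletons convex + Pasch $\Rightarrow$ Kakutani'' attributed to van de Vel, Ch.~I, \S\S1--4. This is not the right condition and not the right reference. The theorem actually used is \cite[Theorem~5]{Chepoi:1994} (see also \cite[Theorem~4.1]{Kubis:2002}), which states that the Pasch and Kakutani properties are equivalent for convexities of \emph{arity~2}, i.e.\ those in which a set is convex as soon as it contains the hull of each of its two-element subsets. Arity~2 is strictly stronger than domain finiteness: domain finiteness lets you reduce hulls to finite subsets, but arity~2 lets you reduce convexity itself to pairs, which is what is needed for a two-point statement like Pasch to propagate to separation of arbitrary convex sets. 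The paper gets arity~2 for TO-convexity from \cref{le:TO-hull-intervals}, so the conclusion holds, but your argument should cite that lemma and the 2-arity condition rather than domain finiteness.

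Second, for part (i) you correctly identify both the strategy (barycentric combinations, then $\Uncomp(\cdot)$) and the key tool (\cref{cor:iterated-sums-uncomp-dim}), but be aware that ``setting up the bookkeeping'' is where the genuine content of the proof lies. Writing $b_1 \in \Uncomp(s_1 \odot a \oplus \bar s_1 \odot c_1)$ with $s_1 \oplus \bar s_1 = 0$ and $b_2 \in \Uncomp(s_2 \odot a \oplus \bar s_2 \odot c_2)$ with $s_2 \oplus \bar s_2 = 0$, the candidate coefficients for $\shull(b_1,c_2)$ are
\begin{equation*}
  t_1 = s_2 \odot (s_1 \oplus \bar s_1 \odot s_2)^{\odot -1}, \qquad
  \bar t_1 = s_1 \odot \bar s_2 \odot (s_1 \oplus \bar s_1 \odot s_2)^{\odot -1},
\end{equation*}
and similarly for $\shull(c_1,b_2)$. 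Verifying that $t_1 \oplus \bar t_1 = 0$ requires the nontrivial algebraic identity $s_2 \oplus s_1 \odot \bar s_2 = s_1 \oplus \bar s_1 \odot s_2$, which in turn uses $s_1 \oplus \bar s_1 = s_2 \oplus \bar s_2 = 0$ and is not obvious in the idempotent semiring. After clearing the denominator, one applies \cref{cor:iterated-sums-uncomp-dim} with $v = s_1 \odot s_2 \odot a$, $q = s_2 \odot b_1$, $w = \bar s_1 \odot s_2 \odot c_1$, $p = s_1 \odot b_2$, $u = s_1 \odot \bar s_2 \odot c_2$. So the approach is right, but finding and verifying these coefficients is exactly the remaining work, and should not be treated as routine.
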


As we observe in \cref{ex:TC-non-Pasch-plane}, the TC-convexity does not satisfy the Pasch--Kakutani properties. However, we show in \cref{sec:sepatation_hemispace_TC} that it satisfies the following weaker separation property, sometimes called the \emph{$S_{3}$ property}.

\begin{theorem}\label{thm:separation_hemispaces}
  Every TC-convex set is an intersection of TC-hemispaces.

  More precisely, each of these TC-hemispaces can be chosen such that its complement is TO-convex. 
\end{theorem}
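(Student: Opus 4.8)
The plan is to prove the equivalent ``$S_3$-type'' reformulation: given a TC-convex set $X \subseteq \TTpm^d$ and a point $p \notin X$, one must exhibit a TC-hemispace $H$ with $X \subseteq H$, $p \notin H$, and $\TTpm^d \setminus H$ TO-convex; intersecting such $H$ over all $p \notin X$ then yields the theorem (the cases $X = \emptyset$ and $X = \TTpm^d$ being handled directly). In turn it suffices to produce a TO-convex set $Y$ with $p \in Y$, $Y \cap X = \emptyset$, and $H := \TTpm^d \setminus Y$ TC-convex: then $Y = \TTpm^d \setminus H$ is TO-convex, hence TC-convex by \cref{cor:TO-contain-TC}, so $H$ is indeed a TC-hemispace with TO-convex complement, $X \subseteq H$ and $p \notin H$. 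Since $\{p\}$ is TO-convex, disjoint from $X$, and contains $p$, and since a nested union of such sets is again TO-convex (this follows from the finitary description $\shull(M) = \bigcup_{T \subseteq M \text{ finite}} \shull(T)$), Zorn's lemma furnishes a \emph{maximal} TO-convex set $Y$ with $p \in Y$ and $Y \cap X = \emptyset$. The only substantive task is then to show that maximality forces $\TTpm^d \setminus Y$ to be TC-convex.

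For this I would apply \cref{prop:local-generation-TC-hull}: it is enough to check that $\TTpm^d \setminus Y$ is closed under the weighted left sum and under local elimination. Take the first; the second is entirely parallel. Suppose for contradiction that $a, b \in \TTpm^d \setminus Y$ and $\lambda, \mu \in \Tgeq$ with $\lambda \oplus \mu = 0$ satisfy $c := \lambda \odot a \lplus \mu \odot b \in Y$. Because $a \notin Y$, the TO-convex set $\shull(Y \cup \{a\})$ strictly contains $Y$ (it contains $a$) and still contains $p$, so by maximality of $Y$ it must meet $X$; pick $u \in \shull(Y \cup \{a\}) \cap X$, and symmetrically $v \in \shull(Y \cup \{b\}) \cap X$. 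Using the finitary description of $\shull$ and its explicit form \cref{eq:hull+U}, write $u \in \Uncomp(\alpha \odot g_a \oplus \beta \odot a)$ and $v \in \Uncomp(\gamma \odot g_b \oplus \delta \odot b)$, where $g_a, g_b \in Y$, the scalars lie in $\Tgeq$ with $\alpha \oplus \beta = \gamma \oplus \delta = 0$, and $\beta, \delta \neq \Zero$ (if $\beta = \Zero$ then $u = g_a \in Y$, contradicting $u \in X$ and $X \cap Y = \emptyset$, and likewise for $\delta$). Since $X$ is TC-convex, $\whull(u, v) \subseteq X$, hence $\whull(u, v) \cap Y = \emptyset$.

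The contradiction then comes from exhibiting a point of $\whull(u, v)$ inside $Y$. The idea is to pick a left-sum combination $\rho \odot u \lplus \sigma \odot v \in \whull(u, v)$ (legitimate by \cref{le:weak_inter}) whose ``$a$-part'' and ``$b$-part'' are scaled to agree with $\lambda \odot a$ and $\mu \odot b$; then, using the calculus of $\Uncomp(\cdot)$ under $\oplus$, $\odot$ and $\lplus$ (\cref{lem:iterated-sums-uncomp} and the surrounding observations) together with the non-commutative cancellation lemmas \cref{lem:cancel-inequalities-left-sum,lem:stronger+cancellation+teq}, the point $\rho \odot u \lplus \sigma \odot v$ can be bounded inside a box spanned by a tropical convex combination of the three points $g_a$, $g_b$, $c \in Y$, so that it lies in $\shull(\{g_a, g_b, c\}) \subseteq Y$ --- the desired contradiction. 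The local-elimination case is analogous: from $(u, v, w), (u, \ominus v, w) \in \TTpm^d \setminus Y$ with $(u, \Zero, w) \in Y$ one runs the same argument with $(u, \Zero, w)$ playing the role of $c$ above, invoking the elimination step of \cref{prop:local-generation-TC-hull}. Once both closure properties are in place, \cref{prop:local-generation-TC-hull} gives that $\TTpm^d \setminus Y$ is TC-convex, completing the proof.

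I expect the main obstacle to be exactly this last combination step. One must fuse the three facts ``from $a$ one can reach $X$ with the help of $Y$'', ``from $b$ one can reach $X$ with the help of $Y$'' and ``$\lambda \odot a \lplus \mu \odot b \in Y$'' into ``from $X$ one can reach $Y$'', and because $\lplus$ is not commutative this requires careful bookkeeping of the orders of the summands and of which coordinates carry the balanced (eliminated) entries --- in effect an instance of the elimination property that underlies TC-convexity. The weight normalization (so that $\rho \oplus \sigma = 0$ while $\rho \odot \beta$ and $\sigma \odot \delta$ line up with $\lambda$ and $\mu$) and the reduction from a general point of $\faces(\rho \odot u, \sigma \odot v)$ to the vertex $\rho \odot u \lplus \sigma \odot v$ are the places where I expect the proof to need the most care.
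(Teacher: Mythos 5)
Your proposal dualizes the paper's argument in a way that creates a gap the sketch does not close. The paper applies Zorn's lemma to the family of TC-convex sets $S$ with $G \subseteq S$ and $z \notin S$, and takes a maximal $H^*$; maximality then forces the \emph{single} excluded point $z$ to lie in $\whull(H^* \cup \{y_i\})$ for any $y_i \notin H^*$, so that \cref{prop:sand_glass} pins $z$ down inside $\whull(X) \subseteq H^*$, a contradiction. You instead take a maximal TO-convex $Y$ containing $p$ and disjoint from $X$, and try to show $\TTpm^d \setminus Y$ is TC-convex. But here the object on the other side is the whole \emph{set} $X$, not a point: maximality only yields $\shull(Y \cup \{a\}) \cap X \neq \emptyset$ and $\shull(Y \cup \{b\}) \cap X \neq \emptyset$, which produces two intersection points $u$ and $v$ that have no reason to coincide. (If $X$ were a singleton $\{q\}$, one would get $q \in \shull(Y\cup\{a\}) \cap \shull(Y\cup\{b\})$ and a TO-version of the sandglass would finish; it is exactly the general-$X$ case that is the hard part.)

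To bridge this, you reduce to showing that some point of $\whull(u,v)$ lies in $\shull(g_a, g_b, c) \subseteq Y$, and you explicitly flag this ``combination step'' as the main obstacle without proving it. This is not a routine bookkeeping gap: it is the load-bearing lemma of the entire proof, and it is not a consequence of \cref{prop:sand_glass}, which eliminates along TO-segments inside $\whull$-hulls, not along TC-segments inside $\shull$-hulls. Nor do \cref{lem:cancel-inequalities-left-sum} or \cref{lem:stronger+cancellation+teq} on their own control the two free choices of ordering hidden in $\lplus$ and in the face of $\faces(\rho\odot u,\sigma\odot v)$ containing the sought point (you note yourself that the point need not be a vertex $\rho\odot u \lplus \sigma\odot v$). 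In addition, the simplification $u \in \Uncomp(\alpha \odot g_a \oplus \beta \odot a)$ with a \emph{single} $g_a \in Y$ is not justified: $u \in \shull(Y\cup\{a\})$ only gives $u \in \Uncomp(\bigoplus_i \alpha_i \odot g_i \oplus \beta \odot a)$ for finitely many $g_i \in Y$, and the partial sum $\bigoplus_i \alpha_i \odot g_i$ can be balanced, so there is no canonical $g_a \in Y$ to substitute without a $\Uncomp$-inclusion going in the wrong direction (\cite[Lemma~3.5(b)]{LohoVegh:2020} gives $\subseteq$, not $\supseteq$). Taken together, the proposal outlines an approach whose crux is a new elimination-type lemma of at least the same difficulty as \cref{prop:sand_glass}, and in a direction the paper never develops; as written, the proof is incomplete.
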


Subsequently, in \cref{sec:hemispace_nearly_halfspace} we study the structure of TC-hemispaces. The main result of this section states that TC-hemispaces are ``nearly'' halfspaces, in the sense that a TC-hemispace is sandwiched between a closed halfspace and its interior. This extends a similar result known for the unsigned case~\cite{BriecHorvath:2008,KatzNiticaSergeev:2014,EhrmannHigginsNitica:2016}.

\begin{theorem} \label{th:hemispace}
If $G \subseteq \TTpm^d$ is a TC-hemispace and $G \notin\{\emptyset, \TTpm^d\}$, then there exists a vector $(a_0, \dots, a_d) \in \TTpm^{d+1}$, $(a_1, \dots, a_d) \neq \Zero$, such that $\HC^+(a) \subseteq G \subseteq \Hclosed^{+}(a)$.
\end{theorem}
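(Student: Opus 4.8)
The goal is to show that a nontrivial TC-hemispace $G$ is sandwiched between an open halfspace and its closure. My plan is to first reduce to the case where $G$ is a cone, and then analyze its combinatorial structure orthant by orthant. For the reduction, I would homogenize: pass from $G \subseteq \TTpm^d$ to the cone $\widehat G = \wcone(\SetOf{(x,0)}{x \in G})$ in $\TTpm^{d+1}$, check that $\widehat G$ is again a TC-hemispace (using that $\wcone$ is built from $\whull$ and that the complement of $G$ lifts to the complement of $\widehat G$ on the slice $\{x_{d+1}=0\}$, together with \cref{le:homog_halflines} to control what happens off the slice), and recover the affine statement by slicing back. This is where I expect to spend some care, since homogenization of hemispaces is not completely formal.

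Once we have a TC-hemispace cone $C$, the core of the argument is to produce the defining vector $a$. The idea is that on each closed orthant $\ort$ of $\TTpm^d$, a TC-convex cone is an ordinary (unsigned, up to coordinatewise sign changes) tropical convex cone, so $C \cap \ort$ is a max-plus cone; and its complement being TC-convex forces $C \cap \ort$ to itself be a tropical hemispace in that orthant. By the known characterization of tropical hemispaces in a single orthant \cite{BriecHorvath:2008,KatzNiticaSergeev:2014,EhrmannHigginsNitica:2016}, $C \cap \ort$ is (essentially) a tropical halfspace there, giving a candidate inequality $\max_{i \in I}(a_i + x_i) \ge \max_{j \in J}(a_j + x_j)$ on that orthant. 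The main work is then a gluing/consistency argument: showing that the data $(I,J)$ and the coefficients $a_i$ can be chosen consistently across all $2^d$ orthants so that they assemble into a single signed vector $a \in \TTpm^{d+1}$ with $\HC^+(a) \subseteq C \subseteq \Hclosed^+(a)$. Here the elimination property \cref{prop:sand_glass}, the local-generation description \cref{prop:local-generation-TC-hull}, and the structural facts about boundaries of non-closed TC-hemispaces summarized in \cref{le:bigger_pos_argmax} are the tools that control how the orthant-wise pieces match along the coordinate hyperplanes.

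The sandwiching inclusions themselves are then not hard: $\Hclosed^+(a)$ is TC-convex (\cref{cor:hspace_weakly_convex}) and contains $C$ on every orthant by construction, so $C \subseteq \Hclosed^+(a)$; and since $\HC^+(a)$ is the interior of $\Hclosed^+(a)$ (\cref{le:hspace_topo}) and is TO-convex hence TC-convex, any point of $\HC^+(a)$ not in $C$ would, by TC-convexity of $\TTpm^d \setminus C$ and a segment-pushing argument, force a point of $C$ into the complement or vice versa — one uses that $\HC^+(a)$ is ``one-sided'' so that a separating segment through the boundary hyperplane cannot avoid both $C$ and its complement.

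\textbf{Main obstacle.} I expect the hardest part to be the gluing step: a priori the orthant-wise halfspace descriptions could disagree about which indices are ``positive'' or about the finite coefficients on lower-dimensional coordinate faces, and one must rule this out. The elimination/sandglass property \cref{prop:sand_glass} is presumably exactly what forbids such inconsistencies — it should imply that if $C$ looks like one halfspace on one orthant and an incompatible one on an adjacent orthant, then a box straddling the shared face is forced into $C$ while its vertices force the complement to be non-convex. Pinning down the finitely many coefficient values $a_i \in \RR \cup \{\Zero\}$ (as opposed to just the combinatorial type $I,J$) and handling the distinction between $G$ closed versus non-closed — where one gets $\Hclosed^+(a)$ versus something strictly between $\HC^+(a)$ and $\Hclosed^+(a)$ — is the remaining delicate point, and this is precisely what \cref{le:bigger_pos_argmax} is designed to handle.
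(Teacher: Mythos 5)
Your high-level plan — homogenize to the conic case, use the known single-orthant characterization of tropical hemispaces, then glue the orthant-wise descriptions into a single signed vector $a$, and finally argue the sandwiching $\HC^+(a) \subseteq G \subseteq \Hclosed^+(a)$ — matches the paper's actual strategy (the paper constructs $\coG = \wcone\SetOf{(z,0)}{z \in G}\setminus\{\Zero\}$ inside the upper halfspace $\{x_{d+1}>\Zero\}$ and proves the gluing through \cref{prop:hemispaces_single_orthant}, \cref{lem:trivial+neighbouring+orthant}, \cref{lem:hemispace+neighbouring+orthants}, \cref{le:hemispace_in_halfspace}). However, there are two genuine gaps.

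First, you cannot invoke \cref{le:bigger_pos_argmax}: that proposition is stated and proved \emph{after} \cref{th:hemispace}, and its setup explicitly begins by fixing a vector $a$ ``which exists by \cref{th:hemispace}.'' Using it here would be circular. The paper's actual gluing mechanism (\cref{lem:trivial+neighbouring+orthant}, \cref{lem:hemispace+neighbouring+orthants}) is a hands-on case analysis with explicit test points (the $\omega$, $\Omega$ constructions) showing that incompatible orthant-wise halfspaces directly contradict TC-convexity of $G$ or its complement; it does not route through the sandglass property \cref{prop:sand_glass} either, which is reserved for the separation theorem \cref{thm:separation_hemispaces}. Your guess that \cref{prop:sand_glass} ``is presumably exactly what forbids such inconsistencies'' would need to be replaced by a concrete argument, and the available one is of a rather different flavor.

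Second, your orthant-by-orthant analysis quietly assumes the single-orthant characterization is non-degenerate, i.e., that there exists an open orthant where $G$ is neither empty nor everything. The paper splits into two cases at exactly this point. When $G \cap \inter(\ort) \in \{\emptyset, \inter(\ort)\}$ for \emph{every} orthant $\ort$ (so $G$ is, up to boundary, a union of orthants), \cref{prop:hemispaces_open_orthant} and \cref{prop:hemispaces_single_orthant} give no information, and a separate argument is needed to show that $G$ is sandwiched between $\SetOf{x}{x_k > \Zero}$ and $\SetOf{x}{x_k \ge \Zero}$ for some coordinate $k$. Your proposal does not address this case, and without it the gluing step has nothing to glue. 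As a smaller point, the homogenization should be carried out inside the upper halfspace $\{x_{d+1}>\Zero\}$ and with $\Zero$ removed from both $\widehat G$ and the lift of its complement; otherwise the two lifts overlap at $\Zero$ and the ``relative hemispace'' structure breaks.
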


In \cref{sec:boundary_hemispace} we analyze more closely the boundary of TC-hemispaces. We prove that the boundary is made our of polyhedral pieces coming from the cell decomposition of $\TTpm^d$ induced by the associated hyperplane, see \cref{le:bigger_pos_argmax} for a precise statement.

\subsection{Separation of TO-convex sets}\label{sec:separation_hemispace_TO}

We now give a proof of \cref{thm:kakutani}. Our proof is based on the basic algebraic properties of $\TSS$ established in \cref{sec:basic_calculation} and combining them with results known for general convexity structures.

\begin{proof}[Proof of \cref{thm:kakutani}]
  For $(i)$, we follow the proof of the Pasch property over arbitrary ordered fields, see~\cite[I. Proposition~4.14.1]{VanDeVel:1993}.
  Let $(a,b_1,b_2,c_1,c_2)$ be as in $(i)$.
  Then there are $s_1, s_2, \bar{s}_1, \bar{s}_2 \in \Tgeq$ such that
  \begin{equation} \label{eq:Pasch-proof-convex-scalars}
    s_1 \oplus \bar{s}_1 = 0 \quad , \quad s_2 \oplus \bar{s}_2 = 0
  \end{equation}
    and 
  \begin{equation} \label{eq:Pasch-proof-intermediate-point}
    b_1 \in \Uncomp(s_1 \odot a \oplus \bar{s}_1 \odot c_1) \quad \text{ and } \quad b_2 \in \Uncomp(s_2 \odot a \oplus \bar{s}_2 \odot c_2) \enspace .
  \end{equation}
  We define
  \begin{equation} \label{eq:Pasch-proof-adapted-scalars}
    \begin{aligned}
      t_1 &= s_2 \odot (s_1 \oplus \bar{s}_1 \odot s_2)^{\odot -1}\\
      \bar{t}_1 &= s_1 \odot \bar{s}_2 \odot (s_1 \oplus \bar{s}_1 \odot s_2)^{\odot -1}\\
      t_2 &= \bar{s}_1 \odot s_2 \odot (s_1 \oplus \bar{s}_1 \odot s_2)^{\odot -1}\\
      \bar{t}_2 &= s_1 \odot (s_1 \oplus \bar{s}_1 \odot s_2)^{\odot -1} \enspace .
    \end{aligned}
  \end{equation}
  Using \cref{eq:Pasch-proof-convex-scalars}, we obtain
  \begin{equation*}
    \begin{aligned}
      s_2 \oplus s_1 \odot \bar{s}_2 &= s_1 \odot s_2 \oplus \bar{s}_1 \odot s_2 \oplus s_1 \odot \bar{s}_2 \\
      &= s_1 \odot s_2 \oplus s_1 \odot \bar{s}_2 \oplus \bar{s}_1 \odot s_2 = s_1 \oplus \bar{s}_1 \odot s_2,
    \end{aligned}
  \end{equation*}
  which implies
  \begin{equation*}
    t_1 \oplus \bar{t}_1 = (s_2 \oplus s_1 \odot \bar{s}_2) \odot (s_1 \oplus \bar{s}_1 \odot s_2)^{\odot -1} = 0 \enspace .
  \end{equation*}
  Note that we also have $t_2 \oplus \bar{t}_2 = 0$.
  Hence, we obtain
  \begin{equation*}
    \Uncomp(t_1 \odot b_1 \oplus \bar{t}_1 \odot c_2) \subseteq \shull(b_1,c_2) \quad \text{ and }
    \quad \Uncomp(t_2 \odot c_1 \oplus \bar{t}_2 \odot b_2) \subseteq \shull (c_1,b_2) \enspace .
  \end{equation*}
  Multiplying by the denominator $s_1 \oplus \bar{s}_1 \odot s_2$ in~\cref{eq:Pasch-proof-adapted-scalars}, we see that the intersection $\shull(b_1,c_2) \cap \shull(c_1,b_2)$ is not empty if 
  \begin{equation*}
    \Uncomp(s_2 \odot b_1 \oplus s_1 \odot \bar{s}_2 \odot c_2) \cap \Uncomp(\bar{s}_1 \odot s_2 \odot c_1 \oplus s_1 \odot b_2) \neq \emptyset \enspace . 
  \end{equation*}
  Scaling~\cref{eq:Pasch-proof-intermediate-point} to
  \begin{equation*} 
    s_2 \odot b_1 \in \Uncomp(s_2 \odot s_1 \odot a \oplus s_2 \odot \bar{s}_1 \odot c_1) \; \text{ and } \; s_1 \odot b_2 \in \Uncomp(s_1 \odot s_2 \odot a \oplus s_1 \odot \bar{s}_2 \odot c_2) 
  \end{equation*}
  allows to apply~\cref{cor:iterated-sums-uncomp-dim} by setting
  \begin{equation*}
    v = s_1 \odot s_2 \odot a, q = s_2 \odot b_1, w = \bar{s}_1 \odot s_2 \odot c_1, p = s_1 \odot b_2, u = s_1 \odot \bar{s}_2 \odot c_2 .
  \end{equation*}

  \smallskip
  
  To prove $(ii)$, we use \cite[Theorem~5]{Chepoi:1994}, which shows that the Pasch property and the Kakutani property are equivalent for $2$-ary convexities (see also \cite[Theorem~4.1]{Kubis:2002} for a more recent improvement).
  TO-convexity is $2$-ary by \cref{le:TO-hull-intervals}, so it satisfies the Kakutani property. The last claim follows from the Kakuatni property because a single point is TO-convex. 
\end{proof}

\begin{figure}[t]
\centering
\includegraphics{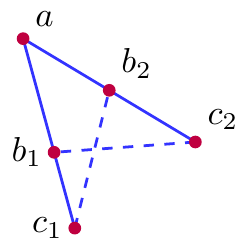}
\vspace*{-0.3cm}
\caption{The Pasch property in the real plane.}\label{fig:pasch}
\end{figure}

\begin{example}\label{ex:pasch_to}
Let $a = (0, \ominus 0)$, $b_1 = (\Zero, \ominus 0)$, $b_2 = (0, 0)$, $c_1 = (\ominus 0, \ominus 0)$, and $c_2 = (\Zero, 1)$.
\Cref{fig:to_pasch} depicts this configuration of points.
The TO-convex hull of $a$ and $c_1$ is the straight line connecting them, so $b_1 \in \shull(a,c_1)$.
Furthermore, $(-1)\odot c_2 \oplus a = (0, \bullet 0)$, so $b_2 \in \shull(a,c_2)$. 
One gets that the TO-convex hull of $b_1$ and $c_2$ is the dashed line connecting them and the TO-convex hull of $c_1$ and $b_2$ is just the shaded whole square.
Their nonempty intersection visualizes the Pasch property in this example. 
\end{example}

\begin{figure}[t]
\centering
\includegraphics{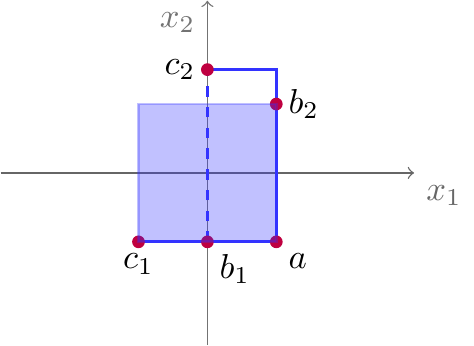}
\vspace*{-0.3cm}
\caption{The Pasch property in the TO-convexity is satisfied.}\label{fig:to_pasch}
\end{figure}

\begin{figure}[t]
\centering
\includegraphics{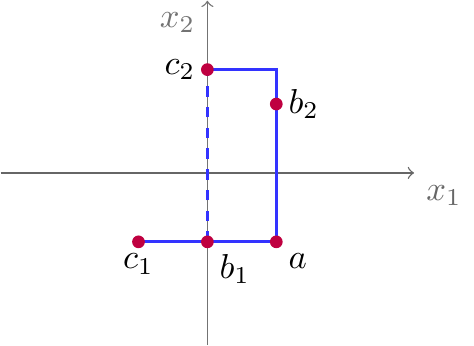}
\vspace*{-0.3cm}
\caption{The Pasch property in the TC-convexity is not satisfied.}\label{fig:tc_pasch}
\end{figure}

The next example shows that the TC-convexity does not have the Pasch property or the Kakutani property. Nevertheless, in \cref{sec:sepatation_hemispace_TC} we will show that the TC-convexity satisfies a weaker separation property.

\begin{example} \label{ex:TC-non-Pasch-plane}
As in \cref{ex:pasch_to}, let $a = (0, \ominus 0)$, $b_1 = (\Zero, \ominus 0)$, $b_2 = (0, 0)$, $c_1 = (\ominus 0, \ominus 0)$, and $c_2 = (\Zero, 1)$. 
\Cref{fig:tc_pasch} depicts this configuration of points.
Note that we have $b_1 \in \faces(a,c_1)$  and $b_2 \in \faces(a,-1 \odot c_2)$, so that $b_1 \in \whull(a,c_1)$, $b_2 \in \whull(a,c_2)$.
Nevertheless, the interval $\whull(c_1,b_2)$ is reduced to two points, $\whull(c_1,b_2) = \{c_1,b_2\}$, while $\whull(c_2,b_1) = \{\Zero\} \times [\ominus 0, 1]$.
In particular, the intersection $\whull(c_1,b_2) \cap \whull(c_2,b_1)$ is empty, so the TC-convexity does not have the Pasch property. This also shows that the TC-convexity does not have the Kakutani property: there is no TC-hemispace that separates $\whull(c_1,b_2)$ from $\whull(c_2,b_1)$, because the point $a$ could not belong to either side of this hemispace.
Even further, we note that $\whull(b_1,c_2) = \shull(b_1,c_2)$.
Hence, this example shows that the TO-convexity and TC-convexity do not satisfy the Kakutani property for pairs of convexities studied in~\cite{Kubis:2002}.
\end{example}

\subsection{Separation of TC-convex sets}\label{sec:sepatation_hemispace_TC}

The main ingredient for the proof of \cref{thm:separation_hemispaces} about the separation by TC-hemispaces will be the elimination property in \cref{prop:sand_glass}. 
This relies on a technical lemma that has the following geometric intuition: given two convex sets arising as the convex hull of finite sets $P_1$ and $P_2$, their intersection is contained in the convex hull of convex combinations of pairs of points in $P_1$ and $P_2$.

To use \cref{cor:faces_from_ineqs} for showing containment in $\faces( . )$, we introduce a helpful notion.  
For ${\sim} \in \{{\le}, {\ge}\}^d$, a point $w$ with $y_k \sim_k w_k$ for all $k \in [d]$ is said to \emph{dominate} $y$ with respect to $\sim$.

  \begin{lemma} \label{lem:combination-of-faces}
    If $\mu \le 0$ and $w \in \Uncomp(v^1 \oplus \mu \odot v^2)$, then
    \[
    \faces(X \cup v^1) \cap \faces(Y \cup v^2) \subseteq \faces(X \cup \mu \odot Y \cup w) \enspace
    \]
  \end{lemma}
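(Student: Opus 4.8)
The plan is to use the combinatorial characterization of membership in $\faces(\cdot)$ provided by \cref{cor:faces_from_ineqs}: a point $y$ lies in $\faces(Z)$ if and only if for every sign pattern ${\sim} \in \{{\le},{\ge}\}^d$ there is a vertex $w^{\sim} \in \vertices(Z)$ that dominates $y$ with respect to $\sim$ (in each coordinate). So I fix an arbitrary $y \in \faces(X \cup v^1) \cap \faces(Y \cup v^2)$ and an arbitrary ${\sim} \in \{{\le},{\ge}\}^d$, and I must produce a vertex of $\vertices(X \cup \mu\odot Y \cup w)$ dominating $y$ with respect to $\sim$. From the two memberships I get a vertex $p \in \vertices(X \cup v^1)$ with $y \sim p$ and a vertex $q \in \vertices(Y \cup v^2)$ with $y \sim q$; each of these is a left sum of an ordering of its generating set. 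The first key step is to split coordinates by which generator is "responsible": write $A = \{k : p_k$ comes from $v^1$ in the chosen ordering$\}$ (i.e. the first generator of maximal absolute value in coordinate $k$ is $v^1$), and similarly $B = \{k : q_k$ comes from $v^2\}$; on $[d]\setminus A$ the value $p_k$ is realized by some element of $X$, and on $[d]\setminus B$ the value $q_k$ is realized by some element of $Y$.

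The second step is to build the desired vertex coordinatewise. The natural candidate is to order the generators $X \cup \mu\odot Y \cup w$ so that $w$ comes first, then the elements of $X$ (in the order realizing $p$), then the elements of $\mu\odot Y$ (in the order realizing $q$). Call the resulting left sum $r \in \vertices(X \cup \mu\odot Y \cup w)$. I claim $y \sim r$. For a coordinate $k$: if $w$ "wins" in coordinate $k$ of this left sum, then $r_k = w_k$, and I need $y_k \sim_k w_k$; here I use $w \in \Uncomp(v^1 \oplus \mu\odot v^2)$ together with the facts $y \sim_k p_k$ and $y \sim_k q_k$ and the relation of $p_k,q_k$ to $v^1_k, \mu\odot v^2_k$ via $A,B$ — concretely, when $k \in A \cap B$, both $p_k$ and $q_k$ are of the form ($\pm$) the corresponding $v$-value, and $w_k$ lies between $v^1_k$ and $\mu\odot v^2_k$ (or equals one of them), so $w_k$ is sandwiched by values dominating $y_k$; when $k \notin A$ (resp. $k\notin B$) the value is dictated by an element of $X$ (resp. $\mu\odot Y$) which then appears as a lower-priority generator in $r$ and handles that coordinate directly. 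The bookkeeping here mirrors the argument in \cref{le:adding_boxes}, and I would use \cref{le:lsum_order} / \cref{obs:basic-properties-lsum} to control the absolute-value comparisons that determine which generator "wins" a coordinate, together with \cref{lem:cancel-inequalities-left-sum} to combine the two chains of inequalities $y \sim p$ and $y \sim q$ when both contribute.

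The main obstacle I anticipate is the coordinates in $A \cap B$: there $p_k$ is a signed copy of $|v^1_k|$ and $q_k$ is a signed copy of $|\mu\odot v^2_k|$, and the sign copies are forced by the requirements $y_k \sim_k p_k$ and $y_k \sim_k q_k$; I must check that $w_k$, which \emph{a priori} only satisfies $w_k \in [\ominus|v^1_k \oplus \mu\odot v^2_k|,\,|v^1_k \oplus \mu\odot v^2_k|]$, actually dominates $y_k$ with respect to $\sim_k$ — for this I need to track carefully whether $v^1_k \oplus \mu\odot v^2_k$ is balanced (so the $\Uncomp$ is a genuine interval) or not (so $w_k$ is pinned down), and in each case verify the inequality by a short case analysis on $\sim_k \in \{\le,\ge\}$ and the signs of $v^1_k$ and $\mu\odot v^2_k$. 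The hypothesis $\mu \le 0$ enters precisely to ensure $|\mu\odot v^2_k| \le |v^2_k|$ and, more importantly, that the scaling does not flip which of $v^1$ or $v^2$ can dominate in a way that breaks the sandwiching; I expect this to be exactly where $\mu\le 0$ is used. Once all coordinates are handled, $r$ is the required dominating vertex, and \cref{cor:faces_from_ineqs} gives $y \in \faces(X \cup \mu\odot Y \cup w)$, completing the proof.
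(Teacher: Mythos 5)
Your candidate vertex is wrong, and the error is in the ordering. You propose to take $r$ as the left sum with $w$ first, then $X$ in the order realizing $p$, then $\mu\odot Y$ in the order realizing $q$. But the paper's dominating vertex is $\tilde{x}_1 \lplus \mu\odot\tilde{y}_1 \lplus w \lplus \tilde{x}_2 \lplus \mu\odot\tilde{y}_2$, i.e.\ $w$ is placed \emph{in the position that $v^1$ and $v^2$ occupied} in the orderings realizing $p$ and $q$: the $X$-elements (resp.\ $Y$-elements, scaled by $\mu$) that preceded $v^1$ (resp.\ $v^2$) stay before $w$, and the rest stay after. This slot alignment is exactly what makes \cref{lem:cancel-inequalities-left-sum} applicable with $n=3$ and slot pairs $(\tilde{x}_1, \mu\odot\tilde{y}_1)$, $(v^1, \mu\odot v^2)$, $(\tilde{x}_2, \mu\odot\tilde{y}_2)$. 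In your ordering the slots do not line up, so the lemma does not give you $y \sim r$, and indeed the claim fails.

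Concretely: take $d=1$, $X = \{\ominus 5\}$, $v^1 = 5$, $Y = \{\ominus 3\}$, $v^2 = 2$, $\mu = 0$, so $w \in \Uncomp(5\oplus 2) = \{5\}$, i.e.\ $w=5$. Then $\faces(X\cup v^1) = [\ominus 5, 5]$, $\faces(Y\cup v^2) = \{\ominus 3\}$, and the intersection is $\{\ominus 3\}$, so take $y=\ominus 3$. For $\sim\,=\,\geq$, the dominating vertex of $\vertices(X\cup v^1)$ is $p = \ominus 5$ (realized by $\ominus 5 \lplus 5$, with $v^1$ second), and $q = \ominus 3$. Your candidate is $r = w \lplus (\ominus 5) \lplus (\ominus 3) = 5 \lplus \ominus 5 \lplus \ominus 3 = 5$, and $\ominus 3 \not\geq 5$, so $r$ does not dominate $y$ in the direction $\geq$. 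The paper's candidate instead keeps $\tilde{x}_1 = \ominus 5$ before $w$, giving $\ominus 5 \lplus \ominus 3 \lplus 5 = \ominus 5$, which does dominate $y$. So your plan needs to be corrected to preserve the relative position of $w$, after which the reduction to \cref{lem:cancel-inequalities-left-sum} is immediate and the case analysis on $A,B$ that you anticipate becomes unnecessary.
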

  \begin{proof}
    Let $z$ be an arbitrary point in $\faces(X \cup v^1) \cap \faces(Y \cup v^2)$.
    Using \cref{cor:faces_from_ineqs}, for every ${\sim} \in \{{\le}, {\ge}\}^d$, there exist vertices $\tilde{x}_1 \lplus v^1 \lplus \tilde{x}_2$ and $\tilde{y}_1 \lplus  v^2 \lplus \tilde{y}_2$ in $\vertices(X \cup v^1)$ and $\vertices(Y \cup v^2)$, respectively, which dominate $z$ with respect to $\sim$. 
    Here, $\tilde{x}_1, \tilde{x}_2$ arise as left sum of points in $X$ and $\tilde{y}_1,\tilde{y}_2$ arise analogously. 
    As $\tilde{x}_1 \lplus \mu \odot \tilde{y}_1 \in \Uncomp(\tilde{x}_1 \oplus \mu \odot \tilde{y}_1)$, $w \in \Uncomp(v^1 \oplus \mu \odot v^2)$ and $\tilde{x}_2 \lplus \mu \odot \tilde{y}_2 \in \Uncomp(\tilde{x}_2 \oplus \mu \odot \tilde{y}_2)$, by \cref{lem:cancel-inequalities-left-sum}, $\tilde{x}_1 \lplus \mu \odot \tilde{y}_1 \lplus w \lplus \tilde{x}_2 \lplus \mu \odot \tilde{y}_2$ dominates $z$ with respect to $\sim$. 
    Ranging over all $\sim$, these points form a subset of $\vertices(X \cup \mu \odot Y \cup w)$.
    Again using \cref{cor:faces_from_ineqs}, $z$ is contained in $\faces(X \cup \mu \odot Y \cup w)$.
  \end{proof}

  This lemma serves as tool to show the following crucial elimination property of TC-convexity as visualized in \cref{fig:tc_sandglass}; the statement in classical geometry is visualized in~\cref{fig:real_sandglass}.  
  It is closely related to the `Sandglass Property' arising for interval convexities in~\cite[\S I.4]{VanDeVel:1993}. 
  
    \begin{figure}[t]
\centering
\includegraphics{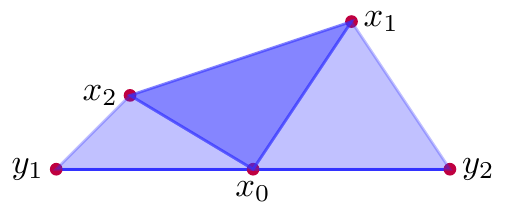}
\vspace*{-0.3cm}
\caption{Illustration of \cref{prop:sand_glass} in the standard convexity.}\label{fig:real_sandglass}
\end{figure}
  
  \begin{figure}[t]
\centering
\includegraphics{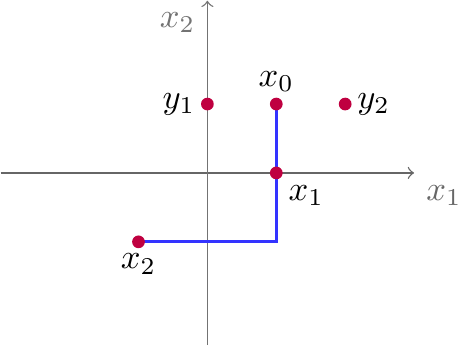}
\includegraphics{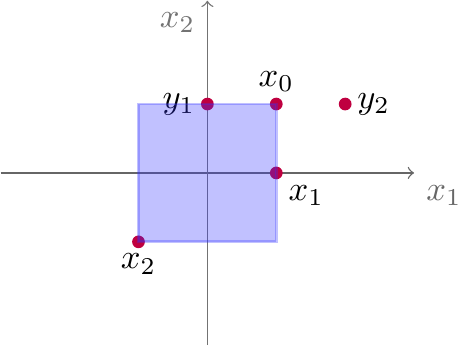}
\includegraphics{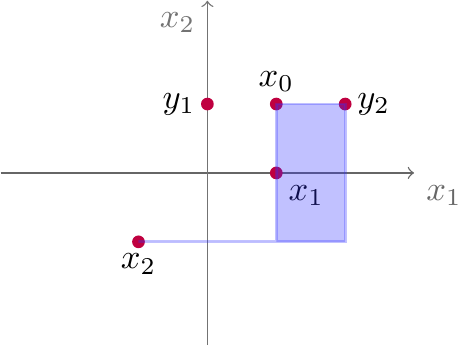}
\vspace*{-0.3cm}
\caption{Illustration of \cref{prop:sand_glass}.}\label{fig:tc_sandglass}
\end{figure}
  
\begin{theorem} \label{prop:sand_glass}
 Fix a set $X = \{x_0, \dots, x_n\} \subset \TTpm^d$. Then, for every $y_1,y_2 \in \TTpm^d$ such that $x_0 \in \shull(y_1, y_2)$ we have 
 \[
 \whull(X \cup \{y_1\}) \cap \whull(X \cup \{y_2\}) = \whull(X) \, .
\]
\end{theorem}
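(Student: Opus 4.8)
The inclusion $\whull(X)\subseteq\whull(X\cup\{y_1\})\cap\whull(X\cup\{y_2\})$ is immediate from monotonicity of $\whull(\cdot)$, so the work is in the reverse inclusion. I would fix a point $z$ in the intersection and aim to show $z\in\whull(X)$. Since $X$ is finite, \cref{le:inthull_from_faces} applied to $X\cup\{y_1\}$ and to $X\cup\{y_2\}$ provides $\lambda,\kappa\in\Tgeq$ (indexed by $X\cup\{y_1\}$, resp. $X\cup\{y_2\}$) with $\bigoplus\lambda_i=\bigoplus\kappa_i=0$ and $z\in\faces\bigl((X\cup\{y_1\})\odot\diag(\lambda)\bigr)\cap\faces\bigl((X\cup\{y_2\})\odot\diag(\kappa)\bigr)$. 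I may assume $y_1,y_2\notin X$ (otherwise $\whull(X\cup\{y_i\})=\whull(X)$ and there is nothing to prove). I may also assume $\lambda_{y_1}\neq\Zero$ and $\kappa_{y_2}\neq\Zero$: if, say, $\lambda_{y_1}=\Zero$, the all-$\Zero$ point $\lambda_{y_1}\odot y_1$ contributes trivially to every left sum, so $z\in\faces(X\odot\diag(\lambda|_X))$ with $\bigoplus_{x\in X}\lambda_x=0$, and \cref{le:inthull_from_faces} gives $z\in\whull(X)$ directly.

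Next I would use the hypothesis $x_0\in\shull(y_1,y_2)$: by definition of $\shull$ there are $\alpha,\beta\in\Tgeq$ with $\alpha\oplus\beta=0$ and $x_0\in\Uncomp(\alpha\odot y_1\oplus\beta\odot y_2)$; since $y_1,y_2\notin X$, both $\alpha,\beta$ are nonzero, and as the statement is symmetric in $y_1,y_2$ I may take $\alpha=0$, so $\beta\le 0$ (a nonzero real) and $x_0\in\Uncomp(y_1\oplus\beta\odot y_2)$. Set $\rho:=\lambda_{y_1}$ and $\mu:=\lambda_{y_1}\odot\beta\odot\kappa_{y_2}^{\odot-1}$; a short computation gives $\lambda_{y_1}\odot y_1\oplus\mu\odot\kappa_{y_2}\odot y_2=\rho\odot(\alpha\odot y_1\oplus\beta\odot y_2)$, hence $\rho\odot x_0\in\Uncomp(\lambda_{y_1}\odot y_1\oplus\mu\odot\kappa_{y_2}\odot y_2)$. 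Now \cref{lem:combination-of-faces} applies. If $\mu\le 0$, I use it with the finite sets $X\odot\diag(\lambda|_X)$ and $X\odot\diag(\kappa|_X)$, points $v^1=\lambda_{y_1}\odot y_1$, $v^2=\kappa_{y_2}\odot y_2$, and $w=\rho\odot x_0$, obtaining
\[
z\in\faces\bigl(X\odot\diag(\lambda|_X)\;\cup\;\mu\odot X\odot\diag(\kappa|_X)\;\cup\;\{\rho\odot x_0\}\bigr);
\]
if instead $\mu>0$, I run the symmetric version of \cref{lem:combination-of-faces} (with the two sides and the points $v^1,v^2$ interchanged), which replaces $\mu$ by $\mu^{\odot-1}\le 0$ and the scalar of $x_0$ by $\rho':=\mu^{\odot-1}\odot\lambda_{y_1}$. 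In either case $z$ lies in a $\faces$-set built from nonnegatively scaled copies of elements of $X$, with $x_0$ carried by the scalar $\rho=\lambda_{y_1}$ (resp. $\rho'$).

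Finally I would upgrade this to $z\in\whull(X)$. The key point is that $\bigoplus\lambda_i=0$ forces every coordinate of $\lambda$ to be $\le 0$, so the scalar attached to $x_0$ is $\le 0$ (this is where it matters which direction of \cref{lem:combination-of-faces} is used); furthermore either $\max_{x\in X}\lambda_x=0$, or $\lambda_{y_1}$ is the unique maximal coordinate of $\lambda$, in which case $\lambda_{y_1}=0$ and the scalar of $x_0$ equals $0$ exactly — and in the swapped case the condition $\mu>0$ together with $\lambda_{y_1},\beta\le 0$ forces $\max_{x\in X}\kappa_x=0$. Hence all scalars occurring in the $\faces$-set are $\le 0$ and at least one of them is $0$. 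Collapsing, for each $x\in X$, the several scaled copies of $x$ into the one with the largest tropical scalar (which changes neither $\vertices(\cdot)$ nor the box $\Uncomp(\bigoplus\cdot)$, hence not $\faces(\cdot)$) yields $z\in\faces(X\odot\diag(\nu))$ for some $\nu\in\Tgeq^X$ with $\bigoplus_x\nu_x=0$, so $z\in\whull(X)$ by \cref{cor:whull-arbitrary-faces} (or \cref{le:faces_convex}). I expect the main obstacle to be precisely this last bookkeeping: the case distinction on the sign of $\mu$, and tracking the scalars so that the coefficient of $x_0$ stays nonpositive and the final coefficient vector is normalized — without this, one only lands in the TC-conic hull of $X$ rather than in $\whull(X)$ itself.
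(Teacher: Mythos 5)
Your proof is correct and follows essentially the same route as the paper's: you break the $y_1$/$y_2$-symmetry, isolate a single transfer ratio (your $\mu$, the paper's $\delta$), split on its sign, apply \cref{lem:combination-of-faces} in each branch, and then check the resulting scalars constitute a proper convex combination. The only cosmetic difference is that you handle some degenerate cases ($y_i\in X$, $\lambda_{y_1}=\Zero$, $\kappa_{y_2}=\Zero$) upfront and make the ``collapse duplicated scaled copies of each $x\in X$'' step explicit, whereas the paper absorbs these into the formula for $\whull$ via multisets; both are valid and lead to the same final verification that $\bigoplus$ of the scalars is $0$.
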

\begin{proof}
  The inclusion  $\whull(X \cup \{y_1\}) \cap \whull(X \cup \{y_2\}) \supseteq \whull(X)$ follows from monotonicity of the hull operator. Hence, it suffices to show that for every $z \in \TTpm^d$ such that $z \in \whull(X \cup \{y_1\}) \cap \whull(X \cup \{y_2\})$ we have $z \in \whull(X)$.

  Breaking the symmetry between $y_1$ and $y_2$, there is a $\mu \in \Tgeq$ with $\mu \leq 0$ such that $x_0 \in \Uncomp(y_1 \oplus \mu \odot y_2)$.
  So let $\lambda^{(1)} \in \Tgeq^{n+1}$, $\lambda^{(2)} \in \Tgeq^{n+1}$ and $\rho_1, \rho_2 \in \Tgeq$ with $\bigoplus \lambda^{(1)} \oplus \rho_1 = \bigoplus \lambda^{(2)} \oplus \rho_2 = 0$ be such that 
  \begin{equation*}
    z \in \faces(\diag(\lambda^{(1)}) \odot X \cup \rho_1 \odot y_1) \cap \faces(\diag(\lambda^{(2)}) \odot X \cup \rho_2 \odot y_2) .
  \end{equation*}

  First, if $\rho_2 = \Zero$ then $z \in \faces(\diag(\lambda^{(2)}) \odot X)$ yields the desired containment. 
  Furthermore, if $\mu = \Zero$ then $x_0 = y_1$ which concludes the claim with $z \in \faces(\diag(\lambda^{(1)}) \odot X \cup \rho_1 \odot y_1)$.
  
  Otherwise, let $\delta = \rho_1 - \rho_2 + \mu = \rho_1 \odot \rho_2^{\odot -1} \odot \mu$, or equivalently $\delta \odot \rho_2 = \rho_1 \odot \mu$. 
  We have to distinguish two cases, depending on the real sign of $\delta$.
  
  \textbf{Case 1} ($\delta \leq 0$.)
  By \Cref{lem:combination-of-faces}, we obtain that
  \begin{equation*}
    z \in \faces(\diag(\lambda^{(1)}) \odot X \cup \delta \odot \diag(\lambda^{(2)}) \odot X \cup w)
  \end{equation*}
 for an arbitrary $w \in \Uncomp(\rho_1 \odot y_1 \oplus \delta \odot \rho_2 \odot y_2)$. 
 With $\delta \odot \rho_2 = \rho_1 \odot \mu$, this implies that 
 \begin{equation*}
    z \in \faces(\diag(\lambda^{(1)}) \odot X \cup \delta \odot \diag(\lambda^{(2)}) \odot X \cup \rho_1 \odot x_0) \,.
 \end{equation*}
 In particular, this yields $z \in \whull(X)$ since $\delta, \rho_1 \leq 0$ and, therefore, $\bigoplus \lambda^{(1)} \oplus \delta \odot \bigoplus \lambda^{(2)} \oplus \rho_1 = \bigoplus \lambda^{(1)} \oplus \rho_1 = 0$. 

 \textbf{Case 2} ($\delta > 0$.) 
 By \Cref{lem:combination-of-faces}, we obtain that
  \begin{equation*}
    z \in \faces(\delta^{\odot -1} \odot \diag(\lambda^{(1)}) \odot X \cup \diag(\lambda^{(2)}) \odot X \cup w)
  \end{equation*}
 for an arbitrary $w \in \Uncomp(\delta^{\odot -1} \odot\rho_1 \odot y_1 \oplus \rho_2 \odot y_2)$. 
 With $\delta^{\odot -1} \odot \rho_1 = \rho_2 \odot \mu^{\odot -1}$, this implies that 
 \begin{equation} \label{eq:z-in-faces-scaled}
    z \in \faces(\delta^{\odot -1} \odot \diag(\lambda^{(1)}) \odot X \cup \diag(\lambda^{(2)}) \odot X \cup \rho_2 \odot \mu^{\odot -1} \odot x_0) \,.
 \end{equation}
 Note that $\delta^{\odot -1} < 0$ implying $\delta^{\odot -1} \odot (\bigoplus \lambda^{(1)}) < 0$ and that $\rho_2 \leq \rho_2 \odot \mu^{\odot -1} = \delta^{\odot -1} \odot \rho_1 < 0$. 
 With this, 
 \[
 \bigoplus \lambda^{(2)} \leq \delta^{\odot -1} \odot (\bigoplus \lambda^{(1)}) \oplus \bigoplus \lambda^{(2)} \oplus \rho_2 \odot \mu^{\odot -1} \leq \bigoplus \lambda^{(2)} \oplus \rho_2 = 0 = \bigoplus \lambda^{(2)} \;,
 \]
 Therefore, the scalars in \cref{eq:z-in-faces-scaled} constitute a proper combination, hence, $z \in \whull(X)$.
\end{proof}

The maximal set of elements which is tropically convex and does not contain a fixed point is called \emph{semispace} in~\cite{KatzNiticaSergeev:2014}.
This concept (for unsigned tropical convexity) was used to study hemispaces.
We use a similar idea in the following proof. 
While we do not have a proper separation theorem for TC-hemispaces, \cref{prop:sand_glass} suffices to show a representation of TC-convex sets by TC-hemispaces. 

\begin{proof}[Proof of \cref{thm:separation_hemispaces}]
  The proof is a variant of the argument given in \cite[Thm. 5.2]{Kubis:1999} and \cite[Thm. I.4.13]{VanDeVel:1993}. Let $G \subseteq \TTpm^d$ be a TC-convex set.  If $G \in \{\emptyset, \TTpm^d\}$, then the claim is trivial.
  Otherwise, let $z \notin G$.
  We want to prove that there exists a TC-hemispace $H^*$ such that $G \subseteq H^*$ and $z \notin H^*$.
  To do so, we consider the family $\mathcal{F}$ of TC-convex subsets $S$ of $\TTpm^d$ such that $G \subseteq S$ and $z \notin S$.
  We partially order $\mathcal{F}$ by inclusion.
  Any chain $\mathcal{C} \subseteq \mathcal{F}$ is upper bounded by $(\bigcup_{S \in \mathcal{C}} S) \in \mathcal{F}$ using that TC-convexity is closed under arbitrary nested unions. 
  Therefore, the Kuratowski--Zorn lemma implies that $\mathcal{F}$ has at least one maximal element $H^*$.
  By construction, $H^*$ is TC-convex; hence to prove that $H^*$ is a TC-hemispace we show that $\TTpm^d \setminus H^*$ is TO-convex.
  Note that this implies that it is also TC-convex.   
  Suppose that $\TTpm^d \setminus H^*$ is not TO-convex. 
  Then, there exist $y_1,y_2 \in \TTpm^d \setminus H^*$ and $x_0 \in \shull(y_1,y_2)$ such that $x_0 \in H^*$.
  Furthermore, by the maximality of $H^*$ we get $z \in \whull(H^* \cup \{y_1\})$ and $z \in \whull(H^* \cup \{y_2\})$.
  Hence, by \cref{cor:convexity-finitary}, there exist finite sets $X_1, X_2 \subseteq H^*$ such that $z \in \whull(X_1 \cup \{y_1\})$ and $z \in \whull(X_2 \cup \{y_2\})$.
  By putting $X = \{x_0\} \cup X_1 \cup X_2$, we get $x_0 \in \shull(y_1,y_2)$, $z \in \whull(X \cup \{y_1\})$, $z \in \whull(X \cup \{y_2\})$, and $z \notin \whull(X) \subseteq H^*$. 
  This gives a contradiction with \cref{prop:sand_glass}.
  Since $z \notin G$ was arbitrary, we obtain that $G$ is an intersection of TC-hemispaces.
\end{proof}

\subsection{TC-hemispaces are nearly halfspaces}\label{sec:hemispace_nearly_halfspace}

Next we aim to show that a TC-hemispace is basically a halfspace up to its boundary (\cref{th:hemispace}).
To achieve this, we consider its intersections with orthants.
This allows to use existing work for nonnegative tropical convexity and to distinguish cases based on the sign structure of the points in the TC-hemispace. 

For a cone $X \subseteq \TTpm^d$, a \emph{relative conic TC-hemispace \wrt $X$} is a subset $H \subseteq X \subseteq \TTpm^d$ such that $H$ and $X \setminus H$ are TC-convex cones. Note that if $G \subseteq \TTpm^d$ is a TC-hemispace as well as a cone and $X \subseteq \TTpm^d$ is a TC-convex cone, then $G \cap X$ is a relative conic TC-hemispace \wrt $X$.

We summarize a crucial insight on the structure of tropical hemispaces in the nonnegative tropical orthant $\Tgeq^d$; see \cite{BriecHorvath:2008}, \cite[Section~4]{KatzNiticaSergeev:2014}, \cite[Section~4]{EhrmannHigginsNitica:2016} for a detailed discussion.

\begin{proposition} \label{prop:hemispaces_open_orthant}
  If $G \subseteq \Tgt^d$ is a relative conic TC-hemispace \wrt $\Tgt^d$ and $G \notin\{\emptyset, \Tgt^d\}$, then there exists a vector $(a_1, \dots, a_d) \in \TTpm^{d}$, $(a_1, \dots, a_d) \neq \Zero$, such that $\HC^+(\Zero, a) \cap \Tgt^d \subseteq G \subseteq \Hclosed^{+}(\Zero, a) \cap \Tgt^d$. 
\end{proposition}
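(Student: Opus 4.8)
The plan is to reduce to a known characterization of hemispaces in the nonnegative tropical orthant. The statement concerns relative conic TC-hemispaces with respect to the \emph{open} orthant $\Tgt^d$, but the prior literature~\cite{BriecHorvath:2008,KatzNiticaSergeev:2014,EhrmannHigginsNitica:2016} is phrased for the closed orthant $\Tgeq^d$ and for the notion of tropical convexity there. First I would recall that, within a single closed orthant, TC-convexity coincides with ordinary (max-plus) tropical convexity: on $\Tgeq^d$ the left sum reduces to $\oplus$ (no two entries are balanced since nothing is $\ominus 0$-valued against a $0$-valued one after rescaling), and $\faces(X \odot \diag(\lambda))$ collapses to $\Uncomp(\bigoplus_i \lambda_i \odot x_i)$, which is just the usual tropical convex combination. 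Hence a relative conic TC-hemispace with respect to $\Tgt^d$ is exactly a relative tropical hemispace (in the usual sense) of the open orthant, and I can invoke the classification from those references.

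The main content is then to carry the classification of hemispaces of the open orthant $\Tgt^d$ into the halfspace form $\HC^+(\Zero,a) \cap \Tgt^d \subseteq G \subseteq \Hclosed^+(\Zero,a)\cap \Tgt^d$. Concretely, I would take $G$ and $\Tgt^d \setminus G$, both TC-convex (equivalently tropically convex) cones partitioning the open orthant. The key step is to produce the separating tropical hyperplane: since $G$ and $\Tgt^d\setminus G$ are disjoint tropically convex cones in $\Tgt^d$, apply the tropical separation theorem for the nonnegative orthant~\cite{GaubertSergeev:2008} — or directly the structural description of hemispaces in the cited works — to obtain $a = (a_1,\dots,a_d) \in \TTpm^d$, $a\neq \Zero$, with $G$ contained in the closed halfspace $\Hclosed^+(\Zero,a)$ and $\Tgt^d\setminus G$ contained in the opposite closed halfspace $\Hclosed^-(\Zero,a)$. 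Intersecting with $\Tgt^d$ and using that $\Tgt^d = (\HC^+(\Zero,a)\cap\Tgt^d) \,\dunion\, (\HC(\Zero,a)\cap\Tgt^d) \,\dunion\, (\HC^-(\Zero,a)\cap\Tgt^d)$ (a partition by strict/equality/strict cases of~\cref{eq:hspace}), we get that the open halfspace part $\HC^+(\Zero,a)\cap\Tgt^d$ cannot meet $\Tgt^d\setminus G$, hence lies in $G$; symmetrically $\HC^-(\Zero,a)\cap\Tgt^d \subseteq \Tgt^d\setminus G$, which gives $G \subseteq \Hclosed^+(\Zero,a)$ after intersecting with $\Tgt^d$. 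This yields exactly the claimed sandwich.

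The step I expect to be the main obstacle is the very first reduction: verifying carefully that TC-convexity restricted to a single (open) orthant really is ordinary tropical convexity there, so that the external references apply verbatim. One must check that the local elimination operation of~\cref{prop:local-generation-TC-hull}(ii) never triggers inside a fixed open orthant — which holds because $(u,v,w)$ and $(u,\ominus v,w)$ cannot both lie in $\Tgt^d$ for $v\neq \Zero$ — and that weighted left sums of points in $\Tgt^d$ with nonnegative scalars stay tropical convex combinations. A secondary subtlety is that the cited hemispace classifications are stated for the closed orthant; I would either restrict their statement to the open orthant (hemispaces of $\Tgt^d$ are traces of hemispaces of the ambient space by the relative-hemispace remark preceding the proposition, together with~\cref{thm:separation_hemispaces}) or re-derive the separating hyperplane directly via~\cite{GaubertSergeev:2008}. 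Once the reduction is in place, the rest is the bookkeeping with~\cref{eq:hspace} indicated above, which is routine.
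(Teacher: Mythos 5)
The paper gives no proof of this proposition; it is stated as a summary of the hemispace classification established in \cite{BriecHorvath:2008,KatzNiticaSergeev:2014,EhrmannHigginsNitica:2016} and supported only by those citations. Your first step, verifying that within the open orthant $\Tgt^d$ the TC-convex hull reduces to ordinary (unsigned) tropical convexity, is correct and worth spelling out: for $x,y\in\Tgt^d$ and $\lambda,\mu\in\Tgeq$, the vector $\lambda\odot x\oplus\mu\odot y$ has no balanced entry, so $\faces(\lambda\odot x,\mu\odot y)$ is the singleton $\{\lambda\odot x\oplus\mu\odot y\}$, and local elimination never fires since $(u,v,w)$ and $(u,\ominus v,w)$ cannot both be in $\Tgt^d$ when $v\neq\Zero$. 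This legitimizes reading off the structure of $G$ from the unsigned hemispace literature, which is exactly what the paper does by citation, and your alternative route ``invoke the structural description in the cited works'' matches the paper's intent.

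Two parts of your sketch, however, contain genuine gaps. First, the separation route via \cite{GaubertSergeev:2008} does not go through as stated: $G$ and $\Tgt^d\setminus G$ are complementary convex cones, so in general they are not both closed and their closures meet along the boundary hyperplane, which is precisely the interesting locus; the Gaubert--Sergeev separation theorem requires at least one of the sets to be closed and the other to avoid its closure, so some additional argument (e.g.\ passing to closures and interiors, or using a different separation principle for open cones) is needed before it yields the vector $a$. Second, your ``secondary subtlety'' paragraph is problematic: the remark preceding \cref{prop:hemispaces_open_orthant} only goes from ambient hemispaces to relative ones, not the converse, so it does not say that every relative conic TC-hemispace of $\Tgt^d$ is the trace of a hemispace of $\TTpm^d$. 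Moreover, if you tried to extend $G$ to an ambient TC-hemispace and then apply \cref{th:hemispace}, that would be circular, because \cref{th:hemispace} is proved via \cref{le:hemispace_in_halfspace}, \cref{lem:hemispace+neighbouring+orthants}, and \cref{prop:hemispaces_single_orthant}, all of which sit downstream of \cref{prop:hemispaces_open_orthant}. (Using \cref{thm:separation_hemispaces} alone is not circular, but it only gives $G$ as an \emph{intersection} of hemispaces, not a single ambient hemispace whose trace is $G$.) The non-circular and correct reading of the paper's intent is your alternative: restrict to $\Tgt^d\cong\RR^d$ with ordinary max-plus convexity and directly invoke the hemispace classification from the cited references.
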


\Cref{prop:hemispaces_open_orthant} can be extended to characterize relative hemispaces that are included in the nonnegative orthant and contain (parts of) the boundary of $\Tgeq^d$. The next corollary gives one such extension that is needed in our proofs.

\begin{corollary}\label{prop:hemispaces_single_orthant}
  Let $X = \SetOf{x \in \Tgeq^d}{x_1 > \Zero}$. If $G \subseteq X$ is a relative conic TC-hemispace \wrt $X$ and $G \cap \Tgt^d \notin\{\emptyset, \Tgt^d\}$, then there exists a vector $(a_1, \dots, a_d) \in \TTpm^{d}$, $(a_1, \dots, a_d) \neq \Zero$, such that $\HC^+(\Zero, a) \cap X \subseteq G \subseteq \Hclosed^{+}(\Zero, a) \cap X$.
\end{corollary}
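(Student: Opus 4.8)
The plan is to reduce the statement to \cref{prop:hemispaces_open_orthant} by passing to the open orthant and then patching in the boundary. First I would set $G' = G \cap \Tgt^d$. Since $G$ is a relative conic TC-hemispace \wrt $X$, and $\Tgt^d$ is itself a TC-convex cone contained in $X$, the intersection $G' = G \cap \Tgt^d$ is a relative conic TC-hemispace \wrt $\Tgt^d$: indeed $G'$ and $\Tgt^d \setminus G' = (X \setminus G) \cap \Tgt^d$ are both TC-convex (intersections of TC-convex sets) and both cones. By hypothesis $G' \notin \{\emptyset, \Tgt^d\}$, so \cref{prop:hemispaces_open_orthant} provides a vector $a = (a_1,\dots,a_d) \neq \Zero$ with
\[
\HC^+(\Zero,a) \cap \Tgt^d \subseteq G' \subseteq \Hclosed^+(\Zero,a) \cap \Tgt^d \, .
\]
This pins down $G$ on the open orthant; the remaining work is to control $G$ on the part of $X$ where $x_1 > \Zero$ but some other coordinate is $\Zero$, i.e. on $\partial$-type points of $X$.

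Next I would upgrade these inclusions from $\Tgt^d$ to all of $X$ using continuity-of-the-cone structure, more precisely the TC-convexity of $G$ and of $X\setminus G$ together with the density of the order on $\TTpm$. For the upper inclusion $G \subseteq \Hclosed^+(\Zero,a) \cap X$: take $x \in G$ with some coordinates equal to $\Zero$. Using that $x_1 > \Zero$, one can approximate $x$ from inside $\Tgt^d$ by a sequence (or a TC-segment) of points that stay in $G$ — concretely, consider weighted left sums $x \lplus \lambda \odot e$ for a suitable fixed interior point $e \in \HC^+(\Zero,a)\cap\Tgt^d \cap G$ and $\lambda$ tending to $\Zero$ from above, which by \cref{prop:local-generation-TC-hull} lie in $G$ and lie in $\Tgt^d$ for $\lambda > \Zero$, hence in $\Hclosed^+(\Zero,a)$ by the displayed inclusion; since $\Hclosed^+(\Zero,a)$ is closed (\cref{le:hspace_topo}) and $x \lplus \lambda \odot e \to x$ as $\lambda \to \Zero$, we get $x \in \Hclosed^+(\Zero,a)$. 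The lower inclusion $\HC^+(\Zero,a) \cap X \subseteq G$ is the symmetric statement applied to the complementary relative hemispace $X \setminus G$: if $x \in \HC^+(\Zero,a)\cap X$ were in $X\setminus G$, then approximating by points of $X\setminus G$ in $\Tgt^d$ and using that $\Tgt^d \setminus G' \subseteq \TTpm^d \setminus \HC^+(\Zero,a)$ (the contrapositive of the lower inclusion above) together with the closedness of $\Hclosed^-(\Zero,a)$ forces $x \in \Hclosed^-(\Zero,a)$, i.e. $x \notin \HC^+(\Zero,a)$, a contradiction.

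The main obstacle I expect is the approximation step: one must check that the left-sum perturbations $x \lplus \lambda \odot e$ (i) genuinely converge to $x$ in the order topology as $\lambda \to \Zero$, (ii) stay inside $X$ and inside $\Tgt^d$ for $\lambda > \Zero$ — here the hypothesis $x_1 > \Zero$ is essential, since it guarantees the first coordinate never collapses to $\Zero$ — and (iii) lie on one prescribed side of the hemispace. Point (iii) is where one must be careful: to keep the perturbed points in $G$ rather than merely in $X$, I would choose $e$ to be a point of $G \cap \HC^+(\Zero,a) \cap \Tgt^d$ (nonempty because $G' \neq \emptyset$ and $G' \supseteq \HC^+(\Zero,a)\cap\Tgt^d$ which is nonempty as $a \neq \Zero$), and then $x \lplus \lambda \odot e \in \whull(\{x,e\}) \subseteq G$ by the weighted-left-sum closure of TC-convex sets (\cref{prop:local-generation-TC-hull}, taking $\mu = 0$, $\lambda \le 0$ and rescaling). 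A symmetric choice of an interior point on the other side handles $X \setminus G$. Once the approximation is set up correctly, everything else is a direct application of the closedness of tropical halfspaces from \cref{le:hspace_topo}.
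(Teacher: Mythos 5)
Your overall plan is sound and closely parallels the paper's proof: both start by applying \cref{prop:hemispaces_open_orthant} to $G' = G \cap \Tgt^d$ to get the vector $a$, and both then extend the inclusions to the boundary of the orthant by a left-sum perturbation $x \lplus \lambda \odot e$ with a helper point $e$ from the open orthant. The difference is in how the perturbation is exploited. The paper establishes the lower inclusion $\HC^+(\Zero,a) \cap X \subseteq G$ directly: it takes $x \in \HC^+(\Zero,a)\cap X$, a helper $y \in \Tgt^d \setminus G$, and shows that $x \lplus \omega \odot y$ lands in $\HC^+(\Zero,a) \cap \Tgt^d \subseteq G$; if $x$ were in $X \setminus G$, the TC-convexity and cone structure of $X \setminus G$ would put $x \lplus \omega \odot y$ on the wrong side, a contradiction. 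You instead prove the upper inclusion $G \subseteq \Hclosed^+(\Zero,a) \cap X$ first, by letting the perturbed points converge to $x$ and invoking the closedness of $\Hclosed^+$ from \cref{le:hspace_topo}; the lower inclusion then follows by the same argument applied to $X \setminus G$. Both routes are valid. The paper's route uses TC-convexity of the complement as the contradiction mechanism and needs no topology; your route uses a limit argument and is symmetric in flavour.

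There is one small slip in your justification. You assert that $\HC^+(\Zero,a) \cap \Tgt^d$ is nonempty ``as $a \neq \Zero$'', but this is false in general (for $a = \ominus e_1$, say, the set $\HC^+(\Zero,a) \cap \Tgt^d$ is empty). The correct reason is that the non-triviality $G \cap \Tgt^d \notin \{\emptyset, \Tgt^d\}$, combined with the sandwich inclusion from \cref{prop:hemispaces_open_orthant}, forces $\psupp(a) \neq \emptyset$ and $\nsupp(a) \neq \emptyset$ — a fact the paper records explicitly right after this corollary. Alternatively, and more simply, you do not actually need $e$ to lie in $\HC^+(\Zero,a)$: it suffices to take any $e \in G' = G \cap \Tgt^d$, which is nonempty directly by hypothesis; then $x \lplus \lambda \odot e \in G \cap \Tgt^d = G' \subseteq \Hclosed^+(\Zero,a)$, which is all the closure argument requires. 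With that simplification the minor gap disappears entirely.
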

\begin{proof}
By \cref{prop:hemispaces_open_orthant}, there exists a vector $(a_1, \dots, a_d) \in \TTpm^{d}$, $(a_1, \dots, a_d) \neq \Zero$, such that $\HC^+(\Zero, a) \cap \Tgt^d \subseteq G \cap \Tgt^d \subseteq \Hclosed^{+}(\Zero, a) \cap \Tgt^d$. Let $x \in \HC^+(\Zero, a) \cap X$ be any point and let $y \in \Tgt^d$ be such that $y \notin G$. Then, for sufficiently small $\omega \in \Tgt$, the point $x \lplus \omega \odot y \in \Tgt^d$ belongs to $\HC^+(\Zero, a)$, which implies that it belongs to $G$. Hence, by the TC-convexity of $X \setminus G$, we have $x \in G$. This shows that $\HC^+(\Zero, a) \cap X \subseteq G$. Analogously, we get $\HC^{-}(\Zero, a) \cap X \subseteq (X \setminus G)$, which implies that $G \subseteq \Hclosed^{+}(\Zero,a) \cap X$.
\end{proof}

We note that the assumption $G \cap \TT_{>\Zero}^d \notin\{\emptyset, \TT_{>\Zero}^d\}$ implies that $\psupp(a) \neq \emptyset$ and $\nsupp(a) \neq \emptyset$.

In order to prove our characterization of TC-hemispaces we need to generalize \cref{prop:hemispaces_single_orthant} to handle multiple orthants.
We start by characterizing relative hemispaces \wrt the halfspace $X = \SetOf{x \in \TTpm^d}{x_1 > \Zero}$.
To do so, in \cref{lem:trivial+neighbouring+orthant,lem:hemispace+neighbouring+orthants,le:hemispace_in_halfspace} we suppose that $G$ is a relative conic TC-hemispace \wrt $X$.
Furthermore, we suppose that there exists an orthant $\ort$ in $\cl(X)$ such that $G \cap \inter(\ort) \notin \{\emptyset, \inter(\ort)\}$.
Then, $G \cap \ort$ is a relative conic TC-hemispace \wrt $\ort \cap X$.
In particular, by \cref{prop:hemispaces_single_orthant}, there is a vector $(a_1,\dots,a_d) \in \TTpm^{d}$ with $(a_1, \dots, a_d) \neq \Zero$ such that $\HC^+(\Zero, a) \cap \ort \cap X \subseteq G \cap \ort \subseteq \Hclosed^{+}(\Zero, a) \cap \ort \cap X$.
Let $\dort$ be a neighboring orthant of $\ort$ in $X$, i.e., an orthant obtained from $\ort$ by changing one sign (other than the sign of the first coordinate).
In the next two lemmas (\cref{lem:trivial+neighbouring+orthant} and \cref{lem:hemispace+neighbouring+orthants}), we show that the relative TC-hemispaces $G \cap \ort$ in $\ort$ and $G \cap \dort$ in $\dort$ are essentially determined by the same vector $a$.

By suitably flipping signs and permuting variables, we can assume that $\ort = \Tgeq^d$ and $\dort$ differs in the sign of the $d$-th component.
We denote $T = (\ort \cup \dort) \cap X$.
 
In the following proofs, $\omega \in \Tgt$ and $\Omega \in \Tgt$ will mean a sufficiently small and sufficiently big number, respectively. 

\begin{lemma} \label{lem:trivial+neighbouring+orthant}
  There are exactly two possibilities for the part of the relative TC-hemispace $G$ in $Q$ to be trivial: 
  \begin{enumerate}[(i)]
  \item $G \cap \inter(\dort) = \inter(\dort) \Leftrightarrow \nsupp(a) =\{d\} \Leftrightarrow \SetOf{y \in \dort \cap X}{y_d < \Zero} \subseteq G$,
  \item $G \cap \inter(\dort) = \emptyset \Leftrightarrow \psupp(a) = \{d\} \Leftrightarrow \SetOf{y \in \dort \cap X}{y_d < \Zero} \cap G = \emptyset$. 
  \end{enumerate}
  
\end{lemma}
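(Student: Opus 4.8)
The plan is to deduce statement (ii) from statement (i) and then prove the three equivalences in (i). For the reduction, observe that $X\setminus G$ is again a relative conic TC-hemispace \wrt $X$, and that complementing the sandwich $\HC^+(\Zero,a)\cap\ort\cap X\subseteq G\cap\ort\subseteq\Hclosed^+(\Zero,a)\cap\ort\cap X$ inside $\ort\cap X$ — using \cref{le:hspace_topo} (so that $\TTpm^d\setminus\Hclosed^+(\Zero,a)=\HC^-(\Zero,a)$ and $\TTpm^d\setminus\HC^+(\Zero,a)=\Hclosed^-(\Zero,a)$) together with $\HC^-(\Zero,a)=\HC^+(\Zero,\ominus a)$ — yields $\HC^+(\Zero,\ominus a)\cap\ort\cap X\subseteq(X\setminus G)\cap\ort\subseteq\Hclosed^+(\Zero,\ominus a)\cap\ort\cap X$. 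Since $\nsupp(\ominus a)=\psupp(a)$ and ``$G\cap\inter(\dort)=\emptyset$'' means ``$(X\setminus G)\cap\inter(\dort)=\inter(\dort)$'', applying (i) to the pair $(X\setminus G,\ominus a)$ gives (ii) for $(G,a)$. Within (i) the implication (c)$\Rightarrow$(a) is immediate from $\inter(\dort)\subseteq\{y\in\dort\cap X : y_d<\Zero\}$, so the content is (b)$\Rightarrow$(c) and (a)$\Rightarrow$(b).

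For (b)$\Rightarrow$(c), assume $\nsupp(a)=\{d\}$. The first step is a one-line reading of \cref{eq:hspace}: for $y\in\dort\cap X$ with $y_d<\Zero$ one has $\tsgn(y_d)=\ominus=\tsgn(a_d)$ and $\tsgn(y_i)\in\{\oplus,\Zero\}$, $\tsgn(a_i)\in\{\oplus,\Zero\}$ for $i<d$, so the right-hand side of \cref{eq:hspace} equals $-\infty$ while the left-hand side is at least $|a_d|+|y_d|>-\infty$; hence $\{y\in\dort\cap X : y_d<\Zero\}\subseteq\HC^+(\Zero,a)$ and, symmetrically, $\HC^-(\Zero,a)\cap\dort\cap X=\emptyset$. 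The second step transfers this membership into $G$. Fix such a $y$ with, say, $y_i\neq\Zero$ for some $i\in\psupp(a)$ (automatic when $1\in\psupp(a)$; the remaining degenerate configuration is treated below). Pick the \emph{aligned companion} $q=(y_1,\dots,y_{d-1},Q)$ with $Q\in\Tgt$ and $|Q|$ large; then $q\in\ort\cap X$ and $q\in\HC^-(\Zero,a)$, because in \cref{eq:hspace} at $q$ the right-hand side equals $|a_d|+|Q|$, which dominates the fixed left-hand side once $|Q|$ is large. Thus $q\in\HC^-(\Zero,a)\cap\ort\cap X\subseteq X\setminus G$. Since $y$ and $q$ agree on the first $d-1$ coordinates, the coordinate-permuted form of \cref{ex:vertical-lines} gives $\whull(y,q)=\{(y_1,\dots,y_{d-1})\}\times[y_d,Q]\subseteq X$, and this segment contains $y^{+}=(y_1,\dots,y_{d-1},\omega)$ for small $\omega\in\Tgt$, which lies in $\HC^+(\Zero,a)\cap\ort\cap X\subseteq G$ (the left-hand side of \cref{eq:hspace} at $y^{+}$ is $\max_{i\in\psupp(a)}(|a_i|+|y_i|)>-\infty$, the right-hand side is the small number $|a_d|+|\omega|$). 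If $y$ belonged to $X\setminus G$, the TC-convexity of $X\setminus G$ would force $y^{+}\in\whull(y,q)\subseteq X\setminus G$, a contradiction; hence $y\in G$.

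For (a)$\Rightarrow$(b) I argue the contrapositive. If $\psupp(a)=\{d\}$, then applying the already-proven (b)$\Rightarrow$(c) to $(X\setminus G,\ominus a)$ gives $\HC^-(\Zero,a)\cap\dort\cap X\subseteq X\setminus G$, and the symmetric computation of the previous paragraph gives $\{y\in\dort\cap X : y_d<\Zero\}\subseteq\HC^-(\Zero,a)$, so $\inter(\dort)\cap G=\emptyset$, in particular $\inter(\dort)\not\subseteq G$. If instead $\nsupp(a)\neq\{d\}$ and $\psupp(a)\neq\{d\}$, then, using that $\psupp(a)$ and $\nsupp(a)$ are nonempty, there is a coordinate one can drive large so as to produce a point $y\in\HC^-(\Zero,a)\cap\inter(\dort)$ (a coordinate of $\nsupp(a)\cap[d-1]$, or — when $d\in\psupp(a)$ — the $d$-th coordinate with its sign reversed). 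One then mimics the transfer of the preceding paragraph, choosing a companion in $\HC^+(\Zero,a)\cap\ort\cap X\subseteq G$ or in $\HC^-(\Zero,a)\cap\ort\cap X\subseteq X\setminus G$ that is aligned with $y$ off coordinate $d$, so that the relevant TC-hull is again the coordinate-$d$ interval between the two points; the interval then contains a point of the ``wrong'' set, which contradicts $y\in G$ and forces $y\in X\setminus G$, whence $\inter(\dort)\not\subseteq G$.

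The main obstacle is exactly this transfer between the orthants $\ort$ and $\dort$: the inclusions available for free live only on $\ort$, whereas assertions (a) and (c) concern $\dort$. The device that makes it go through is the alignment trick — matching the target point on all coordinates but the $d$-th, so that by \cref{ex:vertical-lines} the relevant TC-hull collapses to a one-dimensional interval along coordinate $d$, where \cref{eq:hspace} is monotone and transparent — combined with the TC-convexity of the complement $X\setminus G$. The remaining nuisance is the degenerate configuration in which the non-$d$ coordinates of the target fail to ``activate'' any coordinate of $\psupp(a)$ (which forces $a_1=\Zero$ and $\psupp(a)\subseteq\{2,\dots,d-1\}$, so the aligned companion no longer crosses into the right halfspace); this is handled by a separate reduction, perturbing a $\psupp(a)$-coordinate of the target to a nonzero value, invoking the non-degenerate case, and then collapsing that coordinate by the local elimination property of \cref{prop:local-generation-TC-hull}.
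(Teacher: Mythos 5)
Your plan of deducing (ii) from (i) by passing to the complement and replacing $a$ by $\ominus a$ is exactly the paper's reduction, and your nondegenerate transfer in (b)$\Rightarrow$(c) — choosing the aligned companion $q$ with $q_d$ large so that $q\in\HC^-(\Zero,a)\cap\ort\cap X\subseteq X\setminus G$, passing through the coordinate-$d$ interval $\whull(y,q)$ via \cref{ex:vertical-lines}, and finding the middle point $y^{+}\in\HC^+(\Zero,a)\cap\ort\cap X\subseteq G$ — is the paper's first subcase of that direction. However, there are two genuine gaps in the remaining steps.

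First, your handling of the degenerate configuration ($y_k=\Zero$ for all $k\in\psupp(a)$) does not go through. Local elimination from \cref{prop:local-generation-TC-hull} requires \emph{both} $(u,v,w)$ and $(u,\ominus v,w)$ to lie in $G$ before one may conclude $(u,\Zero,w)\in G$. You can get $(u,\epsilon,w)\in G$ for a small $\epsilon\in\Tgt$ at a coordinate $j\in\psupp(a)\subseteq\{2,\dots,d-1\}$ from the nondegenerate case, but $(u,\ominus\epsilon,w)$ has a negative $j$-th coordinate and therefore leaves the orthant $\dort$ entirely; the sandwich you have on $\ort$ and the transfer argument (which lives in $\ort\cup\dort$) say nothing about that new orthant, so you cannot place $(u,\ominus\epsilon,w)$ in $G$. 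The paper instead handles this degenerate subcase by a bespoke construction: a point $x\in\ort\cap X$ with $x_d=|y_d|$ and small entries elsewhere (so $x\in\HC^-(\Zero,a)\cap\ort\cap X\subseteq X\setminus G$) together with a third point $z\in\faces(x,y)$ that lies in $\HC^+(\Zero,a)\cap\ort\cap X\subseteq G$, contradicting the TC-convexity of $X\setminus G$.

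Second, in the contrapositive of (a)$\Rightarrow$(b), the coordinate-$d$-interval transfer fails whenever $d\notin\psupp(a)$. In that case $a_d\le\Zero$, so for a companion $p=(y_1,\dots,y_{d-1},P)$ aligned off coordinate $d$, the contribution of coordinate $d$ to $a\odot p$ is never positive; once you have driven a coordinate $k\in\nsupp(a)\cap[d-1]$ large to force $y\in\HC^-(\Zero,a)$, the term $a_k\odot y_k$ dominates for \emph{all} values of $P$, and the entire segment $\{(y_1,\dots,y_{d-1})\}\times[\Zero,Q]$ stays inside $\HC^-(\Zero,a)$. There is then no point of the ``wrong'' set on the segment and no contradiction. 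The paper does not attempt this one-dimensional transfer here; it argues (a)$\Rightarrow$(b) directly with a three-point TC-convexity contradiction: assuming $G\cap\inter(\dort)=\inter(\dort)$ and $\exists\,k\in\nsupp(a)\cap[d-1]$, it builds $x\in\HC^+(\Zero,a)\cap\ort\cap X\subseteq G$ (with a large entry at $j\in\psupp(a)$) and $y\in\inter(\dort)\subseteq G$ (with a larger entry at $k$), and observes that $x\lplus y\in\ort$ lies in $\HC^-(\Zero,a)$, contradicting $x\lplus y\in G$. A construction of this kind, varying more than coordinate $d$, is unavoidable; a companion restricted to be aligned off coordinate $d$ cannot produce the needed crossing.

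Your subcase $\psupp(a)=\{d\}$ (reduced to the complement) and the subcase $d\in\psupp(a)$ (where the reversed $d$-th coordinate does produce a companion in $\HC^+(\Zero,a)\cap\ort$ and the interval does cross) are fine, but the case $d\notin\psupp(a)\cup\nsupp(a)$ with $\nsupp(a)\cap[d-1]\ne\emptyset$ remains uncovered by your argument.
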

\begin{proof}
To start, suppose that $G \cap \inter(\dort) = \inter(\dort)$ but there is $k \in \nsupp(a) \cap [d-1]$. 
As we assume that $G \cap \inter(\ort) \notin \{\emptyset, \inter(\ort)\}$, there exists $j \in \psupp(a)$. 
We define $x \in \ort, y \in \dort$ and derive $x \lplus y \in \ort$ via
\begin{equation*}
  x_i = \begin{cases} \Omega & i = j \\ \omega & \text{else} \end{cases} \qquad
  y_i = \begin{cases} 2\Omega & i = k \\ \ominus\omega & i = d \\ \omega & \text{else} \end{cases} \qquad
  (x \lplus y)_i = \begin{cases} 2\Omega & i = k \\ \Omega & i = j \\ \omega & \text{else} \end{cases}
\end{equation*}
By construction, $x,y \in G$ but $x \lplus y \not\in G$, as one can see from $a \odot x > \Zero$ and $a \odot (x\lplus y) < \Zero$. This contradicts the fact that $G$ is TC-convex.
Hence, $\nsupp(a) \subseteq \{d\}$ and as $G \cap \inter(\ort) \notin \{\emptyset, \inter(\ort)\}$ we get $\nsupp(a) = \{d\}$. 

Now, assume that $\nsupp(a) = \{d\}$ but there is a point $y \in (Q \cap X) \setminus G$ such that $y_d < \Zero$. Consider two cases: if $y_k > \Zero$ for some $k \in \psupp(a)$, then we define $x, z \in \ort \cap X$ via 
\begin{equation*}
  x_i = \begin{cases} \Omega & i = d \\ y_i & \text{else} \end{cases} \qquad
  z_i = \begin{cases} \omega & i = d \\ y_i & \text{else} \end{cases}
\end{equation*}
Then $z$ is on the line segment between $x$ and $y$, but $x,y \in T \setminus G$ while $z \in G$. 
This contradicts the fact that $G$ is a relative TC-hemispace.  If $y_k = \Zero$ for every $k \in \psupp(a)$, then we define $x, z \in \ort \cap X$ via
\begin{equation*}
  x_i = \begin{cases} |y_d| & i = d \\ \omega & \text{else} \end{cases} \qquad
  z_i = \begin{cases} 2\omega & i = d \\ y_i &i \neq d, y_i \neq \Zero \\ \omega & \text{else} \end{cases}
\end{equation*}
We note that $z \in \faces(x,y)$. Moreover, the assumption on $y$ implies that $a \odot z > \Zero$. Hence, as above we have $x,y \in T \setminus G$ while $z \in G$, contradicting the fact that $G$ is a relative TC-hemispace. The implication $\SetOf{y \in \dort \cap X}{y_d < \Zero} \subseteq G \Rightarrow G \cap \inter(\dort) = \inter(\dort)$ is trivial, proving the first point of the lemma.

\smallskip

The second point follows analogously as it can be obtained by considering the complement of $G$. 
\end{proof}

This leads us to describe the structure of a TC-hemispace with the union of two orthants. 

\begin{lemma} \label{lem:hemispace+neighbouring+orthants}
$\HC^+(\Zero, a) \cap T \subseteq G \cap T \subseteq \Hclosed^{+}(\Zero, a) \cap T$.
\end{lemma}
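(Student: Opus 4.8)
The plan is to handle the two orthants $\ort$ and $\dort$ separately and then glue. Over $\ort=\Tgeq^d$ the desired sandwich $\HC^+(\Zero,a)\cap\ort\cap X\subseteq G\cap\ort\subseteq\Hclosed^+(\Zero,a)\cap\ort\cap X$ is exactly what is provided by the paragraphs preceding the lemma (i.e.\ by \cref{prop:hemispaces_single_orthant}), so the whole task is to understand $G$ on $\dort$. I would first reduce to one inclusion: for every relative conic TC-hemispace $G$ \wrt $X$ with the standing assumptions of this subsection (which $T\setminus G$ also satisfies) and every covector $a$ with $\HC^+(\Zero,a)\cap\ort\cap X\subseteq G\cap\ort\subseteq\Hclosed^+(\Zero,a)\cap\ort\cap X$, it suffices to prove $\HC^+(\Zero,a)\cap T\subseteq G$. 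Indeed $T\setminus G$ is again such a hemispace, with $\ort$-covector $\ominus a$, and since $\TTpm^d\setminus\Hclosed^+(\Zero,a)=\HC^+(\Zero,\ominus a)$, applying the single inclusion to $(T\setminus G,\ominus a)$ gives $\HC^+(\Zero,\ominus a)\cap T\subseteq T\setminus G$, i.e.\ $G\cap T\subseteq\Hclosed^+(\Zero,a)\cap T$. Because the shared facet $\ort\cap\dort$ lies inside $\ort$, it then remains only to place each $x\in\HC^+(\Zero,a)\cap\dort\cap X$ with $x_d<\Zero$ into $G$.

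I would next clear away the degenerate cases using \cref{lem:trivial+neighbouring+orthant}. If $\psupp(a)=\{d\}$, then for $x\in\dort$ with $x_d<\Zero$ each product $a_i\odot x_i$ lies in $\Tleq$ and $a_d\odot x_d\in\Tlt$, so the left-hand side of \cref{eq:hspace} is strictly dominated and $\HC^+(\Zero,a)\cap\dort\cap\{x_d<\Zero\}=\emptyset$; there is nothing to prove. If $\nsupp(a)=\{d\}$, the mirror computation makes the left-hand side strictly dominate, so $\HC^+(\Zero,a)\cap\dort\cap X\cap\{x_d<\Zero\}$ equals $\{x\in\dort\cap X: x_d<\Zero\}$, which is contained in $G$ by \cref{lem:trivial+neighbouring+orthant}(i).

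There remains the main case $\psupp(a)\ne\{d\}$ and $\nsupp(a)\ne\{d\}$, in which the contrapositive of \cref{lem:trivial+neighbouring+orthant} gives $G\cap\inter(\dort)\notin\{\emptyset,\inter(\dort)\}$. Then $G\cap\dort$ is a nondegenerate relative conic TC-hemispace \wrt $\dort\cap X$, and applying \cref{prop:hemispaces_single_orthant} after the coordinate sign-flip $\phi$ of the $d$-th entry (an isomorphism of $\dort\cap X$ onto $\ort\cap X$ carrying tropical halfspaces to tropical halfspaces) yields a covector $a'\ne\Zero$ with $\HC^+(\Zero,a')\cap\dort\cap X\subseteq G\cap\dort\subseteq\Hclosed^+(\Zero,a')\cap\dort\cap X$. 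It is then enough to show $\HC^+(\Zero,a)\cap\dort\cap X=\HC^+(\Zero,a')\cap\dort\cap X$. On the relative interior $F^\circ=\{x\in X: x_1,\dots,x_{d-1}>\Zero,\ x_d=\Zero\}$ of the common facet the $d$-th coordinate drops out of \cref{eq:hspace}, so both halfspaces sandwich $G\cap F^\circ$ there; since $\psupp(a)\cap\{1,\dots,d-1\}$ and $\nsupp(a)\cap\{1,\dots,d-1\}$ are nonempty, $\HC^+(\Zero,a)\cap F^\circ$ is a nonempty proper open subset of $F^\circ$, hence by \cref{le:hspace_topo} coincides with its own relative interior, so $\HC^+(\Zero,a)\cap F^\circ=\inter(G)\cap F^\circ=\HC^+(\Zero,a')\cap F^\circ$. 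By the uniqueness of the covector of such a halfspace (as underlies \cref{prop:hemispaces_open_orthant}) we may, after rescaling $a'$, assume $a_i=a'_i$ for all $i\in\{1,\dots,d-1\}$.

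The last and hardest step is matching the $d$-th coordinate: I expect to prove $|a_d|=|a'_d|$ and that the $d$-th coordinate sits on the same side of \cref{eq:hspace} over $\dort$ for $a$ and $a'$ (equivalently $\tsgn(a_d)=\tsgn(a'_d)$). This I would establish by a test-point argument modelled on the proof of \cref{lem:trivial+neighbouring+orthant}: fixing $j\in\psupp(a)\cap\{1,\dots,d-1\}$ and $k\in\nsupp(a)\cap\{1,\dots,d-1\}$ and varying only the $d$-th coordinate of otherwise fixed points that straddle the facet $x_d=\Zero$, the TC-convexity of $G$ and of $T\setminus G$—together with the already known description of $G$ over $\ort$, which controls the points of $\dort\cap X$ on $\{x_d=\Zero\}$—forces the threshold magnitude and sign of $a'_d$ to agree with those of $a_d$. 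With this, $\HC^+(\Zero,a)$ and $\HC^+(\Zero,a')$ cut the same slice out of $\dort\cap X$, so $\HC^+(\Zero,a)\cap\dort\cap X\subseteq G$; combined with the $\ort$ part this gives $\HC^+(\Zero,a)\cap T\subseteq G$, and the complementation reduction from the first paragraph completes the proof. The main obstacle is exactly this $d$-th coordinate matching, since the common facet only pins down the first $d-1$ entries of the covector.
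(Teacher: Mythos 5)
Your proposal takes a genuinely different route from the paper's, and the structural ideas in it are sound, but it is incomplete exactly where the paper's proof does its real work.

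What you do differently: you first reduce to the single inclusion $\HC^+(\Zero,a)\cap T\subseteq G$ by passing to the complementary relative hemispace $X\setminus G$, with covector $\ominus a$ on $\ort$, noting $\HC^+(\Zero,\ominus a)=\TTpm^d\setminus\Hclosed^+(\Zero,a)$. This reduction is valid (though once you know $a$ and $a'$ agree up to positive scaling it becomes redundant, since the sandwich for $a'$ on $\dort$ already gives both inclusions). Your handling of the degenerate cases via \cref{lem:trivial+neighbouring+orthant} is correct. Your main new idea is then to exploit the common facet $F^\circ=\{x\in X: x_1,\dots,x_{d-1}>\Zero,\ x_d=\Zero\}$: both sandwiches restrict to $F^\circ$, where the $d$-th coordinate drops out of \cref{eq:hspace}, and uniqueness of the covector of a proper halfspace in a single orthant pins down $a_i=a'_i$ for $i\le d-1$ after rescaling. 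This is a nice uniform replacement for the paper's Cases~1b and~2a. (Your formulation ``$=\inter(G)\cap F^\circ$'' is loose, since $\inter(G)$ is taken in $\TTpm^d$ and $F^\circ$ has empty interior there, but the intended argument---sandwich both ways, take relative closures in $F^\circ$---goes through.)

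The genuine gap is the $d$-th coordinate. You write that you ``expect to prove $|a_d|=|a'_d|$ and $\tsgn(a_d)=\tsgn(a'_d)$'' by ``a test-point argument modelled on the proof of \cref{lem:trivial+neighbouring+orthant}'' and correctly identify this as ``the last and hardest step,'' but you never carry it out. This is precisely the content of the paper's Cases~1a and~2b, and neither construction is routine: Case~1a (different sign at $d$) requires selecting three distinct indices $r\in\psupp(b)\setminus\{d\}$, $s\in\nsupp(a)$, $d$, building a straddling pair $x,y$, and then choosing a specific point $z\in\Uncomp(x\oplus y)$ with $z_d=\omega$ to derive a contradiction; Case~2b (same sign at $d$, different magnitude) needs an $r\in\psupp(b)\setminus\{d\}$ and a careful choice of scalars $\xi_r,\xi_q$ interleaved between $|a_d|$, $\xi_r\odot a_r$, and $|b_d|$, followed by a point $z\in\faces(x,y)$ landing outside both halfspaces. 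Nothing in your $F^\circ$-argument lets you bypass these constructions, because $F^\circ$ contains no information about the $d$-th entry of the covector. Until you supply concrete test points for both the sign-mismatch and the magnitude-mismatch at index $d$, the proof has a hole at its core.
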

\begin{proof}
The case of a trivial intersection of $G$ with $\inter(\dort)$ follows directly from \cref{lem:trivial+neighbouring+orthant}. 
Hence, by \cref{prop:hemispaces_single_orthant}, we can assume that there exists a vector $(b_1,\dots,b_d) \in \TTpm^{d} \setminus \{\Zero\}$ such that $\HC^{+}(\Zero,b) \cap \dort \cap X \subseteq G \cap \dort \subseteq \Hclosed^{+}(\Zero, b) \cap \dort \cap X$.
We show with an exhaustive case distinction that the defining vectors $a$ and $b$ agree up to positive scaling; then this concludes the proof of the lemma.

\textbf{Case 1 } (Different sign pattern.)

Let $j$ be the smallest index for which $\tsgn(a_j) \neq \tsgn(b_j)$. 
Breaking the symmetry in $a$ and $b$, we can assume that $j \in \supp(a)$. 

By taking the complement in both orthants, we can assume that $j \in \psupp(a)$ which means $\tsgn(b_j) \in \{\Zero,\ominus\}$.

\textbf{Case 1a} ($j = d$.)

Since the sign  of $a$ and $b$ differ in $d = j \in \psupp(a)$, there is an $r \in \psupp(b) \setminus \{d\}$.
As the relative TC-hemispace in $\inter(\ort)$ is not trivial, there is an index $s \in \nsupp(a)$.
Note that $r,s,d$ are pairwise different due to the minimality assumption on $j$. 
We define $x \in \ort, y \in \dort$ and derive $x \oplus y$ via
\begin{equation*}
 x_i = \begin{cases} \Omega & i = s \\ 2\Omega & i = d \\ \omega & \text{else} \end{cases} \qquad
 y_i = \begin{cases} 0 & i = r \\ \ominus2\Omega & i = d \\ \omega & \text{else} \end{cases} \qquad
 (x \oplus y)_i = \begin{cases} 0 & i = r \\ \Omega  & i = s \\ \bullet2\Omega & i = d \\ \omega & \text{else} \end{cases} \enspace .
\end{equation*}
By construction, $a \odot x > \Zero$, $b \odot y > \Zero$. 
Choosing $z \in \Uncomp(x \oplus y)$ with $z_d = \omega$, we get a point in $G$. 
However, $z \in \ort$ and $a \odot z < \Zero$, a contradiction.

\textbf{Case 1b} ($j \neq d$.) 

Let $k = \min\nsupp(a)$ (which cannot equal $j$ then). 
As the relative TC-hemispace in $\inter(\dort)$ is not trivial, there is an index $r \in \nsupp(b)$ (which could equal $j$).

We define $x \in \ort, y \in \dort$ and derive $x \lplus y \in \ort$ via
\begin{equation*}
 x_i = \begin{cases} 0 & i = k \\ \omega & \text{else} \end{cases} \qquad
 y_i = \begin{cases} \Omega & i = j \\ 0 & i = r, i \neq j \\ \ominus\omega & i = d \\ \omega & \text{else} \end{cases} \qquad
 (x \lplus y)_i = \begin{cases} \Omega & i = j \\ 0 & i = r, i \neq j \\ 0 & i = k\\ \omega & \text{else} \end{cases} 
\end{equation*}
By construction, $a \odot x < \Zero$, $b \odot y < \Zero$ but $a \odot (x\lplus y) > \Zero$, a contradiction to the TC-convexity of $G$. 

\smallskip

From the cases considered so far, we deduce that $a$ and $b$ have the same sign pattern. 
Hence, only the following possibility is remaining. 

\textbf{Case 2}
Now, we have that the sign patterns of $a$ and $b$ are the same. 

If $a$ and $b$ are not the same up to scaling, then there are three indices $i,j,k \in [d]$ such that not all three signs of the respective components agree and among the quotients $a_i \odot b_i^{\odot -1}$, $a_j \odot b_j^{\odot -1}$, $a_k \odot b_k^{\odot -1}$ at least one has a different value from the other two. 
Then there are indices $p,q \in \{i,j,k\}$ with $\tsgn(a_p) = \tsgn(b_p) = \oplus$, $\tsgn(a_q) = \tsgn(b_q) = \ominus$, and $a_p \odot b_q \neq a_q \odot b_p$. 

\textbf{Case 2a} ($d \not\in \{p,q\}$. )

By scaling $a$ and $b$, we can assume that $a_p = b_p = 0$. 
By the symmetry in $a$ and $b$, we can assume that $|a_q| > |b_q|$.

We choose $\xi_a, \xi_b \in \Tgt$ with $\xi_a < |a_q^{\odot -1}| < \xi_b < |b_q^{\odot -1}|$.
With this, we define $x \in \ort, y \in \dort$ and derive $x \lplus y \in \ort$ via
\begin{equation*}
 x_i = \begin{cases} \xi_a & i = q \\ 0 & i = p \\ \omega & \text{else} \end{cases} \qquad
 y_i = \begin{cases} \xi_b & i = q \\ \ominus\omega & i = d \\ 0 & i = p \\ \omega & \text{else} \end{cases} \qquad
 (x \lplus y)_i = \begin{cases} \xi_b & i = q \\ 0 & i = p \\ \omega & \text{else} \end{cases}
\end{equation*}
The choice of $\xi_a$ and $\xi_b$ yields $|\xi_a \odot a_q| < 0$ and $|\xi_b \odot b_q| < 0$ as well as $|\xi_b \odot a_q| > 0$.
This implies that $a \odot x > \Zero$, $b \odot y > \Zero$ and $a \odot (x \lplus y) < \Zero$, a contradiction. 

\textbf{Case 2b} ($d \in \{p,q\}$. )

By taking complements in both orthants, we can assume that $d = p$, so $d \in \psupp(a) = \psupp(b)$. 
Since $\Hclosed^{+}(\Zero, b) \cap \inter(\dort) \neq \emptyset$, there is at least one $r \in \psupp(b) \setminus \{d\}$.
By scaling, we can assume that $a_q = b_q = \ominus 0$.
By the previous case, we have $a_r = b_r$.
By the symmetry in $a$ and $b$, we can suppose that $|a_d| > |b_d|$.
We choose $\xi_r, \xi_q \in \Tgt$ with
\[
a_d > \xi_q > \xi_r \odot a_r = \xi_r \odot b_r > b_d \enspace .
\]
With this, we define $x \in \ort, y \in \dort$ and $z \in \ort$ via
\begin{equation*}
 x_i = \begin{cases} \xi_q & i = q \\ 0 & i = d \\ \omega & \text{else} \end{cases} \qquad
 y_i = \begin{cases} \xi_r & i = r \\ \ominus 0 & i = d, \\ \omega & \text{else} \end{cases} \qquad
z_i = \begin{cases} \xi_q & i = q \\ \xi_r & i = r \\ \omega & \text{else} \end{cases}
\end{equation*}
We have $z \in \faces(x,y)$. Moreover, $a \odot x > \Zero$, $b \odot y > \Zero$, but $a \odot z < \Zero$, a contradiction. 

\end{proof}

We now extend \cref{lem:hemispace+neighbouring+orthants} from a union of two orthants to the entire set $X$.
\begin{lemma}\label{le:hemispace_in_halfspace}
We have $\HC^+(\Zero, a) \cap X \subseteq G \subseteq \Hclosed^{+}(\Zero, a) \cap X$.
\end{lemma}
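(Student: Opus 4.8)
The plan is to propagate the equality $\HC^+(\Zero,a) \cap \ort \cap X \subseteq G \cap \ort \subseteq \Hclosed^+(\Zero,a) \cap \ort \cap X$, which we already know holds for the base orthant $\ort$ (where it follows from \cref{prop:hemispaces_single_orthant}), from orthant to orthant across all of $X$. Recall the standing assumptions: $G$ is a relative conic TC-hemispace with respect to $X = \SetOf{x \in \TTpm^d}{x_1 > \Zero}$, there is an orthant $\ort \subseteq \cl(X)$ with $G \cap \inter(\ort) \notin \{\emptyset, \inter(\ort)\}$, and we normalized so that $\ort = \Tgeq^d$. The orthants contained in $\cl(X)$ (i.e. those with positive first coordinate) form a connected graph under the ``neighboring orthant'' relation of changing exactly one sign among coordinates $2,\dots,d$. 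First I would show that the non-triviality of $G$ propagates: if $\ort'$ is a neighbor of an orthant $\ort''$ with $G \cap \inter(\ort'') \notin \{\emptyset,\inter(\ort'')\}$ and the vector $a$ associated to $\ort''$ satisfies $\psupp(a) \neq \{j\}$ and $\nsupp(a) \neq \{j\}$ for the flipped index $j$, then by \cref{lem:trivial+neighbouring+orthant} the intersection $G \cap \inter(\ort')$ is again nontrivial and, by \cref{lem:hemispace+neighbouring+orthants}, is described by the \emph{same} vector $a$ (up to positive scaling). This lets me define a single vector $a$ (from the base orthant) and prove by induction along shortest paths in the orthant adjacency graph that for every orthant $\ort'$ in $\cl(X)$ we have $\HC^+(\Zero,a) \cap \ort' \cap X \subseteq G \subseteq \Hclosed^+(\Zero,a) \cap \ort' \cap X$.

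The one subtlety is the degenerate case in \cref{lem:trivial+neighbouring+orthant}: when crossing into $\ort'$ by flipping index $j$, it may happen that $\psupp(a) = \{j\}$ or $\nsupp(a) = \{j\}$, so that $G$ is all of $\inter(\ort')$ or disjoint from it. But in that situation $\HC^+(\Zero,a)$ (respectively $\HC^-(\Zero,a)$), restricted to $\ort'$, is either all of $\inter(\ort') \cap X$ or meets it only on its boundary, and by \cref{lem:trivial+neighbouring+orthant} the precise description $\SetOf{y \in \ort' \cap X}{y_j < \Zero} \subseteq G$ (resp.\ $\cap\, G = \emptyset$) shows the sandwich $\HC^+(\Zero,a) \cap \ort' \cap X \subseteq G \cap \ort' \subseteq \Hclosed^+(\Zero,a) \cap \ort' \cap X$ still holds. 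Moreover once $\psupp(a)=\{j\}$ (or $\nsupp(a)=\{j\}$) this can only happen for that one coordinate; for any further neighbor $\ort''$ of $\ort'$ obtained by flipping a different index $k \neq j$, the relevant support condition at $\ort'$ is satisfied, so \cref{lem:trivial+neighbouring+orthant,lem:hemispace+neighbouring+orthants} again apply with the unchanged $a$. Hence the induction goes through uniformly, and the only bookkeeping is to check that the description relative to each pair of adjacent orthants is consistent with a single global $a$, which is exactly what \cref{lem:hemispace+neighbouring+orthants} guarantees (agreement up to positive scaling, which does not change $\HC^+$, $\HC^-$, or $\Hclosed^+$).

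Putting this together: fix $x \in \HC^+(\Zero,a) \cap X$; it lies in some orthant $\ort'$ of $\cl(X)$, and the orthant-level inclusion just established gives $x \in G$. Conversely, fix $x \in G$; it lies in some $\ort'$, and the same inclusion gives $x \in \Hclosed^+(\Zero,a) \cap X$. This yields $\HC^+(\Zero,a) \cap X \subseteq G \subseteq \Hclosed^+(\Zero,a) \cap X$, as claimed.

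I expect the main obstacle to be purely organizational rather than mathematical: one must set up the induction on the orthant adjacency graph carefully enough that the ``same $a$'' claim is not circular — that is, one fixes $a$ once and for all from the base orthant $\ort = \Tgeq^d$, and at each inductive step invokes \cref{lem:hemispace+neighbouring+orthants} to identify the local defining vector in the new orthant with (a positive rescaling of) this fixed $a$, taking care that the hypothesis ``$G \cap \inter(\ort'') \notin \{\emptyset,\inter(\ort'')\}$'' needed to apply that lemma is itself maintained along the path via \cref{lem:trivial+neighbouring+orthant}, with the degenerate branches handled directly as above.
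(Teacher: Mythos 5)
There is a genuine gap. Your induction step relies on applying \cref{lem:trivial+neighbouring+orthant} and \cref{lem:hemispace+neighbouring+orthants} across an edge $(\ort',\ort'')$ of the orthant graph; but both lemmas are stated under the standing hypothesis that the \emph{base} orthant of the pair is nontrivial, i.e.\ $G$ meets its interior properly. Once you step into a trivial orthant $\ort'$ (the case $\psupp(a)=\{j\}$ or $\nsupp(a)=\{j\}$ that you describe), you cannot use $\ort'$ as a new base to reach $\ort''$, because $\ort'$ fails the hypothesis; and you cannot argue from $\ort''$ either, since you do not yet know that $\ort''$ is nontrivial. Your sentence ``the relevant support condition at $\ort'$ is satisfied, so the lemmas again apply with the unchanged $a$'' conflates a support condition on $a$ with the nontriviality hypothesis on $G \cap \inter(\ort')$, which is what actually fails. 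So the induction does not ``go through uniformly''.

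This is precisely why the paper's proof splits into two cases. When $|\supp(a)| \geq 3$ or $r \in \{0,1\}$, it shows one can choose a path of orthants from $\Tgeq^d$ to $\dort$ in which every intermediate orthant is \emph{good} (nontrivial), and the key combinatorial input is that the $r$-dimensional hypercube graph is $r$-vertex-connected for $r \geq 3$, so one can route around the at most two bad orthants. When $|\supp(a)| = r = 2$, no such good path exists — both of the two length-$2$ routes to $\dort_{\{k,l\}}$ pass through a trivial orthant — and the paper gives a separate, explicit argument: it first produces witnesses showing $G \cap \inter(\dort_{\{k,l\}}) \notin \{\emptyset, \inter(\dort_{\{k,l\}})\}$, then applies \cref{prop:hemispaces_single_orthant} to obtain a vector $b$ there, then matches $b$ with $a$ via \cref{lem:trivial+neighbouring+orthant} (to determine the signs) plus a direct construction with points $x, y$ as in \cref{eq:dim_2_orthants} (to match the magnitudes). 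Your proposal contains neither the connectivity argument nor this second case, so as written it does not prove the lemma.
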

\begin{proof}
Let $\dort$ be any orthant in $\cl(X)$.
We want to show that $\HC^+(a) \cap \dort \cap X \subseteq G \cap \dort \subseteq \Hclosed^{+}(a) \cap \dort \cap X$. Let $\ortsig \in \{\oplus, \ominus\}^{d}$ be the sign vector corresponding to $\dort$, i.e., the vector such that $\inter(\dort) = \SetOf{x \in \TTpm^d}{\forall k, \tsgn(x_k) = \ortsig_k}$.
For the purposes of this proof, we say that an orthant is \emph{good} if $\HC^+(\Zero, a)$ subdivides its interior in a nontrivial way; that is 
if there exists a pair $(k,l) \in \supp(a)$ such that $\ortsig_k \tsgn(a_k) \neq \ortsig_l \tsgn(a_l)$.
Let $r = |\SetOf{k \in [d]}{k \in \supp(a), \ortsig_k = \ominus}|$.
We divide the proof into two cases.

\textbf{Case 1} (We have $|\supp(a)| \ge 3$ or $r \in \{0,1\}$.)

In this case, we start by proving that there exists a sequence of orthants $\Tgeq^d = \dort_0, \dort_1, \dots, \dort_p = \dort$ in $\cl(X)$ such that $\dort_i, \dort_{i+1}$ differ by flipping one sign and the $\dort_i$ are good for all $i \le p-1$.
Such a sequence can be obtained in the following way.
The orthant $\dort_0$ is good by the assumption of the lemma.
To go from $\dort_0$ to $\dort$, we first flip the signs in $\SetOf{k \notin \supp(a)}{\ortsig_k = \ominus}$ (in any order) and note that all orthants obtained in this way are good.
Then, we flip the signs in $\SetOf{k \in \supp(a)}{\ortsig_k = \ominus}$.
If $r \in \{0,1\}$, then this already proves the existence of the sequence, because this step either does nothing or flips one sign.
If $|\supp(a)| \ge 3$ and $r \ge 3$, then there are at most two orthants that are not good and could be obtained at this step.
However, since the graph of the $r$-dimensional hypercube is $r$-vertex-connected and $r \ge 3$, there is a way of flipping the signs in $\SetOf{k \in \supp(a)}{\ortsig_k = \ominus}$ that avoids going through these two orthants (except, possibly, for the last step, since $\dort$ may be not good).
If $|\supp(a)| \ge 3$ and $r = 2$, then we let $k,l$ be the two indices in $\supp(a)$ such that $\ortsig_k = \ortsig_l = \ominus$ and $j$ be any index in $\supp(a)$ such that $\ortsig_j = \oplus$.
If $\tsgn(a_j) = \tsgn(a_k)$ or $\tsgn(a_k) = \tsgn(a_l)$, then we first flip the sign of the component indexed by $k$ and subsequently we flip the sign of the component indexed by $l$.
If $\tsgn(a_j) = \tsgn(a_l)$, then we start by flipping the sign of the component indexed by $l$ and subsequently we flip the sign of the component indexed by $k$.

Given the sequence $Q_0, \dots, Q_p$ we apply \cref{lem:hemispace+neighbouring+orthants} to $T = (\dort_0 \cup \dort_1) \cap X$. This gives $\HC^+(a) \cap \dort_1 \cap X \subseteq G \cap \dort_1 \subseteq \Hclosed^{+}(a) \cap \dort_1 \cap X$. Furthermore, since $\dort_1$ is good, \cref{lem:trivial+neighbouring+orthant} implies that $G \cap \inter(\dort_1) \neq \{\emptyset, \inter(\dort_1)\}$. Hence, by applying \cref{lem:hemispace+neighbouring+orthants,lem:trivial+neighbouring+orthant} to $T = (\dort_1 \cup \dort_2) \cap X$ we get that $\HC^+(a) \cap \dort_2 \cap X \subseteq G \cap \dort_2 \subseteq \Hclosed^{+}(a) \cap \dort_2 \cap X$ and $G \cap \inter(\dort_2) \neq \{\emptyset, \inter(\dort_2)\}$. By repeating this reasoning, we obtain the claim.

\textbf{Case 2} (We have $|\supp(a)| = r = 2$.)

In this case, let $\{k,l\} = \supp(a)$ and let $\dort_{\{k\}}, \dort_{\{l\}}, \dort_{\{k,l\}}$ be the orthants obtained from $\Tgeq^d$ by flipping the signs of the components indexed by $k$ and $l$.
Since $\Tgeq^d$ is good, we can suppose (up to permuting $k$ and $l$) that $\tsgn(a_k) = \oplus$ and $\tsgn(a_l) = \ominus$.
Then, \cref{lem:trivial+neighbouring+orthant} shows that $G \cap \inter(\dort_{\{k\}}) = \emptyset$ and $G \cap \inter(\dort_{\{l\}}) = \inter(\dort_{\{l\}})$.

We first show that $G \cap  \inter(\dort_{\{k,l\}}) \notin \{\emptyset, \inter(\dort_{\{k,l\}})\}$.
We define $x \in \dort_{\{k,l\}}, y \in \Tgeq^d$ and derive $x \lplus y \in \dort_{\{k\}}$ via
\begin{equation*}
  x_i = \begin{cases} \ominus \omega & i = k \\ \ominus \Omega & i = l \\ 0 & \text{else} \end{cases} \quad
  y_i = \begin{cases} 0 & i = k \\ \Omega  & i = l \\ 0 & \text{else} \end{cases} \quad
  (x \lplus y)_i = \begin{cases} 0 & i = k \\ \ominus \Omega & i = l \\ 0 & \text{else} \end{cases}
\end{equation*}
We have $y \notin G$ and $x \lplus y \in G$. Therefore, $x \in G \cap \inter(\dort_{\{k,l\}})$.
Similarly, if we define 
\begin{equation*}
  x_i = \begin{cases} \ominus \Omega & i = k \\ \ominus \omega & i = l \\ 0 & \text{else} \end{cases} \quad
  y_i = \begin{cases} \Omega & i = k \\ 0  & i = l \\ 0 & \text{else} \end{cases} \quad
  (x \lplus y)_i = \begin{cases} \ominus \Omega & i = k \\ 0 & i = l \\ 0 & \text{else} \end{cases}
\end{equation*}
then $y \in G$ and $x \lplus y \notin G$.
Hence, we have $x \notin G$ and $x \in \inter(\dort_{\{k,l\}})$.
This implies $G \cap  \inter(\dort_{\{k,l\}}) \notin \{\emptyset, \inter(\dort_{\{k,l\}})\}$.

Now, by \cref{prop:hemispaces_single_orthant}, there exists a vector $b \in \TTpm^d \setminus \{\Zero\}$ such that $\HC^{+}(\Zero,b) \cap \dort_{\{k,l\}} \subseteq G \cap \dort_{\{k,l\}} \subseteq \Hclosed^{+}(\Zero,b) \cap \dort_{\{k,l\}}$.
By applying \cref{lem:trivial+neighbouring+orthant} to $\dort_{\{k,l\}} \cup \dort_{\{k\}}$ and $\dort_{\{k,l\}} \cup \dort_{\{l\}}$ we get $\nsupp(b) = \nsupp(a) = \{l\}$ and $\psupp(b) = \psupp(a) = \{k\}$. 
By scaling $a$ and $b$, we can assume that $a_k = b_k = 0$.
We want to show that $a_l = b_l$.
For contradiction, assume that $|a_{l}| < \eta^{\odot -1} < \xi^{\odot -1} < |b_{l}|$ for some $\eta, \xi \in \Tgt$.
We define $x \in \dort_{\{k,\ell\}}, y \in \Tgeq$ and derive $x \lplus y \in \dort_{\{k\}}$ via
\begin{equation}\label{eq:dim_2_orthants}
  x_i = \begin{cases} \ominus 0 & i = k \\ \ominus \xi & i = l \\ \omega & \text{else} \end{cases} \quad
  y_i = \begin{cases} 0 & i = k \\ \eta  & i = l \\ \omega & \text{else} \end{cases} \quad
  (x \lplus y)_i = \begin{cases} \ominus 0 & i = k \\ \eta & i = l \\ \omega & \text{else} \end{cases} \enspace .
\end{equation}
We have $x,y \in G$ (as $b \odot x > \Zero$ and $a \odot y > \Zero$) but $x \lplus y \notin G$, which gives a contradiction.
Analogously, if $|a_l| > \eta^{\odot -1} > \xi^{\odot -1} > |b_l|$ for some $\xi,\eta \in \Tgt$, and we define $x,y$ as in \cref{eq:dim_2_orthants}, then $x,y\notin G$ but $y \lplus x \in G$, giving a contradiction.
Therefore, we have $a = b$.

To finish the proof, note that we can go from $\dort_{\{k,l\}}$ to $\dort$ by a sequence of good orthants obtained by flipping the signs that do not belong to $\supp(a)$.
This gives the claim by the same reasoning as in Case 1.
\end{proof}

\begin{figure}
\includegraphics{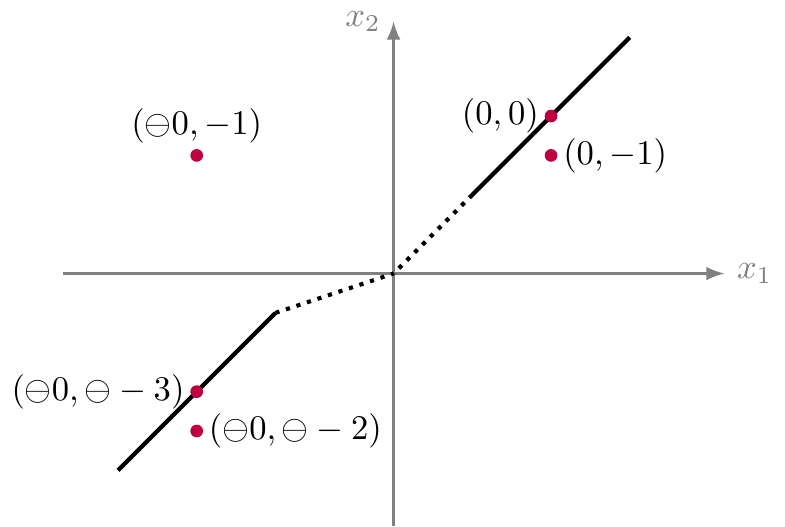}
    \caption{Sketch of the positions for the first part of \cref{ex:opposite-orthants-coefficients}. }
    \label{fig:opposite-orthants-coefficients}
    \end{figure}

\begin{example} \label{ex:opposite-orthants-coefficients}
We illustrate the last case in the proof of \cref{le:hemispace_in_halfspace}.

Let $a = (0,\ominus 0)$ and $b = (0, \ominus 3)$.
We take $\eta = -1$, $\xi = -2$, so $x = (\ominus 0, \ominus (-2))$, $y = (0,-1)$.
Then $a \odot y = 0$, $b \odot x = 1$ so $x,y \in G$ but $x \lplus y = (\ominus 0, -1)$ so $a \odot (x \lplus y) = \ominus 0, b \odot (x \lplus y) = \ominus 2$ and $x \lplus y \notin G$.

Likewise, if $a = (0,\ominus 3)$ and $b = (0, \ominus 0)$, then we take $\eta = -2, \xi = -1$, so $x = (\ominus 0, \ominus (-1)), y = (0,-2)$.
We have $a \odot y = \ominus 1$, $b \odot x = \ominus 0$ so $x,y \notin G$ but $y \lplus x = (0, \ominus (-1))$ satisfies $a \odot (y \lplus x) = 2$, $b \odot (y \lplus x) = 0$ and $y \lplus x \in G$.
 \end{example}

Now, we combine the last lemmas to prove that a TC-hemispace is nearly a halfspace, as stated in \cref{th:hemispace}.

\begin{proof}[Proof of \cref{th:hemispace}]
We consider two cases.
  
\textbf{Case 1} (There exists an orthant $\ort \in \TTpm^d$ such that $G \cap \inter(\ort) \notin \{\emptyset, \inter(\ort)\}$.)

By flipping signs, we assume that $\ort = \Tgeq^d$.
Let $X = \SetOf{x \in \TTpm^{d+1}}{x_{d+1} > \Zero}$.
We define two sets $\coG, \ccG \subseteq X$ by putting $\coG = \wcone \SetOf{(z,0)}{z \in G} \setminus \{\Zero\}$ and $\ccG = \wcone \SetOf{(z,0)}{z \notin G} \setminus \{\Zero\}$.
Since both $G$ and its complement are TC-convex, we have 
\begin{align*}
\coG &= \SetOf{\lambda \odot (z,0)}{\lambda \in \Tgt, z \in G} \, , \\
\ccG &= \SetOf{\lambda \odot (z,0)}{\lambda \in \Tgt, z \notin G} \, .
\end{align*}
In particular, $\coG = \wcone \SetOf{(z,0)}{z \in G} \cap X$, which implies that $\coG$ is a TC-convex cone. 
Analogously, $\ccG$ is a TC-convex cone.

Furthermore, we have $\coG \cup \ccG = X$ because any point $x = (x_1, \dots, x_{d+1})$ in $X$ can be written as $x = x_{d+1} \odot (y_1, \dots, y_d, 0)$, where $y_i = x_i \odot x_{d+1}^{\odot -1}$ and $y$ either belongs to $G$ or not.
Likewise, we have $\coG \cap \ccG = \emptyset$ because if $x \in \coG \cap \ccG$, then the point $y$ defined as above belongs both to $G$ and its complement, which gives a contradiction.
Thus, $\coG$ is a relative conic TC-hemispace \wrt $X$ and $\coG \cap \Tgt^{d+1} \notin \{\emptyset, \Tgt^{d+1}\}$.

Hence, by \cref{le:hemispace_in_halfspace}, there exists a vector $(a_1, \dots, a_{d+1}) \in \TTpm^{d+1}$ such that $(a_1, \dots, a_{d+1}) \neq \Zero$ and $\HC^{+}(\Zero, a) \cap X \subseteq \coG \subseteq \Hclosed^{+}(\Zero, a) \cap X$.
In particular,
\begin{align*}
&\SetOf{x \in \TTpm^d}{a_{d+1} \oplus a_{1} \odot x_1 \oplus \dots \oplus a_{d} \odot x_d  > \Zero } \subseteq G \\
&\subseteq \SetOf{x \in \TTpm^d}{a_{d+1} \oplus a_{1} \odot x_1 \oplus \dots \oplus a_{d} \odot x_d \teq \Zero} \, .
\end{align*}
Since $\nsupp(a) \neq \emptyset$ and $\psupp(a) \neq \emptyset$, we have $(a_1, \dots, a_d) \neq \Zero$.

  \smallskip

  \textbf{Case 2} ($G \cap \inter(\ort) \in \{\emptyset, \inter(\ort)\}$ for every orthant $\ort \subseteq \TTpm^d$.)
  
  By taking suitable combinations, we see that $G$ cannot only consist of parts of coordinate hyperplanes. 
  Hence, by flipping signs, we can assume that $\Tgt^d \subseteq G$.  
  We show that there is a $k \in [d]$ such that
  \begin{equation}
    \SetOf{x \in \TTpm^d}{x_k > \Zero} \subseteq G \subseteq \SetOf{x \in \TTpm^d}{x_k \geq \Zero} \; .
  \end{equation}

Suppose the right inclusion does not hold, then for each $\ell \in [d]$ there is an orthant $\ort^{(\ell)}$ with $\inter(\ort^{(\ell)}) \subseteq G$ whose $\ell$-th component is negative.
As each $\ort^{(\ell)}$ contains points close to the negative of the $\ell$-th tropical unit vector, taking convex combinations of these points and $(0,\dots,0) \in \Tgt^d$ yields points in the interior of every orthant in $\TTpm^d$.
This would imply $G = \TTpm^d$, which was excluded. 
Hence, $G \subseteq \SetOf{x \in \TTpm^d}{x_k \geq \Zero}$.
Therefore, for half of the orthants the interior is contained in $G$ and, by the same argument, for half of the orthants the interior is contained in the complement $G'$ of $G$.
But as $G$ and $G'$ cover the whole space, we get that $G' \subseteq \SetOf{x \in \TTpm^d}{x_k \leq \Zero}$.
By taking again the complement, this concludes the second case. 
\end{proof}

As TO-hemispaces are also TC-hemispaces we immediately get the following. 

\begin{corollary}
If $G \subseteq \TTpm^d$ is a TO-hemispace and $G \notin\{\emptyset, \TTpm^d\}$, then there exists a vector $(a_0, \dots, a_d) \in \TTpm^{d+1}$, $(a_1, \dots, a_d) \neq \Zero$, such that $\HC^+(a) \subseteq G \subseteq \Hclosed^{+}(a)$.
\end{corollary}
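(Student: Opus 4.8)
The plan is to reduce this statement entirely to \cref{th:hemispace} via the relation between TO-convexity and TC-convexity already established. Recall from \cref{cor:TO-contain-TC} that every TO-convex subset of $\TTpm^d$ is also TC-convex. Hence, if $G$ is a TO-hemispace, meaning that both $G$ and $\TTpm^d \setminus G$ are TO-convex, then both $G$ and $\TTpm^d \setminus G$ are TC-convex, so $G$ is a TC-hemispace.

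Since we are given $G \notin \{\emptyset, \TTpm^d\}$, we can apply \cref{th:hemispace} directly to $G$, viewed as a TC-hemispace. This produces a vector $(a_0,\dots,a_d) \in \TTpm^{d+1}$ with $(a_1,\dots,a_d) \neq \Zero$ satisfying $\HC^+(a) \subseteq G \subseteq \Hclosed^+(a)$, which is exactly the claim. No separate argument about the boundary or the sign structure of $a$ is needed, because all of that work is already carried out inside the proof of \cref{th:hemispace}.

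There is essentially no obstacle here: the only point to verify is that the notions of ``hemispace'' for the two convexities are nested in the expected direction, and this is immediate from \cref{cor:TO-contain-TC} applied to both $G$ and its complement. The corollary is therefore a direct consequence, as already flagged in the sentence preceding its statement.
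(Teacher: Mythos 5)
Your proposal is correct and matches the paper's own argument exactly: the paper notes (just before stating the corollary, and earlier in Section~4) that TO-hemispaces are TC-hemispaces by \cref{cor:TO-contain-TC}, and then invokes \cref{th:hemispace}. Nothing further needs to be said.
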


\smallskip

\subsection{The boundary of TC-hemispaces}\label{sec:boundary_hemispace}

\begin{figure}[tbh]
\includegraphics{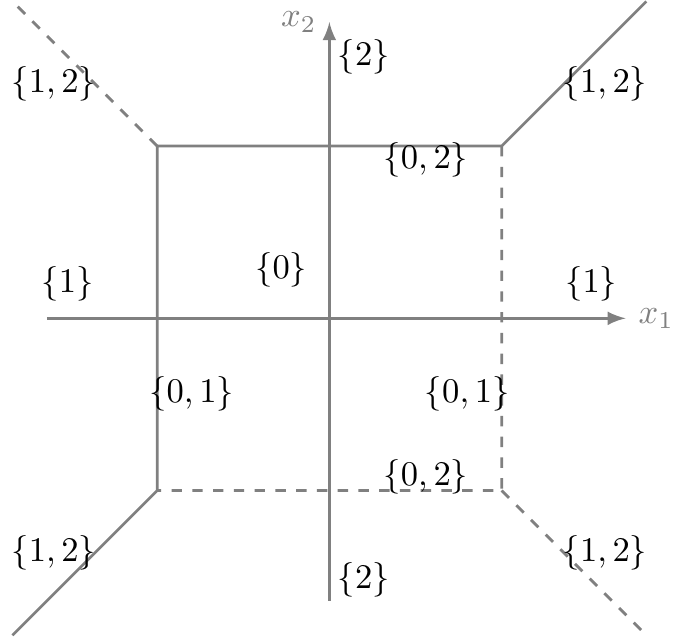}
\includegraphics{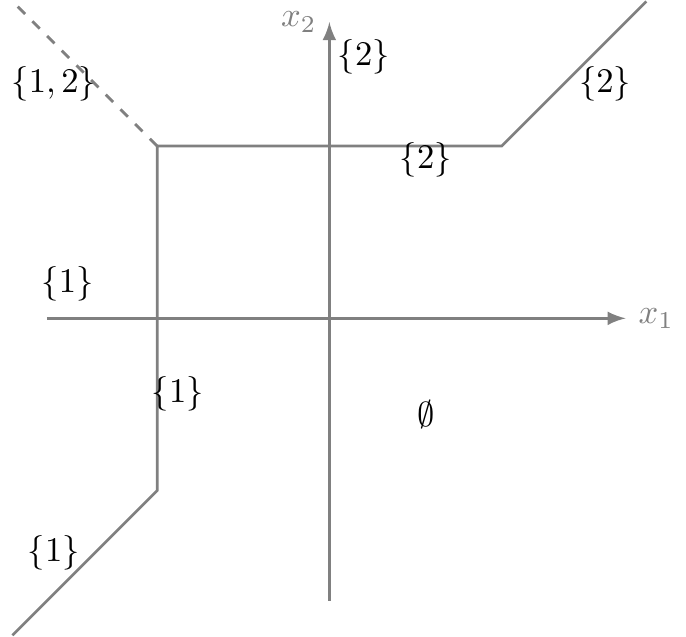}
  \caption{$\domin$ and $\posdomin$ for $(\ominus 0, \ominus -1, -1)$}
\end{figure}

The boundary of a halfspace is determined by the components where the maximum is attained in the tropical linear form defining the halfspace. 
Hence, for $a = (a_0, \dots, a_d) \in \TTpm^{d+1}$ and $x \in \TTpm^d$, we denote
\begin{align*}
\domin(a,x) &=\SetOf{k \in [d]_0}{a_k \odot x_k \neq \Zero \, \land \, \forall \ell \in [d]_0, |a_k| + |x_k| \ge |a_{\ell}| + |x_{\ell}|} \\
&\!= \argmax_{k \in [d]_0}{|a_k \odot x_k|} \cap \supp(a) \cap \supp(x)
\end{align*}
and
\[
\posdomin(a,x) = \SetOf{k \in \domin(a,x)}{a_k \odot x_k > \Zero} \, , 
\]
where we use the convention that $x_0 = 0$.
Note that
\[
\posdomin(a,x) \subseteq \domin(a,x) \subseteq \supp(x) \cup \{0\} .
\]

For a fixed $a$, comprising the points with \begin{enumerate*} \item the same set $\domin(a,x)$, \item in the same orthant \end{enumerate*} yields a cell decomposition of $\TTpm^d$. 
Taking the common refinement of the cell decompositions for several possible $a$ yields a generalization of the decomposition
which was studied under the name `type decomposition'~\cite{DevelinSturmfels:2004} or `covector decomposition'~\cite{JoswigLoho:2016}.

Right from the definition, we obtain some basic properties.
\begin{corollary} \label{cor:argmax_basic_properties}
We have
  \begin{enumerate}[(i)]
  \item If $0 \not\in \domin(a,x) \cup \domin(a,\rho\odot x)$, then $\domin(a, \rho \odot x) = \domin(a,x)$ for $\rho \in \TTpm \setminus \{\Zero\}$; 
  \item $\domin(a,x) = \emptyset$ if and only if $a \odot (0,x) = \Zero$; 
  \item $\domin(a,x) = [d]_0$ if and only if $|a_0| = |a_1| + |x_1| = \dots = |a_d| + |x_d| \neq \Zero$.
  \end{enumerate}
\end{corollary}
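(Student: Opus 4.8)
The statement to prove is \Cref{cor:argmax_basic_properties}, which collects three elementary facts about the set $\domin(a,x)$ defined just above it. All three parts follow directly by unwinding definitions, so the plan is to treat each part in turn without any deep machinery.

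\textbf{Plan for (i).} Suppose $0 \notin \domin(a,x) \cup \domin(a,\rho \odot x)$ for some $\rho \in \TTpm \setminus \{\Zero\}$. The point is that scaling $x$ by $\rho$ adds the same quantity $|\rho|$ to every $|x_k|$ for $k \in [d]$ (recall $|\rho \odot x_k| = |\rho| + |x_k|$), and multiplication by $\rho \ne \Zero$ preserves the support: $\supp(\rho \odot x) = \supp(x)$. Since $0$ is excluded from both argmax sets, the comparison defining the argmax over $k \in [d]_0$ only ever compares the coordinates in $[d]$ against each other, and adding the constant $|\rho|$ to each of $|a_k| + |x_k|$, $k \in [d]$, does not change which of them is largest. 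Combined with $\supp(a) \cap \supp(\rho \odot x) = \supp(a) \cap \supp(x)$, we get $\domin(a, \rho \odot x) = \domin(a,x)$. I would phrase this carefully using the second (equivalent) description of $\domin$ as $\argmax_{k \in [d]_0} |a_k \odot x_k| \cap \supp(a) \cap \supp(x)$, and note that the hypothesis $0 \notin \domin(a,x) \cup \domin(a,\rho\odot x)$ is exactly what lets us drop index $0$ from the $\argmax$.

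\textbf{Plan for (ii) and (iii).} For (ii): $\domin(a,x) = \argmax_{k \in [d]_0}|a_k \odot x_k| \cap \supp(a) \cap \supp(x)$ is empty precisely when $a_k \odot x_k = \Zero$ for all $k \in [d]_0$ (with $x_0 = 0$), i.e.\ when every term of $a \odot \pvec{0}{x} = \bigoplus_{k \in [d]_0} a_k \odot x_k$ vanishes, which by the definition of $\oplus$ happens iff the sum equals $\Zero$; conversely if some term is nonzero then the index achieving the maximum absolute value among nonzero terms lies in $\domin(a,x)$. For (iii): $\domin(a,x) = [d]_0$ means every index $k \in [d]_0$ is both in the support (so $|a_k| + |x_k| \neq \Zero$, i.e.\ is a genuine real number rather than $-\infty$) and attains the maximum, which forces all the values $|a_0| = |a_1| + |x_1| = \dots = |a_d| + |x_d|$ to be equal and finite; the converse is immediate. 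The only mild subtlety worth flagging is the convention $x_0 = 0$ (the tropical unit, a finite number), so $0 \in \supp(x)$ always and $|a_0 \odot x_0| = |a_0|$.

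\textbf{Main obstacle.} There is no real obstacle here — this is a bookkeeping corollary. The one place to be slightly careful is in (i), making explicit why excluding index $0$ from both $\domin$ sets is needed: without it, scaling could move the maximum to or away from the fixed coordinate $0$ (since $|a_0 \odot x_0| = |a_0|$ is unaffected by $\rho$ while the others shift by $|\rho|$), so the conclusion could fail. I would include a one-line remark to that effect, perhaps with a trivial example, but otherwise keep the proof to a short paragraph per item.

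\begin{proof}
We use throughout the second description
$\domin(a,x) = \argmax_{k \in [d]_0}|a_k \odot x_k| \cap \supp(a) \cap \supp(x)$
together with the convention $x_0 = 0$, so that $|a_0 \odot x_0| = |a_0|$ and $0 \in \supp(x)$.

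\emph{(i).} Fix $\rho \in \TTpm \setminus \{\Zero\}$ with $0 \notin \domin(a,x) \cup \domin(a,\rho \odot x)$. Since $\rho \neq \Zero$, we have $\supp(\rho \odot x) = \supp(x)$, and for $k \in [d]$ one has $|a_k \odot (\rho \odot x_k)| = |a_k \odot x_k| + |\rho|$, while $|a_0 \odot (\rho \odot x_0)| = |a_0|$ is unchanged. As $0$ lies in neither $\domin(a,x)$ nor $\domin(a,\rho\odot x)$, in both sets the $\argmax$ over $[d]_0$ agrees with the $\argmax$ over $[d]$ (restricted to the relevant supports). Adding the constant $|\rho|$ to each value $|a_k \odot x_k|$ with $k \in [d]$ does not change which indices in $[d]$ are maximal, so the two $\argmax$ sets over $[d]$ coincide. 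Intersecting with $\supp(a) \cap \supp(x) = \supp(a) \cap \supp(\rho \odot x)$ gives $\domin(a, \rho \odot x) = \domin(a,x)$.

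\emph{(ii).} If $\domin(a,x) = \emptyset$, then for every $k \in [d]_0$ either $k \notin \supp(a) \cap \supp(x)$ or $k$ does not attain the maximum of $|a_\ell \odot x_\ell|$. But if some $a_k \odot x_k \neq \Zero$, then any index attaining the maximum of the (nonempty) set $\SetOf{|a_\ell \odot x_\ell|}{a_\ell \odot x_\ell \neq \Zero}$ lies in $\domin(a,x)$, a contradiction. Hence $a_k \odot x_k = \Zero$ for all $k \in [d]_0$, which by the definition of $\oplus$ in \cref{eq:balanced+addition} forces $a \odot \begin{pmatrix} 0 \\ x \end{pmatrix} = \bigoplus_{k \in [d]_0} a_k \odot x_k = \Zero$. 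Conversely, if $a \odot \begin{pmatrix} 0 \\ x \end{pmatrix} = \Zero$, then every summand $a_k \odot x_k$ equals $\Zero$, so $\supp(a) \cap \supp(x) \ni k$ is impossible for any $k \in [d]_0$ and $\domin(a,x) = \emptyset$.

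\emph{(iii).} If $\domin(a,x) = [d]_0$, then every $k \in [d]_0$ satisfies $a_k \odot x_k \neq \Zero$, i.e.\ $|a_k| + |x_k| \neq \Zero$, and every such index attains the common maximum value; in particular $|a_0| = |a_1| + |x_1| = \dots = |a_d| + |x_d| \neq \Zero$. Conversely, if $|a_0| = |a_1| + |x_1| = \dots = |a_d| + |x_d| \neq \Zero$, then each $a_k \odot x_k$ is nonzero and attains the maximum of $|a_\ell \odot x_\ell|$ over $\ell \in [d]_0$, so every $k \in [d]_0$ belongs to $\domin(a,x)$.
\end{proof}
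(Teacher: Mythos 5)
The paper itself provides no proof for this corollary — it is introduced with the remark "Right from the definition, we obtain some basic properties" — so your task was to supply a proof rather than to match one. Your argument is correct. The only step in (i) that deserves explicit elaboration (and you flag it in your plan) is the claim that under the hypothesis $0 \notin \domin(a,x) \cup \domin(a,\rho\odot x)$ the $\argmax$ over $[d]_0$ may be replaced by the $\argmax$ over $[d]$: if $\domin(a,x) \neq \emptyset$, then $0 \notin \domin(a,x)$ forces $0 \notin \argmax_{k\in[d]_0}|a_k\odot x_k|$ (since $0 \in \supp(x)$ always, and if $a_0 = \Zero$ then $|a_0 \odot x_0| = \Zero$ cannot be maximal unless all terms are $\Zero$, in which case $\domin(a,x) = \emptyset$), so the global maximum is attained in $[d]$; and if $\domin(a,x) = \emptyset$ then $\supp(a)\cap\supp(x) = \emptyset = \supp(a)\cap\supp(\rho\odot x)$, giving $\domin(a,\rho\odot x) = \emptyset$ as well. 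With that one-line justification inserted, the proof is complete; parts (ii) and (iii) are straightforward unwindings of the definition, exactly as you present them.
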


\smallskip

For this subsection, we fix a TC-hemispace $G \subseteq \TTpm^d$ with $G \neq \{\emptyset, \TTpm^d\}$ and let $(a_0, \dots, a_d) \in \TTpm^{d+1}$, $(a_1, \dots, a_d) \neq \Zero$, such that $\HC^+(a) \subseteq G \subseteq \Hclosed^{+}(a)$, which exists by \cref{th:hemispace}.
Furthermore, we fix a point $x \in G$ with $\domin(a,x) \neq \emptyset$.
Observe that we have $\posdomin(a,x) \neq \emptyset$.
Indeed, if $\domin(a,x) \neq \emptyset$ and $\posdomin(a,x) = \emptyset$, then $x \in \HC^{-}(a)$, giving a contradiction with $x \in G$.

We first state the main insight of this subsection which shows that the structure of hemispaces behaves rather well with respect to fixing $\domin$.  
In particular, it extends~\cite[Lemma 3.1]{EhrmannHigginsNitica:2016}.

\begin{proposition}\label{le:bigger_pos_argmax}
  Let $y \in \TTpm^d$ be such that $\domin(a,x) = \domin(a,y)$ and $\posdomin(a,x) \subseteq \posdomin(a,y)$.
  Then, we have $y \in G$.
\end{proposition}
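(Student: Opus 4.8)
The plan is to move from $x$ to $y$ through a bounded sequence of elementary steps, each of which either changes the value of a single coordinate $x_k$ with $k\in\domin(a,x)$, or flips the sign of such a coordinate from the ``negative'' side $\ominus\tsgn(a_k)$ to the ``positive'' side $\tsgn(a_k)$, all while staying inside $G$ and keeping $\domin(a,\cdot)$ equal to $\domin(a,x)$. Throughout, write $D=\domin(a,x)$ and $m=|a_k|+|x_k|$ for any $k\in D$ (this common maximal value does not depend on $k\in D$). The point $y$ satisfies $|a_k|+|y_k|=m$ for exactly the indices $k\in D$, and for each such $k$ we may have $\tsgn(a_k y_k)$ positive while $\tsgn(a_k x_k)$ is negative, since only $\posdomin(a,x)\subseteq\posdomin(a,y)$ is assumed. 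For coordinates $k\notin D$ we have $|a_k|+|x_k|<m$ and also $|a_k|+|y_k|<m$, and these play no role in whether the defining tropical form evaluates positively; they can be moved last and freely by taking weighted left sums.

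The first key step is a \emph{sign-flip lemma}: if $z\in G$ with $\domin(a,z)=D$ and $k\in D$ with $a_k z_k<\Zero$, then the point $z'$ obtained from $z$ by replacing $z_k$ with $\ominus z_k$ also lies in $G$. To see this, note that since $|\psupp(a)\cup\nsupp(a)|$ restricted to $D$ contains at least two relevant indices (because $x\in G$ forces $\posdomin(a,x)\neq\emptyset$, while $a_k z_k<\Zero$ supplies a negative one), we can exhibit $z'$ as an output of the local elimination / weighted left sum operations of \cref{prop:local-generation-TC-hull} applied to $z$ and a second point $z''\in\HC^+(a)$ chosen to have $z''_k=\ominus z_k$ but agreeing with $z$ off coordinate $k$ up to a harmless perturbation; the balanced midpoint forced into $G$ by local elimination, intersected appropriately, yields $z'$. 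Alternatively — and this is cleaner — one builds $z'$ directly: since $\HC^+(a)\subseteq G$, pick $p\in\HC^+(a)$ with $p_k$ of sign $\tsgn(a_k)$ and $|a_k|+|p_k|$ slightly below $m$, and the weighted left sum $z\lplus\rho\odot p$ for suitable $\rho\le 0$ traverses, as $\rho$ varies, points whose $k$-th coordinate sweeps an interval containing both $z_k$ and a point of the opposite sign; combined with the local elimination property this forces the whole segment, in particular $z'$, into $G$. The second key step is a \emph{magnitude-adjustment lemma}: if $z\in G$ with $\domin(a,z)=D$, $k\in D$, and $z_k$ already has sign $\tsgn(a_k)$ (i.e.\ $k\in\posdomin(a,z)$), then for any $t$ with $\tsgn(t)=\tsgn(z_k)$ and $|a_k|+|t|=m$ the point $z$ with $z_k$ replaced by $t$ is again in $G$; this follows because both $z$ and its modification lie in $\Hclosed^+(a)$ with $k$ attaining the max positively, so the segment between them stays in $\HC^+(a)\cup\{z\}$, and one invokes \cref{le:weak_inter} / weighted left sums once more.

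Granting these two lemmas, the main argument is a finite induction. Start from $x$. First, for every $k\in\domin(a,x)\setminus\posdomin(a,x)$ such that $k\in\posdomin(a,y)$, apply the sign-flip lemma to move into $G$ a point $x'$ with $\posdomin(a,x')=\posdomin(a,y)$ and $\domin(a,x')=D$; here one must check that flipping these signs does not disturb $\domin$, which is immediate since $|a_k\,x_k|$ is unchanged by the flip, so $D$ is preserved. Next, apply the magnitude-adjustment lemma coordinate by coordinate for $k\in D$ to change $|x'_k|$ to $|y_k|$, staying in $G$ at each step and keeping $D$ fixed (the value $m$ is preserved by construction). Finally handle the coordinates $k\notin D$: since for these $|a_k|+|y_k|<m$, a weighted left sum of the current point with a point of $\HC^+(a)$ supported suitably, or a direct interval argument via local elimination as in \cref{ex:vertical-lines}, drags $x'_k$ to $y_k$ without ever making the tropical form non-positive and without enlarging $\domin$ beyond $D$. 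After all coordinates have been adjusted we have reached $y$, hence $y\in G$.

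The main obstacle I expect is the sign-flip lemma, specifically making precise that one can flip the sign of a single dominating coordinate while \emph{simultaneously} keeping $\domin(a,\cdot)=D$ and not accidentally certifying a point of $\HC^-(a)$ into $G$ — the delicate point is that during the left-sum sweep the $k$-th coordinate necessarily passes through values of smaller magnitude (and through $\Zero$), momentarily shrinking $D$; one must argue that the \emph{endpoint} $z'$ still has $\domin(a,z')=D$ and lies in $G$ because $z'\in\HC^+(a)$ whenever the flipped sign is $\tsgn(a_k)$, and appeal to TC-convexity of $G$ only for the endpoints, not the interior of the traversal. Handling the boundary case where $0\in D$ (i.e.\ the affine constant $a_0$ attains the maximum) needs a small separate check, using the convention $x_0=0$ and the fact that one cannot ``move'' coordinate $0$; but then $0\in\posdomin(a,x)$ automatically forces $0\in\posdomin(a,y)$ as well, and the argument goes through on the remaining coordinates.
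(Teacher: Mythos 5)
Your high-level plan -- moving from $x$ to $y$ by flipping one dominating sign at a time, rescaling magnitudes, and then freeing the non-dominating coordinates -- parallels the paper's route (which proceeds through \cref{le:translation_hemispace} for scaling, \cref{le:good_sup_slice} for matching signs on $\domin$, and \cref{le:pos_dir_hemispace} for the ``box'' of sign flips). You also correctly identify the sign-flip step as the crux. However, as written, the proposal contains concrete gaps that prevent it from closing.

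\textbf{The sign-flip lemma.} Both of your suggested proofs are flawed. The first attempt -- invoking local elimination on $z$ and a point $z''$ ``agreeing with $z$ off coordinate $k$'' with $z''_k = \ominus z_k$ -- is circular: such a $z''$ is essentially $z'$ itself, and it will generally \emph{not} lie in $\HC^+(a)$ (the tropical form at $z'$ is typically balanced, not strictly positive, since $z'$ retains the other negative dominating coordinates of $z$). The ``cleaner'' version has three issues. (i)~The inequality direction is reversed: with $|a_k|+|p_k|$ slightly \emph{below} $m$ one has $|p_k|<|z_k|$, so for any $\rho\le 0$ the term $\rho\odot p_k$ loses to $z_k$ in the left sum and the $k$-th coordinate never moves; you need $|a_k|+|p_k|>m$. (ii)~The order of the left sum matters: $z\lplus\rho\odot p$ always returns $z_k$ when the magnitudes tie, since $\lplus$ breaks ties in favor of its \emph{first} argument. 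One must compute $\rho\odot p\lplus z$, putting the perturbation first, so that at the threshold $\rho$ the $k$-th coordinate equals $\rho\odot p_k=\ominus z_k$. (iii)~If $p$ is not supported only on coordinate $k$, the left sum disturbs the other coordinates. The paper's proof of \cref{le:pos_dir_hemispace} addresses all three: it takes $w$ supported only on coordinate $k$ with $|a_k|+|w_k|=m+1>m$, checks $w\in\HC^+(a)$, and forms $(-1)\odot w\lplus z$, which is precisely the well-scaled, correctly-ordered, singly-supported perturbation.

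\textbf{The magnitude-adjustment lemma.} As stated it is vacuous: $|a_k|+|t|=m=|a_k|+|z_k|$ together with $\tsgn(t)=\tsgn(z_k)$ forces $t=z_k$. The actual issue -- which you do not confront -- is that $\domin(a,x)=\domin(a,y)$ only guarantees that $|y_k|-|x_k|$ is a \emph{constant} over $k\in D$, not that it vanishes. Adjusting this constant coordinate-by-coordinate cannot work: after changing a single $|x'_k|$ while leaving the others, the argmax of $|a_j\odot x'_j|$ degenerates to $\{k\}$ and $D$ is no longer preserved, contradicting your parenthetical claim ``keeping $D$ fixed.'' The correct tool is a \emph{uniform} scaling $\rho\odot x$ (\cref{le:translation_hemispace}), which leaves $\domin$ unchanged when $0\notin\domin$; when $0\in\domin$ the common dominating value is pinned to $|a_0|$, so no rescaling is needed in the first place.

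\textbf{Non-dominating coordinates.} Your claim that these can be moved ``last and freely'' by ``a direct interval argument via local elimination as in \cref{ex:vertical-lines}'' presupposes that both endpoints of each interval lie in $G$, which is precisely what you still need to prove. The paper handles this more carefully in \cref{le:good_sup_slice}, splitting into the case $|x_j|\le|y_j|$ off $D$ (a single left sum suffices) and the case where some $|x_\ell|>|y_\ell|$ (first shrink the non-dominating coordinates to $\Zero$ via a box argument, \emph{then} apply the first case). Your outline skips this distinction, which is where the left-sum asymmetry bites again.
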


\smallskip

To prove this proposition, we start with a statement that can be seen orthantwise.
It shows closure under positive-tropical scalar multiplication. 

\begin{lemma}\label{le:translation_hemispace}
For $\rho \in \RR$ with $\domin(a,x) = \domin(a,\rho \odot x)$, we have $\rho \odot x \in G$.
\end{lemma}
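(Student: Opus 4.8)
The plan is to split on whether the relatively open cell of the covector‑type decomposition containing $x$ — i.e.\ the set of points lying in the same orthant as $x$ and having the same value $\domin(a,\cdot)$ — lies inside the open halfspace $\HC^{+}(a)$ or inside the hyperplane $\HC(a)$. First I would record that $\rho\odot x$ lies in this cell: $\rho\in\RR$ has $\rho>\Zero$, so $\rho\odot x$ stays in the orthant of $x$, and $\domin(a,\rho\odot x)=\domin(a,x)$ by hypothesis. Next I would observe that the sign of $a\odot(0,\cdot)$ is constant on a cell: which indices of $\domin(a,y)$ carry a positive, respectively negative, contribution $a_{k}\odot y_{k}$ is determined by $\domin(a,x)$ together with the orthant, since signs are frozen there. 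Hence the cell of $x$ lies in exactly one of $\HC^{+}(a)$, $\HC(a)$, $\HC^{-}(a)$, and as $x\in G\subseteq\Hclosed^{+}(a)$ the last possibility is excluded.

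If the cell of $x$ lies in $\HC^{+}(a)$, then $\rho\odot x\in\HC^{+}(a)\subseteq G$ and we are done. Otherwise the cell lies in $\HC(a)$, so $\posdomin(a,x)\neq\emptyset$ and $\negdomin(a,x)\neq\emptyset$, and one checks that $0\notin\domin(a,x)$ (if $0$ were in $\domin(a,x)$, then $a\odot(0,\rho\odot x)\in\Tzero$ forces a $[d]$‑entry to attain the maximum together with $a_{0}$, giving $\domin(a,x)=\{0\}$, which contradicts $\posdomin,\negdomin\neq\emptyset$). Here I would first reduce to $\rho\le0$: the complement $G'=\TTpm^{d}\setminus G$ is a TC‑hemispace with $\HC^{-}(a)\subseteq G'\subseteq\Hclosed^{-}(a)$, so if $\rho>0$ and $\rho\odot x\notin G$, then $\rho\odot x\in G'$ has $\domin(\ominus a,\rho\odot x)=\domin(a,\rho\odot x)\neq\emptyset$, and applying the $\rho\le0$ case to $G'$, the point $\rho\odot x$ and the scalar $\rho^{\odot-1}\le0$ gives $x=\rho^{\odot-1}\odot(\rho\odot x)\in G'$, contradicting $x\in G$. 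For $\rho\le0$ and $a_{0}>\Zero$ the argument is short: $\Zero\in\HC^{+}(a)\subseteq G$, and by \cref{le:weak_inter} every $\faces(\nu_{1}\odot x,\nu_{2}\odot\Zero)$ collapses to a single point (since $\Zero$ is neutral), whence $\whull(x,\Zero)=\SetOf{\nu\odot x}{\nu\in\Tgeq,\ \nu\le0}\cup\{\Zero\}$; this is contained in $G$ by TC‑convexity and contains $\rho\odot x$.

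The remaining subcase — cell of $x$ in $\HC(a)$, $a_{0}\le\Zero$, $\rho\le0$ — is where all the difficulty is, and I expect it to be the main obstacle: $\rho\odot x$ then lies on the hyperplane $\HC(a)$, exactly where the boundary of $G$ sits, and any pair of points sandwiching $\rho\odot x$ in a single coordinate has one point in $\HC^{+}(a)\subseteq G$ and the other in $\HC^{-}(a)\subseteq G'$, so neither local elimination nor a $\whull$‑argument with obviously‑membership endpoints concludes directly; one genuinely has to use that $G'$ is TC‑convex. My first approach would be to homogenize as in the proof of \cref{th:hemispace}: after flipping signs so that the orthant of $x$ is $\Tgeq^{d}$, form the conic relative TC‑hemispace $\coG=\wcone\SetOf{(z,0)}{z\in G}\setminus\{\Zero\}$ inside $X=\SetOf{y\in\TTpm^{d+1}}{y_{d+1}>\Zero}$, apply \cref{le:hemispace_in_halfspace} to obtain $\HC^{+}(\Zero,b)\cap X\subseteq\coG\subseteq\Hclosed^{+}(\Zero,b)\cap X$ for some $b$ with $\domin(b,(x,0))=\domin(a,x)$ and $\domin(b,(\rho\odot x,0))=\domin(a,\rho\odot x)$, and note that $(x,0)$ and $(\rho\odot x,0)$ lie in one orthant of $X$ with a common $\domin$‑set $D'$, with $0\notin D'$ since $b_{0}=\Zero$. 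Since $\coG$ and its complement are cones, their traces on the cell $C'$ defined by $D'$ are convex cones, and inside one orthant — where, after $\slog$, TC‑convexity is ordinary tropical (max‑plus) convexity — \cref{prop:hemispaces_open_orthant} and \cref{prop:hemispaces_single_orthant} show $\coG\cap C'$ is cut out of $C'$ by a single tropical halfspace whose membership is constant on $D'$‑cells, so it contains $(\rho\odot x,0)$ as soon as it contains $(x,0)$; this yields $\rho\odot x\in G$.

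An alternative route for this last subcase, closer in spirit to the rest of the section, is a direct contradiction argument: assuming $\rho\odot x\in G'$, perturb $x$ into $\HC^{+}(a)$ along a coordinate of $\posdomin(a,x)$ to get $x^{(\lambda)}\in\HC^{+}(a)\subseteq G$ with also $\rho\odot x^{(\lambda)}\in\HC^{+}(a)\subseteq G$ (the $\domin$‑hypothesis is precisely what guarantees the latter), and then combine these perturbations with points of $\HC^{-}(a)\subseteq G'$ through \cref{prop:sand_glass} together with the maximality/semispace trick used in the proof of \cref{thm:separation_hemispaces}, forcing a single point into both $G$ and $G'$. In either version, the delicate part — and the step I expect to be the real obstacle — is making the bookkeeping of the perturbations and the auxiliary points uniform over all sign patterns of $a$ (in particular the interplay with $a_{0}$) and over the possible shapes of $\domin(a,x)$, $\posdomin(a,x)$ and $\negdomin(a,x)$.
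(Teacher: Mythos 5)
Your reduction of $\rho>0$ to $\rho<0$ via the complement $G'=\TTpm^d\setminus G$ and $a'=\ominus a$ matches the paper exactly, and your dispatch of the two easy sub-cases (cell of $x$ inside $\HC^+(a)$; or $a_0>\Zero$ so that $\Zero\in\HC^+(a)\subseteq G$ and $\rho\odot x\in\whull(x,\Zero)\subseteq G$) is correct. But you then declare the remaining case ($\rho\le 0$, cell on $\HC(a)$, $a_0\le\Zero$) to be ``where all the difficulty is'' and propose either a homogenization argument through \cref{le:hemispace_in_halfspace} and \cref{prop:hemispaces_open_orthant} or a \cref{prop:sand_glass}-plus-Zorn argument, both of which you yourself flag as unfinished bookkeeping. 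That is a genuine gap: neither route is carried out, and the first one is circular in spirit (it invokes machinery from the proof of \cref{th:hemispace}, which uses the present lemma downstream via \cref{le:bigger_pos_argmax}).

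The paper's actual argument for $\rho<0$ is much shorter and bypasses your case distinction on $a_0$ entirely. It splits only on whether $0\in\domin(a,x)$. If $0\in\domin(a,x)$, then $\domin(a,x)=\domin(a,\rho\odot x)$ together with $\rho<0$ forces $\domin(a,x)=\{0\}$ (scaling by $\rho<0$ strictly decreases every term $|a_k|+|x_k|$ with $k\in[d]$ below $|a_0|$, so no such $k$ can remain in the argmax), and since $\posdomin(a,x)\neq\emptyset$ this gives $\posdomin(a,\rho\odot x)=\{0\}$, i.e.\ $\rho\odot x\in\HC^+(a)\subseteq G$. If $0\notin\domin(a,x)$, pick any $k\in\posdomin(a,x)=\posdomin(a,x)\cap[d]$, and let $y=\rho\odot x_k\odot e_k$ be the point with $y_k=\rho\odot x_k$ and $y_j=\Zero$ otherwise. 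Because $k\in\domin(a,\rho\odot x)$ and $0\notin\domin(a,\rho\odot x)$, the unique dominant term of $a\odot(0,y)$ is $a_k\odot\rho\odot x_k>\Zero$, so $y\in\HC^+(a)\subseteq G$. Finally $\rho\odot x=\rho\odot x\lplus y=\rho\odot x\lplus 0\odot y$, a weighted left sum of $x,y\in G$ with weights $\rho\oplus 0=0$, hence $\rho\odot x\in G$ by \cref{prop:local-generation-TC-hull}. This one-line left-sum trick with the auxiliary unit-vector point $y$ is precisely the idea your proposal is missing; once you have it, the case split on the position of the cell and on the sign of $a_0$ becomes unnecessary.
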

\begin{proof}
The claim is trivial for $\rho = 0$, so we start with $\rho < 0$. 
If $0 \in \domin(a,x)$, then the equality $\domin(a,x) = \domin(a,\rho \odot x)$ implies that $\domin(a,\rho \odot x) = \{0\} = \posdomin(a, \rho \odot x)$ and so $\rho \odot x \in \HC^{+}(a) \subseteq G$.
If $0 \notin \domin(a,x)$, then we fix $k \in\posdomin(a, x) \cap [d] = \posdomin(a, x)$. 
We define a point $y \in \TTpm^d$ as $y = \rho \odot x_k \odot e_k$ where $e_k$ is the $k$-th tropical unit vector. 
We have $y \in \HC^{+}(a)$ and so $y \in G$.
Furthermore, $\rho \odot x = \rho \odot x \lplus y$ and therefore $\rho \odot x \in G$ by the TC-convexity of $G$.

If $\rho > 0$, then we define $x' = \rho \odot x$, $G' = \TTpm^d \setminus G$, and $a' = \ominus a$.
Suppose that $x' \in G'$.
Then, $G'$ is a TC-hemispace such that $\HC^{+}(a') \subseteq G' \subseteq \Hclosed^{+}(a')$ and we have $x = (-\rho) \odot x'$.
Hence, the same reasoning as above shows that $x \in G'$, giving a contradiction. 
\end{proof}

Next, we demonstrate how the existence of a point in $G$ with a specific sign pattern implies that other points with a similar sign pattern also have to lie in $G$. 

\begin{lemma}\label{le:good_sup_slice}
For $y \in \TTpm^d$ with $\domin(a,y) = \domin(a,x)$ and $\tsgn(y_k) = \tsgn(x_k)$ for all $k \in \domin(a,x) \cap [d]$, we have $y \in G$. 
\end{lemma}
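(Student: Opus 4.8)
The plan is to transform $x$ into $y$ by a chain of elementary moves, each of which stays inside $G$ (or, in one exceptional configuration, inside its complement). Write $D=\domin(a,x)$, $D^{+}=\posdomin(a,x)$ and $M=\max_{k\in[d]_{0}}(|a_{k}|+|x_{k}|)$ with $x_{0}=0$, so $|a_{k}|+|x_{k}|=M$ exactly on $D$. From $\domin(a,y)=D$, $x_{0}=y_{0}=0$ and $\tsgn(y_{k})=\tsgn(x_{k})$ on $D\cap[d]$ one gets $\tsgn(a_{k}\odot y_{k})=\tsgn(a_{k}\odot x_{k})$ for all $k\in D$, hence $\posdomin(a,y)=D^{+}$ and $|a_{k}|+|y_{k}|$ hits the maximum $M_{y}$ exactly on $D$. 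First I would reduce to the case that $x$ and $y$ agree on all of $D$: if $0\in D$ this is automatic since then $M=|a_{0}|=M_{y}$, forcing $|x_{k}|=|y_{k}|$ (and signs agree) for $k\in D\cap[d]$; if $0\notin D$, put $\rho=M_{y}-M\in\RR$, check $\domin(a,\rho\odot x)=D$ via \cref{cor:argmax_basic_properties}(i) (using $|a_{0}|<M,M_{y}$), apply \cref{le:translation_hemispace} to get $\rho\odot x\in G$ with $\posdomin(a,\rho\odot x)=D^{+}$, and observe $(\rho\odot x)_{k}=y_{k}$ on $D$. So from now on there is a point $x_{0}\in G$ with $\domin(a,x_{0})=D$, $\posdomin(a,x_{0})=D^{+}$, $(x_{0})_{k}=y_{k}$ for $k\in D\cap[d]$, and $|a_{j}|+|(x_{0})_{j}|<M$ for $j\notin D$.

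The core is the following local move. \emph{Claim:} if $z\in G$ satisfies $\domin(a,z)=D$, $\posdomin(a,z)=D^{+}$, $z_{k}=y_{k}$ for $k\in D\cap[d]$, $|a_{j}|+|z_{j}|<M$ for $j\notin D$, and if $j\in[d]\setminus D$ and $t\in\TTpm$ with $|a_{j}|+|t|<M$, then the point $z^{*}$ obtained from $z$ by replacing its $j$-th coordinate with $t$ lies in $G$. Granting the claim, applying it once for each $j\in[d]\setminus D$ in turn — successively overwriting coordinate $j$ of the current point by $y_{j}$, noting that each intermediate point again meets all the hypotheses — carries $x_{0}$ to $y$, so $y\in G$.

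To prove the claim I split on whether coordinate $0$ contributes a maximal negative term to $a\odot(0,z)$, i.e.\ whether $0\in D$ and $a_{0}<\Zero$. In the non-degenerate case ($0\notin D$, or $a_{0}>\Zero$), set $p_{i}=z_{i}$ for $i\in D^{+}\cap[d]$, $p_{j}=t$, $p_{i}=\Zero$ otherwise, and define $\tilde p$ the same way but with $\tilde p_{j}=\ominus z_{j}$. The only maximal-magnitude terms of $a\odot(0,p)$ and $a\odot(0,\tilde p)$ are positive (those on $D^{+}\cap[d]$, and $a_{0}$ when $0\in D^{+}$), while coordinate $j$ contributes a strictly smaller magnitude; hence $p,\tilde p\in\HC^{+}(a)\subseteq G$. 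If $|t|>|z_{j}|$ then $z\lplus p=z^{*}$, so $z^{*}\in\whull(z,p)\subseteq G$. If $|t|\le|z_{j}|$ then $z\lplus p=z$ is uninformative, but $\tilde p\lplus z$ is $z$ with its $j$-th coordinate negated; since $z$ and $\tilde p\lplus z$ both lie in $G$ and agree off coordinate $j$, local elimination (\cref{prop:local-generation-TC-hull}(ii')) puts the whole segment $\{\text{common part}\}\times[\ominus|z_{j}|,|z_{j}|]$ in $G$, which contains $z^{*}$. In the degenerate case ($0\in D$, $a_{0}<\Zero$) the set $\HC^{-}(a)$ is rich — every point all of whose coordinates have magnitude $<M$ belongs to it — so I argue dually: suppose $z^{*}\notin G$, i.e.\ $z^{*}\in\TTpm^{d}\setminus G$, which is TC-convex and contains $\HC^{-}(a)$; running the mirror of the previous paragraph with the auxiliary points $q$ ($q_{j}=z_{j}$, else $\Zero$) and $\tilde q$ ($\tilde q_{j}=\ominus t$, else $\Zero$), both in $\HC^{-}(a)$, shows $z\in\TTpm^{d}\setminus G$, contradicting $z\in G$. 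This completes the claim and hence the lemma.

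The main obstacle is exactly this last case. When the hyperplane $\HC(a)$ is ``pinched'' at height $|a_{0}|$ — which happens precisely when $0\in D$ and $a_{0}<\Zero$ — the set $G$ has no small points near the relevant cell to left‑sum with: every point of $\HC^{+}(a)$ carries a coordinate of magnitude $>M$, and pushing a coordinate past $M$ leaves the cell and changes $\domin$. This is what forces the dual argument and, with it, genuine use of the hemispace hypothesis (that $\TTpm^{d}\setminus G$ is TC-convex and absorbs $\HC^{-}(a)$). A secondary point to watch is that \emph{decreasing} the magnitude of an off‑$D$ coordinate cannot be achieved by left sums at all and must go through the local elimination built into TC-convexity.
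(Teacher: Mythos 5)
Your proof is correct, and the overall strategy (scale to force agreement on $\domin$, then modify the off-$\domin$ coordinates) is the same as the paper's, but your local mechanism is genuinely different. The paper's Case~1a handles all off-$\domin$ increases in a single left-sum with an auxiliary point that is pushed outward by a factor $1$ so that it is guaranteed to land in $\HC^+(a)$ regardless of the sign of $a_0$; and Case~1b then reduces decreases to increases by zeroing out the off-$\domin$ block all at once via \cref{ex:vertical-boxes}. You instead transport one coordinate at a time, using auxiliary points whose off-$\domin$ entry has magnitude below the maximum, and this forces you to split on whether $\HC^+(a)$ contains such small points. When it does not -- exactly the case $0 \in \domin(a,x)$ with $a_0 < \Zero$ that you correctly isolate -- you work in the complement $\TTpm^d \setminus G$ and use its TC-convexity directly, whereas the paper's outward-scaling trick avoids ever touching the complement inside this lemma. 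Your route thereby makes the use of the full hemispace hypothesis in the degenerate configuration explicit, at the cost of an extra case distinction; the paper's is slightly slicker but hides that sensitivity inside the constant $M\in\{0,1\}$. One small slip: after the initial rescaling by $\rho = M_y - M$ your running maximum changes, so in the statement of your "local move" claim the bound on off-$\domin$ coordinates should read $|a_j|+|z_j| < M_y$ (and similarly $|a_j|+|t| < M_y$) rather than $< M$; your subsequent computations are consistent with the corrected bound, so this is a typo rather than a gap.
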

\begin{proof}
As noted above, $\posdomin(a,x) \neq \emptyset$, so we fix $k \in \posdomin(a,x)$ and consider the following two cases.

\textbf{Case 1} ($y_j = x_j$ for all $j \in \domin(a,x) \cap [d]$.) 

\textbf{Case 1a} ($|x_j| \le |y_j|$ for all $j \notin \domin(a,x)$.)

Define $M = 0$ if $k = 0$ and $M = 1$ otherwise.
Let $z \in \TTpm^d$ be the point defined as 
\begin{equation*}
z_i = \begin{cases}
\Zero &\text{if $i \in \domin(a,y) \setminus \{k\}$,}\\
M \odot y_{i} &\text{otherwise.}
\end{cases}
\end{equation*}
Then, $z$ satisfies $\domin(a,z) = \posdomin(a,z) = \{k\}$ as one sees by taking scalar product with $a$. In particular, we have $z \in \HC^{+}(a) \subseteq G$.
Moreover, $|x_j| \le |y_j|$ for all $j \notin \domin(a,x)$ implies that $y = (-M) \odot z \lplus x$, and so $y \in G$. 

\textbf{Case 1b} (There exists an $\ell \notin \domin(a,x)$ such that $|x_{\ell}| > |y_{\ell}|$.)

Let $u \in \TTpm^d$ be any point that satisfies $u_j = x_j$ for all $j \in \domin(a,x)$ and $|u_j| = |x_j|$ otherwise.
Then, we have $\domin(a,u) = \domin(a,x)$ and $u \in G$ by the previous case. 
Since $u$ was arbitrary, \cref{ex:vertical-boxes} implies that the point $v \in \TTpm^d$ defined as 
\[
v_i = \begin{cases}
x_i &\text{if $i \in \domin(a,x)$,}\\
\Zero &\text{otherwise}
\end{cases}
\]
belongs to $G$.
Now we can apply the Case 1a with $v$ instead of $x$ since $\Zero = |v_j| \leq |y_j|$ for all $j \not\in \domin(a,x)$. 
Hence, we obtain $y \in G$. 

\textbf{Case 2} (There exists an $\ell \in \domin(a,x) \cap [d]$ such that $y_{\ell} \neq x_{\ell}$.)

Let $\rho = |y_{\ell}| \odot |x_{\ell}|^{\odot -1} \in \RR$ and consider the point $x' = \rho \odot x$.
Then, for each $j \in \domin(a,x) \cap [d]$ we have 
\begin{equation} \label{eq:argmax-scaled-points}
|a_j| \odot |x'_j| = \rho \odot |a_j| \odot |x_j| = \rho \odot |a_{\ell}| \odot |x_{\ell}| = |a_{\ell}| \odot |y_{\ell}| = |a_j| \odot |y_j| \,.
\end{equation}
This yields $x'_j = y_j$ for $j \in \domin(a,x) \cap [d]$ because $\tsgn(x_j) = \tsgn(y_j)$. 

By the case assumption, we have $\max_{k \in [d]_0}{|a_k \odot x_k|} \neq \max_{k \in [d]_0}{|a_k \odot y_k|}$ as $y_{\ell} \neq x_{\ell}$ but $\domin(a,y) = \domin(a,x)$.
In particular, $0 \notin \domin(a,y)$.
Furthermore, \cref{eq:argmax-scaled-points} gives $|a_0| < |a_{\ell}| \odot |y_\ell| = |a_{\ell}| \odot |x'_{\ell}|$, so $0 \not\in \domin(a,x')$. 
This implies that $0 \not\in \domin(a,x) \cup \domin(a,x')$. 
Therefore, we have $\domin(a, x') = \domin(a,x)$ by \cref{cor:argmax_basic_properties} and \cref{le:translation_hemispace} implies $x' \in G$. 
Since $x'_j = y_j$ for all $j \in \domin(a,x) \cap [d]$, Case 1 shows that $y \in G$.
\end{proof}

The structure of the boundary of TC-hemispaces relies heavily on the support of the halfspaces sandwiching it.
We discuss this in a simple example which is visualized in \cref{fig:hemispace_boudnaries}.

\begin{example}
Suppose that $G \subset \TTpm^2$ is a TC-hemispace such that 
\begin{equation}\label{eq:simple_hemi1}
\SetOf{x \in \TTpm^2}{x_1 > 0} \subseteq G \subseteq \SetOf{x \in \TTpm^2}{x_1 \ge 0}\, .
\end{equation}
Furthermore, suppose that there is a point $x \in G$ such that $x_1 = 0$ and let $y \in \TTpm^2$ be any point such that $y_1 = 0$. Then, for $a = (\ominus 0, 0, \Zero)$ we get $\domin(a,x) = \{0,1\} = \domin(a,y)$ and $\tsgn(x_1) = \tsgn(y_1)$, so \cref{le:good_sup_slice} implies that $y \in G$. Hence, there are only two TC-hemispaces that satisfy~\cref{eq:simple_hemi1}, namely $G = \SetOf{x \in \TTpm^2}{x_1 > 0}$ and $G = \SetOf{x \in \TTpm^2}{x_1 \ge 0}$. By contrast, suppose that $G' \subset \TTpm^2$ is a TC-hemispace such that 
\begin{equation}\label{eq:simple_hemi2}
\SetOf{x \in \TTpm^2}{x_1 > \Zero} \subseteq G' \subseteq \SetOf{x \in \TTpm^2}{x_1 \ge \Zero}\, .
\end{equation}
Furthermore, suppose that there is a point $x' \in G'$ such that $x'_1 = \Zero$ and let $y \in \TTpm^2$ be any point such that $y_1 = \Zero$. Then, for $a = (\Zero, 0, \Zero)$ we get $\domin(a,x) = \emptyset$ so the assumption of \cref{le:good_sup_slice} is not satisfied and we cannot deduce anything about $y$ using this lemma. Indeed, if $y_2 < x'_2$, then
\[
G' = \SetOf{x \in \TTpm^2}{x_1 > \Zero} \cup \SetOf{x \in \TTpm^2}{x_1 = \Zero, x_2 \ge x'_2}
\]
is a TC-hemispace that does not contain $y$ and
\[
G' = \SetOf{x \in \TTpm^2}{x_1 > \Zero} \cup \SetOf{x \in \TTpm^2}{x_1 = \Zero, x_2 \le x'_2}
\]
Is a TC-hemispace that contains $y$. In particular, there are infinitely many TC-hemispaces that satisfy~\cref{eq:simple_hemi2}.
\end{example}

\begin{figure}[th]
\includegraphics{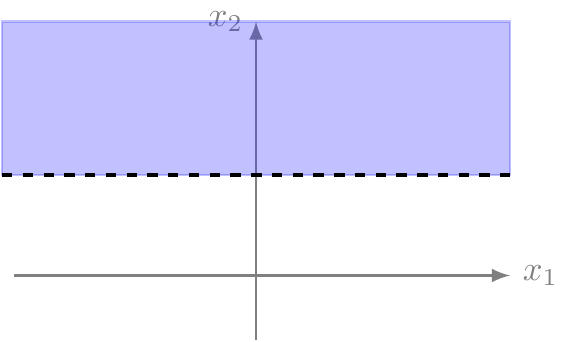}
\includegraphics{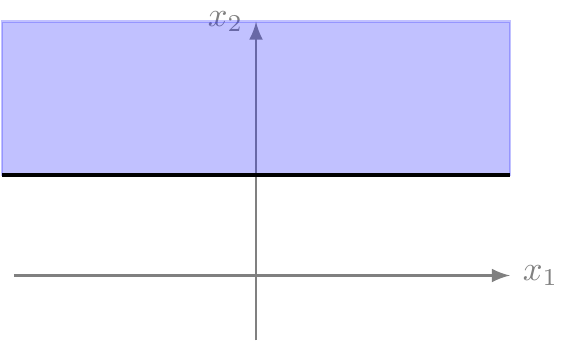}
\includegraphics{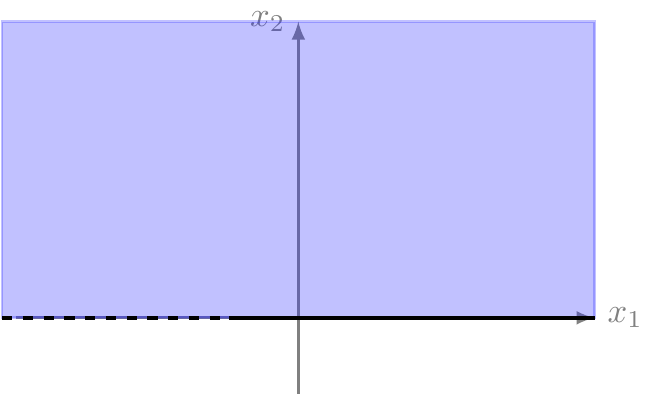}
\caption{Possibilities of boundary of TC-hemispace sandwiched between $x_1 \geq 0$ and $x_1 > 0$ or $x_1 \geq \Zero$ and $x_1 > \Zero$. }\label{fig:hemispace_boudnaries}
\end{figure}

We strengthen the former lemma to even more sign patterns. 

\begin{lemma}\label{le:pos_dir_hemispace}
Define a point $s \in \TSS^d$ as
\[
\forall \ell \in [d], \, s_{\ell} = \begin{cases}
x_{\ell} &\text{if $\ell \in \posdomin(a,x)$,}\\
{\bullet}x_{\ell} &\text{otherwise,}
\end{cases}
\]
then $\Uncomp(s) \subseteq G$.
\end{lemma}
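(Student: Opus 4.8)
The plan is to reduce the statement to a finite check on the vertices of the box $\Uncomp(s)$, and then to pull these vertices into $G$ one at a time by an induction on the number of coordinates whose sign differs from that of $x$.

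\smallskip

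Write $D = \domin(a,x)$, $P = \posdomin(a,x)$, $N = D \setminus P$, and $V = \max_{k \in [d]_0}|a_k \odot x_k|$. First I would describe $\Uncomp(s)$ explicitly: by definition $s_\ell = x_\ell$ is a single point for $\ell \in P \cap [d]$ (here $x_\ell \neq \Zero$ since $P \subseteq \supp(x)$), while $s_\ell = {\bullet}x_\ell$ for $\ell \notin P$, so $\Uncomp(s_\ell) = [\ominus|x_\ell|, |x_\ell|]$ (which is $\{\Zero\}$ when $x_\ell = \Zero$). Thus $\Uncomp(s) = [p,q]$ with $p_\ell = q_\ell = x_\ell$ on $P \cap [d]$ and $p_\ell = \ominus|x_\ell|$, $q_\ell = |x_\ell|$ otherwise, and by \cref{ex:vertical-boxes} we have $[p,q] = \whull(\{p_1,q_1\} \times \dots \times \{p_d,q_d\})$. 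Since $G$ is TC-convex, it therefore suffices to prove that every vertex $y$ of $\Uncomp(s)$ lies in $G$; such a vertex satisfies $y_\ell = x_\ell$ on $P \cap [d]$ and $y_\ell \in \{|x_\ell|, \ominus|x_\ell|\}$ (or $y_\ell = \Zero$) otherwise, hence $|y_\ell| = |x_\ell|$ for all $\ell \in [d]$, $\supp(y) = \supp(x)$, and (using $y_0 = x_0 = 0$) $|a_k \odot y_k| = |a_k \odot x_k|$ for all $k \in [d]_0$. Consequently $\domin(a,y) = D$ and $P \subseteq \posdomin(a,y)$ for every vertex $y$.

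\smallskip

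I would then prove $y \in G$ for every vertex $y$ of $\Uncomp(s)$ by induction on $j(y) := |\{k \in N \cap [d] : \tsgn(y_k) \neq \tsgn(x_k)\}|$, the number of ``flipped'' coordinates. If $j(y) = 0$, then $\tsgn(y_k) = \tsgn(x_k)$ for all $k \in D \cap [d]$ and $\domin(a,y) = \domin(a,x)$, so $y \in G$ by \cref{le:good_sup_slice}. For the inductive step, pick $k_1 \in N \cap [d]$ with $\tsgn(y_{k_1}) \neq \tsgn(x_{k_1})$; note $x_{k_1} \neq \Zero$ and, since $k_1 \in N$ means $a_{k_1} \odot x_{k_1} < \Zero$, that $\tsgn(a_{k_1}) = \ominus\tsgn(x_{k_1})$. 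Let $y''$ be $y$ with the $k_1$-th coordinate changed to $x_{k_1}$; it is again a vertex of $\Uncomp(s)$ with $j(y'') = j(y) - 1$, so $y'' \in G$ by the induction hypothesis. Let $w$ be $y''$ with the $k_1$-th coordinate changed to any value of sign $\tsgn(a_{k_1})$ and absolute value strictly greater than $|x_{k_1}|$. Then $k_1$ is the unique index maximizing $|a_k \odot w_k|$ over $k \in [d]_0$, since all other terms are bounded by $V$, and $a_{k_1} \odot w_{k_1} > \Zero$; by the strict form of \cref{eq:hspace} this yields $w \in \HC^+(a) \subseteq G$. Finally $y''$ and $w$ differ only in the $k_1$-th coordinate, and $y_{k_1} = \ominus x_{k_1}$ lies between $y''_{k_1} = x_{k_1}$ and $w_{k_1}$; hence \cref{ex:vertical-lines} gives $y \in \whull(y'', w) \subseteq G$. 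This closes the induction and, with the first paragraph, finishes the proof.

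\smallskip

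The only real subtlety is that a ``mixed-sign'' vertex $y$ of $\Uncomp(s)$ typically lies on the hyperplane $\HC(a)$: both a positive term indexed by $P$ and a negative term from the non-flipped part of $N$ attain the maximum $V$ in $a \odot (0,y)$, so $y$ is captured neither by $\HC^+(a) \subseteq G$ nor directly by \cref{le:good_sup_slice}. The induction circumvents this by moving one flipped coordinate at a time off the hyperplane via the auxiliary point $w$, whose single enlarged coordinate strictly dominates; this makes $w \in \HC^+(a)$ unconditionally, so in particular the position of the index $0$ relative to $\domin(a,x)$ never has to be discussed. I do not anticipate any further difficulty, and the degenerate case $N \cap [d] = \emptyset$ is simply the base case of the induction.
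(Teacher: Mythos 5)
Your proof is correct and follows essentially the same approach as the paper: reduce to checking the vertices of $\Uncomp(s)$ via \cref{ex:vertical-boxes}, then induct on the number of flipped coordinates in $N \cap [d]$ (which equals the paper's parameter $|\posdomin(a,y) \setminus \posdomin(a,x)|$), with the base case handled by \cref{le:good_sup_slice} and the inductive step producing an auxiliary point in $\HC^+(a) \subseteq G$. The only cosmetic difference is in the auxiliary point: you take $w$ to agree with $y''$ outside coordinate $k_1$ and conclude via \cref{ex:vertical-lines}, whereas the paper takes $w$ supported on the single coordinate $k$ and recovers $y$ as the weighted left sum $(-1) \odot w \lplus z$.
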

\begin{proof}
By \cref{ex:vertical-boxes}, it is enough to show that the vertices of $\Uncomp(s)$ belong to $G$.
Note that the vertices of $\Uncomp(s)$ are precisely the points $y \in \TTpm^d$ that satisfy $|y_j| = |x_j|$ for all $j \in [d]$ and $\posdomin(a,x) \subseteq \posdomin(a,y)$.
To show that every such point belongs to $G$, we proceed by induction over $m = |\posdomin(a,y) \setminus  \posdomin(a,x)|$.

If $m = 0$, then we have $y_j = x_j$ for all $j \in \domin(a,x)$ and so the claim follows from \cref{le:good_sup_slice}.

If $m> 0$, there is a $k \in \posdomin(a,y) \setminus \posdomin(a,x)$. 
Consider the point $z \in \TTpm^d$ defined as
\[
 z_i = \begin{cases}
x_i &\text{if $i = k$,}\\
y_i &\text{otherwise.}
\end{cases}
\]
Then, we have $z \in G$ be the induction hypothesis. Furthermore, since $|a_0| < 1 + |a_k| + |y_k|$, the point $w \in \TTpm^d$ defined as
\begin{equation*}
 w_i = \begin{cases}
1 \odot y_k &\text{if $i = k$,}\\
\Zero &\text{otherwise}
\end{cases}
\end{equation*}
belongs to $\HC^{+}(a)$.
In particular, $w \in G$.
Since $y = (-1) \odot w \lplus z$, we get $y \in G$.
\end{proof}

Now, we are ready to prove \cref{le:bigger_pos_argmax}. 

\begin{proof}[Proof of \cref{le:bigger_pos_argmax}]
Consider the point $z \in \TTpm^d$ defined as $z_j = \tsgn(y_j)|x_j|$ for all $j \in [d]$.
Since $\posdomin(a,x) \subseteq \posdomin(a,y)$, we have $z_j = x_j$ for all $j \in \posdomin(a,x)$. 
With $|z_j| \leq |x_j|$ for all $j \in [d]$, \cref{le:pos_dir_hemispace} implies $z \in G$. 
Furthermore, $\domin(a,z) = \domin(a,x) = \domin(a,y)$ and $\tsgn(z_j) = \tsgn(y_j)$ for all $j \in \domin(a,z) \cap [d]$.
Hence, we have $y \in G$ by \cref{le:good_sup_slice}. 
\end{proof}

\section{Lifts of signed halfspaces}
\label{sec:signed+halfspaces+lifts}

We will make fundamental use of the correspondence between tropical halfspaces and halfspaces defined over non-Archimedean valued fields.
In the following, we consider the field of \emph{(generalized) real Puiseux series} $\KK = \puiseux{\RR}{t}$, whose elements 
\begin{equation}\label{eq:puiseux}
\bm \gamma = \sum c_i t^{a_i} \ , \ a_i,c_i \in \RR \ , \ a_0 > a_1 > a_2 > \dots
\end{equation}
are formal power series with real exponents and such that the sequence $(a_i)_i$ is either finite or unbounded.
The addition and multiplication of Puiseux series are defined in the natural way.
Furthermore, given a series $\bm \gamma$ as in~\cref{eq:puiseux}, we say that $c_0$ is its \emph{leading coefficient} and we denote by $\lc \colon \KK \to \RR$ the map that sends a Puiseux series to its leading coefficient, with the convention that $\lc(0) = 0$.
We also say $\bm \gamma$ is \emph{positive} if its leading coefficient is positive.
This makes $\KK$ an ordered field via $\bm \gamma > \bm \delta$ if and only if $\bm \gamma - \bm \delta$ is positive.
It is known that $\KK$ is a real closed field~\cite{Markwig:2010} and this remains true even if one considers a subfield formed by Puiseux series that are absolutely convergent for sufficiently large $t$~\cite{VanDenDriesSpeissegger:1998}.
All our results are valid for both of these fields.
The crucial property of a real closed field is that for `well-structured' statements (in the sense of model theory), they behave exactly as the `usual' real numbers via Tarski's principle~\cite{Tarski:1948},~\cite[Corollary~3.3.16]{Marker:2002}. 

The field of Puiseux series is linked with signed tropical numbers via the signed valuation map.

\begin{definition}
The map $\sval \colon \KK \to \TTpm$ sends a Puiseux series as in \cref{eq:puiseux} to its \emph{signed valuation},
\[
\sval(\bm \gamma) = \begin{cases}
a_0 &\text{if } \bm \gamma > 0\\
\Zero &\text{if } \bm \gamma  = 0\\
\ominus a_0 &\text{if } \bm \gamma < 0
\end{cases} \enspace .
\]
We extend $\sval$ to vectors in $\KK^d$ componentwise by putting $\sval(\bm x) = \bigl(\sval(\bm x_i)_{i \in [d]}\bigr)$. 
\end{definition}

General properties of the images of semialgebraic sets under the signed valuation map were studied in~\cite{JellScheidererYu:2018}, see also~\cite[Section~4]{AllamigeonGaubertSkomra:2020}.

We fix our notation for halfspaces over Puiseux series.

\begin{definition}
  For a vector $(\bm a_0, \bm a_1,\dots, \bm a_d) \in \KK^{d+1}$ such that $(\bm a_1,\dots, \bm a_d) \neq 0$ we define the \emph{(closed affine) halfspace} by
\begin{equation}
   \bHclosed^+(\bm a) = \SetOf{\bm x \in \KK^d}{\bm a \cdot \begin{pmatrix} 0 \\ \bm x \end{pmatrix} \ge 0} \enspace .
\end{equation}
Furthermore, we denote $\bHclosed^-(\bm a) = \bHclosed^+(- \bm a)$.
\end{definition}

After deriving basic properties and setting some terminology in \cref{sec:basic-tools-signed-valuation}, we give the following simple generalization of the characterization known for tropical halfspaces in one orthant in \cref{sec:tropicalization-of-halfspaces}.

\begin{lemma}\label{le:sval_hspace}
For every halfspace $\bHclosed^+(\bm a)$ we have $\sval\bigl(\bHclosed^+(\bm a) \bigr) = \Hclosed^+\bigl(\sval(\bm a)\bigr)$. 
\end{lemma}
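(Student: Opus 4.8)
I want to show $\sval\bigl(\bHclosed^+(\bm a)\bigr) = \Hclosed^+\bigl(\sval(\bm a)\bigr)$, so I will prove the two inclusions separately, working orthant by orthant and reducing to the classical description \cref{eq:hspace} of a tropical closed halfspace. Throughout write $a = \sval(\bm a) \in \TTpm^{d+1}$; note $(\bm a_1,\dots,\bm a_d)\neq 0$ forces $(a_1,\dots,a_d)\neq\Zero$, so $\Hclosed^+(a)$ is well defined. The basic arithmetic fact I will use repeatedly is that $\sval$ is a (multivalued-compatible) morphism: $\sval(\bm\gamma \bm\delta) = \sval(\bm\gamma)\odot\sval(\bm\delta)$ exactly, and $\sval(\bm\gamma + \bm\delta) \in \Uncomp\bigl(\sval(\bm\gamma)\oplus\sval(\bm\delta)\bigr)$, i.e.\ the signed valuation of a sum lies in the "balanced interval" of the tropical sum; more concretely, writing $\bm s = \sum_i \bm\gamma_i$ with all $\bm\gamma_i\neq 0$, the leading exponent of $\bm s$ is at most $\max_i \val(\bm\gamma_i)$, with equality and a definite sign unless the dominating terms cancel.

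\textbf{Inclusion $\subseteq$.} Fix $\bm x \in \bHclosed^+(\bm a)$, set $x = \sval(\bm x)$, and fix the orthant of $x$, i.e.\ the signs $\tsgn(x_i)$. Splitting the defining inequality $\bm a_0 + \sum_{i} \bm a_i \bm x_i \ge 0$ into the terms whose sign (the real sign of $\bm a_i \bm x_i$, together with the constant term $\bm a_0$) is positive and those whose sign is negative, the inequality says
\[
\bm a_0^{+} + \sum_{\tsgn(\bm x_i)=\tsgn(\bm a_i)} \bm a_i \bm x_i \ \ge\ \bm a_0^{-} + \sum_{\tsgn(\bm x_i)\neq\tsgn(\bm a_i)} |\bm a_i\bm x_i|,
\]
with the obvious convention matching $(a_0^+,a_0^-)$ in \cref{eq:hspace}. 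Taking leading exponents of both sides and using that $\val$ of a sum of positive series equals the max of the $\val$'s, the left side has valuation $\max\bigl(a_0^+, \max_{\tsgn(x_i)=\tsgn(a_i)}(|a_i|+|x_i|)\bigr)$ and likewise for the right side; the classical inequality then forces the tropical inequality \cref{eq:hspace}, i.e.\ $x \in \Hclosed^+(a)$. (One must be mildly careful when the $\val$'s tie, but taking $\max$ is monotone so the implication $\text{classical}\ge \Rightarrow \text{tropical}\ge$ is robust.) This gives $\sval\bigl(\bHclosed^+(\bm a)\bigr) \subseteq \Hclosed^+(a)$.

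\textbf{Inclusion $\supseteq$.} Given $x \in \Hclosed^+(a)$, I must produce a lift $\bm x \in \bHclosed^+(\bm a)$ with $\sval(\bm x) = x$. The natural choice is the monomial lift $\bm x_i = \tsgn(x_i)\, t^{|x_i|}$ (with $\bm x_i = 0$ when $x_i=\Zero$). Then each $\bm a_i\bm x_i$ is a monomial with valuation $|a_i|+|x_i|$ and real sign $\tsgn(a_i)\tsgn(x_i)$. Plugging into $\bm a_0 + \sum_i \bm a_i\bm x_i$ and comparing leading terms: by \cref{eq:hspace} the maximal valuation among the "positive-sign" monomials (including $\bm a_0^+$) is $\ge$ the maximal valuation among the "negative-sign" monomials. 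If it is strictly larger, the leading term of the whole sum is positive and $\bm x \in \bHclosed^+(\bm a)$ immediately. If the two maxima are equal, the leading coefficient of the sum is the difference of the number of positive-side monomials attaining it and the number of negative-side ones — which could vanish. To fix this, perturb the exponents: replace $\bm x_i$ by $\tsgn(x_i)\, t^{|x_i| + \varepsilon_i}$ for generic small $\varepsilon_i$ (positive for one positive-side index $i$ in the argmax, zero elsewhere, say), chosen so that exactly one positive-side monomial strictly dominates all others while not changing any signed valuation $\sval(\bm x_i) = x_i$. This yields a lift with $\sval(\bm x) = x$ and a strictly positive leading term, hence $\bm x \in \bHclosed^+(\bm a)$.

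\textbf{Main obstacle.} The delicate point is exactly this cancellation in the $\supseteq$ direction when the maxima tie: a naive monomial lift can land on the hyperplane $\bHclosed^+(\bm a)\cap\bHclosed^-(\bm a)$ from the wrong side or not at all, and one has to argue the perturbation is harmless. In fact this can be organized cleanly: it suffices to note that $\Hclosed^+(a)$ is the closure of $\HC^+(a)$ (\cref{le:hspace_topo}), lift points of $\HC^+(a)$ by the strict (easy) monomial argument, and then invoke that $\sval$ is closed / continuous in the appropriate sense together with the fact that $\bHclosed^+(\bm a)$ is the closure of its interior over $\KK$ — but since over Puiseux series one cannot take limits directly, the cleanest route remains the explicit generic-exponent perturbation described above, which sidesteps any topological subtlety. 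I expect the bookkeeping of which terms land on which side of \cref{eq:hspace} to be the only real work.
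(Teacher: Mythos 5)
Your $\subseteq$ direction is sound and is essentially the same argument as the paper's, which routes through \cref{le:basic_sval}. The $\supseteq$ direction, however, contains a genuine error in its central device. You propose to break ties by perturbing exponents, replacing $\bm x_i = \tsgn(x_i) t^{|x_i|}$ with $\tsgn(x_i) t^{|x_i|+\varepsilon_i}$ ``while not changing any signed valuation $\sval(\bm x_i) = x_i$.'' But $\sval\bigl(\tsgn(x_i) t^{|x_i|+\varepsilon_i}\bigr) = \tsgn(x_i)(|x_i|+\varepsilon_i) \neq x_i$ whenever $\varepsilon_i \neq 0$: an exponent perturbation changes the signed valuation by definition, so the perturbed $\bm x$ no longer lifts $x$, and the argument collapses. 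Your diagnosis of the problem is right --- when the two maxima in \cref{eq:hspace} tie, the canonical lift can land in $\bHclosed^-(\bm a)$ (e.g.\ $\bm a_0 > 0$ with a small leading coefficient versus many negative-side terms at the same valuation) --- but the fix must perturb the \emph{leading coefficients} of the lift, not the exponents. Scaling coordinates of the canonical lift by real constants preserves $\sval$ and lets you force the positive-side terms to dominate; this is what the paper's lift of type $\subs$ (\cref{def:li-type-J}, with the factor $d+1$) accomplishes, though that machinery appears only afterwards in \cref{le:lift_type_hspace}.

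The paper's own proof of \cref{le:sval_hspace} sidesteps this entirely: it lifts only points of the open halfspace $\HC^+(a)$ via the canonical lift (strict inequality rules out cancellation), and then closes the gap topologically by invoking that $\sval\bigl(\bHclosed^+(\bm a)\bigr)$ is closed --- a nontrivial result about signed valuations of closed semialgebraic sets from \cite{JellScheidererYu:2018,AllamigeonGaubertSkomra:2020} --- together with $\Hclosed^+(a) = \cl\bigl(\HC^+(a)\bigr)$ from \cref{le:hspace_topo}. You dismiss this route because ``over Puiseux series one cannot take limits directly,'' but that objection misfires: the closure is taken in $\TTpm^d$, which carries an ordinary Euclidean topology, not in $\KK^d$; the non-Archimedean difficulty is precisely what the cited closedness-of-image theorem absorbs. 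That topological route is both correct and, once the citation is in hand, considerably shorter than a case analysis of ties.
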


Using a hyperplane separation theorem over Puiseux series, we derive the following separation theorem for images of closed convex semialgebraic sets in \cref{sec:separation-over-puiseux-series}. 

\begin{theorem} \label{thm:tropical_polar}
Suppose that $\bm X \subseteq \KK^d$ is a nonempty closed convex semialgebraic set.
Then, for every $y \notin \sval(\bm X)$ there exists $\bm a \in \KK^{d+1}$ such that $\bm X \subseteq \bHclosed^{+}(\bm a)$ and $y \notin \Hclosed^{+}(\sval(\bm a))$.
In particular, $\sval(\bm X)$ is equal to the intersection of the closed tropical halfspaces that contain it. 
\end{theorem}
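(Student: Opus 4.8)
The plan is to prove the separation statement first and then deduce the "intersection of closed halfspaces" corollary formally. Fix a nonempty closed convex semialgebraic set $\bm X \subseteq \KK^d$ and a point $y \in \TTpm^d \setminus \sval(\bm X)$. The goal is to produce $\bm a \in \KK^{d+1}$ with $\bm X \subseteq \bHclosed^+(\bm a)$ and $y \notin \Hclosed^+(\sval(\bm a))$. The natural route is to lift $y$ to a Puiseux point $\bm y \in \KK^d$ with $\sval(\bm y) = y$ and separate $\bm y$ from $\bm X$ over $\KK$. Since $\bm X$ is closed convex semialgebraic, Tarski's principle transfers the real hyperplane separation theorem (cited as \cref{th:hyperplane_sep}) to the real closed field $\KK$: because $\bm y \notin \bm X$ and $\bm X$ is closed convex, there is a hyperplane strictly separating $\bm y$ from $\bm X$, i.e. there is $\bm a = (\bm a_0,\dots,\bm a_d) \in \KK^{d+1}$ with $(\bm a_1,\dots,\bm a_d) \neq 0$ such that $\bm X \subseteq \bHclosed^+(\bm a)$ while $\bm a \cdot \binom{0}{\bm y} < 0$, and one can even demand a strict gap $\bm a \cdot \binom{0}{\bm y} \le -\varepsilon$ for some positive $\varepsilon \in \KK$ (shrink $\bm X$ slightly or translate the hyperplane, again a first-order statement). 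The first delicate point is that we need $\bm y \notin \bm X$, which is exactly where the hypothesis $y \notin \sval(\bm X)$ is used: if every lift of $y$ landed in $\bm X$, then $y$ would be in $\sval(\bm X)$; more carefully, $\sval^{-1}(y)$ is a nonempty (semialgebraic) "orthant box" in $\KK^d$ and since $y \notin \sval(\bm X)$ the set $\sval^{-1}(y)$ is disjoint from $\bm X$, so we may pick any $\bm y \in \sval^{-1}(y)$.

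The second, more substantial, step is to pass the separation through the signed valuation. From $\bm X \subseteq \bHclosed^+(\bm a)$ and \cref{le:sval_hspace} we get immediately
\[
\sval(\bm X) \subseteq \sval\bigl(\bHclosed^+(\bm a)\bigr) = \Hclosed^+\bigl(\sval(\bm a)\bigr),
\]
so the tropical halfspace $\Hclosed^+(\sval(\bm a))$ contains $\sval(\bm X)$; note $(\sval(\bm a_1),\dots,\sval(\bm a_d)) \neq \Zero$ is automatic as soon as the corresponding $\bm a_i$ are not all $0$, and if valuations happen to vanish one perturbs $\bm a$ by adding tiny generic terms without leaving $\bHclosed^+$ strictly (another place the strict gap $\varepsilon$ helps). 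It remains to check $y \notin \Hclosed^+(\sval(\bm a))$. Using the explicit form \cref{eq:hspace} of membership in a closed tropical halfspace, together with the compatibility of $\sval$ with order, sums and products — in particular $\sval(\bm a \cdot \binom{0}{\bm y})$ is governed by the dominant terms, and the strict inequality $\bm a \cdot \binom{0}{\bm y} \le -\varepsilon < 0$ forces the "negative side" of \cref{eq:hspace} to strictly dominate at the level of valuations — one concludes $\sval(\bm a) \odot \binom{0}{y} \in \Tlt$, i.e. $y \in \HC^-(\sval(\bm a))$, hence $y \notin \Hclosed^+(\sval(\bm a))$. The bookkeeping here is the analogue for one halfspace of what \cref{le:sval_hspace} does for the whole set, and is the real content: one must ensure no cancellation in the max–plus evaluation spoils the sign, which is precisely why the strict gap (a fixed positive Puiseux series, not an infinitesimal relative to the $\bm a_i \bm y_i$) is needed; this is the step I expect to be the main obstacle, and it is handled by choosing $\bm y$ and $\bm a$ with controlled leading terms, e.g. by first separating a slightly enlarged closed convex semialgebraic neighborhood of $\bm X$ so that the separating functional is bounded away from $0$ on $\bm X$ and takes a value of negative valuation at $\bm y$.

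Finally, the "in particular" clause follows formally: $\sval(\bm X)$ is contained in the intersection $\bigcap$ of all closed tropical halfspaces containing it (trivially), and the separation statement just proved shows that every $y \notin \sval(\bm X)$ is excluded by at least one such halfspace, so the reverse inclusion holds and the intersection equals $\sval(\bm X)$. One should double-check the degenerate cases where $\sval(\bm X)$ is not contained in any proper closed tropical halfspace — e.g. $\sval(\bm X) = \TTpm^d$ — in which case the empty intersection convention gives $\TTpm^d$ and the statement is vacuously consistent, but then $\sval(\bm X) \notin \{\text{proper subsets}\}$ means there is no $y$ to separate and the claim is trivial; and the case $\bm X = \emptyset$ is excluded by hypothesis. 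This completes the plan.
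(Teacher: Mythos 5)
Your plan correctly reduces the ``in particular'' clause to the separation statement, and correctly identifies that the crux is preventing cancellation of leading terms when the separating functional is tropicalized. But the proposed fix does not actually close that gap, and as stated the argument fails.

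Concretely, separating a single lift $\bm y \in \sval^{-1}(y)$ from $\bm X$ with a positive margin $\varepsilon$ is not enough. Take $\bm X = \{0\} \subset \KK$, $y = 0 \in \TTpm$, $\bm y = 1$. The separating functional $\bm a = (1/2,-1)$ gives $\bm a \cdot \binom{0}{\bm y} = -1/2$, a perfectly good gap, and $\bm X \subseteq \bHclosed^+(\bm a)$. But $\sval(\bm a) = (0,\ominus 0)$, and $\sval(\bm a) \odot \binom{0}{y} = 0 \ominus 0 = \bullet 0 \in \Tzero$, so $y \in \Hclosed^+(\sval(\bm a))$: the tropical halfspace still contains $y$. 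The margin $\varepsilon$ says nothing about the \emph{valuation} of the cancellation; $\sval(\bm a \cdot \binom{0}{\bm y}) \le \ominus\sval(\varepsilon)$ is consistent with $\sval(\bm a) \odot \binom{0}{y}$ being balanced, and a balanced value lies inside the closed tropical halfspace. To make your route work one must choose both $\bm y$ and $\bm a$ so that no leading-order cancellation occurs in $\bm a \cdot \binom{0}{\bm y}$, and ``pick any lift, demand a gap, perturb $\bm a$'' does not do this. Your parenthetical ``separating a slightly enlarged closed convex semialgebraic neighborhood of $\bm X$'' points in the wrong direction (enlarging $\bm X$ only makes separation harder); the missing idea is to enlarge on the $y$-side, and in a specific tropical way.

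The paper's proof does exactly this. Because $\bm X$ is closed and semialgebraic, $\sval(\bm X)$ is closed in $\TTpm^d$, so one can choose a box $B = \prod_i[\ell_i,r_i]$ with $y$ in its interior and $B \cap \sval(\bm X) = \emptyset$ (this uses density of the order on $\TTpm$). Lifting $B$ to the Puiseux box $\bm B = \prod_i[\clif(\ell_i),\clif(r_i)]$ gives a closed convex semialgebraic set disjoint from $\bm X$, and one separates $\bm B$ from $\bm X$ (not the single point $\bm y$). Then \cref{le:sval_hspace} gives $B \subseteq \Hclosed^-(\sval(\bm a))$, and the topological \cref{le:hspace_topo} (interior of $\Hclosed^-$ equals $\HC^-$) converts ``$y$ is in the interior of $B$'' into $y \in \HC^-(\sval(\bm a))$, hence $y \notin \Hclosed^+(\sval(\bm a))$. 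This neatly avoids any explicit leading-coefficient bookkeeping: the box absorbs the potential cancellation and the interior argument kills the balanced case. If you want to salvage your single-lift approach, you would need to argue, say, that among the uncountably many lifts $\bm y$ and separating $\bm a$'s, some pair achieves strict dominance at the level of valuations; the box trick is the clean way to see such a pair exists.
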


We strengthen the representation as intersection of closed tropical halfspaces for polyhedral cones in \cref{sec:tropicalization+cones}.
The results from that subsection are mainly preparatory for the main results in \cref{sec:TC-hull+as+intersection}.

Finally, we show that TC-hemispaces are images of convex sets under signed valuation.
This is crucial for our main separation result for TC-convexity.

\begin{proposition}\label{prop:hemispace_puiseux}
If $G \subseteq \TTpm^d$ is a TC-hemispace, then $G = \sval(\bm G)$ for some convex set $\bm G \subseteq \KK^d$.
\end{proposition}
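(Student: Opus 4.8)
The plan is to use the structural results already established, most notably \cref{th:hemispace} (every nontrivial TC-hemispace is sandwiched $\HC^+(a) \subseteq G \subseteq \Hclosed^+(a)$) and \cref{le:sval_hspace} ($\sval(\bHclosed^+(\bm a)) = \Hclosed^+(\sval(\bm a))$), together with the fine analysis of the boundary of a TC-hemispace in \cref{sec:boundary_hemispace}. First I would dispose of the trivial cases $G \in \{\emptyset, \TTpm^d\}$: take $\bm G = \emptyset$ or $\bm G = \KK^d$. So assume $G$ is nontrivial, and pick $a \in \TTpm^{d+1}$ with $(a_1,\dots,a_d)\neq \Zero$ and $\HC^+(a)\subseteq G\subseteq \Hclosed^+(a)$. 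Lift $a$ to a Puiseux vector $\bm a$ with $\sval(\bm a)=a$; then $\bm X := \bHclosed^+(\bm a)$ is a closed convex (in fact polyhedral) semialgebraic set with $\sval(\bm X) = \Hclosed^+(a)$ by \cref{le:sval_hspace}. The set $G$ differs from $\Hclosed^+(a)$ only in which parts of the tropical hyperplane $\HC(a)$ — equivalently, which boundary cells — it contains. The task is therefore to build $\bm G$ as a convex subset of $\KK^d$, sitting between the open halfspace $\bHclosed^+(\bm a)\setminus \bHclosed^-(\bm a)$ and the closed halfspace $\bm X$, whose signed valuation selects exactly the boundary cells of $G$.

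The key step is to understand the combinatorial description of $G \cap \HC(a)$. By \cref{le:bigger_pos_argmax} (and \cref{le:translation_hemispace}, \cref{le:good_sup_slice}, \cref{le:pos_dir_hemispace}), membership of a boundary point $x\in\HC(a)$ in $G$ depends only on the pair $(\domin(a,x),\posdomin(a,x))$ in a monotone way: increasing $\posdomin$ while fixing $\domin$ keeps one inside $G$, and the orthant-wise behaviour is likewise governed by these argmax data. So there is a ``monotone'' family $\mathcal{D}$ of admissible pairs $(D, P)$ with $P\subseteq D\subseteq [d]_0$ such that $x\in G\cap \HC(a)$ iff $(\domin(a,x),\posdomin(a,x))\in\mathcal{D}$, and this family is upward closed under enlarging $P$. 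For each boundary point $\bm x$ of $\bm X$ (i.e.\ $\bm a\cdot(0,\bm x)=0$), the leading-order behaviour of the slack $\bm a\cdot(0,\bm x)$ is recorded by exactly $\domin$ and $\posdomin$ of $\sval(\bm x)$; conversely every tropical boundary cell is the $\sval$-image of a semialgebraic (indeed polyhedral) piece of $\partial \bm X$. I would then define
\[
\bm G \;=\; \bigl(\bHclosed^+(\bm a)\setminus \bHclosed^-(\bm a)\bigr)\;\cup\;\SetOf{\bm x \in \partial\bm X}{(\domin(\bm a,\bm x),\posdomin(\bm a,\bm x)) \in \mathcal{D}},
\]
where I abuse notation writing $\domin(\bm a,\bm x)$ for the (lifted) argmax data of the leading terms. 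By \cref{le:sval_hspace} and the cell-wise description above, $\sval(\bm G) = G$.

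The main obstacle — and the bulk of the work — is proving that this $\bm G$ is \emph{convex} over $\KK$. Over a real closed field I would use Tarski's principle / semialgebraicity: convexity of $\bm G$ is a first-order statement, so it suffices to check it (semialgebraically) after replacing $\KK$ by $\RR$, i.e.\ to verify that the analogous ``halfspace with a chosen set of boundary faces selected by a monotone family of argmax-signatures'' is convex in $\RR^d$. This reduces to an elementary but slightly delicate real-geometry lemma: given a closed halfspace $\{\ell \ge 0\}$ in $\RR^d$ and an upward-closed (under enlarging the positive-argmax set) family of faces of the boundary hyperplane arrangement, the union of the open halfspace with the selected boundary faces is convex. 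The convexity argument there mirrors the tropical arguments of \cref{le:good_sup_slice} and \cref{le:pos_dir_hemispace}: a segment between a selected boundary point and any point on the same side stays in the set because the argmax-signature only improves along it. Care is needed to phrase ``argmax-signature'' in a way that is genuinely semialgebraic (it is, since it is defined by finitely many weak/strict inequalities among the $|a_k|+|x_k|$), so that Tarski transfer applies cleanly. Once convexity of the real model is checked, the transfer principle gives convexity of $\bm G\subseteq\KK^d$, and the proof is complete.
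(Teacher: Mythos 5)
Your overall strategy---sandwiching $G$ between $\HC^+(a)$ and $\Hclosed^+(a)$ by \cref{th:hemispace} and then selecting boundary cells by their argmax data---is a natural starting point, but there are two genuine gaps. First, the proposed characterization ``$x \in G \cap \HC(a)$ iff $(\domin(a,x),\posdomin(a,x)) \in \mathcal{D}$'' is silent on the points with $\domin(a,x) = \emptyset$. These points all carry the trivial pair $(\emptyset,\emptyset)$, yet they form a nontrivial coordinate subspace whenever $a_0=\Zero$, and $G$ restricted to that subspace can be \emph{any} lower-dimensional TC-hemispace (see the example preceding \cref{le:pos_dir_hemispace}, and note the second part of~\cref{fig:hemispace_boudnaries}). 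Your family $\mathcal{D}$ cannot record this, so your $\bm G$ is simply wrong on that locus. Second, and more fundamentally, the set $\bm G$ you define is \emph{not semialgebraic}: the condition $(\domin(\bm a,\bm x),\posdomin(\bm a,\bm x)) \in \mathcal{D}$ is phrased in terms of the valuations $|x_k|=\val(\bm x_k)$, and $\val$ is not first-order definable in the field language over $\KK$---indeed the paper itself exhibits $\SetOf{\bm x\in\KK}{\sval(\bm x)>0}$ as a closed convex \emph{non}-semialgebraic set right after \cref{cor:linear+halfspaces+outer+cone}. Therefore Tarski's transfer principle does not apply to ``convexity of $\bm G$,'' and there is no analogous set over $\RR$ to check: over $\RR$ there is no valuation and the boundary hyperplane of a real halfspace has no cell decomposition that mirrors the tropical one. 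This sinks the convexity argument.

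The paper's proof inverts the logic precisely to avoid these obstacles. Rather than writing down a candidate $\bm G$ and trying to verify convexity, it \emph{builds} a convex set by construction: $\bm G = \conv(\Gper\cup\Gali)$, where $\Gali = \conv\bigl(\SetOf{\lif_{\subs}(x)}{x\in G\cap X}\bigr)$ uses the type-$\subs$ lifts of \cref{def:li-type-J}, and $\Gper$ handles the $\domin=\emptyset$ locus by projecting away the coordinates in $\supp(a)$ and applying an induction on dimension. Convexity is then automatic, and all the effort goes into showing $\sval(\bm G)=G$, via a Carath\'eodory decomposition in $\KK^d$, leading-coefficient estimates that exploit the special scaling by $d+1$ in the type-$\subs$ lifts, and the boundary lemmas \cref{le:good_sup_slice,le:pos_dir_hemispace} that you correctly identified as relevant. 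The move you are missing is to choose lifts cleverly and take a convex hull, rather than try to recognize $\bm G$ as a union of pieces of a fixed Puiseux halfspace.
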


\subsection{Basic tools for the signed valuation}
\label{sec:basic-tools-signed-valuation}

The following lemma summarizes basic properties of the signed valuation.

\begin{lemma}\label{le:basic_sval}
The signed valuation map has the following properties: 
\begin{enumerate}[(i)]
\item if $\bm x_1, \bm x_2 \in \KK$ satisfy $\bm x_1 \ge \bm x_2$, then $\sval(\bm x_1) \ge \sval(\bm x_2)$;
\item if $\bm x_1, \dots, \bm x_n \in \KK$, then $\sval(\bm x_1 \cdots \bm x_n) = \sval(\bm x_1) \odot \dots \odot \sval(\bm x_n)$;
\item if $\bm x_1, \dots, \bm x_n \in \KK^d$, then $\sval(\bm x_1 + \dots + \bm x_n) \in \Uncomp\bigl(\sval(\bm x_1) \oplus \dots \oplus \sval(\bm x_n)\bigr)$.
\end{enumerate}
\end{lemma}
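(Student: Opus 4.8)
The three assertions are statements about the leading terms of Puiseux series and can be checked by hand; no appeal to Tarski's principle is needed here. Since $\sval$, the addition of vectors in $\KK^d$, the operation $\oplus$ on $\TSS^d$, and the operator $\Uncomp(\cdot)$ all act coordinatewise, it is enough to prove (iii) in the scalar case $d = 1$, and likewise for (i). For a nonzero series $\bm\gamma = \sum_i c_i t^{a_i}$ as in~\cref{eq:puiseux} I write $v(\bm\gamma) = a_0$ for its leading exponent, so that $\sval(\bm\gamma)$ equals $v(\bm\gamma)$ when $\bm\gamma > 0$ and $\ominus v(\bm\gamma)$ when $\bm\gamma < 0$. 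The plan is to establish (ii) and (iii) first and then deduce (i).

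For (ii) I reduce to $n = 2$ by induction. If one of the factors is $0$, both sides equal $\Zero$. Otherwise the leading term of $\bm x\bm y$ is the product of the leading terms of $\bm x$ and $\bm y$, so $v(\bm x\bm y) = v(\bm x) + v(\bm y)$ and $\lc(\bm x\bm y) = \lc(\bm x)\lc(\bm y)$; comparing with the definition of $\odot$, whose rule is that signs multiply and absolute values add, yields $\sval(\bm x\bm y) = \sval(\bm x)\odot\sval(\bm y)$.

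For (iii) I first record how an iterated $\oplus$ of elements of $\TTpm$ behaves: given $s_1,\dots,s_n \in \TTpm$ not all equal to $\Zero$, let $m = \max_i |s_i| \in \RR$ and $T = \SetOf{i}{|s_i| = m}$; then, by~\cref{eq:balanced+addition} together with associativity of $\oplus$ (using that adding a summand of strictly smaller absolute value leaves a partial sum unchanged), $\bigoplus_i s_i$ equals $\varepsilon m$ if all $s_i$ with $i \in T$ carry the same sign $\varepsilon \in \{\oplus,\ominus\}$, and equals $\bullet m$ otherwise. Now fix $\bm x_1,\dots,\bm x_n \in \KK$ not all zero, put $m = \max\SetOf{v(\bm x_i)}{\bm x_i \neq 0}$, $S = \SetOf{i}{\bm x_i \neq 0,\ v(\bm x_i) = m}$, and $\sigma = \sum_{i \in S}\lc(\bm x_i)$. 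If $\sigma \neq 0$, then $v(\sum_i\bm x_i) = m$ and $\sval(\sum_i\bm x_i)$ equals $m$ or $\ominus m$ according to the sign of $\sigma$; when the leading signs on $S$ agree this is exactly $\bigoplus_i\sval(\bm x_i)$, and when they are mixed it is an endpoint of $\Uncomp(\bullet m) = [\ominus m, m]$, so in either case it lies in $\Uncomp(\bigoplus_i\sval(\bm x_i))$. If $\sigma = 0$, then $|\sval(\sum_i\bm x_i)| < m$; since a sum of nonzero reals all of one sign cannot vanish, the leading signs on $S$ must be mixed, hence $\bigoplus_i\sval(\bm x_i) = \bullet m$ and $\sval(\sum_i\bm x_i)$ lies in the open interval $(\ominus m, m) \subseteq \Uncomp(\bullet m)$.

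Finally, (i) follows from (iii): if $\bm x_1 \ge \bm x_2$, write $\bm x_1 = \bm x_2 + (\bm x_1 - \bm x_2)$ with $\bm x_1 - \bm x_2 \ge 0$, so (iii) gives $\sval(\bm x_1) \in \Uncomp(\sval(\bm x_2) \oplus r)$ with $r = \sval(\bm x_1 - \bm x_2) \in \Tgeq$; a brief case distinction on $r$ against $|\sval(\bm x_2)|$ and the signs involved shows that every element of this set is $\ge \sval(\bm x_2)$. (Alternatively one proves (i) directly by comparing leading exponents in the cases $\bm x_1 > 0$, $\bm x_1 = 0$, $\bm x_1 < 0$.) The only slightly delicate point in the whole argument is the bookkeeping in (iii): the cancellation case $\sigma = 0$ must be separated out, and one has to track precisely when the value of $\oplus$ turns balanced, namely exactly when the dominant Puiseux terms carry both signs.
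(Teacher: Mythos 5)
Your proof is correct, and it organizes the argument differently from the paper's. The paper proves (i) directly by a short sign case analysis on $\bm x_1, \bm x_2$; you instead derive (i) from (iii) via the decomposition $\bm x_1 = \bm x_2 + (\bm x_1 - \bm x_2)$, which is a slightly more unified (though no shorter) route once (iii) is in hand. For (ii) both proofs are identical. The real divergence is in (iii): the paper groups the $\bm x_i$ by sign into $\bm x^+$ and $\bm x^-$, uses the observation that $\sval$ commutes with sums of same-sign series, and then reduces to the single two-term case $\bm x^+ + \bm x^-$, where only the "$\sval(\bm x^+) = \ominus\sval(\bm x^-)$" subcase requires care. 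You instead explicitly characterize the iterated $\oplus$ in terms of the maximal absolute value $m$ and the set $T$ of dominant indices, then compare directly with the coefficient $\sigma$ of $t^{m}$ in the Puiseux sum, splitting on whether $\sigma$ vanishes. Both arguments are elementary and complete; the paper's reduction to a two-term sum is cleaner to write, while your direct leading-coefficient accounting makes more visible exactly when and why the $\oplus$ becomes balanced (the observation you flag at the end). One small point worth stating explicitly in your write-up: the reduction of (iii) to $d = 1$ uses that $\Uncomp$ of a vector is the product of the coordinate-wise $\Uncomp$'s, which you invoke but do not say.
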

\begin{proof}
The first property is trivial if $\bm x_1 \ge 0 \ge \bm x_2$.
Suppose that $\bm x_1 \ge \bm x_2 > 0$ but $\sval(\bm x_1) < \sval(\bm x_2)$.
Then, the leading coefficient of $\bm x_2 - \bm x_1$ is equal to $\lc(\bm x_2) > 0$, so $\bm x_2 - \bm x_1 > 0$, which gives a contradiction.
Analogously, if $0 > \bm x_1 \ge \bm x_2$, but $\sval(\bm x_1) < \sval(\bm x_2)$, then the leading coefficient of $\bm x_2 - \bm x_1$ is equal to $-\lc(\bm x_1) > 0$, which gives a contradiction.

The second property for $n = 2$ follows from the definition of multiplication of Puiseux series.
The extension to $n > 2$ follows by an immediate induction.

To prove the third property, it is enough to consider the case $d = 1$ since $\sval$ and $\Uncomp$ are defined componentwise. 
Furthermore, observe that if $\bm y, \bm z \in \KK$ are nonnegative (or nonpositive), then $\sval(\bm y + \bm z) = \sval(\bm y) \oplus \sval(\bm z)$ by the definition of addition in Puiseux series.
We can order $\bm x_1, \dots, \bm x_n$ in such a way that $\bm x_1, \dots, \bm x_k$ are nonnegative and $\bm x_{k+1}, \dots, \bm x_n$ are negative.
Let $\bm x^{+} = \bm x_1 + \dots + \bm x_k$ and $\bm x^{-} = \bm x_{k+1} + \dots + \bm x_n$.
By the observation above, we have $\sval(\bm x^{+}) = \sval(\bm x_1) \oplus \dots \oplus \sval(\bm x_k)$ and $\sval(\bm x^{-}) = \sval(\bm x_{k+1}) \oplus \dots \oplus \sval(\bm x_n)$.
If $\sval(\bm x^+) \neq \ominus \sval(\bm x^-)$, then the definition of addition in Puiseux series implies that $\sval(\bm x^{+} + \bm x^{-}) = \sval(\bm x^{+}) \oplus \sval(\bm x^{-})$.
If $\sval(\bm x^+) = \ominus \sval(\bm x^-) = a$, then the definition of addition in Puiseux series implies that $\sval(\bm x^{+} + \bm x^{-}) \in [\ominus a, a] = \Uncomp(\sval(\bm x^{+}) \oplus \sval(\bm x^{-}))$.
\end{proof}

The signed valuation map allows us to study sets defined over $\KK^d$ by looking at their images under $\sval$.
Conversely, it is sometimes useful to study a set defined in $\TTpm^d$ by looking at its `lift' in the Puiseux series.
Since $\sval$ is not bijective, we have many possible choices for the lift.
We now introduce different types of lifts of points $x \in \TTpm^d$ into Puiseux series that are used in this work.
Similar lifts were used in \cite{AllamigeonBenchimolGaubertJoswig:2015} to derive properties about lifts of tropical halfspaces within $\Tgeq^d$. 
We start with the simplest one, the canonical lift.

\begin{definition}
Given a point $x \in \TTpm^d$ we define its \emph{canonical lift}, $\clif(x) \in \KK^d$, as the point
\[
\forall i \in [d], \ \bigl( \clif(x) \bigr)_i = \sigma t^{|x_{i}|}\, ,
\]
where $\sigma \in \{-,0,+\}$ is the ``de-tropicalized'' version of $\tsgn(x_i) \in \{\ominus, \Zero, \oplus\}$ and we use the convention that $t^{\Zero} = 0$. 
\end{definition}

The canonical lifts are very simple, but it turns out that they are not particularly well suited for our purposes. Instead, it is more useful for us to consider lifts that vary from one orthant of $\TTpm^d$ to another. To this end, we introduce the following definition.

\begin{definition} \label{def:li-type-J}
Given a point $x \in \TTpm^d$ and a set $\subs \subseteq [d]$ we define the \emph{lift of $x$ of type $\subs$}, denoted $\lif_{\subs}(x) \in \KK^d$, by
\[
\forall i \in [d], \ \bigl( \lif_{\subs}(x) \bigr)_i =
\begin{cases}
(d+1)t^{|x_{i}|} &\text{if $\tsgn(x_i) = \oplus$ and $i \in \subs$}, \\
-(d+1)t^{|x_{i}|} &\text{if $\tsgn(x_i) = \ominus$ and $i \notin \subs$}, \\
\sigma t^{|x_{i}|} &\text{otherwise, }
\end{cases}
\]
where $\sigma$ is the ``de-tropicalized'' version of $\tsgn(x_i)$. 
\end{definition}
In this way, $\lif_{\emptyset}(x)$ coincides with $\clif(x)$ on the nonnegative orthant of $\TTpm^d$, $\lif_{[d]}(x)$ coincides with $\clif(x)$ on the nonpositive orthant of $\TTpm^d$ and so on. Also, for every $x$ and $\subs$ we have $\sval\bigl( \lif_{\subs}(x) \bigr) = x$.

\subsection{Tropicalization of halfspaces}
\label{sec:tropicalization-of-halfspaces}

We are ready to prove the characterization of the signed valuation of halfspaces.

\begin{proof}[Proof of \cref{le:sval_hspace}]
  The proof proceeds as in the case of tropical halfspaces in one orthant, cf. \cite[Proposition~2.4]{DevelinYu:2007}.
  Let $a = \sval(\bm a)$.
  The inclusion $\sval\bigl(\bHclosed^+(\bm a) \bigr) \subseteq \Hclosed^+(a)$ follows from the arithmetic properties of $\sval$. Indeed, if $\bm x \in \KK^d$ is such that $\bm y = \bm a_0 + \bm a_1 \bm x_2 + \dots + \bm a_d \bm x_d \ge 0$ and we denote $x = \sval(\bm x)$, then \cref{le:basic_sval} shows that $\sval(\bm y) \ge \Zero$ and $\sval(\bm y) \in \Uncomp(a_0 \oplus a_1 \odot x_1 \oplus \dots \oplus a_d \odot x_d)$. Hence, we either have $a_0 \oplus a_1 \odot x_1 \oplus \dots \oplus a_d \odot x_d \in \Tzero$ or $a_0 \oplus a_1 \odot x_1 \oplus \dots \oplus a_d \odot x_d \in \Tgt$.
  
  Conversely, if $x \in \HC^+(a)$ and we let $\bm x = \clif(x)$, then \cref{le:basic_sval} shows that $\sval(\bm a_0 + \bm a_1 \bm x_2 + \dots + \bm a_d \bm x_d) \in \Uncomp(a_0 \oplus a_1 \odot x_1 \oplus \dots \oplus a_d \odot x_d)$, which is a singleton in $\Tgt$, so $\bm x \in \bHclosed^+(\bm a)$.
  Hence $\HC^+(a) \subseteq \sval\bigl (\bHclosed^+(\bm a) \bigr) \subseteq \Hclosed^+(a)$.
  Furthermore, the set $\sval\bigl (\bHclosed^+(\bm a) \bigr)$ is closed by \cite[Theorem~6.9]{JellScheidererYu:2018} or, equivalently, \cite[Corollary~4.11]{AllamigeonGaubertSkomra:2020}.
  Therefore, \cref{le:hspace_topo} implies that $\sval\bigl (\bHclosed^+(\bm a) \bigr) = \Hclosed^+(a)$.
\end{proof}

The next lemma gives a more explicit connection between tropical halfspaces of type $\subs$ and lifts of type $\subs$.

\begin{lemma}\label{le:lift_type_hspace}
  Suppose that $\Hclosed^+(a)$ is a tropical halfspace of type $\subs \subseteq [d]$. 
  Let $x \in \Hclosed^+(a)$ and denote $K = [d]_0 \setminus \subs$.
  Then $\lif_{\subs}(x) \in \bHclosed^+\bigl(\lif_{K}(a)\bigr)$. 
\end{lemma}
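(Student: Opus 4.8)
The plan is to unwind both lifts explicitly and reduce the membership $\lif_{\subs}(x) \in \bHclosed^+(\lif_K(a))$ to the scalar inequality $\bm b \ge 0$ in $\KK$, where (as in the proof of \cref{le:sval_hspace}) $\bm b = \bigl(\lif_K(a)\bigr)_0 + \sum_{i=1}^d \bigl(\lif_K(a)\bigr)_i\,\bigl(\lif_{\subs}(x)\bigr)_i$; note that $\bHclosed^+(\lif_K(a))$ is well defined because $(a_1,\dots,a_d)\neq\Zero$ forces some $\lif_K(a)_i\neq 0$. Since $\Hclosed^+(a)$ is of type $\subs$, for $i\in[d]$ we have $i\in\subs \Leftrightarrow a_i\in\Tgt$ and $i\in K \Leftrightarrow a_i\in\Tleq$, while $0\in K$ always. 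Plugging these equivalences into \cref{def:li-type-J} and matching each of the four sign patterns of a pair $(a_i,x_i)$ with $a_i,x_i\neq\Zero$ to the correct branch of the two lifts, I would record: $\bigl(\lif_K(a)\bigr)_i\bigl(\lif_{\subs}(x)\bigr)_i = (d+1)\,t^{|a_i|+|x_i|}$ when $\tsgn(a_i)=\tsgn(x_i)$, and $=-\,t^{|a_i|+|x_i|}$ when $\tsgn(a_i)\neq\tsgn(x_i)$, while the product vanishes whenever $a_i=\Zero$ or $x_i=\Zero$; and $\bigl(\lif_K(a)\bigr)_0 = (d+2)\,t^{|a_0|}$ if $a_0\in\Tgt$ (this is exactly where $0\in K$ puts the large multiplier onto the constant), $=-t^{|a_0|}$ if $a_0\in\Tlt$, and $=0$ if $a_0=\Zero$. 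The whole point of the type-$\subs$ lift is that the large multiplier lands precisely on the products indexed by $P:=\SetOf{i\in[d]}{\Zero\neq\tsgn(a_i)=\tsgn(x_i)}$ and on nothing indexed by $N:=\SetOf{i\in[d]}{\tsgn(a_i),\tsgn(x_i)\in\{\oplus,\ominus\},\ \tsgn(a_i)\neq\tsgn(x_i)}$.

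Thus $\bm b = \bm c_0 + (d+1)\sum_{i\in P} t^{|a_i|+|x_i|} - \sum_{i\in N} t^{|a_i|+|x_i|}$ with $\bm c_0$ as above. Next I would read off the hypothesis $x\in\Hclosed^+(a)$ via \cref{eq:hspace}: setting $M^{+}=\max\bigl(a_0^{+},\max_{i\in P}(|a_i|+|x_i|)\bigr)$ and $M^{-}=\max\bigl(a_0^{-},\max_{i\in N}(|a_i|+|x_i|)\bigr)$ (with $\max\emptyset=-\infty$ and the convention on $a_0^{\pm}$ from \cref{eq:hspace}), membership says exactly $M^{+}\ge M^{-}$. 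If $M^{+}=-\infty$, then $M^{-}=-\infty$ too, forcing $P=N=\emptyset$ and $a_0=\Zero$, hence $\bm b=0$ and we are done. Otherwise $M:=M^{+}$ is a finite real number, every exponent occurring in $\bm b$ is $\le M$ (the positive-side ones are $\le M^{+}$, the negative-side ones $\le M^{-}\le M$), so it remains to show that the coefficient of $t^{M}$ in $\bm b$ — which is then the leading coefficient — is strictly positive.

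For that coefficient I would set $k_{+}=|\SetOf{i\in P}{|a_i|+|x_i|=M}|$, $k_{-}=|\SetOf{i\in N}{|a_i|+|x_i|=M}|$, and $\delta_{+},\delta_{-}\in\{0,1\}$ according to whether $a_0\in\Tgt$ resp.\ $a_0\in\Tlt$ with $|a_0|=M$; then $k_{+}+k_{-}\le d$ (disjoint subsets of $[d]$), $\delta_{+}\delta_{-}=0$, and the coefficient equals $(d+1)k_{+}-k_{-}+(d+2)\delta_{+}-\delta_{-}$. Since $M=M^{+}$ is finite and attained, $k_{+}\ge 1$ or $\delta_{+}=1$. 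If $k_{+}\ge1$, the coefficient is at least $(d+1)k_{+}-k_{-}-\delta_{-}\ge(d+1)k_{+}-(d-k_{+})-1=(d+2)k_{+}-(d+1)\ge1>0$. If $k_{+}=0$, then $\delta_{+}=1$ and $\delta_{-}=0$, so the coefficient is $(d+2)-k_{-}\ge(d+2)-d=2>0$. Either way the leading coefficient of $\bm b$ is positive, hence $\bm b>0$, giving $\lif_{\subs}(x)\in\bHclosed^+(\lif_K(a))$.

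The only place that needs genuine care is this last coefficient estimate in the ``tie'' situation $M^{+}=M^{-}$, where one must verify that the multipliers $d+1$ and $d+2$ of the two lifts really dominate the at most $d$ unit-weight negative contributions together with a possible negative constant term — which is exactly why the multiplier in \cref{def:li-type-J} is chosen to exceed $d$. The sign bookkeeping in the first step (the four-way case split on $(\tsgn a_i,\tsgn x_i)$ against the branches of $\lif_K$ and $\lif_{\subs}$) is the other spot to be meticulous, but it is purely mechanical.
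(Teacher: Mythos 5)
Your proof is correct and follows essentially the same strategy as the paper's: reduce the membership to the scalar inequality $\bm b \ge 0$ over $\KK$ and show that the leading coefficient of $\bm b$ is positive, exploiting that the lift multiplier strictly exceeds $d$ (you apply \cref{def:li-type-J} literally to the $(d+1)$-dimensional vector $a$ and get the constant multiplier $d+2$, while the paper writes $d+1$; both exceed $d$, so the tie-breaking works either way). The paper dispatches the no-tie case via \cref{le:basic_sval}(iii) and then, in the tie case, compares $\lc(\bm y) \ge d+1 > d \ge \lc(\bm z)$, whereas you carry out the same comparison by an explicit coefficient count with $k_{+},k_{-},\delta_{+},\delta_{-}$.
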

Before giving the proof, let us note that if $x \in \HC^+(a)$, then any lift of $x$ belongs to $\bHclosed^+(\bm a)$ for any lift $\bm a$ of $a$.
However, in order to lift the points that belong to the boundary of $\Hclosed^+(a)$ we need to be more careful. 

\begin{proof}
  Let $\bm x = \lif_{\subs}(x)$ and $\bm a  = \lif_{K}(a)$.
  By definition, we have
\[
\bm a_{0} = 
\begin{cases}
(d+1)t^{a_{0}} &\text{if $a_{0} \in \Tgt$,} \\
-t^{|a_{0}|} &\text{otherwise.}
\end{cases}
\]
Also, for every $i \in [d]$ we have $\bm a_{i} = \tsgn(a_i)t^{|a_{i}|}$ and
\[
\bm a_{i} \bm x_{i} =
\begin{cases}
(d+1)t^{|a_i| + |x_{i}|} &\text{if $\tsgn(x_i) = \tsgn(a_i)$}, \\
-t^{|a_i| + |x_{i}|} &\text{otherwise.}
\end{cases}
\]
Therefore, we have $\bm a \cdot \begin{pmatrix} 0 \\ \bm x \end{pmatrix} \ge 0$ if and only if
\begin{equation}\label{eq:lifts_in_hyp}
(d+1) \Bigl( \alpha t^{|a_0|} + \sum_{\tsgn(x_i) = \tsgn(a_i)}t^{|a_i| + |x_{i}|} \Bigr)\ge \beta t^{|a_0|} + \sum_{\tsgn(x_i) \neq \tsgn(a_i)} t^{|a_i| + |x_{i}|} \, ,
\end{equation}
where $(\alpha, \beta) = (1, 0)$ if $a_{0} \in \Tgt$ and $(\alpha, \beta) = (0, 1)$ otherwise.
Let $\bm y \in \KK$ denote the series on the left-hand side of~\cref{eq:hspace} and $\bm z \in \KK$ denote the series on the right-hand side of~\cref{eq:hspace}. By \cref{le:basic_sval} we have $\sval(\bm y - \bm z) \in \Uncomp\bigl(\sval(\bm y) \ominus \sval(\bm z)\bigr) = \Uncomp(a_0 \oplus a_1 \odot x_1 \oplus \dots \oplus a_d \odot x_d)$. If $\Uncomp(a_0 \oplus a_1 \odot x_1 \oplus \dots \oplus a_d \odot x_d)$ is a nonnegative singleton, then $\bm y - \bm z \ge 0$. Otherwise, we have $\sval(\bm y) = \sval(\bm z) > \Zero$. In this case, note that $\lc(\bm y) \ge d +1$ and $\lc(\bm z) \le d$. Hence $\bm y - \bm z \ge 0$.
\end{proof}

\subsection{Separation over Puiseux series}
\label{sec:separation-over-puiseux-series}

We also need the following version of the hyperplane separation theorem over Puiseux series.
We recall that a set $\bm X \subseteq \KK^d$ is a \emph{cone} if $\bm \lambda \bm x \in \bm X$ for all $\bm x \in \bm X$ and $\bm \lambda > 0$. We also recall that a set $\bm X$ is \emph{semialgebraic} if it is defined by a finite Boolean combination of polynomial inequalities. Since the field of Puiseux series is real closed, the semialgebraic sets over $\KK^d$ have similar properties to the semialgebraic sets over $\RR^d$---we refer to \cite{BasuPollackRoy:2006} for more information on this topic. For the sake of generality, we state the next two propositions for semialgebraic sets, but the familiarity with semialgebraic sets is not necessary to understand the other results of this paper---it is enough to admit that polyhedra are semialgebraic. The first proposition is a hyperplane separation theorem for convex semialgebraic sets.

\begin{proposition}\label{th:hyperplane_sep}
  Suppose that $\bm X, \bm Y \subseteq \KK^d$ are two nonempty convex semialgebraic sets such that $\bm X \cap \bm Y = \emptyset$.
  Then, there exists a halfspace $\bHclosed^+(\bm a)$ such that $\bm X \subseteq \bHclosed^+(\bm a)$ and $\bm Y \subseteq \bHclosed^-(\bm a)$.
\end{proposition}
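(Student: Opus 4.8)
The plan is to obtain the statement as an immediate transfer of the classical hyperplane separation theorem over $\RR$, using Tarski's principle exactly as it is invoked elsewhere in the paper for ``well-structured'' statements. The key observation is that both $\RR$ and $\KK$ are real closed fields, so any sentence in the first-order language of ordered fields that holds over $\RR$ holds over $\KK$; the only thing to verify is that the assertion at hand admits such a formulation once the semialgebraic sets are replaced by their defining formulas.

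First I would recall the real version in the precise form I need: if $C, D \subseteq \RR^d$ are nonempty convex sets with $C \cap D = \emptyset$, then there exist $a \in \RR^d \setminus \{0\}$ and $b \in \RR$ with $\langle a, x\rangle \le b \le \langle a, y\rangle$ for all $x \in C$ and $y \in D$. Writing $(\bm a_0, \bm a_1, \dots, \bm a_d) = (b, -a_1, \dots, -a_d)$, this says exactly $C \subseteq \bHclosed^+(\bm a)$ and $D \subseteq \bHclosed^-(\bm a)$ with $(\bm a_1, \dots, \bm a_d) \neq 0$. Thus the conclusion of the proposition already holds over $\RR$, even without the semialgebraicity assumption.

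Next I would translate everything into first-order logic. Since $\bm X$ and $\bm Y$ are semialgebraic, I can write $\bm X = \SetOf{\bm x \in \KK^d}{\varphi(\bm x, \bm c)}$ and $\bm Y = \SetOf{\bm x \in \KK^d}{\psi(\bm x, \bm c)}$ for first-order formulas $\varphi, \psi$ in the language of ordered fields and a common parameter tuple $\bm c \in \KK^{N}$. Then ``nonempty'' ($\exists \bm x\, \varphi(\bm x, \bm c)$ and likewise for $\psi$), ``disjoint'' ($\neg\exists \bm x(\varphi(\bm x, \bm c) \wedge \psi(\bm x, \bm c))$), ``convex'' (via $\forall \bm x \forall \bm y \forall \bm\lambda (0 \le \bm\lambda \wedge \bm\lambda \le 1 \wedge \varphi(\bm x, \bm c) \wedge \varphi(\bm y, \bm c) \Rightarrow \varphi(\bm\lambda\bm x + (1-\bm\lambda)\bm y, \bm c))$, and similarly for $\psi$), and ``there exist $\bm a_0, \dots, \bm a_d$ with $(\bm a_1, \dots, \bm a_d) \neq 0$ such that $\forall \bm x(\varphi(\bm x, \bm c) \Rightarrow \bm a_0 + \sum_{i=1}^d \bm a_i \bm x_i \ge 0)$ and $\forall \bm x(\psi(\bm x, \bm c) \Rightarrow \bm a_0 + \sum_{i=1}^d \bm a_i \bm x_i \le 0)$'' are all first-order. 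Universally quantifying over $\bm c$ turns the implication ``hypotheses $\Rightarrow$ separation exists'' into a genuine sentence $\sigma_{\varphi, \psi}$ of the ordered-field language, and by the previous paragraph $\sigma_{\varphi, \psi}$ is true in $\RR$ for every choice of $\varphi, \psi$.

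Finally, by the completeness of the theory of real closed fields---equivalently Tarski's transfer principle, see \cite{Tarski:1948} and \cite[Corollary~3.3.16]{Marker:2002}, together with the fact that $\KK$ is real closed---the sentence $\sigma_{\varphi, \psi}$ holds in $\KK$ as well. Instantiating its outer quantifier at our parameter tuple $\bm c$ and feeding in the hypotheses of the proposition (nonemptiness, disjointness, convexity of $\bm X, \bm Y$) yields the desired $\bm a \in \KK^{d+1}$ with $(\bm a_1, \dots, \bm a_d) \neq 0$, $\bm X \subseteq \bHclosed^+(\bm a)$ and $\bm Y \subseteq \bHclosed^-(\bm a)$. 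The step I would be most careful with is the first-order bookkeeping: one must quantify the parameters $\bm c$ away to get a parameter-free sentence (a statement with parameters in $\KK$ would not transfer by itself), and one must use the version of the real theorem that keeps the normal vector nonzero so that the output is a bona fide halfspace in the sense of the preceding definition; beyond that the argument is routine.
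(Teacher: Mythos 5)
Your proof is correct and follows essentially the same route as the paper: invoke the classical hyperplane separation theorem over $\RR$, observe that for semialgebraic input sets the full implication (nonempty, convex, disjoint $\Rightarrow$ separating affine form with nonzero linear part exists) can be bundled into a first-order sentence in the language of ordered fields once the parameters are universally quantified away, and transfer that sentence to $\KK$ via completeness of the theory of real closed fields. The only cosmetic difference is that you use a single shared parameter tuple $\bm c$ where the paper uses separate tuples $\bm r$ and $\bm s$, and your write-up is somewhat more explicit about the first-order bookkeeping (nonemptiness, disjointness, convexity); both points are immaterial.
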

\begin{proof}
  If we replace $\KK$ by $\RR$, then the claim follows from the hyperplane separation theorem in $\RR^{d}$, see, e.g.,~\cite[Theorem~11.3 and Theorem~11.7]{Rockafellar:1972}.
  Since the sets $\bm X, \bm Y$ are supposed to be semialgebraic, the claim for $\KK$ follows from the completeness of the theory of real closed fields, see, e.g.,~\cite[Corollary~3.3.16]{Marker:2002} or \cite[Theorem~2.80]{BasuPollackRoy:2006}.
  The argument is based on the existence of finite formulas for describing $\bm X$ and $\bm Y$ and the existence of a separating hyperplane in a real closed field due to their convexity; for more details see \cref{sec:details+semialgebraic+separation}.  
\end{proof}

\begin{remark}
We note that the assumption that $\bm X, \bm Y$ are semialgebraic cannot be entirely skipped. Indeed, it is shown in~\cite{Robson:1991} that $\RR$ is the only ordered field that admits the general hyperplane separation theorem. For the interested reader, we adapt the example from \cite{Robson:1991} to $\KK$ in \cref{sec:details+semialgebraic+separation}. We also note that a special case of \cref{th:hyperplane_sep} is still valid for sets that are definable in definably complete extensions of real closed fields, see~\cite[Corollary~2.20]{AschenbrennerFischer:2011}. Also, \cite{Robson:1991} gives a version of the separation theorem that is valid for arbitrary ordered fields.
\end{remark}

As an application of \cref{th:hyperplane_sep}, we get the following. 

\begin{proof}[Proof of \cref{thm:tropical_polar}]
  To prove the first part of the claim, let $y \notin \sval(\bm X)$.
  Since $\bm X$ is closed and semialgebraic, $\sval(\bm X)$ is also closed by \cite[Theorem~6.9]{JellScheidererYu:2018} or \cite[Corollary~4.11]{AllamigeonGaubertSkomra:2020}.
  Therefore, there exists an open neighborhood of $y$ that does not intersect $\sval(\bm X)$.
  Since the order on $\TTpm$ is dense, we can find $\ell_1,\dots,\ell_d,r_1,\dots,r_d \in \TTpm$ such that 
\[
\ell_1 < y_1 < r_1, \ \ell_2 < y_2 < r_2, \ \dots \ , \ell_d < y_d < r_d
\]
and such that the box $B = [\ell_1, r_1] \times [\ell_2, r_2] \times \dots \times [\ell_d, r_d]$ does not intersect $\sval(\bm X)$.
Consider the lifted box
\[
\bm B = [\clif(\ell_1), \clif(r_1)] \times [\clif(\ell_2), \clif(r_2)] \times \dots \times [\clif(\ell_d), \clif(r_d)] \subset \KK^d \, .
\]
The set $\bm B$ is convex and semialgebraic.
Moreover, we have $\bm X \cap \bm B = \emptyset$ because $\sval(\bm X) \cap \sval(\bm B) = \sval(\bm X) \cap B = \emptyset$.
Hence, by \cref{th:hyperplane_sep}, there exists $\bm a \in \KK^{d+1}$ such that $\bm X \subseteq \bHclosed^{+}(\bm a)$ and $\bm B \subseteq \bHclosed^-(\bm a)$.
Therefore $B \subseteq \Hclosed^-(\sval(\bm a))$ by \cref{le:sval_hspace}.
Since $y$ belongs to the interior of $B$, \cref{le:hspace_topo} shows that $y \in \HC^-(\sval(\bm a))$.
In particular, we have $y \notin \Hclosed^+(\sval(\bm a))$. To prove the second part of the claim, note that $\sval(\bm X) \subseteq \Hclosed^+(\sval(\bm a))$ by \cref{le:sval_hspace}. In other words, the closed tropical halfspace $\Hclosed^+(\sval(\bm a))$ contains $\sval(\bm X)$ but not $y$. Since $y$ was arbitrary, we get that $\sval(\bm X)$ is an intersection of some family of closed tropical halfspaces. Therefore, it is also an intersection of all the closed tropical halfspaces that contain it.
\end{proof}

\begin{corollary} \label{cor:linear+halfspaces+outer+cone}
  If $\bm X \subseteq \KK^d$ from \cref{thm:tropical_polar} is a cone, we can choose the tropical halfspaces to be linear.  
\end{corollary}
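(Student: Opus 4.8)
The plan is to reuse the proof of \cref{thm:tropical_polar} and then modify the separating vector so that its constant term becomes $\Zero$, exploiting that $\bm X$ is a cone. Fix $y \notin \sval(\bm X)$. By \cref{thm:tropical_polar} there is a vector $\bm a = (\bm a_0, \bm a_1, \dots, \bm a_d) \in \KK^{d+1}$ with $(\bm a_1, \dots, \bm a_d) \neq 0$ such that $\bm X \subseteq \bHclosed^+(\bm a)$ and $y \notin \Hclosed^+(\sval(\bm a))$; the latter means $\sval(\bm a) \odot \pvec{0}{y} \in \Tlt$. I would set $\bm a' = (0, \bm a_1, \dots, \bm a_d)$ and show that this linear vector still separates.

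First I would use that $\bm X$ is a cone: for every $\bm x \in \bm X$ and every $\bm \lambda > 0$ we have $\bm \lambda \bm x \in \bm X$, hence $\bm a_0 + \bm \lambda(\bm a_1 \bm x_1 + \dots + \bm a_d \bm x_d) \ge 0$. A degree-one polynomial in $\bm \lambda$ that stays nonnegative on $(0, +\infty)$ in an ordered field must have nonnegative constant term and nonnegative leading coefficient; taking $\bm \lambda$ large forces $\bm a_1 \bm x_1 + \dots + \bm a_d \bm x_d \ge 0$, and then taking $\bm \lambda$ small forces $\bm a_0 \ge 0$. The first inequality, valid for all $\bm x \in \bm X$, is exactly $\bm X \subseteq \bHclosed^+(\bm a')$; moreover $(\bm a_1, \dots, \bm a_d) \neq 0$ ensures $\bHclosed^+(\bm a')$ is a genuine (linear) halfspace, and since $\sval$ maps nonzero series to nonzero signed tropical numbers, $\Hclosed^+(\sval(\bm a'))$ is a genuine linear tropical halfspace.

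Next I would check $y \notin \Hclosed^+(\sval(\bm a'))$. Write $a_i = \sval(\bm a_i)$ and $s = \bigoplus_{i=1}^d a_i \odot y_i$, so that $\sval(\bm a') \odot \pvec{0}{y} = s$ while $\sval(\bm a) \odot \pvec{0}{y} = a_0 \oplus s \in \Tlt$. Since $\bm a_0 \ge 0$ we have $a_0 \in \Tgeq$, and the definition of $\oplus$ in $\TSS$ shows that $a_0 \oplus s \in \Tlt$ with $a_0 \in \Tgeq$ is possible only when $|a_0| < |s|$, $s \in \Tlt$, and $a_0 \oplus s = s$. Hence $s \in \Tlt$, that is $y \in \HC^-(\sval(\bm a'))$, so $y \notin \Hclosed^+(\sval(\bm a'))$.

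Finally, \cref{le:sval_hspace} gives $\sval(\bm X) \subseteq \sval(\bHclosed^+(\bm a')) = \Hclosed^+(\sval(\bm a'))$; thus for each $y \notin \sval(\bm X)$ we have produced a linear closed tropical halfspace containing $\sval(\bm X)$ but avoiding $y$, so $\sval(\bm X)$ equals the intersection of all linear closed tropical halfspaces containing it. I expect the only delicate point to be the symmetrized-semiring bookkeeping in the third paragraph — reading off from the definition of $\oplus$ that discarding a strictly dominated nonnegative coordinate leaves the tropical linear form strictly negative; everything else is immediate from \cref{thm:tropical_polar}, the cone hypothesis, and \cref{le:sval_hspace}.
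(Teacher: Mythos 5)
Your proposal is correct and follows the same strategy as the paper: zero out the constant term $\bm a_0$ and use the cone property of $\bm X$ to show that the resulting linear halfspace $\bHclosed^+(0, \bm a_1, \dots, \bm a_d)$ still contains $\bm X$. The only difference is cosmetic --- the paper deduces $y \notin \Hclosed^+(\sval(\hat{\bm a}))$ from the inclusion $\bHclosed^+(\hat{\bm a}) \subseteq \bHclosed^+(\bm a)$ (using $\bm a_0 \ge 0$, which it gets from $0 \in \bm X$, together with \cref{le:sval_hspace}), while you verify it by hand in $\TSS$, which also works.
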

\begin{proof}
  To see that, suppose that $\bm a_0 \neq 0$.
  Since $0 \in \bm X$, we have $\bm a_0 > 0$. 
  Consider $\hat{\bm a} = (0,\bm a_1, \dots, \bm a_d)$.
  Since $\bHclosed^{+}(\hat{\bm a}) \subseteq \bHclosed^{+}(\bm a)$, we get $y \notin \Hclosed^{+}(\sval(\hat{\bm a}))$ by \cref{le:sval_hspace}.
  
  It remains to show that $\bm X \subseteq \bHclosed^{+}(\hat{\bm a})$.
  Indeed, if there exists $\bm x \in \bm X$ such that $\bm x^{T} \hat{\bm a} < 0$, then for any $\bm \lambda > -\bm a_0 / \bm x^{T} \hat{\bm a} > 0$ we have $\bm \lambda \bm x \in \bm P$ but $\bm \lambda \bm x \notin \bHclosed^{+}(\bm a)$, which is a contradiction.
  
  Therefore, we can suppose that $\bm a$ satisfies $\bm a_0 = 0$.
\end{proof}

\begin{example}
We note that neither of the assumptions of \cref{thm:tropical_polar} can be skipped.
Indeed, the set $\bm X = \SetOf{\bm x \in \KK}{\bm x > 0}$ is semialgebraic and convex but not closed.
We have $\sval(\bm X) = \Tgt$, which is not an intersection of closed tropical halfspaces.
Likewise, the set $\bm Y = \SetOf{\bm x \in \KK}{\sval(\bm x) > 0}$ is closed and convex, but not semialgebraic.
We have $\sval(\bm Y) = \SetOf{x \in \TTpm}{x > 0}$, which is not an intersection of closed tropical halfspaces.
\end{example}

The following lemma gives a partial characterization of the intersection of all closed tropical halfspaces that contain a given finite set.

\begin{lemma}\label{le:weak_hull_from_Puiseux}
Given a finite set $X = \{x_1, \dots, x_m\} \subset \TTpm^d$ we have
\begin{align*}
\bigcap \SetOf{\Hclosed^+(a)}{X \subseteq \Hclosed^+(a)} &= \bigcap \SetOf{\sval(\bm X)}{\bm X \subseteq \KK^d \text{ convex } \wedge X \subseteq \sval(\bm X)} \\
&= \bigcap_{\subs \subseteq [d]} \sval\Bigl( \conv\bigl( \lif_{\subs}(x_1), \dots, \lif_{\subs}(x_m) \bigr) \Bigr) \, .
\end{align*}
\end{lemma}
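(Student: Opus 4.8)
The plan is to prove the two equalities in \cref{le:weak_hull_from_Puiseux} by establishing a cycle of inclusions, exploiting the fact that the middle term is sandwiched between the other two. Write $\mathcal{A} = \bigcap \SetOf{\Hclosed^+(a)}{X \subseteq \Hclosed^+(a)}$, $\mathcal{B} = \bigcap \SetOf{\sval(\bm X)}{\bm X \subseteq \KK^d \text{ convex}, X \subseteq \sval(\bm X)}$, and $\mathcal{C} = \bigcap_{\subs \subseteq [d]} \sval\bigl( \conv( \lif_{\subs}(x_1), \dots, \lif_{\subs}(x_m) ) \bigr)$. I would show $\mathcal{A} \subseteq \mathcal{C} \subseteq \mathcal{B} \subseteq \mathcal{A}$.

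\textbf{Step 1: $\mathcal{C} \subseteq \mathcal{B}$.} For each fixed $\subs \subseteq [d]$, the set $\bm X_\subs = \conv(\lif_\subs(x_1), \dots, \lif_\subs(x_m))$ is a convex (indeed polytope, hence semialgebraic) subset of $\KK^d$, and since $\sval(\lif_\subs(x_i)) = x_i$ for every $i$ and $\subs$, we have $X \subseteq \sval(\bm X_\subs)$. Thus each term of the intersection defining $\mathcal{C}$ is one of the terms in the intersection defining $\mathcal{B}$, giving $\mathcal{B} \subseteq \mathcal{C}$; wait—this is the reverse inclusion. Let me be careful: $\mathcal{B}$ is an intersection over a larger family, so $\mathcal{B} \subseteq \mathcal{C}$ automatically. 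So the nontrivial inclusion here is $\mathcal{C} \subseteq \mathcal{B}$, which requires showing that for \emph{every} convex $\bm X$ with $X \subseteq \sval(\bm X)$, we have $\mathcal{C} \subseteq \sval(\bm X)$. I would approach this via $\mathcal{C} \subseteq \mathcal{A}$ first (see Step 3) and then use \cref{thm:tropical_polar}-style reasoning, or more directly: it suffices to prove $\mathcal{A} \subseteq \mathcal{C}$ and $\mathcal{B} \subseteq \mathcal{A}$, since combined with $\mathcal{B} \subseteq \mathcal{C}$ this closes the cycle $\mathcal{A} \subseteq \mathcal{C} \subseteq ?$. Actually the clean cycle is: $\mathcal{B} \subseteq \mathcal{C}$ (trivial, larger family), $\mathcal{C} \subseteq \mathcal{A}$, and $\mathcal{A} \subseteq \mathcal{B}$.

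\textbf{Step 2: $\mathcal{A} \subseteq \mathcal{B}$.} Let $y \notin \mathcal{B}$; then there is a convex $\bm X \subseteq \KK^d$ with $X \subseteq \sval(\bm X)$ and $y \notin \sval(\bm X)$. The naive move is to apply \cref{thm:tropical_polar} to $\bm X$, but that requires $\bm X$ closed and semialgebraic. I would instead take $\bm X' = \conv(\{\clif(x_1), \dots, \clif(x_m)\})$, a polytope contained in $\sval^{-1}$-preimage appropriately—but we need $\bm X' \subseteq \bm X$, which need not hold. The correct route: since $y \notin \sval(\bm X)$ and (by \cite[Theorem~6.9]{JellScheidererYu:2018} the closure issue) we may first pass to a box $B$ around $y$ disjoint from $\sval(\bm X)$ using density of the order, lift $B$ to $\bm B$ via canonical lifts, separate $\bm X$ from $\bm B$ by a halfspace $\bHclosed^+(\bm a)$ as in the proof of \cref{thm:tropical_polar} (using \cref{th:hyperplane_sep}, valid since $\bm B$ is semialgebraic and $\bm X$ is convex—the separation theorem only needs \emph{one} of the two sets to be semialgebraic, or more precisely we separate $\bm X$ from the semialgebraic convex $\bm B$; here I should double-check whether $\bm X$ convex non-semialgebraic is allowed, and if not, replace $\bm X$ by $\conv$ of finitely many of its points that $\sval$-cover $X$). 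Then $X \subseteq \sval(\bm X) \subseteq \Hclosed^+(\sval(\bm a))$ by \cref{le:sval_hspace}, while $y \notin \Hclosed^+(\sval(\bm a))$ by \cref{le:hspace_topo} since $y$ is interior to $B \subseteq \Hclosed^-(\sval(\bm a))$. Hence $y \notin \mathcal{A}$.

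\textbf{Step 3: $\mathcal{C} \subseteq \mathcal{A}$.} Let $y \notin \mathcal{A}$: there is a closed tropical halfspace $\Hclosed^+(a)$ with $X \subseteq \Hclosed^+(a)$ but $y \notin \Hclosed^+(a)$, i.e. $y \in \HC^-(a)$. By applying a sign flip (replacing $a$ by $\ominus a$ on a coordinate subset) we may assume $\Hclosed^+(a)$ is of type $\subs$ for some $\subs \subseteq [d]$; set $K = [d]_0 \setminus \subs$. By \cref{le:lift_type_hspace}, $\lif_\subs(x_i) \in \bHclosed^+(\lif_K(a))$ for every $i$, hence $\conv(\lif_\subs(x_1), \dots, \lif_\subs(x_m)) \subseteq \bHclosed^+(\lif_K(a))$, and applying $\sval$ with \cref{le:sval_hspace} gives $\sval(\conv(\lif_\subs(x_1), \dots, \lif_\subs(x_m))) \subseteq \Hclosed^+(\sval(\lif_K(a))) = \Hclosed^+(a)$. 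Since $y \notin \Hclosed^+(a)$, we get $y \notin \sval(\conv(\lif_\subs(x_1), \dots, \lif_\subs(x_m)))$, so $y \notin \mathcal{C}$. \textbf{The main obstacle} I anticipate is making Step 2 fully rigorous about the semialgebraicity hypothesis in \cref{th:hyperplane_sep}: an arbitrary convex $\bm X$ with $X \subseteq \sval(\bm X)$ is not semialgebraic, so one cannot directly separate $\bm X$ from $\bm B$; the fix is to observe that it is enough to separate the \emph{finite} point set $\{\clif(x_i)\}$—no wait, those need not lie in $\bm X$—so the genuinely correct fix is to realize that we only need the direction $\mathcal{A} \subseteq \mathcal{C}$ and $\mathcal{C} \subseteq \mathcal{A}$ together with $\mathcal{B} \subseteq \mathcal{C}$, plus a direct argument that $\mathcal{C} \subseteq \mathcal{B}$: for any convex $\bm X$ with $X \subseteq \sval(\bm X)$, pick for each $i$ a point $\bm x_i \in \bm X$ with $\sval(\bm x_i) = x_i$; then $\conv(\bm x_1, \dots, \bm x_m) \subseteq \bm X$ is a polytope, so $\sval(\conv(\bm x_1, \dots, \bm x_m)) \subseteq \sval(\bm X)$, and it remains to show $\mathcal{C} \subseteq \sval(\conv(\bm x_1, \dots, \bm x_m))$, which follows because $\sval(\conv(\bm x_1, \dots, \bm x_m))$ is one of the closed-halfspace intersections—here we invoke \cref{thm:tropical_polar} on this \emph{polytope} (closed, semialgebraic) together with the fact that it $\sval$-contains $X$, reducing to showing $\mathcal{A} \subseteq \sval(\conv(\bm x_1, \dots, \bm x_m))$. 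Reorganizing, the clean final cycle is $\mathcal{A} \subseteq \mathcal{B}$ (Step 2, using that for the relevant separations we work with polytopes $\conv(\bm x_i)$), $\mathcal{B} \subseteq \mathcal{C}$ (larger family, trivial), $\mathcal{C} \subseteq \mathcal{A}$ (Step 3, clean via \cref{le:lift_type_hspace}), and I would present it in that order.
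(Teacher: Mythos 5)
Your final cycle $\mathcal{A} \subseteq \mathcal{B} \subseteq \mathcal{C} \subseteq \mathcal{A}$ is exactly the paper's chain $U \subseteq V \subseteq W \subseteq U$, with the same key ingredients: the reduction of the middle intersection to polytopes $\conv(\bm x_1,\dots,\bm x_m)$ so that \cref{thm:tropical_polar} applies, the trivial inclusion into the $\lif_\subs$-family, and the lift-of-type-$\subs$ argument via \cref{le:lift_type_hspace} and \cref{le:sval_hspace} for $\mathcal{C}\subseteq\mathcal{A}$. Two small points worth correcting in the write-up: in Step~3 no sign flip is needed, since by definition every closed tropical halfspace $\Hclosed^+(a)$ is automatically of type $\subs = \SetOf{i\ge 1}{a_i\in\Tgt}$; and your initial guess that \cref{th:hyperplane_sep} would allow one non-semialgebraic set is false (the appendix gives a counterexample), so the polytope reduction you eventually land on is not optional but essential — state it as the first move in Step~2 rather than as a late fix. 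Apart from that the argument is correct and essentially identical to the paper's proof.
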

\begin{proof}
Denote 
\begin{align*}
  U &= \bigcap \SetOf{\Hclosed^+(a)}{X \subseteq \Hclosed^+(a)} \, , \\
  V &= \bigcap \SetOf{\sval(\bm X)}{\bm X \subseteq \KK^d \text{ convex } \wedge X \subseteq \sval(\bm X)} \, , \\
  W &= \bigcap_{\subs \subseteq [d]} \sval\Bigl( \conv\bigl( \lif_{\subs}(x_1), \dots, \lif_{\subs}(x_m) \bigr) \Bigr) \, .
\end{align*}
We start by showing that  
\begin{equation}\label{eq:simpler_inter_convex}
  V = \bigcap\SetOf{ \sval\Bigl( \conv\bigl( \bm x_1, \dots, \bm x_m \bigr) \Bigr)}{\forall i, \, \sval(\bm x_i) = x_i} \, .
\end{equation}
Indeed, the inclusion $\subseteq$ in \cref{eq:simpler_inter_convex} holds as we just range over a smaller set. 
To prove the opposite inclusion, it is enough to observe that any convex set $\bm X$ such that $\{x_1, \dots, x_m\} \subseteq \sval(\bm X)$ contains some set of the form $\conv\bigl( \bm x_1, \dots, \bm x_m \bigr)$.
Hence, the equality \cref{eq:simpler_inter_convex} holds.

As $\conv(\bm x_1, \dots, \bm x_m)$ is a closed convex semialgebraic set for each choice with $\sval(\bm x_i) = x_i$ for all $i \in [m]$, we can apply \cref{thm:tropical_polar} to each set in the intersection \cref{eq:simpler_inter_convex}.
This implies that $V$ is an intersection of a family of closed tropical halfspaces.
Since $X \subseteq V$, all of these halfspaces contain $X$.
Hence, $U \subseteq V$.

The inclusion $V \subseteq W$ is trivial.
Therefore, it remains to prove that $W \subseteq U$.
To this end, take a point $z \notin U$.
By definition, there is a closed tropical halfspace $\Hclosed^+(a)$ that contains $X$ with $z \notin \Hclosed^+(a)$.
Suppose that $\Hclosed^+(a)$ is of type $\subs \subseteq [d]$ and let $K = [d]_0 \setminus \subs$ be its complement. 
Then, \cref{le:lift_type_hspace} shows that $\lif_\subs(x_i) \in \bHclosed^+\bigl(\lif_{K}(a)\bigr)$ for every $i \in [m]$.
Since the set $\bHclosed^+\bigl(\lif_{K}(a)\bigr)$ is convex, we obtain
\[
\conv\bigl( \lif_{\subs}(x_1), \dots, \lif_{\subs}(x_m) \bigr) \subset \bHclosed^+\bigl(\lif_{K}(a)\bigr) \, .
\]
Using the representation of the valuation of halfspace from \cref{le:sval_hspace}, we get
\begin{equation*}
  W \subseteq \sval\left( \conv\bigl( \lif_{\subs}(x_1), \dots, \lif_{\subs}(x_m) \bigr) \right) \subset \sval\left(\bHclosed^+\bigl(\lif_{K}(a)\bigr)\right) = \Hclosed^+(a) .
\end{equation*}
In particular, we get $z \notin W$ and so $W \subseteq U$ as claimed.
\end{proof}

\subsection{Tropicalization and cones}
\label{sec:tropicalization+cones}

We provide refined representations of the tropicalization of subsets of $\KK^d$ in case they are cones.
These insights will be used to derive the main representation theorems in \cref{sec:TC-hull+as+intersection}. 

The next lemma strengthens the claim of \cref{cor:linear+halfspaces+outer+cone} for polyhedral cones.

\begin{lemma}\label{le:sval_cone}
Suppose that $\bm P \subseteq \KK^d$ is a polyhedral cone. Then, $\sval(\bm P)$ is an intersection of finitely many linear closed tropical halfspaces.
\end{lemma}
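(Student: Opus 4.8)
The plan is to combine \cref{thm:tropical_polar} (more precisely its conic refinement \cref{cor:linear+halfspaces+outer+cone}) with a compactness/finiteness argument that reduces the a priori infinite intersection of linear closed tropical halfspaces to a finite one. First I would recall that $\bm P$, being a polyhedral cone, is in particular a nonempty closed convex semialgebraic set, so \cref{thm:tropical_polar} applies: $\sval(\bm P)$ equals the intersection of all closed tropical halfspaces that contain it, and by \cref{cor:linear+halfspaces+outer+cone} one may take all of these halfspaces to be linear (since $0 \in \bm P$ forces the separating halfspaces to pass through the origin). Thus $\sval(\bm P) = \bigcap_{i \in I} \Hclosed^+(\Zero, a^{(i)})$ for some index set $I$, where each $a^{(i)} \in \TTpm^d \setminus \{\Zero\}$.

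The key step is to show that finitely many of these halfspaces suffice. The homeomorphism $\slog \colon \RR^d \to \TTpm^d$ lets us transfer everything to $\RR^d$; under this identification $\sval(\bm P)$ is a closed subset of $\RR^d$ whose image is the intersection of a family of closed sets $C_i = \slog^{-1}(\Hclosed^+(\Zero,a^{(i)}))$. The natural route is to argue that a linear closed tropical halfspace $\Hclosed^+(\Zero,a)$ is determined (up to positive scaling and up to which coordinates carry zero/nonzero entries) by a finite combinatorial datum: its \emph{type} $\subs = \{i : a_i \in \Tgt\}$ together with the genuine real parameters $|a_i| \in \RR$ for $i \in \supp(a)$. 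I would show that $\sval(\bm P)$, being the valuation of a polyhedron, is itself a finite polyhedral complex in $\RR^d$ — this is classical for tropicalizations of polyhedra over Puiseux series (and can be invoked via the semialgebraic machinery already set up, or via \cref{le:weak_hull_from_Puiseux} applied to a finite generating set). Concretely: write $\bm P = \cone(\bm v_1, \dots, \bm v_k)$ for finitely many generators $\bm v_j \in \KK^d$; then $\sval(\bm P)$ contains $\sval(\bm v_1), \dots, \sval(\bm v_k)$ and, using \cref{cor:linear+halfspaces+outer+cone} together with \cref{le:sval_hspace}, every separating linear tropical halfspace is obtained by tropicalizing a linear halfspace of $\KK^d$ that contains $\bm P$; since $\bm P$ is polyhedral, its (closed) polar cone $\bm P^{\circ}$ is again polyhedral, hence finitely generated, say $\bm P^{\circ} = \cone(\bm a^{(1)}, \dots, \bm a^{(m)})$. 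Then $\bm P = \bigcap_{\ell=1}^m \bHclosed^+(0,\bm a^{(\ell)})$, and I claim $\sval(\bm P) = \bigcap_{\ell=1}^m \sval(\bHclosed^+(0,\bm a^{(\ell)})) = \bigcap_{\ell=1}^m \Hclosed^+(\Zero, \sval(\bm a^{(\ell)}))$ by \cref{le:sval_hspace}. The inclusion $\subseteq$ is immediate from monotonicity of $\sval$; the reverse inclusion is exactly the content of the second assertion of \cref{thm:tropical_polar} once one checks that no further halfspace is needed — i.e. that the finite family $\{\Hclosed^+(\Zero,\sval(\bm a^{(\ell)}))\}_{\ell}$ already cuts out $\sval(\bm P)$.

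To close that last gap I would argue by contradiction: suppose $z \in \bigcap_{\ell} \Hclosed^+(\Zero,\sval(\bm a^{(\ell)}))$ but $z \notin \sval(\bm P)$. By \cref{thm:tropical_polar}/\cref{cor:linear+halfspaces+outer+cone} there is a linear tropical halfspace $\Hclosed^+(\Zero, b)$ containing $\sval(\bm P)$ with $z \notin \Hclosed^+(\Zero, b)$; lift $b$ to $\bm b \in \KK^d$ with $\sval(\bm b) = b$ and $\sval(\bHclosed^+(0,\bm b)) = \Hclosed^+(\Zero,b)$ (via \cref{le:sval_hspace}). Then $\sval(\bm P) \subseteq \Hclosed^+(\Zero,b) = \sval(\bHclosed^+(0,\bm b))$ does not by itself give $\bm P \subseteq \bHclosed^+(0,\bm b)$; but one can instead choose the lift adapted to the type of $\Hclosed^+(\Zero,b)$ as in \cref{le:lift_type_hspace}, applied to the generators $\sval(\bm v_j)$, to produce a genuine linear halfspace of $\KK^d$ containing all $\bm v_j$ (hence containing $\bm P$) whose tropicalization separates $z$ from $\sval(\bm P)$ — so $\bm b' := \lif_K(b)$ lies in $\bm P^{\circ}$, hence is a nonnegative combination of $\bm a^{(1)}, \dots, \bm a^{(m)}$, and therefore $\Hclosed^+(\Zero, b) = \sval(\bHclosed^+(0,\bm b'))$ is already implied by the finitely many halfspaces $\Hclosed^+(\Zero,\sval(\bm a^{(\ell)}))$, contradicting $z \in \bigcap_\ell \Hclosed^+(\Zero,\sval(\bm a^{(\ell)}))$ but $z \notin \Hclosed^+(\Zero,b)$.

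The main obstacle I anticipate is precisely this last point: the tropicalization map does not commute with arbitrary intersections, so one cannot naively say ``$\bm P$ is a finite intersection of halfspaces, therefore $\sval(\bm P)$ is too.'' One genuinely has to exploit \cref{thm:tropical_polar} to know that \emph{every} necessary separating tropical halfspace already arises from tropicalizing a halfspace in the finitely generated polar cone $\bm P^{\circ}$, and the \cref{le:lift_type_hspace} device (choosing the lift according to the type of the halfspace and the orthant of the point) is the key technical tool making this work at the boundary. A secondary, more routine issue is bookkeeping the distinction between a linear tropical halfspace and its defining vector up to positive scaling and up to support, but this is harmless since scaling $a$ by $\lambda \in \Tgt$ and the ambiguity on zero entries do not change $\Hclosed^+(\Zero, a)$.
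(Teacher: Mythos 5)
Your central claim — that $\sval(\bm P)$ equals $\bigcap_{\ell}\Hclosed^+(\Zero,\sval(\bm a^{(\ell)}))$ for a finite set of generators $\bm a^{(1)},\dots,\bm a^{(m)}$ of the dual cone of $\bm P$ — is false, and this is the decisive gap. Take $d=3$ and $\bm a^{(1)}=(1,1,-1)$, $\bm a^{(2)}=(1,-1,1)$ in $\KK^3$; these two rays generate the dual cone of $\bm P=\SetOf{\bm x\in\KK^3}{\bm x_1+\bm x_2-\bm x_3\ge 0,\ \bm x_1-\bm x_2+\bm x_3\ge 0}$, and adding the two inequalities gives $\bm x_1\ge 0$ for all $\bm x\in\bm P$, so every point of $\sval(\bm P)$ has first coordinate in $\Tgeq$. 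Yet $z=(\ominus 0,0,0)$ satisfies $\sval(\bm a^{(1)})\odot z = \ominus 0\oplus 0\ominus 0 = \bullet 0\in\Tzero$ and $\sval(\bm a^{(2)})\odot z = \ominus 0\ominus 0\oplus 0 = \bullet 0\in\Tzero$, so $z$ lies in both $\Hclosed^+(\Zero,\sval(\bm a^{(1)}))$ and $\Hclosed^+(\Zero,\sval(\bm a^{(2)}))$, while $z\notin\sval(\bm P)$ because $z_1=\ominus 0$. The combination $\bm a^{(1)}+\bm a^{(2)}=(2,0,0)$ tropicalizes to $(0,\Zero,\Zero)$, which cuts off $z$, but this consequence does not transfer tropically: the evaluations $\sval(\bm a^{(\ell)})\odot z$ are balanced, so $\sval(\bm a^{(1)}+\bm a^{(2)})$ lies in the TO-hull of $\{\sval(\bm a^{(1)}),\sval(\bm a^{(2)})\}$ but not in its TC-hull, and closed tropical halfspaces are only TC-convex, so membership does not propagate through the combination.

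This is exactly why the paper does not tropicalize a single generating set. Its proof fixes, for each of the $2^d$ closed orthants $\bort\subset\KK^d$, a finite generating set $\bm U_{\bort}$ of the polyhedral cone obtained by intersecting the dual cone with $\bort$, and works with $\bm U=\bigcup_{\bort}\bm U_{\bort}$. The reason this suffices is the orthant-wise compatibility of valuation and conic hull (Lemma~8 of~\cite{AllamigeonGaubertSkomra:2019}): when the generators in a conic combination all lie in a single closed orthant, no sign cancellation occurs, and $\sval$ of the combination lands in $\wcone(\sval(\bm U_{\bort}))$ — a genuine TC-conic hull — rather than merely in the weaker TO-hull afforded by \cref{le:basic_sval}. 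TC-convexity of $\Hclosed^+(\Zero,y)$ then finishes the argument. In the example above, the dual cone meets the all-nonnegative orthant of $\KK^3$ in the ray spanned by $(1,0,0)$, so $(0,\Zero,\Zero)$ appears in $\sval(\bm U)$ and $z$ is properly excluded.

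Your fallback via \cref{le:lift_type_hspace} does not repair this. That lemma controls the particular lift $\lif_{\subs}(\sval(\bm v_j))$ of a \emph{tropical} point, not the original Puiseux generators $\bm v_j$ of $\bm P$: from $\sval(\bm v_j)\in\Hclosed^+(\Zero,b)$ you obtain that $\lif_{\subs}(\sval(\bm v_j))$ lies in the lifted halfspace, but that is a different point from $\bm v_j$, so you cannot conclude that $\bm P$ itself is contained in the lifted halfspace, hence cannot place the lifted vector in the dual cone and decompose it over $\bm a^{(1)},\dots,\bm a^{(m)}$.
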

\begin{proof}
  Let $\bm P = \SetOf{\bm x \in \KK^d}{\bm A \bm x \ge 0}$ for some matrix $\bm A \in \KK^{n \times d}$.
  By Farkas' lemma~\cite[Corollary~7.1h]{Schrijver:1987}, for every $\bm a \in \KK^d$ we have the equivalence
  \begin{equation}\label{eq:farkas_puiseux}
  \bm P \subseteq \bHclosed^{+}(0,\bm a) \iff \bm a \in \SetOf{\bm y^{T}\bm A}{\bm y \ge 0} \, .
  \end{equation}
  The intersection of $\SetOf{\bm y^{T}\bm A}{\bm y \ge 0}$ with any closed orthant $\KK^d$ is a polyhedral cone, and so it is generated by a finite family of rays by the Minkowski--Weyl theorem~\cite[Corollary~7.1a]{Schrijver:1987}.
  For any closed orthant $\bort \subset \KK^d$, let $\bm U_{\bort} \subset \KK^d$ be a finite set such that $\SetOf{\bm y^{T}\bm A}{\bm y \ge 0} \cap \bort = \cone(\bm U_{\bort})$.
  Let $\bm U = \bigcup_{\bort} \bm U_{\bort}$ be the set of all rays obtained in this way.
  We claim that $\sval(\bm P) = \bigcap_{\bm u \in \bm U} \Hclosed^+(\Zero, \sval(\bm u))$.
  The inclusion ``$\subseteq$'' follows by combining \cref{eq:farkas_puiseux} with \cref{le:sval_hspace}.

  To prove the opposite inclusion, let $y \notin \sval(\bm P)$.
  By \cref{thm:tropical_polar}, there exists $\bm a \in \KK^{d+1}$ such that $\bm P \subseteq \bHclosed^{+}(\bm a)$ and $y \notin \Hclosed^{+}(\sval(\bm a))$.
  Since $\bm P$ is a cone, we can choose $\bm a$ in such a way that $\bm a_0 = 0$ by \cref{cor:linear+halfspaces+outer+cone}; hence, we let $\bm a \in \KK^{d}$. 
  
  Let $\bort \subset \KK^d$ be a closed orthant such that $\bm a = (\bm a_1, \dots, \bm a_d) \in \bort$. 
  By \cref{eq:farkas_puiseux} we have ${\bm a} \in \cone(\bm U_{\bort})$.
  Denote $\bm U_{\bort} = \{\bm u_1, \dots, \bm u_m\}$. 
  Since the orthant $\bort$ is fixed, we have $\sval({\bm a}) \in \wcone\left(\sval({\bm U_{\bort}})\right)$ 
  by \cite[Lemma~8]{AllamigeonGaubertSkomra:2019}. 
  Suppose that $y \in \Hclosed^{+}(\Zero,\sval(\bm u_i))$ for all $i \in [m]$.
  Then, we also have $\sval(\bm u_i) \in \Hclosed^{+}(\Zero,y)$ for all $i \in [m]$.
  As closed halfspaces are convex by \cref{cor:hspace_weakly_convex}, we get $\whull\left(\sval({\bm U_{\bort}})\right)=\whull\bigl(\sval(\bm u_1), \dots, \sval(\bm u_m)\bigr) \subseteq \Hclosed^{+}(\Zero,y)$.
  Furthermore, since the tropical halfspace $\Hclosed^{+}(\Zero,y)$ is linear, we get $\whull\left(\sval({\bm U_{\bort}})\right) \subseteq \Hclosed^{+}(\Zero,y)$. 
  Therefore $\sval({\bm a}) \in \Hclosed^{+}(\Zero,y)$, which implies that $y \in \Hclosed^{+}(\Zero,\sval(\bm a))$, giving a contradiction.
  
  Hence, there exists $i \in [m]$ such that $y \notin \Hclosed^{+}(\Zero,\sval(\bm u_i))$.
  Since $y$ was arbitrary, we get $\bigcap_{\bm u \in \bm U} \Hclosed^+(\Zero, \sval(\bm u)) \subseteq \sval(\bm P)$.
\end{proof}

Finally, we demonstrate how the tropicalization of a cone is composed of convex hulls. 
This will be used in the proof of the Minkowski-Weyl theorem for TC-convex cones. 

\begin{lemma}\label{le:multiplied_conv}
Let $\bm X \subseteq \KK^d$ be an arbitrary set. Take $\bm \lambda \in \KK$ such that $\bm \lambda \ge 0$ and denote $\sval(\bm \lambda) = \lambda$. Then, we have 
\begin{equation}\label{eq:multiplied_conv_puisuex}
\sval\bigl(\conv(\bm \lambda \bm X)\bigr) = \lambda \odot \sval\bigl(\conv( \bm X)\bigr) \, .
\end{equation}
In particular, we have the equality
\[
\sval\bigl(\cone(\bm X)\bigr) = \bigcup \SetOf{\sval\bigl(\conv(t^{\lambda}\bm X)\bigr)}{\lambda \in \Tgeq} \, .
\]
\end{lemma}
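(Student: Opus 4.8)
The plan is to prove the scaling identity \cref{eq:multiplied_conv_puisuex} first, and then deduce the statement about cones by taking a union over all nonnegative scalars. For the first part, I would split into the trivial case $\bm\lambda = 0$ (both sides equal $\{\Zero\}$, using that $\conv(\bm X)$ is nonempty or handling $\bm X = \emptyset$ separately) and the case $\bm\lambda > 0$. In the latter case the map $\bm x \mapsto \bm\lambda \bm x$ is a bijection of $\KK^d$ that commutes with taking convex hulls, so $\conv(\bm\lambda\bm X) = \bm\lambda\,\conv(\bm X)$; applying $\sval$ and using the multiplicativity from \cref{le:basic_sval}(ii), namely $\sval(\bm\lambda\bm x) = \sval(\bm\lambda)\odot\sval(\bm x) = \lambda\odot\sval(\bm x)$, componentwise, gives $\sval(\conv(\bm\lambda\bm X)) = \lambda\odot\sval(\conv(\bm X))$. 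The only mild subtlety is that $\sval$ is not injective, but here we only need that it maps the set $\bm\lambda\cdot S$ onto $\lambda\odot\sval(S)$ for $S = \conv(\bm X)$, which follows directly from pointwise multiplicativity since $\bm\lambda$ is a fixed scalar.

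For the second part, recall that $\cone(\bm X) = \SetOf{\bm\lambda\bm x}{\bm\lambda > 0,\ \bm x\in\conv(\bm X)} = \bigcup_{\bm\lambda > 0}\bm\lambda\,\conv(\bm X)$ (the standard fact that the conic hull is the union of positive scalings of the convex hull; if $\Zero$-rays matter one adjusts by allowing $\bm\lambda \ge 0$ or noting $\sval$ sends $0$-series to $\Zero$). Taking $\sval$ of both sides and using that $\sval$ of a union is the union of $\sval$'s, together with \cref{eq:multiplied_conv_puisuex}, yields
\[
\sval(\cone(\bm X)) = \bigcup_{\bm\lambda > 0}\sval(\bm\lambda\,\conv(\bm X)) = \bigcup_{\bm\lambda > 0}\sval(\bm\lambda)\odot\sval(\conv(\bm X)) = \bigcup_{\lambda\in\Tgt}\lambda\odot\sval(\conv(\bm X)) \, .
\]
Then I would observe that $\lambda\odot\sval(\conv(\bm X)) = \sval(t^\lambda\,\conv(\bm X)) = \sval(\conv(t^\lambda\bm X))$ by \cref{eq:multiplied_conv_puisuex} applied with $\bm\lambda = t^\lambda$ (whose signed valuation is $\lambda$), and that ranging $\lambda$ over $\Tgt$ versus $\Tgeq$ differs only by the point $\Zero$, which lies in $\sval(\conv(\bm X))$ whenever that set is nonempty (and the $\lambda = \Zero$ term contributes exactly $\{\Zero\}$). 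This gives the claimed equality $\sval(\cone(\bm X)) = \bigcup\SetOf{\sval(\conv(t^\lambda\bm X))}{\lambda\in\Tgeq}$.

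The main obstacle, such as it is, is bookkeeping around the degenerate cases: the empty set, the zero scalar, and the distinction between $\Tgt$ and $\Tgeq$ in the index set (i.e.\ whether $\Zero$ is included). None of these is deep, but they must be handled cleanly so that both sides genuinely agree as sets. Everything else is a direct consequence of the pointwise multiplicativity of the signed valuation established in \cref{le:basic_sval} and the elementary interaction of scalar multiplication with convex and conic hulls over the ordered field $\KK$.
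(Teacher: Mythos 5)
Your proposal is correct and follows essentially the same route as the paper: the first identity comes from $\conv(\bm\lambda\bm X)=\bm\lambda\,\conv(\bm X)$ together with the multiplicativity $\sval(\bm\lambda\bm y)=\lambda\odot\sval(\bm y)$ from \cref{le:basic_sval}(ii), and the cone statement follows by writing $\cone(\bm X)$ as a union of scalings of $\conv(\bm X)$ and applying \cref{eq:multiplied_conv_puisuex} to replace each scalar by $t^\lambda$. The paper skips the explicit $\bm\lambda=0$ and $\Tgt$-versus-$\Tgeq$ bookkeeping that you spell out, but those remarks are harmless and the argument is the same.
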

\begin{proof}
Let $x \in \sval\bigl(\conv(\bm \lambda \bm X)\bigr)$. Since $\conv(\bm \lambda \bm X) = \bm \lambda \conv(\bm X)$, there exists $\bm y \in \conv(\bm X)$ such that $x = \sval(\bm \lambda \bm y) = \sval(\bm \lambda) \odot \sval(\bm y) = \lambda \odot \sval(\bm y)$. Hence $x \in \lambda \odot \sval\bigl(\conv( \bm X)\bigr)$. Conversely, suppose that $x \in \lambda \odot \sval\bigl(\conv( \bm X)\bigr)$. Then, there exists $\bm y \in \conv(\bm X)$ such that $x = \lambda \odot \sval(\bm y) = \sval(\bm \lambda \bm y)$, so that $x \in \sval\bigl(\conv(\bm \lambda \bm X)\bigr)$. To prove the second claim, note that~\cref{eq:multiplied_conv_puisuex} gives $\sval\bigl(\conv(\bm \lambda \bm X)\bigr) = \sval\bigl(\conv(t^{\lambda}\bm X)\bigr)$. In particular, we get
\begin{align*}
\sval\bigl(\cone(\bm X)\bigr) &= \bigcup \SetOf{\sval\bigl(\conv(\bm \lambda \bm X)\bigr)}{\bm \lambda \ge 0} \\
&= \bigcup \SetOf{\sval\bigl(\conv(t^{\lambda}\bm X)\bigr)}{\lambda \in \Tgeq} \, . \qedhere
\end{align*}
\end{proof}

\subsection{Lifts of TC-hemispaces}

Now, we prove our crucial auxiliary proposition relating TC-hemispaces with convex lifts.

\begin{proof}[Proof of \cref{prop:hemispace_puiseux}]
  First, note that the claim is trivial if $G \in \{\emptyset, \TTpm^d\}$.
  Otherwise, \cref{th:hemispace} shows that there exists $a = (a_0, \dots, a_d) \in \TTpm^{d+1}$, $(a_1, \dots, a_d) \neq \Zero$ such that $\HC^{+}(a) \subseteq G \subseteq \Hclosed^{+}(a)$.

  We prove the claim by induction over $d$.
  If $d = 1$, then the claim follows from the fact that, for any $a \in \TTpm$, each set $\SetOf{\bm x \in \KK}{\sval(\bm x) {\sim} a}$ with ${\sim} \in \{\ge, >, \le, <\}$ is convex. 

  If $d > 1$, then we let $X = \SetOf{x \in \TTpm^d}{\domin(a,x) \neq \emptyset}$ and its complement $\Xperp = \TTpm^{d} \setminus X$.
  Note that $\Xperp$ is essentially the set of points whose support is disjoint from the support of $a$. 
  We will construct a lift for the intersection of $G$ with $X$ and $X'$ and finish by taking their convex hull. 
  
\smallskip

  We start by showing that there exists a convex set $\Gper \subseteq \KK^d$ with $\sval(\Gper) = G \cap \Xperp$.
  If $\Xperp = \emptyset$, then we set $\Gper = \emptyset$.
  Otherwise, let $K = \SetOf{k \in [d]}{a_k \neq \Zero}$ and note that we have $a_0 = \Zero$ and $\Xperp = \SetOf{x \in \TTpm^{d}}{x_k = \Zero \; \forall k \in K}$.
  
  If $K = [d]$, then we take $\Gper = \{0\}$ if $\Zero \in G$ and $\Gper = \emptyset$ otherwise.
  Note that $G \cap \Xperp$ and $\Xperp \setminus G$ are TC-convex as intersection of TC-convex sets. 
  If $K \subsetneq [d]$, then let $\pi \colon \TTpm^d \to \TTpm^{d - |K|}$ be the projection that forgets the coordinates from $K$.
  As the components in $K$ of $G \cap \Xperp, \Xperp \setminus G \subseteq \Xperp$ are all $\Zero$, \cref{prop:local-generation-TC-hull} implies that the set $\pi(G \cap \Xperp)$ is a TC-hemispace in $\TTpm^{d-|K|}$.
  Hence, by the induction hypothesis, we have $\pi(G \cap \Xperp) = \sval(\bm G')$ for some convex set $\bm G' \subseteq \KK^{d- |K|}$.
  We embed the set $\bm G'$ into $\KK^d$ by adding $0$ coordinates to every point and this gives a convex set $\Gper \subseteq \KK^{d}$ such that $\sval(\Gper) = G \cap \Xperp$.

  \smallskip

  Next, we show that there exists a convex set $\Gali \subseteq \KK^{d}$ with $\sval(\Gali) = G \cap X$.
  To construct an appropriate lift with \cref{def:li-type-J}, let $\subs = \SetOf{k \in [d]}{a_k > \Zero}$ be such that $\Hclosed^{+}(a)$ is of type $\subs$. 
  Set $\Gali = \conv(\SetOf{\lif_{\subs}(x) \in \KK^d}{x \in G \cap X})$.
  Then, $\Gali$ is convex and right from the definition we have $G \cap X \subseteq \sval(\Gali)$.  

  To prove that $\sval(\Gali) \subseteq G \cap X$, let $\bm y \in \Gali$ be arbitrary and $y = \sval(\bm y)$. 
  Then, the Carath{\'e}odory theorem in $\KK^d$ shows that
  \begin{equation} \label{eq:convex-combination-lift-y}
    \bm y = \bm \lambda_1 \lif_{\subs}(y^{(1)}) + \dots + \bm \lambda_m \lif_{\subs}(y^{(m)})
  \end{equation}
for some $m \le d+1$, $y^{(1)}, \dots, y^{(m)} \in G \cap X$, $\bm \lambda_1, \dots, \bm \lambda_m > 0$, and $\sum_i \bm \lambda_i = 1$.
As we aim to prove that $y \in G \cap X$, we will estimate the leading terms of the components in the sum~\cref{eq:convex-combination-lift-y}. 
  For every $i \in [m]$ let $\lambda_i = \sval(\bm \lambda_i) > \Zero$ and $\lclam_i = \lc(\bm \lambda_i) > 0$. 
  We assume that the points $y^{(1)}, \dots, y^{(m)}$ are labeled such that $\lclam_1 \ge \dots \ge \lclam_m$.
  To relate $y$ with the hemispace property of $G$, we set
  \begin{equation} \label{eq:leftsum-sval-y-points}
  z = \lambda_1 \odot y^{(1)} \lplus \dots \lplus \lambda_m \odot y^{(m)} \,.
\end{equation} 
  With $\bigoplus_i \lambda_i = 0$, the TC-convexity of $G$ implies $z \in G$. 
  Moreover, we also have $z \in X$.
  Indeed, this is trivial if $a_0 \neq \Zero$.
  Otherwise, because of $y^{(1)} \in X$, it has a component $k \in [d]$ such that $a_k \odot y^{(1)}_k \neq \Zero$ and so $a_k \odot z_k \neq \Zero$.
  Thus, $z \in G \cap X$. 
  
  We will prove that $y \in G$ using \cref{le:pos_dir_hemispace}. To do so, suppose that there exists $k \in \posdomin(a,z) \cap [d]$.  If $a_k > \Zero$ and $z_k > \Zero$, then we define a vector $u \in \RR^m$ by
  \begin{equation*}
    u_i =
    \begin{cases}
     \lc\Bigl(\bigl(\lif_{\subs}(y^{(i)})\bigr)_k\Bigr) & \text{ if } \left|\lambda_i \odot y^{(i)}_k\right| = |z_k|, \\
     0 & \text{ otherwise. }
   \end{cases}
\end{equation*}
Since $k \in J$, we have $u \in \{-1,0,d+1\}^m$.
Moreover, since $z_k> \Zero$, the vector $u$ is nonzero.
Let $p \in [m]$ be the smallest index such that $u_p \neq 0$, which is exactly the entry defining the $k$th component in the left sum~\cref{eq:leftsum-sval-y-points}. 
Hence, we have $z_k = \lambda_p \odot y^{(p)}_k$.
Now, $z_k > \Zero$ implies $y^{(p)}_k > \Zero$ and therefore $u_p = d+1$ by the definition of the lift of type $J$.

We are ready to estimate the sum of the leading coefficients of the dominating terms in the sum $\bm \lambda_1 \bigl(\lif_{\subs}(x_1)\bigr)_k + \dots + \bm \lambda_m \bigl(\lif_{\subs}(x_m)\bigr)_k$. 
This is just 
\begin{equation}\label{eq:lc_in_lifts}
\begin{aligned}
\lclam_1 u_1 + \dots + \lclam_m u_m &\ge (d+1)\lclam_p - \lclam_{p+1} - \dots - \lclam_{m} \\
&\ge (d+1)\lclam_p - (m - p)\lclam_{p} \ge p\lclam_p > 0 \, .
\end{aligned}
\end{equation}
Therefore, there is no cancellation of the leading terms in~\cref{eq:convex-combination-lift-y} which implies $y_k = \sval(\bm y_k) = z_k$. 
Analogously, we obtain $y_k = z_k$ if $a_k < \Zero$ and $z_k < \Zero$.
In particular, $y_k = z_k$ for every $k \in \posdomin(a,z) \cap [d]$.
Furthermore, for every $\ell \in [d]$ we have $\ominus |z_{\ell}| \le y_{\ell} \le |z_{\ell}|$ by \cref{le:basic_sval}. 
Therefore, $y \in G$ by \cref{le:pos_dir_hemispace}.

Moreover, we have $y \in X$.
Indeed, this is trivial if $a_0 \neq \Zero$.
Otherwise, $ \posdomin(a,z) \cap [d] \neq \emptyset$ and so $a_k \odot y_k \neq \Zero$ for at least one $k \in [d]$.
Thus, we have $\sval(\Gali) = G \cap X$.

\smallskip

To finish the proof, let $\bm G = \conv(\Gper \cup \Gali) \subseteq \KK^d$.
Then, $\bm G$ is convex and $G \subseteq \sval(\bm G)$.
To prove that $\sval(\bm G) \subseteq G$, note that this inclusion is trivial if $a_0 \neq \Zero$.
Otherwise, let $\bm z \in \bm G$.
Then, there exist $\bm x \in \Gper, \bm y \in \Gali, \bm \lambda, \bm \mu \ge 0, \bm \lambda + \bm \mu = 1$ such that $\bm z = \bm \lambda \bm x + \bm \mu \bm y$.
Denote $z = \sval(\bm z)$, $x = \sval(\bm x)$, $y = \sval(\bm y)$, $\lambda = \sval(\bm \lambda)$, $\mu = \sval(\bm \mu)$, and $w = \lambda \odot x \lplus \mu \odot y \in G$.
If $\lambda = \Zero$ or $\mu = \Zero$, then we trivially have $z \in G$.
Otherwise, $y \in X$ and $\mu \neq \Zero$ imply that $w \in X$.
Furthermore, $x \in \Xperp$ implies that $z_k = \mu \odot y_k = w_k$ for all $k \in K$.
Therefore, $z \in G$ by \cref{le:good_sup_slice}.
\end{proof}

\section{Separation and representation by halfspaces}
\label{sec:TC-hull+as+intersection}

We are now ready to prove our main results on separation in TO- and TC-convexity using closed tropical halfspaces.
To this end, we combine the results on separation by hemispaces established in \cref{sec:separation+hemispaces}, together with our analysis of the structure of hemispaces.
To obtain the separation result for TC-convexity we additionally use the representations of halfspaces and hemispaces using their lifts from \cref{sec:signed+halfspaces+lifts}.
As an application of the separation results, we derive analogs of the Minkowski--Weyl theorem for polyhedra and cones in \cref{sec:minkowski-weyl-theorem}. 

We start with a strengthening of the separation theorems in \cite[Section 5]{LohoVegh:2020} using the Pash property of TO-convexity shown in \cref{thm:kakutani}. 

\begin{theorem}\label{th:weak_separation_TO}
Suppose that $X,Y \subseteq \TTpm^d$ are nonempty and disjoint TO-convex sets. Then, there exists a hyperplane $\HC(a)$ such that $X \subseteq \Hclosed^+(a)$ and $Y \subseteq \Hclosed^-(a)$.
\end{theorem}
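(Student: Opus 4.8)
The plan is to derive Theorem~\ref{th:weak_separation_TO} from the hemispace separation result for TO-convexity, namely the Kakutani property established in \cref{thm:kakutani}(ii), together with the structural description of TO-hemispaces given in \cref{th:hemispace} and its corollary for TO-hemispaces. Since $X$ and $Y$ are nonempty disjoint TO-convex sets, \cref{thm:kakutani}(ii) yields a TO-hemispace $G$ with $X \subseteq G$ and $Y \subseteq \TTpm^d \setminus G$. Because $X,Y$ are both nonempty we have $G \notin \{\emptyset, \TTpm^d\}$, so the corollary to \cref{th:hemispace} provides a vector $(a_0,\dots,a_d) \in \TTpm^{d+1}$ with $(a_1,\dots,a_d) \neq \Zero$ such that $\HC^+(a) \subseteq G \subseteq \Hclosed^+(a)$. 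In particular $X \subseteq \Hclosed^+(a)$, which is the first half of the desired conclusion.

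The remaining work is to show $Y \subseteq \Hclosed^-(a)$. Here I would argue via the complement: since $\HC^+(a) \subseteq G$, we get $\TTpm^d \setminus G \subseteq \TTpm^d \setminus \HC^+(a)$. By \cref{le:hspace_topo}, $\HC^+(a)$ is the interior of $\Hclosed^+(a)$ and $\Hclosed^+(a)$ is its closure; more to the point, $\HC^+(a) = \TTpm^d \setminus \Hclosed^-(a)$ (this identity appears in the proof of \cref{le:hspace_topo}), hence $\TTpm^d \setminus \HC^+(a) = \Hclosed^-(a)$. Therefore $Y \subseteq \TTpm^d \setminus G \subseteq \Hclosed^-(a)$. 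Combining the two inclusions gives $X \subseteq \Hclosed^+(a)$ and $Y \subseteq \Hclosed^-(a)$ for the hyperplane $\HC(a)$, completing the proof.

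The main obstacle is largely bookkeeping rather than a genuine difficulty: one must make sure the corollary of \cref{th:hemispace} is invoked with the correct hemispace (i.e.\ that $G$ is a TO-hemispace, which by \cref{cor:TO-contain-TC} is also a TC-hemispace, so the structural theorem applies), and that the sign conventions $\Hclosed^-(a) = \Hclosed^+(\ominus a)$ and $\HC^-(a) = \HC^+(\ominus a)$ from the definitions are used consistently. One subtle point worth spelling out is why $G \neq \emptyset$ and $G \neq \TTpm^d$: these follow respectively from $Y \neq \emptyset$ (so $\TTpm^d \setminus G \neq \emptyset$) and $X \neq \emptyset$ (so $G \neq \emptyset$). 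Once these are in place, the statement follows immediately and no further computation is needed.
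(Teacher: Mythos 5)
Your proof is correct and follows essentially the same route as the paper: invoke the Kakutani property from \cref{thm:kakutani} to obtain a TO-hemispace $G$ separating $X$ and $Y$, then apply \cref{th:hemispace} (valid since TO-hemispaces are TC-hemispaces) to sandwich $G$ between $\HC^+(a)$ and $\Hclosed^+(a)$, from which both inclusions follow. The only difference is that you spell out the complementation step $\TTpm^d \setminus \HC^+(a) = \Hclosed^-(a)$ explicitly, which the paper leaves implicit.
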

\begin{proof}
By \cref{thm:kakutani}, there exists a TO-hemispace $G \subseteq \TTpm^d$ such that $X \subseteq G$ and $Y \subseteq (\TTpm^{d} \setminus G)$. Since the sets $X,Y$ are nonempty, this hemispace is nontrivial. Hence, by \cref{th:hemispace}, there exists a hyperplane $\HC(a)$ such that $X \subseteq G \subseteq \Hclosed^+(a)$ and $Y \subseteq (\TTpm^{d} \setminus G) \subseteq \Hclosed^-(a)$.
\end{proof}

\begin{corollary}\label{th:TO-closure_separation}
Suppose that a set $X \subseteq \TTpm^d$ is TO-convex.
Then, the set $\cl(X)$ is equal to the intersection of closed tropical halfspaces that contain it.

Furthermore, if $X$ is a nonempty TO-convex cone, then $\cl(X)$ is equal to the intersection of linear closed tropical halfspaces that contain it.
\end{corollary}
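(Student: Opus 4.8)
The plan is to derive both statements from the weak separation theorem for TO-convex sets (\cref{th:weak_separation_TO}) together with \cref{le:hspace_topo}, which controls closures of halfspaces. For the first statement, let $Z$ denote the intersection of all closed tropical halfspaces containing $X$. Since each such halfspace is closed and contains $X$, it contains $\cl(X)$; hence $\cl(X) \subseteq Z$. For the reverse inclusion I would argue by contradiction: suppose $y \notin \cl(X)$. The goal is to produce a closed tropical halfspace that contains $X$ but not $y$, which then shows $y \notin Z$. Since $\cl(X)$ is closed and $\TTpm^d$ is homeomorphic to $\RR^d$ via $\slog$, the order topology is ``nice'', so there is a small open box $B$ around $y$ disjoint from $\cl(X)$; shrinking if necessary (using density of the order on $\TTpm$) we may take $B$ to be an open box with endpoints in $\TTpm$. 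Then $X$ and $B$ are disjoint, and both are TO-convex — $X$ by hypothesis, and an open box $B = (\ell_1,r_1)\times\dots\times(\ell_d,r_d)$ is TO-convex because it is the intersection of finitely many open tropical halfspaces (equivalently, one can check directly that $\shull$ of two points of $B$ stays in $B$, e.g. via \cref{ex:vertical-boxes} and \cref{le:TO-hull-intervals}, since $\Uncomp(A\odot x) \subseteq B$ when the two generating points lie in the box). Applying \cref{th:weak_separation_TO} to the disjoint nonempty TO-convex sets $X$ and $B$, we get a hyperplane $\HC(a)$ with $X \subseteq \Hclosed^+(a)$ and $B \subseteq \Hclosed^-(a)$. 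Now $\Hclosed^+(a)$ is a closed tropical halfspace containing $X$; it remains to see $y \notin \Hclosed^+(a)$. Since $y$ is in the topological interior of $B$ and $B \subseteq \Hclosed^-(a)$, \cref{le:hspace_topo} (the interior of $\Hclosed^-(a)$ is $\HC^-(a)$, and $\Hclosed^+(a)\cap\HC^-(a)=\emptyset$) gives $y \in \HC^-(a)$, hence $y \notin \Hclosed^+(a)$. Therefore $y \notin Z$, which proves $Z \subseteq \cl(X)$ and establishes the first claim.

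For the second statement, assume $X$ is a nonempty TO-convex cone. First note $\cl(X)$ is again a cone: scalar multiplication by $\lambda \in \Tgt$ is a homeomorphism of $\TTpm^d$, so it preserves closures, and $\Zero \in \cl(X)$ since $\lambda\odot x \to \Zero$ as $\lambda \to \Zero$ along any $x \in X$. Now run the argument above for a point $y \notin \cl(X)$ to obtain a closed halfspace $\Hclosed^+(a) \supseteq X$ with $y \notin \Hclosed^+(a)$. The task is to replace $\Hclosed^+(a)$ by a \emph{linear} closed halfspace with the same two properties. This is where I expect the only real work: I would show that if a closed halfspace $\Hclosed^+(a_0,\bar a)$ contains a cone $X$ with $\Zero \in X$, then the linear halfspace $\Hclosed^+(\Zero,\bar a)$ also contains $X$, and moreover still excludes $y$. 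The inclusion $X \subseteq \Hclosed^+(\Zero,\bar a)$ follows by the standard scaling trick (as in the proof of \cref{cor:linear+halfspaces+outer+cone} over Puiseux series, transported to $\TTpm^d$): if some $x \in X$ violated the linear inequality strictly, then scaling $x$ by large $\lambda \in \Tgt$ would push $\bar a \odot (\lambda\odot x)$ past the bounded contribution of $a_0$, producing a point of $X$ outside $\Hclosed^+(a_0,\bar a)$, a contradiction; the boundary case needs a short limiting argument using that $X$ is a cone and $\Hclosed^+$ is closed. For the exclusion of $y$, since $\Zero \in X \subseteq \Hclosed^+(a_0,\bar a)$ we get $a_0 \in \Tgeq$ (plugging in $x = \Zero$), and then $\HC^+(\Zero,\bar a) \subseteq \HC^+(a_0,\bar a)$ and $\Hclosed^+(\Zero,\bar a) \subseteq \Hclosed^+(a_0,\bar a)$; since $y \notin \Hclosed^+(a_0,\bar a)$ we conclude $y \notin \Hclosed^+(\Zero,\bar a)$. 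Ranging over all $y \notin \cl(X)$ then yields that $\cl(X)$ is the intersection of the linear closed tropical halfspaces containing it.

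The main obstacle is the halfspace-to-linear-halfspace reduction in the conic case — specifically handling the boundary situation where $X$ touches the hyperplane $\HC(a_0,\bar a)$ — and verifying cleanly that an open box is TO-convex so that \cref{th:weak_separation_TO} applies. Both are routine given the tools already in the paper (\cref{le:hspace_topo}, \cref{cor:linear+halfspaces+outer+cone}, \cref{le:TO-hull-intervals}, \cref{ex:vertical-boxes}), but they are the steps that require genuine verification rather than bookkeeping.
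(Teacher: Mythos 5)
Your proposal is correct and follows essentially the same route as the paper: separate $X$ from a small box around a point $y \notin \cl(X)$ using \cref{th:weak_separation_TO}, invoke \cref{le:hspace_topo} to conclude $y$ lies in the opposite open halfspace, and in the conic case reduce to a linear halfspace by observing $\Zero \in \cl(X)$ forces $a_0 \in \Tgeq$ together with the scaling argument. One minor remark: the ``boundary case'' you flag in the linear-halfspace reduction does not actually arise, since $x \notin \Hclosed^{+}(\Zero,\bar a)$ already means the strict inequality $\bar a_1 \odot x_1 \oplus \dots \oplus \bar a_d \odot x_d < \Zero$, so the scaling trick alone closes the argument with no limiting step needed; also, the paper uses a closed box (TO-convex by \cref{ex:vertical-boxes}), which sidesteps the need to argue separately that open boxes are TO-convex, but your open-box variant works too.
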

\begin{proof}
Let $X$ be a TO-convex set. The claim is trivial if $X$ is empty. Otherwise, note that $\cl(X)$ is obviously included in the intersection of closed tropical halfspaces that contain it.
To prove the opposite inclusion, let $y \notin X$. We can find $\ell_1,\dots,\ell_d,r_1,\dots,r_d \in \TTpm$ such that 
\[
\ell_1 < y_1 < r_1, \ \ell_2 < y_2 < r_2, \ \dots \ , \ell_d < y_d < r_d
\]
and such that the box $B = [\ell_1, r_1] \times [\ell_2, r_2] \times \dots \times [\ell_d, r_d]$ does not intersect $\cl(X)$.
Since $B$ and $X$ are TO-convex and disjoint, \cref{th:weak_separation_TO} implies that there exists a closed tropical halfspace $\Hclosed^{+}(a)$ such that $\cl(X) \subseteq \Hclosed^{+}(a)$ and $B \subseteq \Hclosed^{-}(a)$.
Furthermore, $y$ belongs to the interior of $B$ and so we have $y \in \HC^{-}(a)$ by \cref{le:hspace_topo}.
Since $y$ was arbitrary, we get the first claim.

To prove the second claim, suppose that $X$ is a nonempty TO-convex cone and let $y \notin \cl(X)$.
By the first part of the theorem, there exists $a \in \TTpm^{d+1}$ such that $\cl(X) \subseteq \Hclosed^{+}(a)$ and $y \in \HC^{-}(a)$.
Since $X$ is a nonempty cone, we have $\Zero \in \cl(X)$.
In particular, $a_0 \ge \Zero$.
Let $\tilde{a} = (\Zero, a_1, \dots, a_d)$.
Then, $y \in \HC^{-}(\tilde{a})$.
Suppose that $x \in \cl(X)$ is such that $x \notin \Hclosed^{+}(\tilde{a})$.
Then, $a_1 \odot x_1 \oplus \dots \oplus a_d \odot x_d < \Zero$ and so $\lambda \odot x \in \HC^{-}(a)$ for a sufficiently large $\lambda > \Zero$.
This gives a contradiction with $\cl(X) \subseteq \Hclosed^{+}(a)$.
Hence, we have $\cl(X) \subseteq \Hclosed^{+}(\tilde{a})$, which finishes the proof.
\end{proof}

Before proceeding with the separation theorem for the TC-convexity, let us note that the analog of \cref{th:weak_separation_TO} fails for the TC-convexity as shown by \cref{ex:TC-non-Pasch-plane}. However, the analog of \cref{th:TO-closure_separation} could still be true for the TC-convexity. Our techniques do not allow us to prove this result in full, and we leave it as an open question for future research. Nevertheless, we are able to show that this separation results is true for finitely generated sets. 

\begin{theorem} \label{thm:TC-hull-intersection-halfspaces}
For every finite set $X \subseteq \TTpm^d$, its TC-convex hull $\whull(X)$ is equal to the intersection of the closed tropical halfspaces that contain $X$. 
\end{theorem}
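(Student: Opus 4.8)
The plan is to prove the two inclusions separately. Write $U = \bigcap\SetOf{\Hclosed^+(a)}{X \subseteq \Hclosed^+(a)}$ for the intersection on the right-hand side. The inclusion $\whull(X) \subseteq U$ is the easy direction: every closed tropical halfspace is TC-convex by \cref{cor:hspace_weakly_convex}, so any $\Hclosed^+(a) \supseteq X$ also contains $\whull(X)$ by definition of the hull, and intersecting over all such halfspaces gives $\whull(X) \subseteq U$. The substance is the reverse inclusion $U \subseteq \whull(X)$, equivalently: if $z \notin \whull(X)$ then some closed tropical halfspace separates $z$ from $X$.

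To get the separating halfspace, I would feed $z \notin \whull(X)$ into the hemispace machinery. Since $\whull(X)$ is TC-convex and $z$ lies outside it, \cref{thm:separation_hemispaces} produces a TC-hemispace $G$ with $\whull(X) \subseteq G$ and $z \notin G$; moreover the theorem lets us choose $G$ so that its complement $\TTpm^d \setminus G$ is TO-convex. Now the point is to replace the hemispace $G$ by an actual closed halfspace. The natural route is via the Puiseux lifts: by \cref{prop:hemispace_puiseux}, $G = \sval(\bm G)$ for some convex set $\bm G \subseteq \KK^d$, and the complement $\TTpm^d \setminus G$, being TO-convex, is also of the form $\sval(\bm G')$ — but here I would instead work directly with the finite set. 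Concretely, since $X \subseteq G = \sval(\bm G)$ and $X$ is finite, pick preimages $\bm x_i \in \bm G$ with $\sval(\bm x_i) = x_i$, and consider $\bm C = \conv(\bm x_1, \dots, \bm x_m) \subseteq \bm G$, a closed convex semialgebraic set. Then $z \notin G \supseteq \sval(\bm C)$, so $z \notin \sval(\bm C)$, and \cref{thm:tropical_polar} applied to $\bm C$ gives $\bm a \in \KK^{d+1}$ with $\bm C \subseteq \bHclosed^+(\bm a)$ and $z \notin \Hclosed^+(\sval(\bm a))$. By \cref{le:sval_hspace}, $x_i = \sval(\bm x_i) \in \sval(\bHclosed^+(\bm a)) = \Hclosed^+(\sval(\bm a))$ for each $i$, so the closed tropical halfspace $\Hclosed^+(\sval(\bm a))$ contains $X$ but not $z$. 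Hence $z \notin U$, which is what we wanted.

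An even cleaner packaging is to invoke \cref{le:weak_hull_from_Puiseux} directly: that lemma already identifies $U$ with $\bigcap \SetOf{\sval(\bm X)}{\bm X \subseteq \KK^d \text{ convex},\ X \subseteq \sval(\bm X)}$. So it suffices to show $\whull(X)$ equals this latter intersection. One inclusion follows because each $\sval(\bm X)$ in the family is TC-convex — here one needs that the signed valuation of a convex subset of $\KK^d$ is TC-convex, which should follow from \cref{le:basic_sval} together with \cref{prop:local-generation-TC-hull}, checking the weighted-left-sum and local-elimination closure properties — and contains $X$, hence contains $\whull(X)$. The reverse inclusion is exactly the hemispace-plus-\cref{prop:hemispace_puiseux} argument above: given $z \notin \whull(X)$, produce a TC-hemispace $G \supseteq \whull(X)$ with $z \notin G$, lift it to a convex $\bm G$ with $\sval(\bm G) = G$, note $X \subseteq G = \sval(\bm G)$, so $z$ is excluded by the member $\sval(\bm G)$ of the family.

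The main obstacle is the passage from the abstract TC-hemispace separation to a concrete halfspace: \cref{thm:separation_hemispaces} only gives hemispaces, and hemispaces need not be halfspaces, so one genuinely needs \cref{prop:hemispace_puiseux} (realizing a hemispace as a signed valuation of a convex set) combined with \cref{thm:tropical_polar} (separating a point from the valuation of a closed convex semialgebraic set by a closed tropical halfspace). The finiteness of $X$ is essential precisely because \cref{thm:tropical_polar} requires a \emph{closed} convex semialgebraic set, and it is $\conv(\bm x_1, \dots, \bm x_m)$ — a polytope, automatically closed and semialgebraic — that plays this role; for infinite $X$ one cannot guarantee a closed semialgebraic lift, which is exactly why the analog of \cref{th:TO-closure_separation} for TC-convexity is left open. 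A minor technical point to handle carefully is verifying that $\sval$ of a convex set is TC-convex (if one takes the \cref{le:weak_hull_from_Puiseux} route), using the characterization of TC-convexity via weighted left sums and local elimination from \cref{prop:local-generation-TC-hull}.
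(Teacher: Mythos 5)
Your first argument is correct and is essentially the paper's own proof: the easy inclusion follows from \cref{cor:hspace_weakly_convex}, and for the converse one passes from $z\notin\whull(X)$ through \cref{thm:separation_hemispaces} to a separating TC-hemispace, through \cref{prop:hemispace_puiseux} to a convex lift $\bm G$ over $\KK$, and finally through a closed polytope inside $\bm G$ and \cref{thm:tropical_polar} to a separating closed tropical halfspace. The paper phrases the last two steps through the chain of inclusions in \cref{le:weak_hull_from_Puiseux}, but the content is the same; your direct construction of the halfspace is if anything a slightly tighter presentation of the same route.

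One remark on the ``cleaner packaging'': you do not need the claim that $\sval$ of an arbitrary convex subset of $\KK^d$ is TC-convex. The easy inclusion $\whull(X)\subseteq\bigcap\SetOf{\sval(\bm Y)}{\bm Y \text{ convex},\ X\subseteq\sval(\bm Y)}$ already follows from $\whull(X)\subseteq U$ (via \cref{cor:hspace_weakly_convex}) together with the equality $U=\bigcap\sval(\bm Y)$ from \cref{le:weak_hull_from_Puiseux}, which is how the paper closes the circle. The claim you flag is in fact true, but verifying the weighted-left-sum closure from \cref{prop:local-generation-TC-hull} is not entirely routine: in a coordinate where $|x_i|=|\mu\odot y_i|$ with $\tsgn(x_i)\neq\tsgn(y_i)$, a generic convex combination of lifts can have its leading terms cancel, and one must choose a lift $\bm\epsilon$ of $\mu$ with a sufficiently small leading coefficient so that the left summand's leading term dominates in every such coordinate simultaneously. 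Since you acknowledged the point as needing care and your main argument avoids it entirely, this is not a defect, but it is worth knowing the subtlety is genuine and that the paper deliberately routes around it.
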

\begin{proof}
  \cref{cor:hspace_weakly_convex} shows that $\whull(X)$ is contained in said intersection. 

  For the reverse direction, 
  \cref{le:weak_hull_from_Puiseux} shows the equality
  \begin{align*}
    &\bigcap_{a \in \TTpm^{d+1}} \SetOf{\Hclosed^+(a)}{X \subseteq \Hclosed^+(a)} \\
    &= \bigcap \SetOf{\sval(\bm Y)}{\bm Y \subseteq \KK^d \text{ convex, } X \subseteq \sval(\bm Y)} \\
  &\subseteq \bigcap \SetOf{\sval(\bm Y)}{\bm Y \subseteq \KK^d \text{ convex, }  X \subseteq \sval(\bm Y),\, \sval(\bm Y) \text{ is a TC-hemispace }} \; .
\end{align*}
By \cref{prop:hemispace_puiseux}, every TC-hemispace containing $X$ arises in the latter intersection.
Combined with \cref{thm:separation_hemispaces}, this yields
\begin{align*}
  &\bigcap \SetOf{\sval(\bm Y)}{\bm Y \subseteq \KK^d \text{ convex, } X \text{ in TC-hemispace } \sval(\bm Y)} \\
    &= \bigcap \SetOf{G}{G \text{ TC-hemispace with } X \subseteq G} \\
& = \whull(X) \, . \qedhere
\end{align*}
\end{proof}

Combining \cref{thm:TC-hull-intersection-halfspaces} and \cref{le:weak_hull_from_Puiseux} we obtain the following corollary, which is an analog of \cite[Theorem~3.14]{LohoVegh:2020} for TC-convexity. 

\begin{corollary} \label{cor:intersection-of-lifts}
For every $x_1, \dots, x_n \in \TTpm^d$ we have 
\begin{align*}
\whull(x_1, \dots, x_n) &=  \bigcap \SetOf{\sval \bigl( \conv( \bm x_1, \dots, \bm x_n)\bigr)}{ \forall i, \, \bm x_i \in \sval^{-1}(x_i) } \\
&= \bigcap_{\subs \subseteq [d]} \sval\Bigl( \conv\bigl( \lif_{\subs}(x_1), \dots, \lif_{\subs}(x_n) \bigr) \Bigr) \, .
\end{align*}
\end{corollary}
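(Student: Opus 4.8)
The plan is to derive \cref{cor:intersection-of-lifts} by stitching together the two preceding results that are designed exactly for this purpose. Concretely, \cref{thm:TC-hull-intersection-halfspaces} gives
\[
\whull(x_1,\dots,x_n) = \bigcap \SetOf{\Hclosed^+(a)}{\{x_1,\dots,x_n\} \subseteq \Hclosed^+(a)} \, ,
\]
since $\{x_1,\dots,x_n\}$ is a finite set, while \cref{le:weak_hull_from_Puiseux}, applied to the same finite set $X = \{x_1,\dots,x_n\}$, identifies that intersection of closed tropical halfspaces with both of the expressions
\[
\bigcap \SetOf{\sval(\bm X)}{\bm X \subseteq \KK^d \text{ convex},\ X \subseteq \sval(\bm X)}
\quad\text{and}\quad
\bigcap_{\subs \subseteq [d]} \sval\bigl(\conv(\lif_{\subs}(x_1),\dots,\lif_{\subs}(x_n))\bigr) \, .
\]
So the skeleton of the proof is just: quote \cref{thm:TC-hull-intersection-halfspaces}, then quote \cref{le:weak_hull_from_Puiseux}, and read off the equalities.

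The one genuine step that still needs an argument is matching the first displayed line of \cref{cor:intersection-of-lifts} — the intersection over all choices of lifts $\bm x_i \in \sval^{-1}(x_i)$ of $\sval(\conv(\bm x_1,\dots,\bm x_n))$ — with the set $V$ appearing in \cref{le:weak_hull_from_Puiseux}. This is precisely the content of \cref{eq:simpler_inter_convex} inside the proof of \cref{le:weak_hull_from_Puiseux}: on the one hand every tuple $(\bm x_1,\dots,\bm x_n)$ with $\sval(\bm x_i) = x_i$ produces the convex set $\conv(\bm x_1,\dots,\bm x_n)$, which contains $\{x_1,\dots,x_n\}$ in its signed valuation, so the intersection over tuples dominates $V$; on the other hand any convex $\bm X$ with $X \subseteq \sval(\bm X)$ contains a set of the form $\conv(\bm x_1,\dots,\bm x_n)$ for suitable preimages $\bm x_i$, so $V$ dominates the intersection over tuples. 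I would simply cite \cref{eq:simpler_inter_convex} rather than reprove it.

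Putting the pieces together, the proof reads: by \cref{thm:TC-hull-intersection-halfspaces}, $\whull(x_1,\dots,x_n)$ equals the intersection of all closed tropical halfspaces $\Hclosed^+(a)$ with $\{x_1,\dots,x_n\} \subseteq \Hclosed^+(a)$; by \cref{le:weak_hull_from_Puiseux} (together with \cref{eq:simpler_inter_convex} from its proof) this same intersection equals
\[
\bigcap \SetOf{\sval\bigl(\conv(\bm x_1,\dots,\bm x_n)\bigr)}{\forall i,\ \bm x_i \in \sval^{-1}(x_i)}
= \bigcap_{\subs \subseteq [d]} \sval\Bigl(\conv\bigl(\lif_{\subs}(x_1),\dots,\lif_{\subs}(x_n)\bigr)\Bigr) \, ,
\]
which is the desired chain of equalities.

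I do not expect any serious obstacle here: this is a corollary whose whole point is to be a three-line consequence of the two theorems immediately preceding it. If anything, the only thing to be careful about is bookkeeping — making sure the finite set $X$ in \cref{le:weak_hull_from_Puiseux} is literally $\{x_1,\dots,x_n\}$ (repetitions among the $x_i$ are harmless, since both sides only depend on the underlying set), and that the ``convex set $\bm X$'' in \cref{le:weak_hull_from_Puiseux} and the ``convex set $\bm G$'' produced for hemispaces in \cref{prop:hemispace_puiseux} are not conflated; but that conflation only enters the proof of \cref{thm:TC-hull-intersection-halfspaces}, not of this corollary, so for the corollary itself it suffices to invoke \cref{thm:TC-hull-intersection-halfspaces} and \cref{le:weak_hull_from_Puiseux} as black boxes.
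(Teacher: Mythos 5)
Your proposal matches the paper's approach exactly: the paper introduces this corollary with the single remark that it follows by ``combining \cref{thm:TC-hull-intersection-halfspaces} and \cref{le:weak_hull_from_Puiseux},'' which is precisely your chain (the identification of the first displayed intersection with $V$ via \cref{eq:simpler_inter_convex} being implicit in the paper but correctly made explicit by you). No issues.
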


\subsection{Minkowski--Weyl theorem}
\label{sec:minkowski-weyl-theorem}

Based on the separation results, we are able to derive various representations of finitely generated TC-convex sets. 
We start with a conic version. 

\begin{theorem}\label{th:conic_M-W}
Let $X \subseteq \TTpm^d$ be an arbitrary set. Then, the following are equivalent: 
\begin{enumerate}[(i)]
\item $X$ is an intersection of finitely many closed linear tropical halfspaces. 
\item $X = \wcone(x_1, \dots, x_m)$ for a finite collection of points $\{x_1, \dots, x_m\}$.
\item There exists a finite collection of polyhedral cones $\bm P_1, \dots, \bm P_m \subseteq \KK^d$ such that
\[
\bigcap_i \sval(\bm P_i) = X \, .
\]
\item There exists a finite collection of linear halfspaces $\bm H_1, \dots, \bm H_m \subseteq \KK^d$ such that
\[
\bigcap_i \sval(\bm H_i) = X \, .
\]
\end{enumerate}
\end{theorem}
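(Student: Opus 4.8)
The plan is to establish the chain $(i)\Rightarrow(iv)\Rightarrow(iii)\Rightarrow(i)$ together with $(i)\Rightarrow(ii)\Rightarrow(iii)$; combined, these give all the equivalences. The first part is essentially tautological. For $(i)\Rightarrow(iv)$ I would lift each defining halfspace: given a linear $\Hclosed^+(\Zero,a^{(i)})$, choose $\bm a^{(i)}\in\KK^{d+1}$ with $\sval(\bm a^{(i)})=(\Zero,a^{(i)})$, $\bm a^{(i)}_0=0$ and $(\bm a^{(i)}_1,\dots,\bm a^{(i)}_d)\neq 0$; then $\sval(\bHclosed^+(\bm a^{(i)}))=\Hclosed^+(\Zero,a^{(i)})$ by \cref{le:sval_hspace}, and $\bHclosed^+(\bm a^{(i)})$ is a linear halfspace over $\KK$, so $X$ is a finite intersection of such valuations. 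The step $(iv)\Rightarrow(iii)$ is immediate, a linear halfspace being a polyhedral cone. For $(iii)\Rightarrow(i)$, \cref{le:sval_cone} expresses each $\sval(\bm P_i)$ as a finite intersection of linear closed tropical halfspaces, and a finite intersection of those is again of this form.

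For $(i)\Rightarrow(ii)$ I would argue orthant by orthant. Fix a closed orthant $\cl(\ort)$ of $\TTpm^d$. By \cref{eq:hspace}, after identifying $\cl(\ort)$ with $\Tgeq^d$ via the norm, each $\Hclosed^+(\Zero,a^{(i)})\cap\cl(\ort)$ is cut out by a two-sided max-plus inequality with no constant term, so $X\cap\cl(\ort)$ is a finite intersection of homogeneous unsigned tropical halfspaces; by the tropical Minkowski--Weyl theorem in the nonnegative orthant~\cite{GaubertKatz:2011} it is the unsigned tropical cone generated by a finite set $G_{\ort}\subseteq X\cap\cl(\ort)$. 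Let $\{x_1,\dots,x_k\}=\bigcup_{\ort}G_{\ort}$, the union over the $2^d$ orthants. The inclusion $\wcone(x_1,\dots,x_k)\subseteq X$ follows from \cref{le:convexity_TC_cone}, since all generators lie in $X$, $\Zero\in X$, and $X$ is a TC-convex cone. Conversely, write $X=\{\Zero\}\cup\bigcup_{\ort}(X\cap\inter(\ort))$; a point of $X\cap\inter(\ort)$ has the form $\bigoplus_j\lambda_j\odot g_j$ with $g_j\in G_{\ort}$, $\lambda_j\in\Tgeq$, and since the $g_j$ share a closed orthant they are sign-compatible, so $\bigoplus_j\lambda_j\odot g_j=\lambda_1\odot g_1\lplus\dots\lplus\lambda_r\odot g_r$ has no balanced entry and is the unique element of $\faces(\lambda_1\odot g_1,\dots,\lambda_r\odot g_r)$; by \cref{le:TC_cone_from_faces} it lies in $\wcone(x_1,\dots,x_k)$. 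As $\Zero$ does too, we get $X=\wcone(x_1,\dots,x_k)$.

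The step $(ii)\Rightarrow(iii)$ is where the real work lies. Given $X=\wcone(x_1,\dots,x_m)$, I would take for each $\subs\subseteq[d]$ the polyhedral cone $\bm P_{\subs}=\cone(\{0\}\cup\{\lif_{\subs}(x_1),\dots,\lif_{\subs}(x_m)\})\subseteq\KK^d$ ($2^d$ cones in all) and show $\bigcap_{\subs}\sval(\bm P_{\subs})=X$. For ``$\supseteq$'': each $\sval(\bm P_{\subs})$ is a TC-convex cone—a finite intersection of linear closed tropical halfspaces by \cref{le:sval_cone}, hence TC-convex by \cref{cor:hspace_weakly_convex}, and a cone because $\bm P_{\subs}$ is one over $\KK$ and $\sval$ is multiplicative by \cref{le:basic_sval}—containing $\{\Zero,x_1,\dots,x_m\}$, so it contains $\wcone(x_1,\dots,x_m)$ by \cref{le:convexity_TC_cone}. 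For ``$\subseteq$'', put $C_{\subs}=\sval(\conv(\{0\}\cup\{\lif_{\subs}(x_i)\}))$; this is TC-convex by \cref{thm:tropical_polar} (valuation of a polytope), contains $\Zero$ and all $x_i$, and $\bigcap_{\subs}C_{\subs}=\whull(\Zero,x_1,\dots,x_m)$ by \cref{cor:intersection-of-lifts}. Using $\cone(\bm Y)=\{\bm\lambda\bm c:\bm\lambda\ge 0,\bm c\in\conv(\bm Y)\}$ (valid since $0\in\bm Y$) together with \cref{le:multiplied_conv} one obtains $\sval(\bm P_{\subs})=\bigcup_{\lambda\in\Tgeq}\lambda\odot C_{\subs}$. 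Now take $z\in\bigcap_{\subs}\sval(\bm P_{\subs})$; if $z=\Zero$ we are done, so choose for each $\subs$ some $\lambda_{\subs}\in\Tgt$, $c_{\subs}\in C_{\subs}$ with $z=\lambda_{\subs}\odot c_{\subs}$, and set $\lambda=\max_{\subs}\lambda_{\subs}$ (a maximum over finitely many indices). Then $\lambda^{\odot-1}\odot z=(\lambda^{\odot-1}\odot\lambda_{\subs})\odot c_{\subs}$ is a multiple of $c_{\subs}$ by a factor in $[\Zero,0]$, hence lies in $\whull(\Zero,c_{\subs})\subseteq C_{\subs}$ for every $\subs$; therefore $\lambda^{\odot-1}\odot z\in\bigcap_{\subs}C_{\subs}=\whull(\Zero,x_1,\dots,x_m)$ and $z=\lambda\odot(\lambda^{\odot-1}\odot z)\in\wcone(\Zero,x_1,\dots,x_m)=\wcone(x_1,\dots,x_m)=X$, as required.

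I expect the main obstacle to be the ``$\subseteq$'' direction of $(ii)\Rightarrow(iii)$, namely commuting the finite intersection over lift-types $\subs$ past the union over scalars. This hinges on two facts that must be isolated carefully: that $C_{\subs}$ is TC-convex and contains $\Zero$ (so it is stable under scaling towards $\Zero$), which rests on \cref{thm:tropical_polar} and ultimately on the Puiseux-series separation theorem; and that there are only finitely many $\subs$, which permits passing to the single scalar $\lambda=\max_{\subs}\lambda_{\subs}$. Along the way one must also check the bookkeeping identities $t^{\lambda}\lif_{\subs}(x)=\lif_{\subs}(\lambda\odot x)$ and $\sval(\cone(\bm Y))=\bigcup_{\lambda\in\Tgeq}\lambda\odot\sval(\conv(\bm Y))$, which are routine but sensitive to the convention $t^{\Zero}=0$.
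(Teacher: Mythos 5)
Your proof is correct and, in broad structure, follows the paper's: the same cycle of implications, the same use of the unsigned tropical Minkowski--Weyl theorem orthant-by-orthant for $(i)\Rightarrow(ii)$, and the same lifted cones $\bm P_{\subs}=\cone\bigl(\{\lif_{\subs}(x_i)\}\bigr)$ for $(ii)\Rightarrow(iii)$ together with \cref{le:sval_cone} for $(iii)\Rightarrow(i)$. The one place you genuinely diverge is the ``$\subseteq$'' direction of $(ii)\Rightarrow(iii)$. The paper, after obtaining scalars $\lambda_{\subs}$ with $x\in\sval\bigl(\conv(\lif_{\subs}(\lambda_{\subs}\odot x_1),\dots)\bigr)$ for each $\subs$, enlarges every convex hull to the common superset $\conv\SetOf{\lif_{\subs}(\lambda_{\tilde \subs}\odot x_i)}{\tilde \subs,i}$ and applies the intersection-of-lifts formula to that bigger (but still finite) generating set. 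You instead normalize: set $\lambda=\max_{\subs}\lambda_{\subs}$, observe that each $C_{\subs}=\sval\bigl(\conv(\{0\}\cup\{\lif_{\subs}(x_i)\})\bigr)$ is TC-convex (via \cref{thm:tropical_polar}) and contains $\Zero$, hence is stable under scaling by factors in $[\Zero,0]$, so $\lambda^{\odot-1}\odot z\in\bigcap_{\subs}C_{\subs}=\whull(\Zero,x_1,\dots,x_m)$ by \cref{cor:intersection-of-lifts}, and then rescale. This is a tidier finish and avoids having to control the enlarged generating set, at the modest cost of explicitly invoking the TC-convexity of the $C_{\subs}$. Both arguments ultimately rest on \cref{cor:intersection-of-lifts} and \cref{le:multiplied_conv}.

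One small inaccuracy worth fixing in the writeup of $(i)\Rightarrow(ii)$: you write $X=\{\Zero\}\cup\bigcup_{\ort}(X\cap\inter(\ort))$, but this decomposition omits points of $X$ that have some (but not all) coordinates equal to $\Zero$. The fix is immediate---argue with closed orthants, i.e.\ $X=\bigcup_{\ort}(X\cap\cl(\ort))$ and note that for $x\in X\cap\cl(\ort)$ the generators $G_{\ort}$ are still sign-compatible (a coordinate of some $g_j$ may be $\Zero$, but then it contributes nothing to the $\oplus$ and no balancing occurs)---so the conclusion stands unchanged.
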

\begin{proof}
To prove the implication $(i) \Rightarrow (iv)$, suppose that $X = \bigcap_{i = 1}^{m} \Hclosed^{+}(a_i)$ for some $a_1, \dots, a_m \in \TTpm^{d+1}$.
For every $i \in [m]$, let $\bm H_i = \bHclosed^+\bigl(\clif(a_i)\bigr)$.
By \cref{le:sval_hspace} we have $X = \bigcap_{i = 1}^{m} \Hclosed^{+}(a_i) = \bigcap_{i = 1}^{m} \sval(\bm H_i)$.

The implication $(iv) \Rightarrow (iii)$ is trivial and the implication $(iii) \Rightarrow (i)$ follows from \cref{le:sval_cone}.

To prove the implication $(i) \Rightarrow (ii)$, we use the tropical Minkowski--Weyl theorem for unsigned tropical convexity~\cite[Theorem~1]{GaubertKatz:2011}.
This theorem implies that for every closed orthant $\ort$ of $\TTpm^d$ there exists a finite set $U_{\ort} \subset \ort$ such that $X \cap \ort = \wcone(U_{\ort})$.
Let $U = \bigcup_{\ort} U_{\ort}$.
We will show that $X = \wcone(U)$.
Since $\wcone(U_{\ort}) \subseteq \wcone(U)$ for every $\ort$, we get $X \subseteq \wcone(U)$.
To prove the opposite inclusion, note that $U \subseteq X$ implies $\whull(U) \subseteq X$ by \cref{cor:hspace_weakly_convex}.
Since $X$ is an intersection of linear tropical halfspaces, we get $\wcone(U) \subseteq X$.

To finish the proof, it is enough to show the implication $(ii) \Rightarrow (iii)$.
To do so, let 
\[
\bm P_{J} = \cone\bigl(\lif_{J}(x_1), \dots, \lif_{J}(x_m)\bigr)
\]
for all $J \subseteq [d]$. We will show that $X = \bigcap_{J \subseteq [d]} \sval(\bm P_{J})$.
If $x \in X$, there exists $\lambda \in \Tgeq$ such that $x \in \lambda \odot \whull(x_1, \dots, x_m)$.
By combining \cref{thm:TC-hull-intersection-halfspaces} with \cref{le:weak_hull_from_Puiseux}, for all $J \subseteq [d]$, we get 
\[
\whull(x_1, \dots, x_m) \subseteq \sval\Bigl(\conv\bigl(\lif_{J}(x_1), \dots, \lif_{J}(x_m)\bigr)\Bigr) \, .
\]
Hence, by \cref{le:multiplied_conv} we have
\begin{align*}
\lambda \odot \whull(x_1, \dots, x_m) &\subseteq \lambda \odot \sval\Bigl(\conv\bigl(\lif_{J}(x_1), \dots, \lif_{J}(x_m)\bigr)\Bigr) \\
&= \sval\Bigl(\conv\bigl(t^{\lambda}\lif_{J}(x_1), \dots, t^{\lambda}\lif_{J}(x_m)\bigr)\Bigr) \subseteq \sval(\bm P_J) \,
\end{align*}
and $x \in \bigcap_{J \subseteq [d]} \sval(\bm P_{J})$.
Conversely, if $x \in \bigcap_{J \subseteq [d]} \sval(\bm P_{J})$ then, by \cref{le:multiplied_conv}, for every $J\subseteq [d]$ there exists $\lambda_{J} \in \Tgeq$ such that
\begin{align*}
x &\in \sval\Bigl(\conv\bigl(t^{\lambda_{J}}\lif_{J}(x_1), \dots, t^{\lambda_{J}}\lif_{J}(x_m)\bigr)\Bigr) \\
&= \sval\Bigl(\conv\bigl(\lif_{J}(\lambda_{J} \odot x_1), \dots, \lif_{J}(\lambda_{J} \odot x_m)\bigr)\Bigr) \, .
\end{align*} 
Hence, by combining \cref{thm:TC-hull-intersection-halfspaces} with \cref{le:weak_hull_from_Puiseux} and \cref{le:TC_cone_from_faces} we get
\begin{align*}
x &\in \bigcap_{J} \sval\Bigl(\conv\SetOf{\lif_{J}(\lambda_{\tilde{J}} \odot x_i)}{\tilde{J} \subseteq [d], i \in [m]}\Bigr) \\
&= \whull\Bigl(\SetOf{\lambda_{\tilde{J}} \odot x_i}{\tilde{J} \subseteq [d], i \in [m]}\Bigr) \\
&\subseteq \wcone(x_1, \dots, x_m)  = X\, . \qedhere
\end{align*}
\end{proof}

Establishing an affine version of the former theorem requires us to come up with an appropriate concept of dehomogenization. 

\begin{theorem} \label{thm:affine_M-W}
Let $X \subseteq \TTpm^d$ be an arbitrary set.
Then, the following are equivalent:
\begin{enumerate}[(i)]
\item $X$ is an intersection of finitely many closed tropical halfspaces.
\item There exist two finite sets $V,W \subset \TTpm^d$ such that 
\[
\whull\bigl(\SetOf{v \lplus \lambda \odot w }{v \in V, w \in W, \lambda \in \Tgeq}\bigr) = X \, .
\]
\item There exists a finite collection of polyhedra $\bm P_1, \dots, \bm P_m \subset \KK^d$ such that 
\[
\bigcap_i \sval(\bm P_i) = X \, .
\]
\item There exists a finite collection of affine halfspaces $\bm H_1, \dots, \bm H_m \subseteq \KK^d$ such that
\[
\bigcap_i \sval(\bm H_i) = X \, .
\]
\end{enumerate}
\end{theorem}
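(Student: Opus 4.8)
The plan is to prove \cref{thm:affine_M-W} by the same homogenization trick that relates affine and conic Minkowski--Weyl statements over the reals, reducing everything to the conic case \cref{th:conic_M-W} which is already established. The key device is the embedding $\TTpm^d \hookrightarrow \TTpm^{d+1}$, $x \mapsto (x,0)$, together with the cone construction of \cref{le:homog_halflines}. Throughout I write $\hat{V} = \SetOf{(v,0)}{v \in V}$ and $\hat{W} = \SetOf{(w,\Zero)}{w \in W}$ for finite sets $V,W \subset \TTpm^d$, and I record the basic fact that for a set $X \subseteq \TTpm^d$ the intersection with the hyperplane $\{x_{d+1} = 0\}$ commutes with $\sval$ and with taking closed tropical halfspaces: a closed tropical halfspace in $\TTpm^{d+1}$ intersected with $\{x_{d+1}=0\}$ is a closed tropical halfspace (or all of $\{x_{d+1}=0\}$) in $\TTpm^d$, and conversely any closed tropical halfspace $\Hclosed^+(a_0,a_1,\dots,a_d)$ of $\TTpm^d$ is $\Hclosed^+(\Zero,a_1,\dots,a_d,a_0) \cap \{x_{d+1}=0\}$; similarly for halfspaces and polyhedra over $\KK$.

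First I would prove $(i) \Rightarrow (ii)$. Suppose $X = \bigcap_{i=1}^m \Hclosed^+(a^{(i)})$ with $a^{(i)} \in \TTpm^{d+1}$. Homogenize each halfspace to a linear closed tropical halfspace $\Hclosed^+(\Zero, a^{(i)}_1, \dots, a^{(i)}_d, a^{(i)}_0)$ in $\TTpm^{d+1}$ and also include the halfspace $\Hclosed^+(\Zero,\dots,\Zero,0)$ forcing $x_{d+1} \geq \Zero$. Their intersection $\hat{X}$ is an intersection of finitely many closed linear tropical halfspaces, so by \cref{th:conic_M-W} $(i) \Rightarrow (ii)$ we have $\hat{X} = \wcone(\hat z_1, \dots, \hat z_r)$ for finitely many points $\hat z_j \in \TTpm^{d+1}$; since $\hat X \subseteq \{x_{d+1} \geq \Zero\}$ we may rescale so that each $\hat z_j$ has last coordinate $0$ or $\Zero$. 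Let $V$ be the projections (deleting the last coordinate) of those $\hat z_j$ with last coordinate $0$, and $W$ the projections of those with last coordinate $\Zero$; if $W = \emptyset$ put $W = \{\Zero\}$ and if $V = \emptyset$ then $\hat X \cap \{x_{d+1}=0\} = \emptyset$ so $X = \emptyset$ and the statement is vacuous, so assume both nonempty. Then $\{x_{d+1}=0\} \cap \hat X = \{(x,0) : x \in X\}$, and \cref{le:homog_halflines} (with this $V,W$, whose $\hat V \cup \hat W$ generates the same cone $\hat X = \wcone(\hat V \cup \hat W)$) identifies this slice with $\SetOf{(x,0)}{x \in \whull(\SetOf{v \lplus \lambda\odot w}{v\in V,w\in W,\lambda\in\Tgeq})}$, giving $(ii)$.

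Next, $(ii) \Rightarrow (iii)$: given $V,W$ as in $(ii)$, let $\hat V, \hat W$ be their lifts into $\TTpm^{d+1}$ as above and set $\hat X = \wcone(\hat V \cup \hat W)$. By \cref{th:conic_M-W} $(ii) \Rightarrow (iii)$ there are polyhedral cones $\bm P_1, \dots, \bm P_m \subseteq \KK^{d+1}$ with $\bigcap_i \sval(\bm P_i) = \hat X$. Put $\bm Q_i = \bm P_i \cap \{\bm x_{d+1} = 1\}$, an (affine) polyhedron in $\KK^{d+1}$, and let $\bm P_i'$ be its projection to $\KK^d$ deleting the last coordinate; this is a polyhedron. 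Using that $\sval$ commutes with the slicing $\{\bm x_{d+1} = 1\}$ down to $\{x_{d+1} = 0\}$, and using \cref{le:homog_halflines} to identify the slice of $\hat X$, one checks $\bigcap_i \sval(\bm P_i') = X$. The implication $(iii) \Rightarrow (iv)$ should follow by intersecting the defining halfspaces of each polyhedron $\bm P_i$ and noting $\sval$ of a polyhedron contains $\sval$ of each defining halfspace, combined with \cref{le:sval_hspace}; more directly, one argues as in the conic case: $(iii) \Rightarrow (i)$ using \cref{le:sval_cone}'s affine analogue (slice a cone version), then $(i) \Rightarrow (iv)$ by taking $\bm H_i = \bHclosed^+(\clif(a^{(i)}))$ and invoking \cref{le:sval_hspace}, exactly as in the proof of \cref{th:conic_M-W}. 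Finally $(iv) \Rightarrow (iii)$ is trivial (a halfspace is a polyhedron), closing the cycle.

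The main obstacle I anticipate is the bookkeeping around the non-closedness and the $\Zero$ last coordinate in the homogenization: in the conic picture the ray directions live in $\{x_{d+1} = \Zero\}$ rather than an affine slice, so the dehomogenization is not the naive ``intersect with $x_{d+1}=1$'' but must route through \cref{le:homog_halflines}, and one must be careful that the cone $\wcone(\hat V \cup \hat W)$ genuinely recovers $X$ on the slice $\{x_{d+1}=0\}$ rather than something larger — this is precisely what \cref{le:homog_halflines} is built to handle, so the proof hinges on applying it correctly in both directions. A secondary subtlety is the degenerate cases $X = \emptyset$ and $V$ or $W$ empty, which must be dispatched separately since \cref{le:homog_halflines} assumes $V,W$ nonempty; I would handle $X = \emptyset$ by observing it is cut out by two opposite closed halfspaces and is $\whull$ of the empty family (taking $V = W = \emptyset$ with the convention that the generating set is empty), and pad $W$ with $\Zero$ when only recession-free, which does not change the hull.
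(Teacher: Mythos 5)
Your proposal uses the same basic ingredients as the paper --- homogenization to $\TTpm^{d+1}$, the conic version \cref{th:conic_M-W}, and the dehomogenization lemma \cref{le:homog_halflines} --- but routes the cycle of implications differently. The paper proves $(i)\Rightarrow(iv)\Rightarrow(iii)\Rightarrow(ii)\Rightarrow(i)$; you prove $(i)\Rightarrow(ii)\Rightarrow(iii)$ and close with $(iii)\Rightarrow(i)\Rightarrow(iv)\Rightarrow(iii)$. Your $(i)\Rightarrow(ii)$ homogenizes the halfspaces, your $(ii)\Rightarrow(iii)$ homogenizes the generating sets and slices the polyhedral cones at $\{\bm x_{d+1}=1\}$ (using that a cone can be rescaled so the slice captures all of $\sval(\cdot)\cap\{x_{d+1}=0\}$); both steps are sound and close in spirit to the paper's $(ii)\Rightarrow(i)$ and $(iii)\Rightarrow(ii)$ respectively. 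Your treatment of the degenerate cases ($X=\emptyset$, and padding $W$ with $\Zero$ so that \cref{le:homog_halflines} applies) matches the paper's.

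One concrete misstep: your first suggestion for $(iii)\Rightarrow(iv)$ --- decomposing each $\bm P_i$ into its defining halfspaces $\bm H_{ij}$ and intersecting --- does not work. Since $\bm P_i \subseteq \bm H_{ij}$, one only gets $\sval(\bm P_i) \subseteq \bigcap_j \sval(\bm H_{ij})$, hence $\bigcap_i \sval(\bm P_i) \subseteq \bigcap_{i,j}\sval(\bm H_{ij})$, and this inclusion can be strict (this is precisely the subtlety that \cref{le:sval_cone} is designed to overcome in the conic case). You recognize this and pivot to routing $(iii)\Rightarrow(i)\Rightarrow(iv)$ instead, invoking an ``affine analogue of \cref{le:sval_cone}.'' That analogue is not stated in the paper, but your sketch of deriving it by homogenizing a polyhedron $\bm P$ to the cone $\bm Q = \SetOf{(\bm\lambda\bm x,\bm\lambda)}{\bm\lambda\ge 0, \bm x\in\bm P}$, applying \cref{le:sval_cone}, and slicing at $\{x_{d+1}=0\}$ is correct --- indeed this is exactly the homogenized cone the paper introduces in its own proof of $(iii)\Rightarrow(ii)$. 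So the final version of your argument closes the cycle; the cost of your routing is an extra implication and the need to state and justify the affine \cref{le:sval_cone}, which the paper avoids by using that cone only inside $(iii)\Rightarrow(ii)$ and then feeding directly into \cref{th:conic_M-W}.

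A minor imprecision in your preliminary remarks: slicing a general (non-linear) closed tropical halfspace $\Hclosed^+(b_0,\dots,b_{d+1})$ at $\{x_{d+1}=0\}$ produces the constraint with constant term $b_0\oplus b_{d+1}$, which may be balanced and hence not a legal coefficient vector. In practice this never occurs in your proof because you only slice the linear halfspaces returned by \cref{th:conic_M-W}, but the blanket claim as written is too broad.
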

\begin{proof}
The theorem is trivial if $X$ is empty.
From now on we suppose that $X$ is nonempty.

The implication $(i) \Rightarrow (iv)$ follows by the same argument as in the proof of \cref{th:conic_M-W}.

The implication $(iv) \Rightarrow (iii)$ is trivial.

To prove that $(iii) \Rightarrow (ii)$, for every $i \in [m]$, let $\bm Q_i = \{(\bm \lambda \bm x, \bm \lambda) \colon \bm \lambda \ge 0, \bm x \in \bm P_i\} \subseteq \KK^{d+1}$.
The set $\bm Q_i$ is a polyhedral cone.
Furthermore, we have the equality 
\begin{equation}\label{eq:homog_puiseux}
\sval(\bm Q_i) \cap \{x_{d+1} = 0\} = \SetOf{(x,0)}{ x \in \sval(\bm P_i)} \, .
\end{equation}
Indeed, if $x \in \sval(\bm P_i)$, then there exists $\bm x \in \bm P_i$ such that $\sval(\bm x) = x$.
Therefore, $(\bm x, 1) \in \bm Q_i$ and $(x,0) \in \sval(\bm Q_i)$.
Conversely, if $(x,0) \in \sval(\bm Q_i)$, then there exist $\bm \lambda \ge 0$ and $\bm x \in \bm P_i$ such that $\sval(\bm \lambda) = 0$ and $x = \sval(\bm \lambda \bm x) = \sval(\bm \lambda) + \sval(\bm x) = \sval(\bm x)$.
Therefore $x \in \sval(\bm P_i)$ and~\cref{eq:homog_puiseux} is satisfied.
Let $Y = \bigcap_{i = 1}^{m} \sval(\bm Q_i) \subseteq \TTpm^{d+1}$.
By~\cref{eq:homog_puiseux}, we get
\begin{equation}\label{eq:homog_puiseux2}
Y \cap \{x_{d+1} = 0\} = \SetOf{(x, 0)}{x \in X} \, .
\end{equation}
Applying \cref{th:conic_M-W} to the set $Y$, there exists a finite set $U = \{u_1, \dots, u_n\} \subset \TTpm^{d+1}$ such that $Y = \wcone(U)$.
Since $Y$ is a cone, we can suppose that $U$ contains $\Zero$.
Moreover, we have $y_{d+1} \ge \Zero$ for every $y \in Y$.
Hence, we can scale every point $u^{(i)} \in U$ in such a way that $u^{(i)}_{d+1} \in \{\Zero, 0\}$.
Thus, we can write $U  = \hat{V} \cup \hat{W}$ where $\hat{V}$ contains the elements of $U$ whose last coordinate is $0$ and $\hat{W}$ contains the elements whose last coordinate is $\Zero$.
Since $U$ contains $\Zero$, the set $\hat{W}$ is nonempty.
Since $X$ is nonempty,~\cref{eq:homog_puiseux2} implies that $\hat{V}$ is also nonempty.
Therefore, by combining~\cref{eq:homog_puiseux2} with \cref{le:homog_halflines} we get
\[
X = \whull\bigl(\SetOf{v \lplus \lambda \odot w }{v \in V, w \in W, \lambda \in \Tgeq}\bigr) \, ,
\]
where $V,W$ are the projections of $\hat{V}, \hat{W}$ obtained by deleting the last coordinate.

To prove the implication $(ii) \Rightarrow (i)$, we define $\hat{V} = \{(v,0) \colon v \in V\}$ and $\hat{W} = \{(w,\Zero) \colon w \in \hat{W}\}$.
By \cref{th:conic_M-W}, we have 
\[
\wcone(\hat{V} \cup \hat{W}) = \bigcap_{i = 1}^{n} \SetOf{x \in \TTpm^{d+1}}{a_{i,1} \odot x_1 \oplus \dots \oplus a_{i,d+1} \odot x_{d+1} \teq \Zero } \, 
\]
for some finite set $\{a_1, \dots, a_n\} \in \TTpm^{d+1} \setminus \{\Zero\}$. By \cref{le:homog_halflines} we have the equality $\{(x, 0) \colon x \in X\} = \wcone(\hat{V} \cup \hat{W}) \cap \{x_{d+1} = 0\}$ and therefore
\[
X = \bigcap_{i = 1}^{n} \SetOf{x \in \TTpm^{d+1}}{a_{i,d+1} \oplus a_{i,1} \odot x_1 \oplus \dots \oplus a_{i,d} \odot x_d \teq \Zero } \, . \qedhere
\]
\end{proof}

\section{Oriented (valuated) matroids and Bergman fans}
\label{sec:oriented-matroids}

We give a brief introduction of the necessary notions to connect oriented matroids and oriented valuated matroids with TC-convexity. 

Two vectors $u,v \in \TTpm^k$ are \emph{orthogonal}, denoted by $u \perp v$, if $u \odot v \in \Tzero$.
For a set $U \subseteq \TTpm^k$, we set $U^{\perp} \coloneqq \{v \in \TTpm^k \mid u \perp v\ \forall u \in U\}$. 

\begin{lemma} \label{lem:orthogonal-as-cone}
  For a subset $U \subseteq \TTpm^k$ we have $(U^{\perp})^{\perp} \supseteq \wspan(U)$. 
  For finite $U$, equality holds: $(U^{\perp})^{\perp} = \wspan(U)$. 
\end{lemma}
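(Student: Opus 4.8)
The plan is to prove the two parts separately. For the inclusion $(U^{\perp})^{\perp} \supseteq \wspan(U)$, first observe that for a single vector $u \in U$ the set $\{u\}^{\perp}$ is a linear tropical hyperplane, hence TC-convex by \cref{cor:hspace_weakly_convex}, and it is also a cone. Therefore $U^{\perp} = \bigcap_{u \in U}\{u\}^{\perp}$ is a TC-convex cone containing $\Zero$, and likewise $(U^{\perp})^{\perp} = \bigcap_{v \in U^{\perp}} \{v\}^{\perp}$ is a TC-convex cone containing $\Zero$ and also containing $U$ (since orthogonality is symmetric: $u \perp v$ for all $v \in U^{\perp}$ by definition of $U^{\perp}$). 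I would also need $(U^\perp)^\perp \supseteq \ominus U$, which is immediate because $u \perp v$ implies $(\ominus u) \perp v$. Since $(U^{\perp})^{\perp}$ is a TC-convex cone containing $U \cup \ominus U$ and $\Zero$, \cref{le:convexity_TC_cone} gives $\wcone(U \cup \ominus U) \subseteq (U^{\perp})^{\perp}$, i.e. $\wspan(U) \subseteq (U^{\perp})^{\perp}$.

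For the reverse inclusion when $U$ is finite, the key is to represent $\wspan(U)$ as an intersection of closed linear tropical halfspaces and then show that each such halfspace is ``orthogonal-witnessed'' by a point of $U^{\perp}$. More precisely: $\wspan(U) = \wcone(U \cup \ominus U)$ is finitely generated, so by \cref{th:conic_M-W} (equivalence $(ii) \Leftrightarrow (i)$) it is an intersection of finitely many closed linear tropical halfspaces, $\wspan(U) = \bigcap_{i} \Hclosed^+(\Zero, b_i)$ for some $b_i \in \TTpm^d \setminus \{\Zero\}$. The hyperplane associated with $b_i$, namely $\{b_i\}^{\perp} = \HC(\Zero,b_i)$, contains $\wspan(U)$? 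Not quite — a halfspace need not be a hyperplane. Here I would instead argue that since $\wspan(U)$ is closed under $x \mapsto \ominus x$ (as a span), the only closed linear halfspaces containing it are actually hyperplanes: if $\Hclosed^+(\Zero,b)$ contains a set $S$ with $S = \ominus S$, then it also contains $\ominus S = S$ placed in $\Hclosed^-(\Zero,b)$, forcing $S \subseteq \Hclosed^+(\Zero,b) \cap \Hclosed^-(\Zero,b) = \HC(\Zero,b) = \{b\}^{\perp}$. Hence $U \subseteq \wspan(U) \subseteq \{b_i\}^{\perp}$ for each $i$, which means $b_i \in U^{\perp}$. Consequently $(U^{\perp})^{\perp} \subseteq \bigcap_i \{b_i\}^{\perp} \subseteq \bigcap_i \Hclosed^+(\Zero,b_i) = \wspan(U)$, completing the proof.

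The main obstacle I anticipate is justifying cleanly that a finitely generated TC-span admits a representation purely by linear \emph{hyperplanes} rather than proper halfspaces. The argument sketched above — that a halfspace containing a centrally symmetric set must contain the corresponding hyperplane — is the crux and needs \cref{le:hspace_topo} or the explicit inequality \cref{eq:hspace} to verify $\Hclosed^+(\Zero,b) \cap \Hclosed^-(\Zero,b) = \HC(\Zero,b)$; this follows since $x \in \Hclosed^+(\Zero,b) \cap \Hclosed^-(\Zero,b)$ forces $b \odot \begin{pmatrix}0\\x\end{pmatrix} \teq \Zero$ and $\ominus b \odot \begin{pmatrix}0\\x\end{pmatrix} \teq \Zero$ simultaneously, which by the definition of $\teq$ in \cref{eq:extended+partial+orders} is possible only if $b \odot \begin{pmatrix}0\\x\end{pmatrix} \in \Tzero$, i.e. $x \in \HC(\Zero,b) = \{b\}^\perp$. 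A secondary point to handle carefully is that \cref{th:conic_M-W} requires $X$ to be an intersection of finitely many closed \emph{linear} halfspaces in condition $(i)$, matching exactly what we get from the span, so the equivalence applies directly with no affine terms. Finally, I would remark (or leave as an aside) that finiteness is genuinely used: for infinite $U$ the Minkowski–Weyl direction fails and one only retains the inclusion, consistent with the statement.
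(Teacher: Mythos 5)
Your proof is correct, and it follows the same underlying idea as the paper's: represent $\wspan(U)$ by closed linear halfspaces, observe that the central symmetry $\wspan(U)=\ominus\wspan(U)$ forces those halfspaces to restrict to the corresponding linear hyperplanes, and conclude that the defining vectors lie in $U^{\perp}$, so $(U^{\perp})^{\perp}$ sits inside their intersection. The paper runs this as a proof by contradiction: it picks a single point $p\in(U^{\perp})^{\perp}\setminus\wspan(U)$, invokes the halfspace representation (citing \cref{thm:TC-hull-intersection-halfspaces}) to get one separating $w$, notes $w\odot u\teq\Zero$ and $w\odot(\ominus u)\teq\Zero$ force $w\in U^{\perp}$, and derives a contradiction from $w\odot p<\Zero$. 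You instead take the full Minkowski--Weyl representation $\wspan(U)=\bigcap_i\Hclosed^{+}(\Zero,b_i)$ from \cref{th:conic_M-W}, show each $b_i\in U^{\perp}$, and deduce $(U^{\perp})^{\perp}\subseteq\bigcap_i\{b_i\}^{\perp}\subseteq\wspan(U)$ directly. A small advantage of your route: by going through \cref{th:conic_M-W}(ii)$\Leftrightarrow$(i) you automatically obtain \emph{linear} halfspaces, whereas the paper's terse citation of \cref{thm:TC-hull-intersection-halfspaces} nominally yields affine ones and implicitly relies on the cone structure to discard the affine term. Your verification that $\Hclosed^{+}(\Zero,b)\cap\Hclosed^{-}(\Zero,b)=\HC(\Zero,b)$ via $(\Tgt\cup\Tzero)\cap(\Tlt\cup\Tzero)=\Tzero$ is exactly the same algebraic fact the paper leans on when it reads $w\odot u\teq\Zero$ and $\ominus(w\odot u)\teq\Zero$ as $w\odot u\in\Tzero$. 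Your first-inclusion argument via \cref{cor:hspace_weakly_convex} and \cref{le:convexity_TC_cone} is likewise the paper's, spelled out a bit more.
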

\begin{proof}
  By definition of orthogonality, we have $s \odot u \in (U^{\perp})^{\perp}$ for all $s \in \TTpm$ and $u \in U$.   
  As an intersection of hyperplanes, the set $(U^{\perp})^{\perp}$ is TC-convex.
  Therefore, $(U^{\perp})^{\perp} \supseteq \wcone(U,\ominus U)$. 
  For finite $U$, assume that there is an element $p \in (U^{\perp})^{\perp} \setminus \wspan(U)$.
  Then \cref{thm:TC-hull-intersection-halfspaces} shows that there is an element $w \in \TTpm^k$ such that $w \odot p < \Zero$ and $w \odot q \models \Zero$ for all $q \in \wcone(U,\ominus U)$.
  Though, then $w \odot u \models \Zero$ and $w \odot \ominus u \models \Zero$ for all $u \in U$ which implies $w \in U^{\perp}$.
  By definition of the double orthogonal, this shows $w \odot p \in \Tzero$, a contradiction. 
\end{proof}

\begin{remark} \label{rem:span+orthogonal}
  Motivated by the former lemma we clarify the interplay between the span and the orthogonal of a set.
  It turns out that $\wspan(U)^{\perp} = \wspan(U^{\perp})$ for an arbitrary subset $U \subseteq \TTpm^k$. 

  For this, note that $U^{\perp} = \wspan(U^{\perp})$ for any $U \subseteq \TTpm^k$ as $U^{\perp}$ is a TC-convex set as intersection of TC-convex sets. 
  Using the inclusion of \cref{lem:orthogonal-as-cone} on $U^{\perp}$ and the orthogonal of the inclusion for $U$ gives $U^{\perp} = \wspan(U^{\perp}) \subseteq U^{\perp \perp \perp} \subseteq \wspan(U)^{\perp}$.
  As $U \subseteq \wspan(U)$ implies $\wspan(U)^{\perp} \subseteq U^{\perp}$, we obtain the equality $U^{\perp} = \wspan(U^{\perp}) = \wspan(U)^{\perp} = U^{\perp \perp \perp}$. 
\end{remark}

Recently, there has been several advances on \emph{matroids over hyperfields} starting with \cite{BakerBowler:2019} building on the investigation of hyperfields in the context of tropical geometry~\cite{Viro:2010} among others. 
This framework generalizes matroids, oriented matroids, and valuated matroids.
In this terminology, oriented matroids are matroids over the \emph{sign hyperfield}; that is the set $\SignH = \{\ominus 0,\Zero,0\}$ equipped with a hyperfield structure.
We refer to~\cite{BLSWZ:1993,Anderson:2019,Celaya:2019} for further reading on oriented matroids and the interpretation in the context of hyperfields. 

All three examples arise naturally as images from matroids over valued fields (at least the `realizable' ones as discussed below) and, as such, they have additional useful properties.
This is captured by the class of \emph{stringent} hyperfields.  
Several cryptomorphic definitions of matroids over stringent hyperfields attain a simpler form as derived in~\cite[Section 4]{BowlerPendavingh-arxiv:2023}.
We consider two special classes, namely matroids over the hyperfield of signs (aka oriented matroids) and matroids over the real tropical hyperfield (aka oriented valuated matroids). 

Let $\HH \in \{\SignH,\TTpm\}$. 
Since $y \oplus x = x \lplus y$ for all $x \neq \ominus y \in \HH$, oriented matroids and oriented valuated matroids fall into the category of matroids over stringent hyperfields.
Therefore, \cite[Theorem 44]{BowlerPendavingh-arxiv:2023} allows one to state a unified concise definition of \emph{vectors} for oriented matroids and oriented valuated matroids.
We obtain an even simpler version using $\supp(x \lplus y) = \supp(x) \cup \supp(y)$ for all $x,y \in \HH$. 
Here, the operation $\circ$ defined at the beginning of \cite[Section 4.3]{BowlerPendavingh-arxiv:2023} boils down to the left sum. 
In the statement of the following definition, oriented matroids are $\SignH$-matroids and oriented valuated matroids are $\TTpm$-matroids. 

\begin{definition}\label{def:matroid_vectors}
  Let $\HH \in \{\SignH,\TTpm\}$. 
  A subset $\mathcal{V} \subseteq \HH^k$ forms the \emph{vectors} of a $\HH$-\emph{matroid} $\cM$ if and only if the following properties hold:
  \begin{itemize}
  \item[(V0)] $\Zero \in \mathcal{V}$. 
  \item[(V1)] If $s \in \HH$ and $v \in \mathcal{V}$ then $s \odot v \in \mathcal{V}$. 
  \item[(V2)] If $x,y \in \mathcal{V}$ and $z = x \lplus y$, then $z \in \mathcal{V}$. 
  \item[(V3)] If $x,y \in \mathcal{V}$, $i \in [k]$ with $x_i = \ominus y_i \neq \Zero$, then there is a $z \in \mathcal{V}$ with $z \in \Uncomp(x \oplus y) \cap \HH^k$ and $z_i = \Zero$. 
  \end{itemize}
\end{definition}

\begin{definition}[{\cite[Lemma 5]{BowlerPendavingh-arxiv:2023}}] \label{def:others-from-vectors}
The \emph{circuits} of $\cM$ are the support-minimal non-zero vectors.
  The \emph{covectors} of $\cM$ are the elements of $\HH^k$ orthogonal to the circuits of $\cM$.
  The \emph{cocircuits} of $\cM$ are the support-minimal non-zero covectors. 
  The covectors form the vectors of the \emph{dual matroid}. 
\end{definition}

Note that for a field $F$, the $F$-matroids just correspond to linear spaces over $F$. 
In particular, the vectors are the elements in a linear space over $F$ and the circuits are the support-minimal non-zero vectors of the linear space.
The covectors and cocircuits then arise from the dual linear space.
We will use this in the following for the fields $\RR$ and $\puiseux{\RR}{t}$. 

We summarize a few statements of~\cite{BowlerPendavingh-arxiv:2023} to derive the following lemma.

\begin{lemma} \label{lem:orthogonal-closure-circuits}
  Let $M$ be a $\TTpm$-matroid with vectors $\mathcal{V}$, covectors $\mathcal{W}$, circuits  $\mathcal{C}$ and cocircuits $\mathcal{D}$.
  Then $\mathcal{V} = \mathcal{D}^{\perp}$, $\mathcal{W} = \mathcal{C}^{\perp}$ and $\mathcal{V} = (\mathcal{C}^{\perp})^{\perp}$. 
\end{lemma}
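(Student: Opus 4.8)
The plan is to reduce everything to the single identity $\mathcal{V} = \wspan(\mathcal{C})$, and then harvest the three stated equalities from the orthogonality calculus of \cref{lem:orthogonal-as-cone} and \cref{rem:span+orthogonal} together with the duality bookkeeping of \cref{def:others-from-vectors}. The equality $\mathcal{W} = \mathcal{C}^{\perp}$ needs no work: by \cref{def:others-from-vectors} the covectors are \emph{defined} as the elements of $\TTpm^k$ orthogonal to the circuits. So the content is $\mathcal{V} = \wspan(\mathcal{C})$.

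For $\wspan(\mathcal{C}) \subseteq \mathcal{V}$ I would verify that $\mathcal{V}$ is a TC-convex cone containing $\mathcal{C} \cup \ominus\mathcal{C}$ and $\Zero$, and apply \cref{le:convexity_TC_cone}. Being a cone through $\Zero$ is (V0)--(V1); containing $\ominus\mathcal{C}$ is again (V1); and TC-convexity is exactly the two closure conditions of \cref{prop:local-generation-TC-hull}: a weighted left sum $\lambda\odot x \lplus \mu\odot y$ is in $\mathcal{V}$ by (V1) then (V2), and the local-elimination output is supplied by (V3) --- given $(u,v,w),(u,\ominus v,w)\in\mathcal{V}$ with $v\neq\Zero$, axiom (V3) at the distinguished coordinate produces $z\in\mathcal{V}$ lying in $\Uncomp\bigl((u,v,w)\oplus(u,\ominus v,w)\bigr)=\{u\}\times[\ominus|v|,|v|]\times\{w\}$ and vanishing there, i.e.\ $z=(u,\Zero,w)$ (the case $v=\Zero$ is trivial). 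For the reverse inclusion $\mathcal{V}\subseteq\wcone(\mathcal{C})$ I would cite the cryptomorphism for matroids over stringent hyperfields from \cite{BowlerPendavingh-arxiv:2023}: each vector is a composition of finitely many circuits, and since composition over $\TTpm$ is the left sum (\cref{rem:relation-left-sum-OM}), such a vector lies in $\vertices(c_1,\dots,c_m)\subseteq\faces(c_1,\dots,c_m)\subseteq\wcone(\mathcal{C})$ by \cref{le:TC_cone_from_faces}. This gives $\mathcal{V}=\wspan(\mathcal{C})$.

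Now the rest is calculus. From $\mathcal{V}=\wspan(\mathcal{C})$ and \cref{rem:span+orthogonal} (which gives $\wspan(U)^{\perp}=U^{\perp}$ for any $U$) we get $\mathcal{W}=\mathcal{C}^{\perp}=\wspan(\mathcal{C})^{\perp}=\mathcal{V}^{\perp}$. A matroid has finitely many circuits up to scaling by $\Tgt$, and neither $\wspan$ nor $(\,\cdot\,)^{\perp}$ changes when a generator is replaced by a positive scalar multiple, so there is a finite $\mathcal{C}_0\subseteq\mathcal{C}$ with $\wspan(\mathcal{C}_0)=\wspan(\mathcal{C})$ and, using $\mathcal{C}\subseteq\wspan(\mathcal{C}_0)\subseteq(\mathcal{C}_0^{\perp})^{\perp}$ (\cref{lem:orthogonal-as-cone}) together with $U^{\perp}=U^{\perp\perp\perp}$ (\cref{rem:span+orthogonal}), also $\mathcal{C}^{\perp}=\mathcal{C}_0^{\perp}$. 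The finite half of \cref{lem:orthogonal-as-cone} then yields $(\mathcal{C}^{\perp})^{\perp}=(\mathcal{C}_0^{\perp})^{\perp}=\wspan(\mathcal{C}_0)=\mathcal{V}$, which is the third claim. Finally, $\mathcal{D}$ is by \cref{def:others-from-vectors} the circuit set of the dual matroid, whose vectors are $\mathcal{W}$; running the above once more (the $\subseteq$ direction using that $\mathcal{W}=\mathcal{C}^{\perp}$ is already a TC-convex cone, the $\supseteq$ direction using the same cryptomorphism for the dual) gives $\mathcal{W}=\wspan(\mathcal{D})$, hence $\mathcal{D}^{\perp}=\wspan(\mathcal{D})^{\perp}=\mathcal{W}^{\perp}=(\mathcal{C}^{\perp})^{\perp}=\mathcal{V}$, which is $\mathcal{V}=\mathcal{D}^{\perp}$.

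The only genuinely non-bookkeeping step, and the one I expect to be the real obstacle, is $\mathcal{V}\subseteq\wcone(\mathcal{C})$: that every vector of a $\TTpm$-matroid decomposes as a composition of its circuits. This is matroid theory over hyperfields and is imported from \cite{BowlerPendavingh-arxiv:2023} rather than reproved here. Two minor points also need care: that the possibly infinite set $\mathcal{C}$ may be shrunk to a finite subset without disturbing its TC-span or its orthogonal complement (so that the finite-$U$ half of \cref{lem:orthogonal-as-cone} applies), and that axioms (V1)--(V3) translate \emph{exactly} onto the closure conditions of \cref{prop:local-generation-TC-hull}, in particular that the conclusion of (V3) is precisely the local-elimination point $(u,\Zero,w)$.
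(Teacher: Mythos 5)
Your proof is valid in outline but follows a genuinely different route from the paper. You reduce everything to $\mathcal{V} = \wspan(\mathcal{C})$, proving the inclusion $\wspan(\mathcal{C}) \subseteq \mathcal{V}$ by translating the axioms (V0)--(V3) into the TC-convexity criterion of \cref{prop:local-generation-TC-hull} (the reading of (V2) as closure under left sum and of (V3) as the local elimination step is exactly right), and then extracting the orthogonality statements from \cref{lem:orthogonal-as-cone} and \cref{rem:span+orthogonal}. The paper instead imports both $\mathcal{V} = \mathcal{D}^{\perp}$ and $\mathcal{W} = \mathcal{C}^{\perp}$ directly as the ``original'' definition of vectors/covectors in Bowler--Pendavingh (equivalent to \cref{def:matroid_vectors}), and then obtains $\mathcal{V} = (\mathcal{C}^{\perp})^{\perp}$ by a short sandwich: perfection of stringent hyperfields gives $\mathcal{W} \subseteq \mathcal{V}^{\perp}$, while $\mathcal{C} \subseteq \mathcal{V}$ gives $\mathcal{V}^{\perp} \subseteq \mathcal{C}^{\perp} = \mathcal{W}$, so $\mathcal{W} = \mathcal{V}^{\perp}$, and dually $\mathcal{V} = \mathcal{W}^{\perp} = (\mathcal{C}^{\perp})^{\perp}$. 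Your approach makes the geometric content of the axioms visible, at the cost of more machinery; the paper's is a three-sentence piece of $\perp$-bookkeeping, at the cost of citing the equivalence theorem plus the perfection corollary.

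The step you correctly flag as ``the real obstacle'' is indeed the weak point: $\mathcal{V} \subseteq \wcone(\mathcal{C})$ via ``every vector is a finite composition of circuits.'' The paper uses this decomposition explicitly only for $\SignH$-matroids (in the proof of \cref{thm:representation-vectors-tropical-hyperfields}) and conspicuously avoids it for $\TTpm$-matroids, instead \emph{deriving} $\mathcal{V} = \wspan(\mathcal{C})$ as a downstream consequence of the present lemma combined with \cref{lem:orthogonal-as-cone}. This strongly suggests that a citable ``vectors are compositions of circuits'' statement over $\TTpm$ was not readily available to the authors, and that the perfection route is the workaround. If you want to keep your order of argument, you should either locate a precise reference for that decomposition over $\TTpm$ (and confirm it is not itself proved there via the double-orthogonality you are trying to establish, which would make your argument circular with the eventual use of the lemma), or replace that inclusion by the paper's perfection argument, since perfection alone already gives $\mathcal{V} \subseteq \mathcal{W}^{\perp} = \mathcal{C}^{\perp\perp}$ which, combined with your $\wspan(\mathcal{C}) \subseteq \mathcal{V}$ and the finite-generator half of \cref{lem:orthogonal-as-cone}, closes the gap without any composition statement.
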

\begin{proof}
  The representation of the vectors by cocircuits and of covectors by circuits via orthogonality is the original definition from \cite{BowlerPendavingh-arxiv:2023} which was shown to be equivalent to \cref{def:matroid_vectors}. 
  
  By~\cite[Corollary 34]{BowlerPendavingh-arxiv:2023}, stringent hyperfields are also perfect. 
  Therefore, one gets $\mathcal{W} \subseteq \mathcal{V}^{\perp}$.
  As circuits are special vectors, the inclusion-reversing structure of orthogonality yields $\mathcal{C}^{\perp} \supseteq \mathcal{V}^{\perp}$.
  Together, we get $\mathcal{W} = \mathcal{V}^{\perp} = \mathcal{C}^{\perp}$ as the first and the last of the sets in this chain are equal by the first part.
  Analogously, we can deduce $\mathcal{V} = \mathcal{W}^{\perp}$.
  Using again $\mathcal{W} = \mathcal{C}^{\perp}$ implies $\mathcal{V} = (\mathcal{C}^{\perp})^{\perp}$. 
\end{proof}

A matroid $M_0$ over a hyperfield $H_0$ is \emph{realizable} over another hyperfield $H_1$ if there is a matroid $M_1$ over $H_1$ and a hyperfield morphism $H_1 \to H_0$ mapping $M_1$ to $M_0$.
We do not need this generality but only a more special case, and we refer to \cite{BakerBowler:2019} for more details.
Recall that $\sval$ maps vectors over Puiseux series $\puiseux{\RR}{t}$
to vectors over $\TTpm$.
Furthermore, $\tsgn$ maps vectors over $\TTpm$ or vectors over $\puiseux{\RR}{t}$
to vectors over $\SignH$.
Furthermore, the sign map $\sgn$ maps vectors over $\RR$ to vectors over $\SignH$.
Here, we have a slight abuse of notation as we do not distinguish between the ways we represent the three elements of $\SignH$ as $\{\ominus 0,\Zero,0\}$, $\{\ominus,\Zero,\oplus\}$ or $\{-,0,+\}$. 
All three maps indeed give rise to hyperfield morphisms.  

\begin{definition}
  Let $\mathcal{R} \in \{\RR,\puiseux{\RR}{t}\}$ and $\HH \in \{\SignH,\TTpm\}$.
  Then an $\HH$-matroid $M$ is \emph{realizable} over $\mathcal{R}$ if the set of vectors of $M$ is the image of the set of vectors of a linear space over $\mathcal{R}$.
\end{definition}

\begin{example}

  Consider the matrix 
  \begin{align*}
    T =
    \begin{pmatrix}
      1 & 1 & 1 & 1\\
      t^2 & t^{-1} & -t^3 & 1\\
      -t^{-1} & -t^{1} & 0 & t^{-2}\\
    \end{pmatrix} \in \puiseux{\RR}{t}^{3 \times 4} \enspace . 
  \end{align*}
  One can think of $t$ as a very big number, so $t^{-\Omega}$ for a sufficiently big $\Omega$ is a very small number.
  Taking the row span of $T$ yields a linear subspace of $\puiseux{\RR}{t}^{4}$.
  The image of the vectors in this linear subspace under the map $\sval$ form the vectors of an $\TTpm$-matroid, and under the map $\tsgn$, they form the vectors of an $\SignH$-matroid. 
  As an example, we obtain
  \begin{align*}
    \sval(t^2, t^{-1}, -t^3, 1) = (2, -1, \ominus 3, 0), \quad \sval(-t^{-1}, -t^{1}, 0, t^{-2}) = (\ominus -1, \ominus 1, \Zero, -2)
  \end{align*}
  and
  \begin{align*}
    \tsgn\left( (-t^{-1}, -t^{1}, 0, t^{-2}) + t^{-5} \cdot (t^2, t^{-1}, -t^3, 1) \right) = (\ominus 0,\ominus 0,\ominus 0,\oplus 0)\\
    = (\ominus 0,\ominus 0,\Zero,\oplus 0) \lplus (\oplus 0,\oplus 0,\ominus 0,\oplus 0) \\
    = \tsgn\left( (-t^{-1}, -t^{1}, 0, t^{-2}) \right) \lplus \tsgn\left(t^{-5} \cdot (t^2, t^{-1}, -t^3, 1) \right)
  \end{align*}
  For a nice discussion on the interpretation of $\lplus$ see \cite[Example 6.5]{Anderson:2019}. 
  \end{example}

The image of a linear space over complex Puiseux series under the valuation is a \emph{tropical linear space}.
If the coefficients of the defining equations of the linear space are constant, then this gives rise to the \emph{Bergman fan} of the underlying linear matroid; see, e.g.,~\cite{Joswig:2022} for more details.
This idea was generalized to the setting of oriented matroids in~\cite[Proposition~2.4.7]{Celaya:2019} under the name \emph{real Bergman fan}.
In the general setting of matroids over hyperfields, the set of vectors forms the generalization of the Bergman fan.
Already for tropical linear spaces, a connection to tropical convexity was established in that it is the tropical convex hull of the circuits~\cite{YuYuster:2006}.
Furthermore, \cite[Theorem 3.14]{Tabera:2015} basically states that the set of vectors of a $\TTpm$-matroid realizable by a linear space $L$ over $\puiseux{\RR}{t}$ is the orthogonal set to the image of the cocircuits of $L$.
We give a unified generalization of these insights. 

\begin{theorem} \label{thm:representation-vectors-tropical-hyperfields}
  Let $\mathcal{C}, \mathcal{D}, \mathcal{V} \subseteq \TTpm^k$ be the set of circuits, cocircuits and vectors of a $\HH$-matroid $\cM$ where $\HH \in \{\TTpm,\SignH\}$.

  For $\HH = \TTpm$,
  \begin{equation}\label{eq:matroids_ttpm}
    \mathcal{V} = \whull(\mathcal{C})\cup \{\Zero\} = \bigcap_{d \in \mathcal{D}} \HC\left((\Zero,d)\right) = \mathcal{D}^{\perp} \enspace .
  \end{equation}
  
  Furthermore, for $\HH = \SignH$,
  \begin{equation}
    \wspan(\mathcal{V}) = \wspan(\mathcal{C}) = \bigcap_{d \in \mathcal{D}} \HC\left((\Zero,d)\right)  \enspace ,
  \end{equation}
    \begin{equation}
    \whull(\mathcal{V}) = \whull(\mathcal{C} \cup \{\Zero\}) = \bigcap_{d \in \mathcal{D}} \HC\left((\Zero,d)\right) \cap [\ominus 0, 0]^k  \enspace ,
    \end{equation}
  \begin{equation}
   \mathcal{V} = \{\ominus 0,\Zero, 0\}^k \cap \whull(\mathcal{C} \cup \{\Zero\}) = \{\ominus 0,\Zero, 0\}^k \cap \bigcap_{d \in \mathcal{D}} \HC\left((\Zero,d)\right)  \enspace .
  \end{equation}
  
  If $\cM$ is realizable by a linear space $L$ over $\RR$ or over $\puiseux{\RR}{t}$, then the image of $L$ is the TC-convex hull of the image of the circuits of $L$ and the orthogonal to the images of the cocircuits of $L$ under the signed valuation. 
\end{theorem}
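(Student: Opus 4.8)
The plan is to prove the displayed identities for $\HH=\TTpm$ first, to deduce those for $\HH=\SignH$ from them together with \cref{lem:special-hull-unit-cube}, and then to read off the realizable case as a direct specialization.

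\textbf{The case $\HH=\TTpm$.} Unwinding \cref{def:matroid_vectors} and the definition of a signed tropical hyperplane, one has $\HC\big((\Zero,d)\big)=\SetOf{x\in\TTpm^{k}}{d\odot x\in\Tzero}=\{d\}^{\perp}$, so $\bigcap_{d\in\mathcal D}\HC\big((\Zero,d)\big)=\mathcal D^{\perp}$, and $\mathcal V=\mathcal D^{\perp}=(\mathcal C^{\perp})^{\perp}$ is exactly \cref{lem:orthogonal-closure-circuits}. It remains to identify these sets with $\whull(\mathcal C)\cup\{\Zero\}$, and this is where the matroid structure is genuinely used. By \cite{BowlerPendavingh-arxiv:2023} the underlying (unsigned) matroid has only finitely many circuit supports $S_{1},\dots,S_{r}\subseteq[k]$ and, over the \emph{stringent} hyperfield $\TTpm$, the circuits supported on a fixed $S_{i}$ form a single $\TTpm^{\times}$-scaling class, so $\mathcal C=\bigcup_{i=1}^{r}\SetOf{\alpha\odot c^{(i)}}{\alpha\in\TTpm\setminus\{\Zero\}}$ for suitable representatives $c^{(1)},\dots,c^{(r)}$. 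Since $\mathcal C$ is a cone, \cref{cor:hull_of_cone} gives $\whull(\mathcal C)\cup\{\Zero\}=\wcone(\mathcal C)$, and since $\mathcal C$ is generated under nonzero scaling by the finite set $\{c^{(1)},\dots,c^{(r)}\}$ (which, up to scaling, contains the negatives $\ominus c^{(i)}$), this equals $\wspan(c^{(1)},\dots,c^{(r)})$. Now \cref{lem:orthogonal-as-cone}, applied to the \emph{finite} set $\{c^{(1)},\dots,c^{(r)}\}$, yields $\wspan(c^{(1)},\dots,c^{(r)})=\big(\{c^{(i)}\}^{\perp}\big)^{\perp}$, and $\{c^{(i)}\}^{\perp}=\mathcal C^{\perp}$ because the relation $u\odot v\in\Tzero$ is insensitive to $\TTpm^{\times}$-scaling of $v$. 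Hence $\whull(\mathcal C)\cup\{\Zero\}=(\mathcal C^{\perp})^{\perp}=\mathcal V$, completing \cref{eq:matroids_ttpm}. One can also check directly via \cref{prop:local-generation-TC-hull} that $\mathcal V$ is TC-convex — closure under weighted left sums is (V1) followed by (V2), local elimination at the eliminated coordinate is (V3) — giving $\whull(\mathcal C)\subseteq\mathcal V$; but the route above gives both inclusions at once.

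\textbf{The case $\HH=\SignH$.} Here $\mathcal C$ and $\mathcal D$ are \emph{finite} (since $\SignH^{\times}=\{0,\ominus 0\}$) and $\mathcal C\subseteq\{\ominus 0,\Zero,0\}^{k}$ satisfies $\mathcal C=\ominus\mathcal C$. Applying \cref{lem:special-hull-unit-cube} to $\mathcal C\cup\{\Zero\}$ gives $\whull(\mathcal C\cup\{\Zero\})=\wspan(\mathcal C)\cap[\ominus 0,0]^{k}$, which is the second displayed equation, and intersecting with $\{\ominus 0,\Zero,0\}^{k}\subseteq[\ominus 0,0]^{k}$ reduces the third displayed equation to $\mathcal V=\{\ominus 0,\Zero,0\}^{k}\cap\wspan(\mathcal C)$. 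Since on sign vectors the relation $u\odot v\in\Tzero$ coincides with oriented-matroid orthogonality, the classical fact that the vectors of an oriented matroid form the orthogonal complement of its cocircuits reads $\mathcal V=\big(\bigcap_{d\in\mathcal D}\HC((\Zero,d))\big)\cap\{\ominus 0,\Zero,0\}^{k}$, so everything now follows from the first displayed equation $\wspan(\mathcal C)=\bigcap_{d\in\mathcal D}\HC((\Zero,d))=\mathcal D^{\perp}$. To prove this I would take $\perp$ of both sides; by \cref{rem:span+orthogonal} and \cref{lem:orthogonal-as-cone} (applied to the finite sets $\mathcal C,\mathcal D$) this reduces to $\mathcal C^{\perp}=\wspan(\mathcal D)$, whose inclusion $\supseteq$ is circuit--cocircuit orthogonality together with the fact that $\mathcal C^{\perp}$ is a TC-convex cone closed under negation. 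The reverse inclusion $\mathcal C^{\perp}\subseteq\wspan(\mathcal D)$ is the main obstacle: it requires promoting the decomposition of a sign covector into conformal cocircuits to a statement about $\TTpm$-vectors with prescribed magnitudes, which I would handle either by a magnitude-bookkeeping argument picking, for each coordinate of $v$, a conformal cocircuit through it, or by invoking that the trivial valuation of an oriented matroid is an oriented valuated matroid \cite{BakerBowler:2019} and applying the $\TTpm$ case to it.

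\textbf{The realizable case.} If $\cM$ is the image of a linear space $L$ over $\mathcal R\in\{\RR,\puiseux{\RR}{t}\}$ under $\sgn$, $\tsgn$ or $\sval$, then each of these maps preserves supports, and the circuits of $L$ with a fixed support form a single $\mathcal R^{\times}$-scaling class by ordinary linear algebra; hence the image of the circuit set of $L$ equals $\mathcal C$, the image of the cocircuit set of $L$ has the same orthogonal complement as $\mathcal D$, and the image of $L$ is $\mathcal V$ (with $0\mapsto\Zero$). Thus the assertion that the image of $L$ is the TC-convex hull of the image of its circuits, and equals the orthogonal complement of the images of its cocircuits, is precisely \cref{eq:matroids_ttpm} — respectively its $\SignH$-counterparts — for the matroid $\cM$.
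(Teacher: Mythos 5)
Your $\HH=\TTpm$ argument is essentially the paper's: the chain $\mathcal V=\mathcal D^{\perp}=(\mathcal C^{\perp})^{\perp}$ from \cref{lem:orthogonal-closure-circuits}, then $\whull(\mathcal C)\cup\{\Zero\}=\wcone(\mathcal C)$ from \cref{cor:hull_of_cone}, and finally \cref{lem:orthogonal-as-cone} applied to a finite set of circuit representatives (one per circuit support) to get $\wcone(\mathcal C)=\wspan(c^{(1)},\dots,c^{(r)})=(\mathcal C^{\perp})^{\perp}$. This is exactly what the paper does, and your observation that circuits with a fixed support form one scaling class so that a finite representative set suffices is the same point the paper makes.

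In the $\HH=\SignH$ case you take a detour the paper does not. To prove $\wspan(\mathcal C)=\mathcal D^{\perp}$ you apply $\perp$ and, via \cref{rem:span+orthogonal} and \cref{lem:orthogonal-as-cone}, reduce to $\mathcal C^{\perp}=\wspan(\mathcal D)$ --- a correct and equivalent reformulation --- but then you hit a genuine gap on the inclusion $\mathcal C^{\perp}\subseteq\wspan(\mathcal D)$ and offer two unfinished routes. The second of these (``the trivial valuation of an oriented matroid is an oriented valuated matroid, apply the $\TTpm$ case'') is precisely the paper's \cref{le:embedding}: form $\widetilde{\mathcal C},\widetilde{\mathcal D}$ by scaling by all of $\TTpm\setminus\{\Zero\}$, note these are the circuits and cocircuits of a $\TTpm$-matroid, and apply \cref{eq:matroids_ttpm} to it; since $\wspan(\mathcal C)=\wspan(\widetilde{\mathcal C})$ and $\mathcal D^{\perp}=\widetilde{\mathcal D}^{\perp}$, the conclusion falls out directly, with no need for the $\perp$-of-both-sides detour. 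So the route you name as an ``or'' is the right one, but you should commit to it and carry it through; the ``magnitude-bookkeeping'' alternative as sketched does not obviously close, because an element of $\mathcal C^{\perp}$ is an arbitrary $\TTpm^{k}$-vector, not a sign vector, so the classical conformal-cocircuit decomposition does not directly apply.

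Two further small omissions in the $\SignH$ block: you establish $\whull(\mathcal C\cup\{\Zero\})=\wspan(\mathcal C)\cap[\ominus 0,0]^{k}$ and call it ``the second displayed equation'', but that equation also asserts $\whull(\mathcal V)=\whull(\mathcal C\cup\{\Zero\})$, and the first one also asserts $\wspan(\mathcal V)=\wspan(\mathcal C)$. Neither is addressed. The paper gets both by noting that every non-$\Zero$ vector of an oriented matroid is a composition of circuits, which gives the sandwich $\mathcal C\cup\{\Zero\}\subseteq\mathcal V\subseteq\whull(\mathcal C)\cup\{\Zero\}$; that one sentence closes both gaps. Your treatment of the third displayed equation and of the realizable case matches the paper in substance.
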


The proof relies on the following lemma, which allows us to transfer the results from matroids over $\TTpm$ to matroids over $\SignH$.

\begin{lemma}\label{le:embedding}
Suppose that $\mathcal{C}, \mathcal{D} \subseteq \TTpm^k$ are the sets of circuits and cocircuits of a matroid over $\SignH$. Consider the sets $\widetilde{\mathcal{C}} = \SetOf{\lambda \odot v}{\lambda \in \TTpm \setminus \{\Zero\},\, v \in \mathcal{C}}$, $\widetilde{\mathcal{D}} = \SetOf{\lambda \odot u}{\lambda \in \TTpm \setminus \{\Zero\},\, u \in \mathcal{D}}$. Then, the sets $\widetilde{\mathcal{C}},\widetilde{\mathcal{D}}$ are the sets of circuits and cocircuits of a matroid over $\TTpm$.
\end{lemma}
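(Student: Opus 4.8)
The statement to prove is Lemma~\ref{le:embedding}: if $\mathcal{C},\mathcal{D}\subseteq\TTpm^k$ are the circuits and cocircuits of an $\SignH$-matroid, then their "scalar saturations" $\widetilde{\mathcal C}=\SetOf{\lambda\odot v}{\lambda\in\TTpm\setminus\{\Zero\},v\in\mathcal C}$ and $\widetilde{\mathcal D}$ are the circuits and cocircuits of a $\TTpm$-matroid. The plan is to produce the candidate vector set, verify it satisfies the four axioms (V0)--(V3) of \cref{def:matroid_vectors}, and then check that $\widetilde{\mathcal C}$ is exactly the set of support-minimal nonzero members of this vector set and that $\widetilde{\mathcal D}$ is the corresponding cocircuit set.

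**Main steps.** First I would let $\mathcal V_0\subseteq\SignH^k$ be the vectors of the given $\SignH$-matroid and set $\mathcal V=\SetOf{\lambda^{(1)}\odot v^{(1)}\lplus\dots\lplus\lambda^{(m)}\odot v^{(m)}}{m\ge 0,\ \lambda^{(i)}\in\TTpm,\ v^{(i)}\in\mathcal V_0}$, i.e. the closure of $\TTpm\odot\mathcal V_0$ under left sums; this is forced by (V1) and (V2). Axioms (V0) and (V1) are then immediate and (V2) holds by construction together with associativity of $\lplus$ (\cref{obs:basic-properties-lsum}). For (V3), given $x,y\in\mathcal V$ with $x_i=\ominus y_i\neq\Zero$, I would observe each entry of $x$ and $y$ is a scalar left-sum of entries of sign vectors, hence $\tsgn(x),\tsgn(y)\in\mathcal V_0$; apply (V3) for the $\SignH$-matroid to $\tsgn(x),\tsgn(y)$ to get a sign vector $\bar z\in\mathcal V_0$ with $\bar z_i=\Zero$ and $\bar z\in\Uncomp(\tsgn(x)\oplus\tsgn(y))$, then "lift" $\bar z$ coordinatewise to a genuine element $z\in\TTpm^k$ by choosing, in each coordinate $j$, the correct real magnitude matching $x_j$ or $y_j$ (whichever dominates) times $\bar z_j$. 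The delicate point is checking $z\in\mathcal V$ and $z\in\Uncomp(x\oplus y)\cap\TTpm^k$: membership in $\mathcal V$ follows because $z$ can itself be written as a scalar left-sum of (scalar multiples of) $x$ and $y$ using $\tsgn(z)=\bar z$, appealing to the compatibility of $\Uncomp(\cdot)$ with left sums and scalar products recorded in the Observation after \cref{lem:iterated-sums-uncomp}; membership in $\Uncomp(x\oplus y)$ is a coordinatewise magnitude bound $|z_j|\le|x_j\oplus y_j|$.

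**Identifying circuits and cocircuits.** Having shown $\mathcal V$ is a $\TTpm$-matroid vector set, I would show its circuits are precisely $\widetilde{\mathcal C}$. Since the circuits of $\mathcal V_0$ are the support-minimal nonzero sign vectors and every element of $\mathcal V$ has support equal to a union of supports of members of $\mathcal V_0$ (using $\supp(x\lplus y)=\supp(x)\cup\supp(y)$ and $\supp(\lambda\odot v)=\supp(v)$ for $\lambda\neq\Zero$), a support-minimal nonzero $z\in\mathcal V$ must have $\tsgn(z)$ support-minimal in $\mathcal V_0$, hence $\tsgn(z)\in\pm\mathcal C$, hence $z\in\widetilde{\mathcal C}$; conversely every $\lambda\odot v$ with $v\in\mathcal C$ is support-minimal in $\mathcal V$. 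Then by \cref{def:others-from-vectors} the cocircuits of $\mathcal V$ are the support-minimal nonzero covectors, i.e. support-minimal nonzero elements of $\mathcal C^{\perp}$ (here $\mathcal C^\perp$ in the sense of \cref{def:others-from-vectors}), and the same support argument applied to the dual $\SignH$-matroid—whose vectors generate $\widetilde{\mathcal D}$ under scalars and left sums by the already-proved half applied to the dual—identifies them with $\widetilde{\mathcal D}$. The fact that the dual of the constructed $\TTpm$-matroid has circuit set $\widetilde{\mathcal D}$ should be packaged by noting orthogonality over $\TTpm$ restricts correctly to orthogonality over $\SignH$ (since $\tsgn$ is a hyperfield morphism and $u\odot v\in\Tzero$ iff $\tsgn(u)\odot\tsgn(v)\in\{\ominus0,\Zero,0\}\cap\Tzero=\{\Zero\}$ when the magnitudes are "generic"—care is needed exactly here).

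**Main obstacle.** I expect the genuine difficulty to be the verification of (V3): one must produce the eliminated vector $z$ in $\TTpm^k$, not just in $\SignH^k$, and simultaneously guarantee $z\in\mathcal V$ and $z\in\Uncomp(x\oplus y)$. The natural construction via $\tsgn$ loses magnitude information, so the lift must be done by hand, setting $z_j$ equal to the sign $\bar z_j$ times whichever of $|x_j|,|y_j|$ is selected by the left-sum/elimination pattern, and then re-expressing $z$ as an explicit scalar left combination of $x$ and $y$ so that (V2),(V1) certify $z\in\mathcal V$. A secondary subtlety is the "support-minimality transfers across scalar saturation" claim for cocircuits, which requires knowing that the dual of the saturated $\TTpm$-matroid is the saturation of the dual $\SignH$-matroid; I would handle this by combining \cref{lem:orthogonal-closure-circuits} (for $\TTpm$-matroids) with the morphism property of $\tsgn$, rather than re-deriving duality from scratch.
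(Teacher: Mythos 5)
Your plan is to build the vector set $\mathcal{V}$ of the putative $\TTpm$-matroid by hand (as the left-sum closure of $\TTpm\odot\mathcal{V}_0$), verify the vector axioms (V0)--(V3), and then read off circuits and cocircuits by support-minimality. The paper takes a genuinely shorter route: it does not touch the vector set at all, but instead verifies that the pair $(\widetilde{\mathcal C},\widetilde{\mathcal D})$ satisfies the \emph{dual pair of circuit sets} axiomatization from \cite[Theorem~4.17]{BakerBowler:2019} and \cite[Theorem~3]{BowlerPendavingh-arxiv:2023} --- one checks that scaling preserves the circuit signature axioms and that orthogonality transfers from $\SignH$ to $\TTpm$, and the cited theorems then certify the matroid for free. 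This avoids ever having to verify the strong elimination axiom (V3) on the vector level, which, as you flag yourself, is where the difficulty lies.

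More importantly, your argument for (V3) has a concrete gap. You assert that the lifted vector $z$ (with $z_i=\Zero$) belongs to $\mathcal V$ "because $z$ can itself be written as a scalar left-sum of scalar multiples of $x$ and $y$." This is false. If $x_i=\ominus y_i\neq\Zero$, then for any $\lambda,\mu\in\TTpm$ the $i$-th coordinate of $\lambda\odot x\lplus\mu\odot y$ is $(\lambda\odot x_i)\lplus(\mu\odot y_i)$, which is nonzero unless $\lambda=\mu=\Zero$; so the left-sum closure of $\{x,y\}$ never contains a vector vanishing at $i$. Hence the only way $z$ can land in your $\mathcal V$ is via a nontrivial left-sum combination of \emph{other} elements of $\TTpm\odot\mathcal V_0$, and exhibiting that combination is precisely the content of the elimination axiom you are trying to prove --- the argument is circular as written. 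A secondary unresolved issue is sign-matching in the lift: when $x_j,y_j$ have opposite nonzero signs but unequal magnitudes, the coordinate $(\tsgn(x)\oplus\tsgn(y))_j$ is balanced even though $(x\oplus y)_j$ is not, so the $\SignH$-elimination may return a $\bar z_j$ that does not equal $\tsgn((x\oplus y)_j)$, and then $z\notin\Uncomp(x\oplus y)$. Both issues disappear if you switch to the circuit-axiom route the paper uses, since the circuit elimination axioms for a pair over $\TTpm$ are a coordinatewise magnitude refinement of those over $\SignH$ and transfer directly under the scaling $\widetilde{(\cdot)}$.
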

\begin{proof}
Observe that the scaling yields a correct signature of the circuits and cocircuits. 
The orthogonality of the circuits and cocircuits over $\TTpm$ follows from the orthogonality over $\SignH$.
Now, the claim follows directly from the characterization of a pair of circuits of a matroid and the circuits of its dual matroid given in \cite[Theorem 4.17]{BakerBowler:2019} and \cite[Theorem~3]{BowlerPendavingh-arxiv:2023}.
\end{proof}

\begin{proof}[Proof of \cref{thm:representation-vectors-tropical-hyperfields}]  
  We start by proving the claim for $\HH = \TTpm$. By \cref{lem:orthogonal-closure-circuits}, we directly get $\mathcal{V} = \mathcal{D}^{\perp} = \bigcap_{d \in \mathcal{D}} \HC\left((\Zero,d)\right)$.
  In particular, $\mathcal{V}$ is TC-convex, hence, due to (V1), we have $\mathcal{V} = \wspan(\mathcal{V})$. 
  
  By the monotonicity of $\wspan$, we obtain $\wspan(\mathcal{C}) \subseteq \wspan(\mathcal{V})$.
  Recall that circuits of fixed support are unique up to scaling (see e.g. property (C2) in \cite[\S 2.2]{BowlerPendavingh-arxiv:2023}).
  Applying Lemma~\ref{lem:orthogonal-as-cone} to a finite representative set of $\mathcal{C}$ up to scaling yields $\left(\mathcal{C}^{\perp}\right)^{\perp} = \wspan(\mathcal{C})$.
  Together with \cref{lem:orthogonal-closure-circuits} we get $\wspan(\mathcal{C}) = \left(\mathcal{C}^{\perp}\right)^{\perp} = \mathcal{V}$.
  Since the set of circuits forms a cone, \cref{cor:hull_of_cone} gives $\wspan(\mathcal{C}) = \wcone(\mathcal{C}) = \whull(\mathcal{C})\cup \{\Zero\}$.  

  \smallskip

  To prove the claims for $\SignH$, note that the equality $\wspan(\mathcal{C}) = \bigcap_{d \in \mathcal{D}} \HC\left((\Zero,d)\right)$ follows from \cref{le:embedding} combined with \cref{eq:matroids_ttpm}, since $\wspan(\mathcal{C}) = \wspan(\widetilde{\mathcal{C}})$ and $ \bigcap_{d \in \mathcal{D}} \HC\left((\Zero,d)\right) =  \bigcap_{d \in \widetilde{\mathcal{D}}} \HC\left((\Zero,d)\right)$.
  Moreover, since every non-$\Zero$ vector of an oriented matroid is a composition of circuits, we have $\mathcal{C} \cup \{\Zero\} \subseteq \mathcal{V} \subseteq \whull(\mathcal{C}) \cup \{\Zero\}$, which gives $\wspan(\mathcal{V}) = \wspan(\mathcal{C})$, $\whull(\mathcal{V}) = \whull(\mathcal{C} \cup \{\Zero\})$, and shows the inclusion
\[
   \mathcal{V} \subseteq \{\ominus 0,\Zero, 0\}^d \cap \whull(\mathcal{C} \cup \{\Zero\}) \subseteq \{\ominus 0,\Zero, 0\}^d \cap \bigcap_{d \in \mathcal{D}} \HC\left((\Zero,d)\right)  \enspace .
\]

The first and the last set are equal by the definition of covectors in terms of orthogonality applied to the dual matroid (\cref{def:others-from-vectors}).
Therefore, the chain of inclusions is actually an equality. Applying \cref{lem:special-hull-unit-cube} to $\mathcal{V}$ gives $\whull(\mathcal{V}) = \wspan(\mathcal{V}) \cap [\ominus 0, 0]^k$, which finishes the proof of all of the equalities.

  \smallskip

  For the second part, we rely on insights about hyperfield morphisms from~\cite{BakerBowler:2019}. 
  Signed valuation gives rise to a hyperfield morphism to the real tropical hyperfield; forgetting the valuation further gives rise to a hyperfield morphism to the sign hyperfield. 
  Since hyperfield morphisms transfer matroids to matroids, realizability is preserved. 
\end{proof}

\begin{remark}
The hyperfield morphism from the real tropical hyperfield to the tropical hyperfield transfers vectors of a matroid over $\TTpm$ to vectors of an unsigned tropical linear space.
So, while taking the tropical absolute value of an arbitrary TC-tropically convex set in general does not result in a tropically convex set, this holds for linear spaces over the real tropical hyperfield. 
\end{remark}

\begin{remark}
  Note that we have $\mathcal{V} = \left(\mathcal{V}^{\perp}\right)^{\perp}$ over $\TTpm$.
  Over a field this characterizes linear spaces. 
  Over $\TTpm$, by \cref{rem:span+orthogonal}, this equality actually holds exactly for those sets of the from $U^{\perp}$ for some subset $U \subseteq \TTpm^k$.
  It remains open if sets of this form are finitely generated.
  The analogous question for unsigned tropical convexity was answered negatively in \cite{GrigorievVorobjov:2021}.
\end{remark}

\section{Conclusion}

One of our main results is the representation of a finitely generated TC-convex set as an intersection of closed tropical halfspaces in \cref{thm:TC-hull-intersection-halfspaces}. 
Furthermore, we show in \cref{thm:tropical_polar} that also the valuation of a closed convex semialgebraic set has such a representation. 
This motivates the following. 
\begin{conjecture}
  For all closed TC-convex sets, we have
  \begin{equation}
    X = \bigcap_{a \in \TTpm^{d+1}} \SetOf{\Hclosed^+(a)}{X \subseteq \Hclosed^+(a)} \,.
  \end{equation}
\end{conjecture}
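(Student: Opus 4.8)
The plan is to prove the nontrivial inclusion: given a closed TC-convex set $X \subseteq \TTpm^d$ and a point $z \notin X$, produce a vector $a \in \TTpm^{d+1}$ with $(a_1,\dots,a_d)\neq\Zero$ such that $X \subseteq \Hclosed^+(a)$ and $z \notin \Hclosed^+(a)$. The reverse inclusion $X \subseteq \bigcap \SetOf{\Hclosed^+(a)}{X \subseteq \Hclosed^+(a)}$ is immediate, so this is enough.

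For the separation I would first exploit what is already known for finitely generated sets. By \cref{cor:convexity-finitary} we have $X = \bigcup_{T} \whull(T)$ over the finite subsets $T \subseteq X$, a union directed by inclusion; by \cref{thm:TC-hull-intersection-halfspaces} together with \cref{le:hspace_topo} each $\whull(T)$ is closed and equals the intersection of the closed tropical halfspaces containing $T$. Since $z \notin X$, we have $z \notin \whull(T)$ for every such $T$, so we may pick a normalized vector $a_T$ (normalizing in the compact set $\SetOf{a \in \TTpm^{d+1}}{\max_i |a_i| = 0}$, which is legitimate because $\Hclosed^+(a) = \Hclosed^+(\lambda\odot a)$ for $\lambda>\Zero$) with $T \subseteq \Hclosed^+(a_T)$ and $z \notin \Hclosed^+(a_T)$. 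Passing to a cluster point $a$ of this net, one checks $X \subseteq \Hclosed^+(a)$: any $p \in X$ lies in some $\whull(T_0)$, hence $p \in \Hclosed^+(a_T)$ for all $T \supseteq T_0$, and the condition ``$p \in \Hclosed^+(\,\cdot\,)$'' is closed in the normalized parameter, so it passes to the limit.

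The difficulty is that the condition $z \notin \Hclosed^+(a_T)$, i.e. $z \in \HC^-(a_T)$, is \emph{open}, so in the limit it only yields $z \in \Hclosed^-(a)$; the point $z$ may end up on the boundary hyperplane $\HC(a)$, in which case $z \in \Hclosed^+(a)$ and $a$ fails to separate. To fix this one must perturb $a$ to a nearby $a'$ with $X \subseteq \Hclosed^+(a')$ still holding but $z \notin \Hclosed^+(a')$. Since $X$ is closed and $z \notin X$, there is a box $B$ with $z \in \inter(B)$ and $B \cap X = \emptyset$; the perturbation should tilt $\HC(a)$ off $z$ inside $B$ while keeping all contact points of $X$ with $\HC(a)$ on the positive side. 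I would reduce to this situation cleanly by combining \cref{thm:separation_hemispaces} and \cref{th:hemispace}: take a TC-hemispace $G$ with $X \subseteq G$, $z \notin G$, complement TO-convex, and $\HC^+(a)\subseteq G\subseteq\Hclosed^+(a)$; the bad case is exactly $z \in \HC(a)\setminus G$. Here one would apply \cref{prop:hemispace_puiseux} to write $G = \sval(\bm G)$ with $\bm G\subseteq\KK^d$ convex, separate a lifted box around $z$ from $\bm G$ using the Puiseux hyperplane separation \cref{th:hyperplane_sep}, and push the separating halfspace back down via \cref{le:sval_hspace}, arguing as in the proof of \cref{thm:tropical_polar}.

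The main obstacle is precisely this last step. \Cref{th:hyperplane_sep} needs the convex set being separated from to be semialgebraic, whereas the lift $\bm G$ furnished by \cref{prop:hemispace_puiseux} is in general neither semialgebraic nor closed, and $\sval(\bm G) = G$ itself need not be closed, so a box around $z$ can meet $G$ even though $z \notin G$. Bridging the gap would require strengthening \cref{prop:hemispace_puiseux} to produce a closed convex semialgebraic lift of a (closed) TC-hemispace, or equivalently upgrading \cref{thm:TC-hull-intersection-halfspaces} from finite to arbitrary closed TC-convex sets by making the choice of separating halfspace uniform over the finite subsets $T \subseteq X$; the fine description of the boundary of TC-hemispaces in \cref{le:bigger_pos_argmax} is the natural tool for controlling the contact cells, but the failure of the Kakutani property for TC-convexity (\cref{ex:TC-non-Pasch-plane}) is exactly what blocks a direct separation argument and is why the statement remains conjectural.
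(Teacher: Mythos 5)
What you were asked to ``prove'' is stated in the paper as a \emph{conjecture}, and the authors explicitly leave it open: immediately before \cref{th:conic_M-W} they write that the analogue of \cref{th:TO-closure_separation} ``could still be true for the TC-convexity. Our techniques do not allow us to prove this result in full, and we leave it as an open question for future research,'' settling only the finitely generated case in \cref{thm:TC-hull-intersection-halfspaces}. So there is no proof in the paper to compare against, and your proposal should be judged as an attempt at an open problem. On those terms your analysis is accurate and matches the authors' own diagnosis of where the known tools stop.

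Concretely, the two obstructions you isolate are exactly the right ones. The directed-union argument over finite $T \subseteq X$ using \cref{cor:convexity-finitary} and \cref{thm:TC-hull-intersection-halfspaces}, followed by normalization and a cluster point, is a natural first attempt, and you correctly observe that in the limit one can only retain the closed condition $X \subseteq \Hclosed^+(a)$ while the strict condition $z \in \HC^-(a_T)$ degenerates to $z \in \Hclosed^-(a)$, permitting $z \in \HC(a)$; for TO-convexity this loss is repaired by \cref{th:TO-closure_separation}, but for TC-convexity the failure of the Kakutani/Pasch property shown in \cref{ex:TC-non-Pasch-plane} blocks the analogous repair. You also correctly observe that the Puiseux route through \cref{thm:separation_hemispaces}, \cref{th:hemispace}, \cref{prop:hemispace_puiseux}, \cref{th:hyperplane_sep}, and \cref{le:sval_hspace} runs aground because \cref{prop:hemispace_puiseux} delivers only a convex lift $\bm G \subseteq \KK^d$, with no guarantee that $\bm G$ is semialgebraic or closed, so \cref{th:hyperplane_sep} and hence the machinery of \cref{thm:tropical_polar} do not apply. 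Strengthening \cref{prop:hemispace_puiseux} to produce a closed semialgebraic lift, or making the choice of separating vector $a_T$ uniform in $T$ via the boundary control of \cref{le:bigger_pos_argmax}, are precisely the two directions the authors signal; neither is carried out in the paper, so you have identified the genuine gap rather than overlooked a step.
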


Such a more general statement could be deduced from a more direct proof of the representation by halfspaces.
On one hand, it would be interesting to get stronger separation without relying on the separation results over Puiseux series by using \cref{le:weak_hull_from_Puiseux} in the proof of \cref{thm:TC-hull-intersection-halfspaces}.  
On the other hand, the proof of \cref{thm:separation_hemispaces} uses the Kuratowski--Zorn lemma which is highly non-constructive. 
\begin{question}
 How can one deduce \cref{thm:TC-hull-intersection-halfspaces} in a more constructive way? 
\end{question}

\Cref{ex:caratheodory-number} discusses the Carath{\'e}odory number $c_d$ of TC-convexity.
The examples gives a lower bound $c_d \ge 2^d$ complementing the upper bound $c_d \leq d2^d + 1$ given in \cref{le:inter_caratheodory}.
Recall that TC-convexity extends the `usual' tropical convexity which has Carath{\'e}odory number $d+1$ as already shown in~\cite{Helbig:1988,DevelinSturmfels:2004,BriecHorvath:2004}. 

\begin{question}
 What is the Carath{\'e}odory number of TC-convexity?
\end{question}

An approach for the former question would be via a better understanding of the operator $\faces(\cdot)$.  
In \cref{def:vert+faces}, we introduced it as a crucial building block for the structure of TC-convex sets.
On a more abstract level, the operator produces certain cubical subcomplexes of the cubical complex formed by the faces of a hypercube. 
A better understanding of these complexes might lead to a better bound. 

\smallskip

Our connection to matroids over hyperfield in \cref{sec:oriented-matroids} together with the recent work~\cite{MaxwellSmith:2023} motives the following.

\begin{question}
 Can one extend the theory of tropical convexity to ordered perfect hyperfields preserving many of the features of TC-convexity? 
\end{question}

Furthermore, (unsigned) tropical linear spaces were characterized as those tropical varieties which are also tropically convex in~\cite{Hampe:2015}.
This may also be extended to the signed setting.

\begin{question}
 Are linear spaces over the signed tropical hyperfield exactly those signed tropical varieties which are TC-convex? 
\end{question}

Here, by linear space, we mean the sets of vectors of a matroid over the signed tropical hyperfield. 
These sets of vectors were described in \cref{thm:representation-vectors-tropical-hyperfields}.
We leave it open, what exactly we mean by signed tropical variety; one may even be lead to define a signed tropical variety as a polyhedral complex that looks locally like a linear space over the signed tropical hyperfield.

\section*{Acknowledgements}

We thank the 2021 HIM program Discrete Optimization during which part of this work was developed.
This research benefitted from the support of the FMJH Program PGMO.
We thank Ben Smith for helpful comments on preliminary versions.

\bibliographystyle{plain}
 \newcommand{\noop}[1]{}

\appendix

\section{Details on semialgebraic sets}
\label{sec:details+semialgebraic+separation}

We give more explanations for the statement and proof of \cref{th:hyperplane_sep}. We refer to \cite[Corollary~3.3.20]{Marker:2002} for another example of a proof that works in the same way.

\begin{proof}[Extended proof of \cref{th:hyperplane_sep}]
Recall that $\bm X, \bm Y \subseteq \KK^d$ are two nonempty convex semialgebraic sets such that $\bm X \cap \bm Y = \emptyset$.
Since $\bm X$, $\bm Y$ are semialgebraic, there exist two first-order formulas $\phi(x_1, \dots, x_d, y_1, \dots y_{n_1}), \psi(x_1, \dots, x_d, y_1, \dots, y_{n_2})$ in the language of ordered fields and two vectors $\bm r \in \KK^{n_1}, \bm s \in \KK^{n_2}$ such that 
\begin{align*}
\bm X &= \{\bm x \in \KK^d \colon \phi(\bm x, \bm r) \text{ is true in } \KK \} \, , \\
\bm Y &= \{\bm x \in \KK^d \colon \psi(\bm x, \bm s) \text{ is true in } \KK \} \, .
\end{align*}
  For every $\bm w \in \KK^{n_1}, \bm z \in \KK^{n_2}$, let $\bm X_{\bm w}, \bm Y_{\bm z}$ be the semialgebraic sets defined as 
\begin{align*}
\bm X_{\bm w} &= \{\bm x \in \KK^d \colon \phi(\bm x, \bm w) \text{ is true in } \KK \} \, , \\
\bm Y_{\bm z} &= \{\bm x \in \KK^d \colon \psi(\bm x, \bm z) \text{ is true in } \KK \} \, .
\end{align*}
Now, the statement 
\begin{quote}
``For every $\bm w, \bm z$ such that $\bm X_{\bm w}, \bm Y_{\bm z}$ are nonempty, convex, and disjoint, there exists a vector $\bm a$ such that $\bm X_{\bm w} \subseteq \bHclosed^+(\bm a)$ and $\bm Y_{\bm z} \subseteq \bHclosed^-(\bm a)$''
\end{quote}
  can be written as sentence in the language of ordered fields.
  This sentence is true over $\RR$ by the hyperplane separation theorem, and so it is true over $\KK$ by the completeness of the theory of real closed fields.
  By taking $(\bm w, \bm z) = (\bm r, \bm s)$ we obtain the claim.
  \end{proof}
  
  \begin{example}
  The assumption that both sets $\bm X$, $\bm Y$ are semialgebraic cannot be dropped from \cref{th:hyperplane_sep} as the following example, taken from \cite{Robson:1991}, shows. 
  Let $\bm A = \SetOf{\bm z \in \KK}{\bm z \ge 0 \land \val(\bm z) < 0}$ and $\bm B = \SetOf{\bm z \in \KK}{\bm z \ge 0 \land \val(\bm z) \ge 0}$.
  In this way, $\bm A \cup \bm B$ is the set of nonnegative Puiseux series, $\bm A$ does not have a least upper bound in $\KK$, $\bm B$ does not have a greatest lower bound in $\KK$, and $\bm a < \bm b$ for all $\bm a \in \bm A, \bm b \in \bm B$.
  Let $\bm X = \bm X_1 \cup \bm X_2$, where
\begin{align*}
\bm X_1 &= \SetOf{(\bm x, \bm y) \in \KK^2}{\bm x \ge 0 \, \land \, (\forall \bm z \in \bm B , \ \bm y \le \bm z \bm x)} \, . \\
\bm X_2 &= \SetOf{(\bm x, \bm y) \in \KK^2}{\bm x \le 0 \, \land \, (\forall \bm z \in \bm A , \ \bm y \le \bm z \bm x )} \, . 
\end{align*}
We note that $\bm X$ is nonempty because it contains $(0,0)$.
Also, both $\bm X_{1}$ and $\bm X_2$ arise as intersections of closed convex sets, so $\bm X_1$ and $\bm X_2$ are both convex and closed.
Hence $\bm X$ is closed.
To see that $\bm X$ is also convex, let $(\bm x_1, \bm y_1) \in \bm X_1$ and $(\bm x_2, \bm y_2) \in \bm X_2$ be such that $\bm x_1 > 0$ and $\bm x_2 < 0$.
Pick $\bm \lambda \in [0,1]$ such that $\bm \lambda \bm x_1 + (1 - \bm \lambda) \bm x_2 = 0$ and let $\bm z = \max(0, \bm y_1 / \bm x_1)$.
Since $\bm z$ is a lower bound for $\bm B$, we have $\bm z \in \bm A$.
Hence $\bm \lambda \bm y_1 + (1-\bm \lambda) \bm y_2 \le \bm z (\bm \lambda \bm x_1 + (1 - \bm \lambda) \bm x_2) = 0$.
Therefore, the point $\bm \lambda (\bm x_1, \bm y_1) + (1 - \bm \lambda)(\bm x_2, \bm y_2)$ belongs to both $\bm X_1$ and $\bm X_2$.
Hence, the whole segment between these two points belongs to $\bm X$.

Despite the fact that $\bm X$ is closed and convex, it cannot be separated by a hyperplane from \emph{any} point $\bm w \notin \bm X$.
To see that, suppose that $(\bm c_0, \bm c_1, \bm c_2)$ is such that $(\bm c_1, \bm c_2) \neq (0,0)$ and $\bm c_0 + \bm c_1 \bm x + \bm c_2 \bm y \ge 0$ for all $(\bm x, \bm y) \in \bm X$.
Since $(t^x, 0),(0,-t^x), (-t^x, -t^x), (t^x, t^{x - 0.5}) \in \bm X$ for arbitrarily large values of $x > 0$, we have $\bm c_1 > 0$ and $\bm c_2 < 0$.
Let $\bm z = \bm c_1 / |\bm c_2|$. If $\bm z \in \bm A$, then by taking any $\bm z' \in \bm A$ that is greater than $\bm z$ and considering the points $(t^x, \bm z' t^{x}) \in \bm X$ we get $\bm c_0 + \bm c_1 t^{x} + \bm c_2 \bm z' t^{x} \ge 0 \iff \bm z' \le \bm z + (\bm c_0 / |\bm c_2|) t^{-x}$, which is a contradiction for $x$ large enough.
Likewise, if $\bm z \in \bm B$, then by taking any $\bm z' \in \bm B$ that is smaller than $\bm z$ and considering the points $(-t^{x}, -\bm z' t^{x}) \in \bm X$ we get $\bm c_0 - \bm c_1 t^{x} - \bm c_2 \bm z' t^{x} \ge 0 \iff \bm z' \ge \bm z - (\bm c_0/|\bm c_2|) t^{-x}$, which is a contradiction for $x$ large enough.
\end{example}

\end{document}